\theoremstyle{plain}
\newtheorem{introtheorem}{Theorem}
\newtheorem{introprop}[introtheorem]{Proposition}
\newtheorem{introconj}[introtheorem]{Conjecture}
\newtheorem{introcor}[introtheorem]{Corollary}
\newtheorem{theorem}{Theorem}[section]
\newtheorem{cor}[theorem]{Corollary}
\newtheorem{con}[theorem]{Conjecture}
\newtheorem{prop}[theorem]{Proposition}
\newtheorem{lemma}[theorem]{Lemma}
\theoremstyle{definition}
\newtheorem{rmk}[theorem]{Remark}
\numberwithin{equation}{section}
\newtheorem*{theoremA*}{Theorem A}
\newtheorem*{theoremB*}{Theorem B}
\newtheorem*{theorem1*}{Theorem A'}
\newtheorem*{theoremC*}{Theorem C}
\newtheorem*{theoremD*}{Theorem D}
\newtheorem*{theoremE*}{Theorem E}
\newtheorem*{theoremF*}{Theorem F}
\newtheorem*{theoremE2*}{Theorem E2}
\newtheorem*{theoremE3*}{Theorem E3}
\newcommand{\bs}{\backslash}
\newcommand{\Cc}{\mathcal{C}}
\newcommand{\C}{\mathbb{C}}
\newcommand{\A}{\mathcal{A}}
\newcommand{\Nc}{\mathcal{N}}
\newcommand{\E}{\mathcal{E}}
\newcommand{\Hc}{\mathcal{H}}
\newcommand{\Z}{\mathbb{Z}}
\newcommand{\Zc}{\mathcal{Z}}
\newcommand{\R}{\mathbb{R}}
\newcommand{\N}{\mathbb{N}}
\newcommand{\co}{\overline{\operatorname{co}}}
\newcommand{\Ind}{\operatorname{Ind}}
\newcommand{\SO}{\operatorname{SO}}
\newcommand{\Hom}{\operatorname{Hom}}
\newcommand{\GL}{\operatorname{GL}}
\newcommand{\SL}{\operatorname{SL}}
\newcommand{\SU}{\operatorname{SU}}
\newcommand{\tr}{\operatorname{tr}}
\newcommand{\Ad}{\operatorname{Ad}}
\newcommand{\ad}{\operatorname{ad}}
\newcommand{\diag}{\operatorname{diag}}
\newcommand{\Pol}{\operatorname{Pol}}
\newcommand{\vol}{\operatorname{vol}}
\newcommand{\Spec}{\operatorname{spec}}
\newcommand{\supp}{\operatorname{supp}}
\newcommand{\Span}{\operatorname{span}}
\newcommand{\rank}{\operatorname{rank}}
\newcommand{\re}{\operatorname{Re}}
\def\dotvar{\, \cdot\,}
\def\hat{\widehat}
\def\af{\mathfrak{a}}
\def\e{\epsilon}
\def\gf{\mathfrak{g}}
\def\cf{\mathfrak{c}}
\def\hf{\mathfrak{h}}
\def\kf{\mathfrak{k}}
\def\mf{\mathfrak{m}}
\def\nf{\mathfrak{n}}
\def\sl{\mathfrak{sl}}
\def\gl{\mathfrak{gl}}
\def\so{\mathfrak{so}}
\def\sp{\mathfrak{sp}}
\def\su{\mathfrak{su}}
\def\tf{\mathfrak{t}}
\def\la{\langle}
\def\ra{\rangle}
\def\1{{\bf1}}
\def\U{\mathcal{U}}
\def\Cc{\mathcal{C}}
\def\oline{\overline}
\def\F{\mathcal{F}}
\def\tilde{\widetilde}
\def\v{\mathbf{v}}
\def\aut{\text{aut}}
\newcommand{\No}{\operatorname{Norm}}
\newcommand{\Noc}{\mathfrak{Norm}}
\newcommand{\id}{\operatorname{id}}
\title[On norms on Harish-Chandra modules]{On norms on Harish-Chandra modules}
\subjclass[2000]{22F30, 22E46, 53C35, 22E40}
\begin{document}
\date{\today}

\begin{abstract}
The Casselman-Wallach theorem is a foundational result in the theory of representations of real reductive groups connecting algebraic representations to topological representations. We provide a quantitative version of this theorem. For that we introduce the notion of {\it Sobolev gap} for a Harish-Chandra module. This is a new invariant whose finiteness is highly non-trivial. We determine the Sobolev gap for representations in the unitary dual of the group $\SL(2,\R)$ and establish uniform finiteness results in general for representations of the discrete series and the minimal principal series. We use these notions to reformulate and extend classical results of Bernstein and Reznikov concerning automorphic functionals with respect to cocompact lattices. In particular, we prove an abstract convexity bound which applies to automorphic functionals with respect to general lattices in $\SL(2,\R)$ and is independent of the type of unitarizable irreducible Harish-Chandra module. Finally, we offer an extensive list of open problems.
\end{abstract}

\author[Bernstein]{Joseph Bernstein}
\email{bernstei@post.tau.ac.il}
\address{Tel Aviv University, School of Mathematical Sciences,\\Ramat Aviv,
Tel Aviv 69978}

\author[Ganguly]{Pritam Ganguly}
\email{pritam1995.pg@gmail.com}
\address{ Indian Statistical
Institute, Stat-Math Unit,\\
203 B. T. Rd., Kolkata 700108}

\author[Krötz]{Bernhard Krötz}
\email{bkroetz@gmx.de}
\address{Universit\"at Paderborn, Institut f\"ur Mathematik\\Warburger Stra\ss e 100,
33098 Paderborn}

\author[Kuit]{Job J. Kuit}
\email{j.j.kuit@gmail.com}
\address{Universit\"at Paderborn, Institut f\"ur Mathematik\\Warburger Stra\ss e 100,
33098 Paderborn}

\author[Sayag]{Eitan Sayag}
\email{sayage@math.bgu.ac.il}
\address{Ben-Gurion University of the Negev, Department of Mathematics\\PO Box 653,
Be'er Sheva 8410501}

\maketitle
\tableofcontents

\section{Introduction}
In its inception, the focus of representation theory was the study of unitary representations. In the case where $G$ is a real reductive group, Harish-Chandra introduced the useful category of $(\gf,K)$-modules, where $\gf={\rm Lie}(G)$ and $K \subset G$ is a maximal compact subgroup. Harish-Chandra used $(\gf,K)$-modules to effectively study unitary representations, and thus interest shifted to the study of these more algebraic objects. In particular, the language of $(\gf,K)$-modules provided the correct notion of infinitesimal equivalence of representations and turned out to be of extraordinary use.

To pass from a $(\gf,K)$-module $V$ to a continuous representation of $G$ on some complete locally convex space $E$ there are two obstacles: One needs to complete the vector space $V$ to such a topological vector space $E$ and provide a continuous action of $G$ on $E$ that will be compatible with the given $(\gf,K)$-module structure on $V$. In short, we require $V\simeq_{(\gf, K)} E^{K-{\rm finite}}$ and say that $E$ is a {\it globalization} of $V$. 
Globalizations exist provided that $V$ 
is finitely generated as a $\gf$-module and admissible, i.e. with finite $K$-multiplicities. Those $(\gf,K)$-modules $V$ are called Harish-Chandra modules, and all modules $V$ from now on will be assumed to be of this type. 

The most important globalizations are obtained using so-called $G$-continuous norms. A norm $p$ on $V$ is $G$-continuous if the Banach completion $E=V_p$ of the normed space $(V,p)$ is a globalization of $V$. 

Although the choice of the norm $p$ has a great effect on the geometry induced on $V$, it turns out that the representation of $G$ on the space of smooth vectors $E^{\infty} \subset E$ is in a very precise sense independent of the choice of the norm $p$. We will make this precise. 

We recall the foundational Casselman-Wallach theorem \cite{Casselman, Wallach_book_II}, which
states that up to isomorphism there exists a unique smooth Fr\'echet completion $V^\infty$ of $V$ with moderate growth. To introduce a quantitative version, we recall the more explicit version of the theorem as stated in \cite{BK}: 

{\it Any two $G$-continuous norms $p, q$ on a Harish-Chandra module $V$ are Sobolev equivalent.
}

Notice that this statement indeed implies that the Fr{\'e}chet modules of smooth vectors $V_p^\infty$ and $V_q^\infty$ are $G$-isomorphic and in particular it yields the uniqueness statement above. 
At this point, we mention that {\it Sobolev equivalence} means that there exists a $k\in\N$ such that $p$ is dominated by $k$-th Sobolev norm of $q$ and vice versa. See Section 3 for more details. The natural quantitative question then is to provide a reasonable bound on $k$ in terms of the two given $G$-continuous norms on the Harish-Chandra module $V$.

It appears futile to try to compare two 
general $G$-continuous norms as the order of the Sobolev domination may be arbitrarily high.
Our approach is to compare norms with similar growth.
More precisely, to a $G$-continuous norm $p$ on $V$ we attach the basic growth invariant $w_{p}:G\to\R_{>0}$, by setting
$$
w_p(g)
:= \sup_{p(v)\leq 1} p (g\cdot v) \qquad (g \in G).
$$
This function is positive, locally bounded and submultiplicative. Any function $w: G \to \R_{>0}$ satisfying these properties will be called a {\it weight}. 

\par
The space of all $G$-continuous norms on a Harish-Chandra module $V$ is thus filtered by the set of weights. For any 
$w$ we let $\No(V,w)$ be the set of $G$-continuous norms with 
growth function $w_p\leq C w$, with $C>0$. The most important case 
is $w=\1$, which corresponds to uniformly bounded norms. In this case we use the shortened notation $\No(V)=\No(V,\1)$. In this introduction, we focus mainly on $w=\1$.

It is natural to introduce a partial order on the set of norms as follows. Write $p\lesssim q$ if 
$p$ is dominated by a multiple of $q$. Mutual domination defines an equivalence relation and we write 
$\Noc(V)$ for the set of equivalence classes of $\No(V)$.
The equivalence class of a norm $p$ 
is denoted by $[p]$ and we observe that each $[p]\in\Noc(V)$ has an isometric norm as a representative, namely $p_{\rm iso}(v):=\sup_{g\in G} p(g\cdot v)$.
Notice that $\lesssim$ induces a partial order on $\Noc(V)$.
In the sequel, we will assume that $V$ is such that $\No(V)\neq \emptyset$ which, for instance, is guaranteed if $V$ is unitarizable. 

\subsection{Isometric norms in harmonic analysis on homogeneous spaces}\label{subsec: natural norms and X bnd}

We now provide key examples for isometric norms. Let $H\subset G$ be a closed subgroup, and let $X=H\bs G$ be the attached homogeneous space.
Henceforth, we assume that $V$ is an irreducible Harish-Chandra module that is distinguished with respect to $H \subset G$. 
This means that for some non-zero $\eta:V^{\infty} \to \mathbb{C}$ we have that $\eta$ is $H$-invariant. Frobenius reciprocity yields an embedding
$$
i_\eta:V^{\infty} \to C^{\infty}(X),
\quad i_\eta(v)(Hg):=m_{v,\eta}(Hg):=\eta(g\cdot v).
$$
Assume henceforth that $X=H\bs G$ carries a $G$-invariant measure and let $1\leq r\leq \infty$. We say that $(V,\eta)$ is $(X,r)$-bounded 
provided that $\operatorname{im} i_\eta\subset L^r(X)$.  In this case we obtain via
$$
p_{\eta, X,r}(v)
= \|m_{v,\eta}\|_{L^r(X)}\qquad (v\in V^\infty)
$$
an isometric $G$-continuous norm. These norms are ubiquitous in harmonic analysis; see Sections \ref{Sec: SR} and \ref{Sec: AF}.

\subsection{Minimal and maximal norms}

Our starting point is the observation that $\Noc(V)$ features (unique) minimal and maximal elements $[p_{\rm min}]$ and $[p_{\rm max}]$. 

\par Let us explain the construction of $[p_{\rm min}]$.  For this we recall a key result about Harish-Chandra modules: Any such $V$ can be realized in the space of analytic functions on $G$ via matrix coefficients. To be more precise let $\tilde V$ be the dual Harish-Chandra module of $V$. Then we can attach to any pair $(v, \tilde v) \in V \times \tilde V$ an analytic matrix coefficient $m_{v,\tilde v} :G \to \mathbb{C}$ whose right derivatives $R(u)m_{v,\tilde v}(e)$ for $u \in \U(\gf)$ at the origin coincide with $\tilde v(uv)$.
Our assumption $\No (V) \neq \emptyset$ assures that the matrix coefficients $m_{v,\tilde v}$ are bounded functions on $G$. For simplicity assume further that $\tilde V$ is cyclic (with a generator $\tilde v$). Then a representative of $[p_{\rm min}]$ is given by 
$$
p_{\rm min}(v)
=\sup_{g\in G} |m_{v,\tilde v} (g)|
\qquad(v\in V).
$$
It is straightforward to see that this construction defines a minimal element in $\Noc(V)$. Using duality one obtains a maximal element of $\Noc(V)$: {\it a maximal norm is the dual norm of a minimal norm of the dual module.} 
It is an important observation of the present paper that these simple facts has striking consequences. 

\subsection{Definition of the Sobolev gap}

To introduce the pseudometric structure we need to introduce a flexible family of Sobolev norms attached to a $G$-continuous norm $p$.

Natural Sobolev norms $p_s$ of any order $s\in\R$ for any  norm $p$ on $V$ can be declared (algebraically) via the $K$-Laplacian. We then define an invariant of the Harish-Chandra module $V$ by setting
$$
s(V)
:=\inf\{ s>0\mid p_{\rm max} \lesssim p_{{\rm min}, s}\}.
$$
We repeat that the mere finiteness of this number encodes the Casselman-Wallach theorem. 
Similarly, in Section \ref{section Sobolev gap} we define the {\em Sobolev gap} $s(V,w)$ with respect to general weights $w$
with the convention that $s(V)=s(V,\1)$.

Inherent to this notion is the concept of stabilization of norms which was indicated in \cite{BR2}. To explain this compelling feature, let $p\in \No(V)$ and $q$ be any $G$-continuous norm with $p\lesssim q$. 
Then the infimum construction of seminorms 
$$
q^G
:= \inf_{g\in G} q(g\dotvar)
$$
yields a $G$-invariant norm $q^G\in \No(V)$ dominating $p$ as well, i.e. $p\lesssim q^G$. If now $q$ in addition is a $K$-invariant Hermitian norm then we have strong stabilization; see Proposition \ref{Prop stabilization 1}: 
$$
[p_{\max}]
=[(q_s)^G ]
\qquad \hbox{if $p_{\max}\lesssim{q_s}$ and in particular for $s>s(V)$.}
$$
This is quite useful in applications for $G$-continuous norms $p\in \No(V)$, because one typically has coarse bounds 
$p\lesssim q_k$ for a large $k$ stemming from a standard Sobolev lemma (see for instance \cite[Prop. 4.1]{BR2} or \cite[Sect. 2.4]{MV}). Stabilization then allows us to reduce $k$ to any $s>s(V)$  while maintaining domination 
$$
p\lesssim (q_s)^G
\lesssim q_s\,.
$$

For unitary principal series of $G=\SL(2,\R)$ stabilization $[p_{\max}]=[(q_s)^G]$ for $s>\frac{1}{2}$ and $q$ the unitary norm was noted in \cite{BR2}. Further, it is stated that $[p_{\max}]$ can be implemented by a Besov norm on the circle. 

\begin{rmk}
We wish to mention that in \cite{BernsteinICM} a natural notion of multiparametric Sobolev norms with the number of parameters equal to the real rank of $G$ was suggested. When fully developed, this notion is expected to yield finer invariants.
\end{rmk}

\subsection{Main Results}

The paper offers two results on the invariants $s(V,w)$: One very general on uniform finiteness and valid for any real reductive group $G$ and a second very explicit for the basic group of $G=\SL(2,\R)$. 

We begin by describing our general results. To prepare the statements, write $\mathcal{HC}$ for the class of all Harish-Chandra modules and
let $\mathcal{HC}_{\rm d}$, $\mathcal{HC}_{\rm mp}$ and $\mathcal{HC}_{\rm u}$ denote the subclasses corresponding to discrete series, minimal principal series, and unitarizable modules.
Then we show in Theorem \ref{thm disc series} and Theorem \ref{thm sup} in the formulation of \eqref{thm sup reloaded} that: 

\begin{introtheorem}\label{thm.main1}
Let $G$ be a real reductive group. Then the following assertions hold true 
\begin{enumerate}[(i)]
\item\label{thm.main1 - item 1}
$$
\sup_{V\in \mathcal{HC}_{\rm d}} s(V)< \infty.
$$
\item\label{thm.main1 - item 2}
For every weight $w$ one has 
$$
\sup_{\substack{V\in \mathcal{HC}_{\rm mp}\\ \No(V,w)\neq \emptyset}}s(V,w)
< \infty.
$$
\end{enumerate}
\end{introtheorem}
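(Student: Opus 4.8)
The plan is to bound $s(V)$ (respectively $s(V,w)$) by a constant depending only on $\dim G$ (and on the weight $w$), exploiting that these invariants record only the \emph{order} of a Sobolev domination and are insensitive to multiplicative constants. Since $(p_s)_t = p_{s+t}$ and $p\lesssim q$ forces $p_s\lesssim q_s$, it is enough to exhibit one reference $K$-invariant Hermitian norm $q$ together with integers $M,N$ — uniform over the class in question — for which $p_{\max}\lesssim q_M$ and $q\lesssim p_{\min,N}$; then $p_{\max}\lesssim q_M\lesssim p_{\min,N+M}$, so that $s(V)\le N+M$. The constants implicit in these dominations are allowed to depend on $V$.

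For the discrete series I would take $q$ to be the unitary norm, realized through the square-integrable matrix coefficient embedding $v\mapsto m_{v,\tilde v_0}$ of $V$, and dually of $\tilde V$ (which is again a discrete series), into $L^2(G)$. The chain $p_{\min}\lesssim q\lesssim p_{\max}$ is automatic from minimality and maximality, so the task is the two reverse bounds. For $q\lesssim p_{\min,N}$ one uses Schur orthogonality, which gives the exact identity $\|m_{v,\tilde v_0}\|_{L^2(G)} = d_\pi^{-1/2}\|\tilde v_0\|\, q(v)$, and then estimates this global $L^2$-mass by finitely many $\U(\gf)$-derivatives of $m_{v,\tilde v_0}$ measured in $\sup$-norm. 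This rests on Harish-Chandra's exponential decay estimates for discrete-series matrix coefficients, whose rate is bounded below over the entire discrete series so that the relevant integral over the fixed manifold $G$ converges (with a constant one need not track), combined with a Sobolev lemma on $G$; the passage from $\pf$-derivatives to the $K$-Laplacian is effected by the Casimir acting as a scalar, at the cost of a power of its eigenvalue, which is harmless. Reading the same estimate on $\tilde V$ and dualizing produces $p_{\max}\lesssim q_M$. One reads off $N$ and $M$ depending only on $\dim G$.

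For the minimal principal series I would work in the compact picture: $V_{\lambda,\delta}$ is realized, $K$-equivariantly and \emph{independently of} $\lambda$, in the $K$-finite sections of the homogeneous vector bundle over $M\backslash K$ attached to $\delta\in\hat M$. Taking the cyclic vectors to be fixed low $K$-type sections, the matrix coefficients restricted to $K$ become twisted convolutions on the compact manifold $M\backslash K$; one then identifies $[p_{\min}]$ with an $L^1$-type norm and $[p_{\max}]$ with an $L^\infty$-type norm there, up to a bounded number of $K$-derivatives and up to the Iwasawa cocycle $e^{\langle\lambda+\rho,H(\cdot)\rangle}$, which over the finitely many group elements entering the estimate contributes at worst polynomially in $|\lambda|$ per derivative — again affecting only constants, and, when $\No(V,w)\neq\emptyset$, confining $\re\lambda$ to a bounded region determined by the growth of $w$. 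The Sobolev embedding $W^{N,1}(M\backslash K)\hookrightarrow L^\infty(M\backslash K)$ for $N>\dim(M\backslash K)$, which is uniform in the twist $\delta$, then yields $p_{\max}\lesssim p_{\min,N}$, hence $s(V,w)\le N$, with $N$ depending only on $\dim G$ and on $w$.

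The step I expect to be the main obstacle is the lower bound $p_{\min,N}\gtrsim q$, equivalently $\tilde p_{\min,N}\gtrsim\tilde q$, with $N$ uniform. Recovering control of a vector $K$-type by $K$-type is hopeless here, since the number of $\U(\gf)$-derivatives needed to reach a $K$-type starting from the fixed $K$-types of the cyclic vector grows without bound; the role of the $L^2(G)$-realization for the discrete series and of the compact convolution structure for the principal series is precisely to circumvent this and pay only a dimension-dependent order. For the discrete series there is a further delicate point: the smallest members of the class decay only barely enough to be square-integrable, so the convergence of the defining integral has to be arranged with care rather than by a crude $L^2\le L^\infty\cdot L^1$ bound.
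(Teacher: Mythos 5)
For part~(i), your route essentially coincides with the paper's: Schur orthogonality converts the unitary norm into the $L^2(G)$-norm of a single matrix coefficient, a uniform integrability statement for discrete series matrix coefficients supplies the convergence, an eigenfunction Sobolev estimate bounds the $L^2(G)$-norm by $p_{\min,k}^{\rm st}$ with a uniform $k$, and duality handles $p_{\max}$. What you call ``a Sobolev lemma on $G$'' is precisely the delicate point you flag at the end. A naive Sobolev inequality fails on the non-compact group $G$; what saves the argument is that $m_{v,\tilde v}$ is a $\Zc(\gf)$-eigenfunction, and the paper invokes Lemma \ref{KSS-lemma} (from \cite{KSS}) together with Mili\u ci\'c's theorem that discrete series coefficients lie in $L^{2-\e}(G)$ for a \emph{uniform} $\e>0$. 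You attribute the uniform decay to Harish-Chandra, but the uniformity over the \emph{entire} discrete series is the content of \cite{Milicic}; and the Sobolev order $k$ in the eigenfunction estimate is itself a nontrivial output of \cite{KSS}, not something you can derive by a generic Sobolev lemma. So your sketch for (i) has the right skeleton but leaves the two genuine inputs unnamed.

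For part~(ii) your approach diverges from the paper's and, as written, has a gap. You propose to restrict matrix coefficients to $K$, identify $[p_{\min}]$ and $[p_{\max}]$ with $L^1$ and $L^\infty$-type norms on $M\backslash K$ up to bounded derivatives, and finish with the Sobolev embedding $W^{N,1}(M\backslash K)\hookrightarrow L^\infty(M\backslash K)$. But $p_{\min}(v)=\sup_{g\in G}|m_{v,\tilde v_0}(g)|/w(g)$ is a supremum over all of $G$, not over $K$, and the value of this sup is \emph{not} captured by ``finitely many group elements.'' In the paper's proof (Key Lemma \ref{Lemma Uniform lower bounds}), the effective group element is $a_{t_\tau}\in\oline{A^+}$ with $e^{-t_\tau}\asymp(1+|\tau|)^{-1}$, i.e.\ it escapes to infinity in $A^+$ as the $K$-type $\tau$ grows; the $\rho$-Jacobian $a_{t_\tau}^{-\rho}\asymp(1+|\tau|)^{-c_\gf}$ picked up there is precisely where the Sobolev order $c_\gf$ in Theorem \ref{thm.main1}(\ref{thm.main1 - item 2}) comes from. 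Your compact-picture argument controls only derivatives tangent to $K/M$ and loses the transverse ($\oline{N}$- or $A$-direction) scaling, which is what the Dirac-approximation argument in the $\oline{N}$-model quantifies. So the reduction to a Sobolev embedding on the compact homogeneous space is not enough; you would need to explain why the sup over $g$ saturates on a set controllable uniformly in $\tau$, and that is exactly the non-trivial quantitative Dirac approximation of \cite[Sect.~12]{BK} and Lemma \ref{Lemma Uniform lower bounds}.
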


Motivated by the above finiteness results, we formulate the following (see Conjecture \ref{Finiteness conjecture} in the main text):

\begin{introconj}[Uniform Finiteness Conjecture] \label{conj uniform finiteness}
For any given weight $w$ on $G$ one has: 
$$
\sup_{\substack{V \in \mathcal{HC}\\ \No(V,w)\neq \emptyset}}s(V,w)
<\infty
$$
In particular, 
for the entire unitary dual:
\begin{equation}\label{sgapu}
\sup_{V \in \mathcal{HC}_{\rm u}} s(V)
<\infty.
\end{equation}
\end{introconj}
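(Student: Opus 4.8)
The plan is to bootstrap from Theorem~\ref{thm.main1}: reduce an arbitrary Harish-Chandra module first to the irreducible case, then to the minimal principal series, while keeping track of the growth data. For the first reduction, suppose $0\to V'\to V\to V''\to 0$ is exact. Any $G$-continuous norm $p$ on $V$ restricts to one on $V'$ and descends to one on $V''$; conversely $p_{\rm min}^{\,V}$ is controlled by $p_{\rm min}^{\,V'}$ together with a lift of $p_{\rm min}^{\,V''}$, and dually for $p_{\rm max}$. Iterating along a composition series of $V$ then bounds $s(V,w)$ in terms of the invariants $s(V_j,w_j)$ of the irreducible subquotients, with the weights $w_j$ comparable to $w$. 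So it suffices to bound $s(V,w)$ for $V$ irreducible, uniformly in $V$ with $\No(V,w)\neq\emptyset$.

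For the second reduction, fix an irreducible $V$ and, via Casselman's subrepresentation theorem, realize it as a submodule $V\hookrightarrow I$ of a minimal principal series $I$. The two extremal norms are functorial along such an embedding: since the minimal and maximal globalization functors are exact (Kashiwara--Schmid, Casselman--Wallach), a closed embedding induces $p_{\rm min}^{\,V}\sim p_{\rm min}^{\,I}|_V$, and dualizing the induced quotient of dual modules gives $p_{\rm max}^{\,V}\sim p_{\rm max}^{\,I}|_V$. Moreover the Sobolev norms are built from the $K$-Laplacian, which acts by scalars on each $K$-isotypic component, so restriction to the closed $K$-stable subspace $V$ commutes with $p\mapsto p_s$ up to constants. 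Hence a domination $p_{\rm max}^{\,I}\lesssim (p_{\rm min}^{\,I})_s$ restricts to $p_{\rm max}^{\,V}\lesssim (p_{\rm min}^{\,V})_s$, giving $s(V)\leq s(I)$, and similarly for the weighted invariants with the weight on the $I$-side a controlled modification of $w$. One then invokes the uniform bound for the minimal principal series from Theorem~\ref{thm.main1}; the case $w=\1$ yields \eqref{sgapu}. The stabilization principle (Proposition~\ref{Prop stabilization 1}) keeps the intermediate comparisons efficient.

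The crux is that this restriction argument presupposes that the ambient $I$ has comparable growth to $V$, which is false in general: a ``small'' representation — already any non-tempered member of the unitary dual, such as a complementary series or a unitary $A_{\qf}(\lambda)$ — embeds only in a minimal principal series $I=I(\sigma,\nu)$ with $\nu\neq 0$, whose matrix coefficients grow and for which $\No(I,w)$ may well be empty, so $p_{\rm max}^{\,I}$ is not even defined as a norm. What is needed instead is a genuinely \emph{relative} comparison inside $I$: the matrix coefficients of $I$ pairing into the small quotient $\tilde V$ of $\tilde I$ remain bounded even though generic matrix coefficients of $I$ are not, and one must estimate $p_{\rm max}^{\,V}$ against $(p_{\rm min}^{\,V})_s$ through this sub-object with $s$ independent of $(\sigma,\nu)$. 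Equivalently, one must show that the non-compactness of the relevant parameter families (infinitesimal characters, Langlands data) is absorbed into the growth invariant — exactly the mechanism already verified for the discrete series and the minimal principal series in Theorem~\ref{thm.main1}. An alternative, essentially equivalent route is to bound the order of Sobolev domination directly from the asymptotic $N$-expansions of matrix coefficients in the style of Casselman--Milicic, controlling it through the combinatorics of the leading exponents; here too the obstacle is not any individual representation but the uniformity as the exponents range over the whole unitary dual, which is where the deepest difficulties of the unitarizability problem reappear.
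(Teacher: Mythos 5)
This statement is presented in the paper as a \emph{conjecture} (Conjecture~\ref{Finiteness conjecture}), not a theorem: the paper does not prove it, and Theorems~\ref{thm.main1}(\ref{thm.main1 - item 1})--(\ref{thm.main1 - item 2}) are offered precisely as partial evidence. Your proposal therefore cannot be ``compared to the paper's proof'' because there is none; what I can assess is whether your outline in fact closes the gap, and it does not.

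You yourself flag one genuine obstruction in the final paragraph (the ambient principal series $I$ into which $V$ embeds may have $\No(I,w)=\emptyset$, so that $p_{\rm max}^{\,I}$ is simply not defined), and this diagnosis is exactly right and is exactly why the conjecture remains open. But there is a second gap, earlier in the argument, that you gloss over: the claim that a closed embedding $V\hookrightarrow I$ gives $p_{\rm max}^{\,V}\sim p_{\rm max}^{\,I}|_V$ by ``dualizing the induced quotient.'' The paper's Lemma~\ref{prop embed} proves only the easy half, $[p_{\min}^{\,V}]=[q_{\min}^{\,U}|_V]$; the corresponding statement for $p_{\max}$ is \emph{not} automatic, because the maximal-norm unit ball $B_{\max}=\co(G\cdot D)$ is a closed convex hull taken in the ambient distribution space, and the hull computed inside $V$ need not agree (up to constants) with the intersection of the ambient hull with $V$. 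The paper makes this point explicitly: the inequality $s(V,w)\leq s(U,w)$ is established only when $V$ is a direct summand or when a norm $q\in\No(U,w)$ exists with $p_{\max}^{\,V}\lesssim q|_V$, and it lists the general submodule comparison as an open problem (``Sobolev gap for submodules'') and ties it to the norm-extension problem, which is Problem~(\ref{prb1}) in the introduction. Exactness of the Casselman--Wallach or minimal/maximal globalization functors gives topological isomorphisms of Fr\'echet or DNF completions, but that is a statement about topologies, not about equivalence classes of single norms with controlled Sobolev shift, so it does not substitute for the missing norm-level comparison. The same difficulty re-enters your composition-series reduction: a short exact sequence $0\to V'\to V\to V''\to 0$ does \emph{not} obviously control $p_{\max}^{\,V}$ in terms of $p_{\max}^{\,V'}$ and a lift of $p_{\max}^{\,V''}$ without first solving the extension problem for norms. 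In short: the conjecture is genuinely open, your reduction strategy founders on (i) the unresolved submodule comparison for $p_{\max}$ and (ii) the weight mismatch you already identify, and both obstructions are ones the paper itself singles out in Subsections~\ref{sec. open problems} and the discussion around Lemma~\ref{prop embed}.
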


Notice that for $G$ of real rank one Theorem \ref{thm.main1} implies \eqref{sgapu}.

We move on to describe our explicit results for the group $G=\SL(2,\R)$. Our first result implies the Uniform Finiteness Conjecture for $\SL(2,\R)$ in the strongest possible way.

\begin{introtheorem}[See Theorem \ref{thm ultimate}]\label{intothm.Thm C}
Let $V$ be a non-trivial unitarizable irreducible Harish-Chandra modules of $\SL(2,\R)$. 
Then
$$
s(V)
=1.
$$ 
\end{introtheorem}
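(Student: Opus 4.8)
The plan is to prove the two inequalities $s(V)\le 1$ and $s(V)\ge 1$ separately, exploiting the very explicit description of the unitary dual of $\SL(2,\R)$: the (spherical and nonspherical) unitary principal series, the complementary series, the discrete series $D_n^{\pm}$ with $n\ge 1$, and the limits of discrete series. For each such $V$ one has a concrete model — either on $L^2$ of the circle $S^1=\operatorname{SO}(2)\bs \SL(2,\R)$ (realized via induction from the Borel, compact picture) or on a line bundle over $S^1$ — in which the $K=\operatorname{SO}(2)$-types are one-dimensional, indexed by an arithmetic progression of integers, and the $K$-Laplacian acts on the weight-$m$ vector by a scalar comparable to $m^2$. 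In all of these models the natural unitary norm $q$ is $K$-invariant and Hermitian, so Proposition \ref{Prop stabilization 1} applies: it suffices to compute, for this single well-chosen $q$, the infimal $s$ for which $p_{\max}\lesssim q_s$, since then $[p_{\max}]=[(q_s)^G]$ and the minimal norm is handled dually.

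For the upper bound $s(V)\le 1$, I would argue that $p_{\max}\lesssim q_s$ for every $s>1$, uniformly enough to take the infimum. Concretely, $p_{\max}$ is (up to equivalence) the dual of $p_{\min}$ of the dual module, and $p_{\min}$ is implemented by a sup of a matrix coefficient; dualizing, $p_{\max}$ is comparable to a norm of the form $\sup_{g}\|\,\cdot\,g\|$ against an $L^1$-type pairing, which on the circle model is controlled by an $\ell^1$-type (Besov $B^{0}_{1,\infty}$-flavored) norm on Fourier coefficients. The embedding $B^{s}_{2,2}=H^s \hookrightarrow B^0_{1,\infty}$ on the one-dimensional manifold $S^1$ holds precisely for $s>\tfrac12$ — but one must be careful: the relevant dimension of $X=H\bs G$ here is not $1$ but effectively $2$ (the cone, or the "$a\mathfrak n$" directions), and the sharp Sobolev threshold that emerges is $s=1$, not $\tfrac12$. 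This matches the Bernstein–Reznikov value $s>\tfrac12$ for the circle model of the bounded norm but doubles when one passes from $[p_{\max}]$ measured against the circle to $[p_{\min}]$ and back. I would make this precise by an explicit Fourier-coefficient computation in each family: for principal and complementary series the $K$-type coefficients of a $p_{\min}$-unit vector decay like $m^{-1/2}$ (edge of $\ell^2$), and pairing against them costs one more half-derivative; for discrete series $D_n$ the coefficients are supported on $m\ge n$ and one gets a genuinely $n$-independent estimate, which is where the uniformity over the whole dual — not just $s(V)=1$ pointwise — comes from.

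For the lower bound $s(V)\ge 1$, I would exhibit, in each family, an explicit sequence of $K$-finite vectors $v_N$ (supported on $K$-types of weight $\le N$) witnessing that $p_{\max}(v_N)/p_{\min,s}(v_N)\to\infty$ for every $s<1$. The natural candidates are truncated "delta-like" vectors: $v_N=\sum_{|m|\le N} a_m e_m$ with coefficients $a_m$ tuned so that $p_{\min}(v_N)\sim 1$ while $p_{\max}(v_N)$ — a dual/sup norm — is as large as possible, i.e. $\sim N^{1/2}$ after optimizing, whereas $p_{\min,s}(v_N)\sim N^{s}$; taking $s<1$ the ratio $N^{1/2}\cdot N^{-s}$ does go to $0$, so in fact I expect the sharp witness to need the correct pairing and the ratio to be $N^{1-s}$ up to logarithms, forcing $s(V)\ge 1$. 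The content of the lower bound is thus a lower estimate on $p_{\max}$ (equivalently, on the operator norm of $g\mapsto g\cdot v$ over $g\in G$ in the $p_{\min}$-geometry), which reduces to estimating matrix coefficients $m_{v_N,\tilde v_N}(g)$ from below along a suitable one-parameter subgroup $a_t$, a standard asymptotics-of-matrix-coefficients computation.

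The main obstacle I anticipate is getting the \emph{uniformity} across the families — in particular making the constants in $p_{\max}\lesssim q_s$ independent of the parameter (the spectral parameter $\nu$ for principal/complementary series, the integer $n$ for discrete series) so that the single value $s(V)=1$ is attained with no degeneration as one approaches the endpoints of the complementary series or as $n\to\infty$; this is exactly the point where one must invoke, or re-prove by hand in the $\SL(2,\R)$ model, the uniform discrete-series and principal-series bounds of Theorem \ref{thm.main1}, and check that the stabilization Proposition \ref{Prop stabilization 1} can be applied with parameter-independent implied constants. The rest — the Fourier-analytic Sobolev estimates on $S^1$ and the matrix-coefficient asymptotics — is routine but must be done case by case because the limits of discrete series and the nonspherical principal series have slightly different $K$-type supports.
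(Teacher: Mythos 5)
Your two-inequality structure is correct, and your route to the lower bound $s(V)\ge 1$ (explicit matrix-coefficient asymptotics along $a_t$) is essentially the paper's, though simpler than you think: you do not need truncated ``delta-like'' superpositions, because a single $K$-type suffices. Proposition~\ref{thm upper-lower}(\ref{thm upper-lower - item 2}) gives $p_{\max}(e_n)\gtrsim \sqrt{n}\,/\,(\log n)^{\varepsilon(V)}$ while $p_{\min,s}(e_n)\lesssim n^{s-1/2}\log n$, so the ratio $p_{\max}(e_n)/p_{\min,s}(e_n)\gtrsim n^{1-s}/(\log n)^{2}$ is already unbounded for every $s<1$ without any optimization over a window $|m|\le N$.

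Your upper bound, however, has a genuine gap and a numerical muddle. You write that one should argue $p_{\max}\lesssim q_s$ ``for every $s>1$'', but this would only give $d_\to([p_{\max}],[q])\le 1$, and combined with the dual estimate would yield $s(V)\le 2$, not $\le 1$. The correct threshold is $p_{\max}\lesssim q_s$ for every $s>\tfrac12$; what doubles it to $s(V)=1$ is not an ``effective dimension~2'' Besov count but the pseudometric/duality structure: by Proposition~\ref{prop unitary}, unitarity of $q$ forces $d([q],[p_{\min}])=d([q],[p_{\max}])$, hence $s(V)\le 2\,d([q],[p_{\max}])$, and so it suffices to prove $d([q],[p_{\max}])\le \tfrac12$. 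You do not invoke this reduction, and without it the arithmetic doesn't close.

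The hard and genuinely missing ingredient is the proof of $p_{\max}\lesssim q_{1/2+\epsilon}$. Appealing to $H^s\hookrightarrow B^0_{1,\infty}$ on $S^1$ is too coarse to control the \emph{maximal} norm, which is a closed convex hull of a $G$-orbit in the distribution topology, not an intrinsically Fourier-analytic object on the circle. The paper instead constructs explicit test measures: Proposition~\ref{thm estimate convex combination} produces, for almost all $\epsilon>0$, a complex Borel measure $\beta_\epsilon$ on $\overline{A^+}$ of total variation $1$ with $\int_{\overline{A^+}}\la a\cdot e_m,e_n\ra\,d\beta_\epsilon(a)\asymp |n|^{-(1/2+\epsilon)}$. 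One then pushes forward $f\,dk_1\otimes\beta_\epsilon\otimes f\,dk_2$ under $K\times\overline{A^+}\times K\to G$, with $f$ chosen so that its Fourier coefficients recover an arbitrary $\ell^2$-bounded sequence; this exhibits $c\sum_n a_n|n|^{-(1/2+\epsilon)}e_n$ as lying in $B_{\max}=\overline{\mathrm{co}}(G\cdot e_m)$. Establishing the asymptotics of $I_{n,\epsilon}$ is itself nontrivial: it is done case by case via Bargmann's hypergeometric formulas (Section~\ref{Sec9: key estimates}), with separate delicate analyses for unitary principal, complementary, and discrete series. Nothing in your sketch replaces this.

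Finally, the concern about uniformity over the dual is misplaced for this theorem: $s(V)=1$ is a statement for each fixed $V$, and the constants are allowed to depend on $V$. Uniformity is the content of the separate Conjecture~\ref{conj uniform finiteness} and is not needed here.
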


We remark that while a uniform result for unitary principal series was expected, the fact that there is no deviation for discrete series and complementary series was surprising for us. For $G$ having real rank one we expect an analogous result.

\subsection{The pseudo-metric space of norms}
As mentioned above
$\Noc(V)$ the set of equivalence classes of isometric norms on $V$ has a natural partial order and this poset has unique minimal and maximal elements denoted $[p_{\rm min}]$ and $[p_{\rm max}]$.

The definition of the Sobolev gap above motivates to introduce a pseudometric
on $\Noc(V)$: For $[p], [q]\in \Noc(V)$ we set
$$
d_\to ([p], [q])
=\inf\{ s\geq 0\mid p \lesssim q_s\}
$$
and declare a pseudometric by
$$
d([p], [q])
=\max\{ d_\to ([p], [q]), d_\to ([q], [p])\}.
$$
Observe that $s(V)=d([p_{\rm min}], [p_{\rm max}])$ is the diameter of our pseudometric space 
$\Noc(V)$.

Suppose now that $V$ is unitarizable and let $[q]$ be the equivalence class of the unitary norm. Then the distance of certain norms $[p]\in \Noc(V)$ to $[q]$ seems to be of particular interest and for $G=\SL(2,\R)$ we could verify that the unitary norm $[q]$ sits in the "middle" of $(\Noc(V),d)$, that is (See Theorem \ref{thm ultimate} for a stronger statement):

\begin{introprop}\label{prop intro}
Let $V$ be a non-trivial unitarizable Harish-Chandra module for $\SL(2,\R).$ Let $[q]$ be the class of the unitary norm in $\Noc(V)$. Then
$$
d([q], [p_{\rm min}])
=d([q], [p_{\rm max}])
=\frac{1}{2} s(V)
=\frac{1}{2}.
$$
\end{introprop}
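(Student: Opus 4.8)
The plan is to deduce the statement from Theorem~\ref{intothm.Thm C} (which gives $s(V)=1$, hence $d([p_{\rm min}],[p_{\rm max}])=s(V)=1$) by combining two soft inputs — the triangle inequality for $d$ and the duality involution of $\Noc(V)$ — so as to reduce the whole assertion to a single explicit Sobolev domination, which is where the actual work lies.

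First I would exploit the triangle inequality. Since $p_{\rm min}$ and $p_{\rm max}$ are the least and greatest elements of $\Noc(V)$ and the unitary norm $q$ lies in $\No(V)$, one has $p_{\rm min}\lesssim q\lesssim p_{\rm max}$, so $d_\to([p_{\rm min}],[q])=d_\to([q],[p_{\rm max}])=0$, and therefore $d([q],[p_{\rm min}])=d_\to([q],[p_{\rm min}])=:b$ and $d([q],[p_{\rm max}])=d_\to([p_{\rm max}],[q])=:a$. Because composing the Sobolev operation of order $s$ with that of order $t$ yields the one of order $s+t$, the functional $d_\to$ is sub-additive (this is exactly the triangle inequality for $d$), so
$$
1=s(V)=d_\to([p_{\rm max}],[p_{\rm min}])\le d_\to([p_{\rm max}],[q])+d_\to([q],[p_{\rm min}])=a+b.
$$
Hence it will suffice to prove $a\le\tfrac12$ and $b\le\tfrac12$: combined with $a+b\ge 1$ this forces $a+b=1$ and $a=b=\tfrac12=\tfrac12 s(V)$.

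Next I would halve this task using duality. Passing to the dual norm is an order-reversing bijection $*\colon\Noc(V)\to\Noc(\tilde V)$ which is compatible with the Sobolev construction — since the $K$-Laplacian is symmetric for the canonical pairing $\tilde V\times V\to\C$ one gets $(p_s)^*\sim(p^*)_{-s}$ — and consequently $d_\to^{V}([x],[y])=d_\to^{\tilde V}([y^*],[x^*])$. By the description of the extremal norms recalled in the introduction, $*$ carries $[p_{\rm min}]$ to the maximal class of $\tilde V$, and by uniqueness up to scalar of the invariant Hermitian form it carries $[q]$ to the unitary class of $\tilde V$; thus $a(V)=b(\tilde V)$ and $b(V)=a(\tilde V)$. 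For $G=\SL(2,\R)$ the module $\tilde V$ is isomorphic to $V$ for the unitary and complementary principal series, and to the image of $V$ under the outer automorphism for the discrete series and their limits; as the outer automorphism induces an isometry of $\Noc$ preserving the minimal, maximal and unitary classes, we obtain $a(\tilde V)=a(V)$ and $b(\tilde V)=b(V)$, whence $a=b$. Together with the previous paragraph, it now remains only to prove the single inequality $a=b\le\tfrac12$, i.e.
$$
p_{\rm max}\lesssim q_s\qquad\text{for every }s>\tfrac12.
$$

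The last step — establishing this critical domination uniformly over the unitary dual — is the one I expect to be the main obstacle, and the exponent $\tfrac12=\tfrac12\dim(K/M)$ for $K=\SO(2)$ reflects the critical Besov/Sobolev exponent of the circle. For a unitary principal series realized on $C^\infty(K/M)$ with $q$ the $L^2$-norm one checks on $K$-types that $p_{\rm min}(e_n)\sim|n|^{-1/2}$ and (dually) $p_{\rm max}(e_n)\sim|n|^{1/2}$, while $q(e_n)=1$, so that $[q]$ lies ``halfway'' between $[p_{\rm min}]$ and $[p_{\rm max}]$; the inequality $p_{\rm max}\lesssim q_s$ then amounts to a critical embedding $B^{1/2}(S^1)\hookrightarrow q_s$ on the circle, valid for $s>\tfrac12$ (dually, $q\lesssim(p_{\rm min})_s$ is the corresponding endpoint embedding), and the precise identification of the extremal classes with Besov classes on $K/M$ is the delicate point. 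For the discrete series and the complementary series the circle model is not directly available for the extremal norms, and one must instead work through the realization of $V$ by its matrix coefficients on $G$; proving that $[p_{\rm min}]$ and $[p_{\rm max}]$ sit at Sobolev distance exactly $\tfrac12$ on either side of $[q]$ — with, in particular, no deviation for the discrete series — is the substantive content, carried out in full in Theorem~\ref{thm ultimate}. Granting that, the two earlier steps yield $d([q],[p_{\rm min}])=d([q],[p_{\rm max}])=\tfrac12=\tfrac12 s(V)$, as claimed.
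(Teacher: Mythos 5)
Your proof is correct, and the overall architecture — reduce everything via $d([q],[p_{\rm min}])=d([q],[p_{\rm max}])$ and the triangle inequality to the single critical domination $p_{\rm max}\lesssim q_{s}$ for $s>\tfrac12$ — is the same as the paper's. The triangle-inequality step $s(V)\le a+b$ is exactly what underlies the ``in particular $s(V)\le 2d([q],[p_{\rm max}])$'' of Proposition~\ref{prop unitary}, so no genuine difference there. The one place where your route deviates is the duality argument for $a=b$. You go through the contragredient $\tilde V$ and then invoke $\tilde V\simeq V$ for the unitary/complementary series and $\tilde V\simeq\theta(V)$ under the outer automorphism for the (limits of) discrete series, followed by the observation that $\theta$ normalizes $K$ and hence preserves the Sobolev structure. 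That is valid for $\SL(2,\R)$, but it requires a case-by-case check of what $\tilde V$ is and an extra ad hoc fact about the outer automorphism. The paper's Proposition~\ref{prop unitary} instead uses the Riesz identification $\overline{V_{q}}\xrightarrow{\ \sim\ }V_{q}'$ for a unitary norm $q$: dualizing lands you in the complex conjugate $\overline{V}$ rather than $\tilde V$, where minimal/maximal/unitary classes and the Sobolev scale are carried over tautologically, and $a=b$ drops out with no case analysis and for arbitrary real reductive $G$. So the paper's route is both shorter and more general; yours buys nothing here and costs a case distinction. Lastly, the heuristic $p_{\rm min}(e_{n})\sim|n|^{-1/2}$, $p_{\rm max}(e_{n})\sim|n|^{1/2}$ in your final paragraph is slightly too optimistic outside the discrete series: Proposition~\ref{thm upper-lower} shows a $\log$-correction in these bounds for the principal and complementary series, which is precisely why the endpoint $s=\tfrac12$ fails and only $s>\tfrac12$ works. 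Since you defer the hard estimate to Theorem~\ref{thm ultimate} anyway, this doesn't affect the validity of your argument, but the sharper statement is worth keeping in mind.
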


In this regard, we record that the inequality $d([q], [p_{\rm max}])\leq \frac{1}{2}$ was mentioned in \cite[Appendix A.2]{BR2}. 
This innocent looking result is in fact quite powerful as it yields an abstract convexity bound for isometric norms as a corollary; see Theorem \ref{thm.abst. conv. bnds} in the main text: 

\begin{introcor}[Abstract Convexity Bound]\label{introthm.abst. conv. bnds}
Let $V$ be an irreducible unitarizable Harish-Chandra module for $\SL(2,\R).$ Let $q$ be a unitary norm on $V$, we let $S=\Spec_K(V)\subset \hat K=\Z$ and $(e_n)_{n\in S}$ an orthonormal basis consisting of $K$-types. Let $p$ be any isometric  $G$-continuous norm on $V$ and $\epsilon>0$. Then there exists a constant $C_\e>0$, such $p\leq C_\e\, q_{\frac{1}{2}+\epsilon}$. Moreover, there exists a constant $C>0$ such that  
$$
p(e_n)
\leq C(1+|n|)^{\frac{1}{2}}\qquad (n \in S(V)).
$$
\end{introcor}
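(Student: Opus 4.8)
The plan is to deduce Corollary~\ref{introthm.abst. conv. bnds} directly from Proposition~\ref{prop intro}, which (once proved, or granted here) gives $d([q],[p_{\max}]) = \tfrac12$. First I would unwind the definition of the directed pseudometric $d_\to$: since $p$ is an isometric $G$-continuous norm on the irreducible module $V$, it lies in $\No(V)$, hence $p \lesssim p_{\max}$, so $[p] \le [p_{\max}]$ in the poset $\Noc(V)$. Consequently $d_\to([p],[q]) \le d_\to([p_{\max}],[q]) \le d([q],[p_{\max}]) = \tfrac12$. By the very definition $d_\to([p],[q]) = \inf\{s \ge 0 \mid p \lesssim q_s\}$, this means that for every $\epsilon > 0$ we have $p \lesssim q_{\frac12 + \epsilon}$, i.e. there is a constant $C_\epsilon > 0$ with $p \le C_\epsilon\, q_{\frac12 + \epsilon}$. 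This is the first assertion.

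For the pointwise bound on $K$-types, I would evaluate this inequality on the orthonormal basis $(e_n)_{n \in S}$ of $K$-types. The Sobolev norms $q_s$ are defined algebraically via the $K$-Laplacian, so $q_s(e_n) = (1 + \lambda_n)^{s/2}\, q(e_n) = (1+\lambda_n)^{s/2}$, where $\lambda_n$ is the eigenvalue of the $K$-Laplacian on the $K$-type $e_n$; for $\hat K = \Z$ one has $\lambda_n \asymp n^2$, so $q_s(e_n) \asymp (1+|n|)^s$. Plugging $s = \tfrac12 + \epsilon$ into the first part gives $p(e_n) \le C_\epsilon (1+|n|)^{\frac12 + \epsilon}$ for each $\epsilon$, which is already close but has the spurious $\epsilon$. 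To remove it and obtain the clean exponent $\tfrac12$, I would instead argue at the endpoint directly: stabilization (Proposition~\ref{Prop stabilization 1}) together with $s(V) = 1$ shows $[p_{\max}] = [(q_s)^G]$ for every $s > s(V) = 1$, but more to the point, the sharp statement $d([q],[p_{\max}]) = \tfrac12$ should be upgraded to the endpoint assertion $p_{\max} \lesssim q_{1/2}$ itself (not merely $q_{1/2+\epsilon}$) — this is exactly the kind of "stronger statement" promised in Theorem~\ref{thm ultimate}, and it is where the explicit $\SL(2,\R)$ computation, realizing $[p_{\max}]$ by a Besov-type norm on the circle, pays off. Granting $p \le p_{\max,\mathrm{iso}} \lesssim q_{1/2}$, evaluation on $e_n$ yields $p(e_n) \le C(1+|n|)^{1/2}$ directly.

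The main obstacle is the endpoint: the soft poset/pseudometric argument only ever delivers $q_{1/2+\epsilon}$, since $d_\to$ is defined with an infimum and need not be attained. Getting the clean exponent $\tfrac12$ requires knowing that the infimum defining $d_\to([p_{\max}],[q])$ is actually a minimum — equivalently, that $p_{\max} \lesssim q_{1/2}$ holds at $s = \tfrac12$ and not just in the limit. For $\SL(2,\R)$ this is genuine content, resting on the identification of the maximal norm with a concrete function-space norm (a Besov norm $B^{1/2}_{\infty,\infty}$ or similar on $\Sphere^1$) for which the dual/endpoint Sobolev embedding is an equality up to constants rather than an $\epsilon$-loss; I would quote this from the proof of Theorem~\ref{thm ultimate}. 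Everything else — the poset inequalities, the computation of $q_s(e_n)$, and the passage $\hat K = \Z \Rightarrow \lambda_n \asymp n^2$ — is routine bookkeeping.
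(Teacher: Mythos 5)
Your proof of the first assertion ($p \lesssim q_{\frac{1}{2}+\epsilon}$) is correct and matches the paper: it follows from $p \lesssim p_{\max}$ and $d([q],[p_{\max}]) = \tfrac12$, which by definition of $d_\to$ gives exactly $p_{\max} \lesssim q_{\frac12+\epsilon}$ for every $\epsilon > 0$.

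The second assertion is where there is a genuine gap. You correctly identify that the soft pseudometric argument cannot remove the $\epsilon$, and that some endpoint information is needed. But the endpoint input you propose — upgrading $d([q],[p_{\max}]) = \tfrac12$ to the norm domination $p_{\max} \lesssim q_{1/2}$, possibly via a Besov-norm realization of $[p_{\max}]$ — is \emph{not} established anywhere in the paper, and is likely false. Theorem~\ref{thm ultimate} only gives $[p_{\max}] = [q_s^G]$ for $s > \tfrac12$ (not $s = \tfrac12$, and incidentally not $s > 1$ as you wrote), and the Besov space $B^{1/2}_{\infty,\infty}$ on the circle does not embed into $H^{1/2}$, so the remark about Besov norms offers no route to $p_{\max} \lesssim q_{1/2}$. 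The paper sidesteps this entirely: it invokes Proposition~\ref{thm upper-lower}(\ref{thm upper-lower - item 2}), which gives the \emph{individual $K$-type} bound $p_{\max}(e_n) \leq C\sqrt{1+|n|}$ directly, proved by explicit matrix-coefficient estimates (and by duality with part (i)). This is strictly weaker than $p_{\max} \lesssim q_{1/2}$ — it controls only one $K$-type at a time, not general vectors — but it is exactly what the second assertion requires, since $p(e_n) \leq p_{\max}(e_n)$. You need to quote that pointwise $K$-type bound rather than the (unavailable) endpoint Sobolev domination.
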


Below, see Theorem \ref{introthm: theorem automorphic}, we exploit this bound providing an application to harmonic analysis on homogeneous spaces and in particular to the theory of automorphic forms. 

Regarding the topology of the pseudometric space $(\Noc(V), d)$, our theory suggests that its metric quotient is a contractible space; see\eqref{Section contract} below.

\subsection{Applications to automorphic forms}\label{subsec. subconvexity} We continue with our discussion of homogeneous spaces and let now $H=\Gamma$ be a lattice and $X=\Gamma\bs G$. The $\Gamma$-fixed functionals $\eta$ are referred to as automorphic functionals.
We will assume that $(V,\eta)$ is $(X,\infty)$-bounded (see Subsection \ref{subsec: natural norms and X bnd}), drop $\eta$ from the notation, and define the automorphic sup-norm as $p_{\rm aut}=p_{\eta,X,\infty}$, i.e.
$$
p_{\rm aut}(v)
=\|m_{v,\eta}\|_{L^\infty(X)}
\qquad (v\in V^\infty).
$$
The importance of the sup-norm in modern number theory was first explicated in the influential paper \cite{IS}. These norms were studied extensively in different aspects and we mention
\cite{BH}, \cite{BHMM} and the references therein. 

Notice that $(X,\infty)$-boundedness for $(V,\eta)$ is automatic if $X$ is compact.
For the remainder of this section we let $G=\SL(2,\R)$. 
For cocompact lattices the main result of Bernstein-Reznikov \cite{BR} on Sobolev-regularity of $\eta$ states that $p_{\rm aut}\lesssim q_s$ if and only if $s>\frac{1}{2}$. In our terminology, this reads $d([p_{\rm aut}], [q])=\frac{1}{2}$. 
Recall that 
according to Proposition \ref{prop intro} we have $d([q], [p_{\max}])=\frac{1}{2}$. 

This would suggest that $[p_{\rm max}]$ is close to $[p_{\rm aut}]$. But this expectation is wrong: 
We deduce from the work of Reznikov \cite{R} that actually
$$
d([p_{\rm aut}], [p_{\max}])
\geq \frac{1}{6}\,.
$$
In fact, it is likely that their distance should be a half as well.
Together, these facts indicate
that $(\Noc(V), d)$ shows the features of a positively curved space.

\par We move on to non-cocompact lattice $\Gamma$ and $\eta$ cuspidal in the sense that it yields an embedding of $V$ into the space $L_{\rm cusp}^2(X)$ of cuspidal automorphic forms on $X$. As cuspidal automorphic forms decay at infinity, the supremum norm $p_{\rm aut}$ is 
defined as well. Our abstract convexity bound in Corollary \ref{introthm.abst. conv. bnds} extends \cite{BR} to general lattices. We complete the literature with the following result (see Theorem~\ref{thm: key automorphic} in the body of the paper):

\begin{introtheorem}\label{introthm: theorem automorphic}
Let $\Gamma$ be a lattice in $G=\SL(2,\R)$ and $\eta:V^\infty \to \C$ a $\Gamma$-invariant (automorphic) functional on some unitarizable Harish-Chandra module $V$ with $K$-spectrum $S=S(V)\subset \Z$. Let $(e_n)_{n \in S}$ be an orthonormal basis of $K$-types and $\epsilon >0$.  Then for every $\epsilon >0$ there exist a constant $C_\e$ such that the following assertions hold: 
\begin{enumerate}[(i)]
\item Suppose that $\eta$ is cuspidal. Then $p_{\rm aut}\leq C_\e\,  q_{\frac{1}{2}+\epsilon}$. Moreover, there exists a constant $C>0$ such that 
$$
|\eta(e_n)|
\leq C (|n|+1)^{\frac{1}{2}}
\qquad (n\in S).
$$
\item Suppose that $\eta$ gives a realization in $L^r(X)$ for some $1\leq r \leq \infty$, then 
$$
\|m_{v,\eta}\|_{L^r(X)}
\leq C_\e\,  q_{\frac{1}{2}+\epsilon}(v)
\qquad (v\in V^\infty).
$$
\end{enumerate}
\end{introtheorem}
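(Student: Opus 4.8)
The plan is to derive Theorem~\ref{introthm: theorem automorphic} from the Abstract Convexity Bound (Corollary~\ref{introthm.abst. conv. bnds}) by showing that the relevant $L^r$-norms, in particular the automorphic sup-norm $p_{\rm aut}$ in the cuspidal case, are \emph{isometric $G$-continuous norms} on $V$, and hence dominated by $q_{\frac12+\epsilon}$ up to a constant.

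\textbf{Step 1: Reduce to $(X,r)$-boundedness.} First I would verify that when $\eta$ is cuspidal, the functional is $(X,r)$-bounded for every $1\le r\le\infty$; for $r=\infty$ this is because cuspidal automorphic forms decay rapidly in the cusps (Dirichlet-type estimates / the fundamental domain has finitely many cusps and in each cusp a cuspidal matrix coefficient decays faster than any polynomial in the horocyclic coordinate), so $m_{v,\eta}\in L^\infty(X)$; and $L^\infty\cap$ (rapid decay) $\subset L^r$ for all $r$. In particular $p_{\rm aut}=p_{\eta,X,\infty}$ and $p_{\eta,X,r}$ are well-defined isometric $G$-continuous norms on $V^\infty$ by the discussion in Subsection~\ref{subsec: natural norms and X bnd} (the point being $G$-invariance of the measure on $X$, which forces $w_{p_{\eta,X,r}}=\1$, i.e. these norms lie in $\No(V)$). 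For part (ii), the hypothesis that $\eta$ gives a realization in $L^r(X)$ is precisely $(X,r)$-boundedness, so $p_{\eta,X,r}\in\No(V)$ directly.

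\textbf{Step 2: Apply the convexity bound.} Since $p_{\rm aut}$ (resp. $p_{\eta,X,r}$) is an isometric $G$-continuous norm on the unitarizable irreducible $V$, Corollary~\ref{introthm.abst. conv. bnds} applies verbatim: for every $\epsilon>0$ there is $C_\epsilon>0$ with $p_{\rm aut}\le C_\epsilon\,q_{\frac12+\epsilon}$, which is the first assertion of (i) and all of (ii). The pointwise bound $|\eta(e_n)|\le C(|n|+1)^{1/2}$ follows because $|\eta(e_n)|=|m_{e_n,\eta}(\Gamma e)|\le p_{\rm aut}(e_n)\le C(1+|n|)^{1/2}$ by the ``moreover'' part of Corollary~\ref{introthm.abst. conv. bnds}, using that $q(e_n)=1$ for an orthonormal $K$-type basis.

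\textbf{Step 3: Handle irreducibility/semisimplicity issues and the non-irreducible automorphic input.} Corollary~\ref{introthm.abst. conv. bnds} is stated for irreducible $V$; if the theorem is to be read for a general unitarizable $V$ with a $\Gamma$-fixed functional, I would decompose $V$ into irreducibles and note the estimates are inherited (with the $K$-type basis respecting the decomposition, or just quoting that $s(V)=1$ uniformly from Theorem~\ref{intothm.Thm C}). I expect \textbf{the main obstacle to be Step~1 in the non-cocompact cuspidal case}: one must show $p_{\rm aut}$ is actually $G$-continuous with trivial weight, i.e. that $\sup_{v}\frac{p_{\rm aut}(g\cdot v)}{p_{\rm aut}(v)}$ stays bounded (which is clear from $G$-invariance of the measure and of the sup once finiteness is known) \emph{and} that it is finite at all, which requires the uniform rapid decay of cuspidal matrix coefficients in the cusps — this is where the cuspidality hypothesis is essential and where a genuine (if standard) estimate on reduction theory for $\SL(2,\R)$ enters. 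Everything after that is a formal consequence of the abstract convexity bound.
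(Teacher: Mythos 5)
Your proposal is correct and follows essentially the same route as the paper: the paper presents Theorem~\ref{thm: key automorphic} as a direct consequence of Theorem~\ref{thm.abst. conv. bnds}, with the $G$-continuity and finiteness of $p_{\eta,X,r}$ (including $r=\infty$ in the cuspidal case, where Langlands' lemma / rapid cuspidal decay and Dixmier--Malliavin enter as you indicate) supplied by the discussion in Section~\ref{Sec: SR}, and the individual $K$-type bound coming from the ``moreover'' part of the corollary exactly as you use it. The only thing worth flagging is that the abstract convexity bound is stated for \emph{irreducible} $V$ and your Step~3 correctly notices that the paper's statement does not say ``irreducible''; the intended reading of the theorem (consistent with Subsection~\ref{subsec: natural norms and X bnd}) is that $V$ is irreducible, which avoids the multiplicity issue in choosing the basis $(e_n)_{n\in S}$.
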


Note that unitary Eisenstein series satisfy the assumption of the second item for $r>2$.

In this article we also provide a tight abstract convexity bound for unitary principal series for any real reductive group. By that we mean a domination of the maximal norm $p_{\max}$ by a Sobolev norm $q_s$ of the unitary norm $q$ on $K$-types with Sobolev order $s$ optimal; see Corollary \ref{Cor mps convexity}. 

Likewise, Bernstein and Reznikov provide in \cite{BR} with the method of relative traces a tight domination of $p_{\aut}$ by a Sobolev norm $q_s$ in case $\Gamma$ is cocompact. Interestingly, these bounds coincide for the Lorentz groups. In addition we can drop the assumption that $\Gamma$ is cocompact.  We replicate Theorem \ref{thm Lorentz} from the main text:

\begin{introtheorem} Let $V$ be a Harish-Chandra  module of the unitary principal series for a reductive group $G$ with $\gf=\so(1,n)$, $n\geq 2$, and let $q$ be a unitary norm on $V$. Let 
$V=\bigoplus_{\tau\in \hat K}V[\tau]$ be the isotypical decomposition of $V$ into $K$-types and identify $\tau$ with its highest weight. Let $\Gamma<G$ be a lattice and $\eta\in (V^{-\infty})^\Gamma$ be a non-zero automorphic functional. Assume that either $\Gamma$ is cocompact or $\eta$ is cuspidal. Then there exists a constant $C>0$ such that 
\begin{equation} \label{eq aut dom intro}
p_{\rm aut}(v_\tau)
\leq C ( 1+|\tau|)^{\frac{n-1}{2}} q(v_\tau)
\qquad (\tau \in\hat K,  v_\tau\in V[\tau]).
\end{equation} 
Moreover, one has the Sobolev domination 
\begin{equation} \label{eq Sob dom intro}
p_{\rm aut}
\lesssim q_s
\qquad \big(s>\tfrac{n}{2}\big).
\end{equation}
\end{introtheorem}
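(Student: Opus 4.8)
The plan is to route the automorphic sup-norm through the maximal norm $p_{\max}$ and then invoke the tight abstract convexity bound for the minimal principal series together with a branching count for $\SO(n)\downarrow\SO(n-1)$.

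\textbf{Step 1: $p_{\rm aut}$ is an isometric norm in $\No(V)$.} Since the supremum norm on $L^\infty(X)$ is right-$G$-invariant and, for $v\in V^\infty$ and $g\in G$, the function $m_{g\cdot v,\eta}$ is the right $g$-translate of $m_{v,\eta}$, one has $p_{\rm aut}(g\cdot v)=p_{\rm aut}(v)$ for all $g\in G$; thus $p_{\rm aut}$ is $G$-invariant, its growth function equals $\1$, and $p_{\rm aut}\in\No(V)$ as soon as it is finite on $V^\infty$, i.e. as soon as $(V,\eta)$ is $(X,\infty)$-bounded. If $\Gamma$ is cocompact this is automatic. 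If $\eta$ is cuspidal one uses that the $m_{v,\eta}$ are cuspidal automorphic forms, hence rapidly decreasing in the cusps by reduction theory; combined with $L^2$-boundedness and a Sobolev lemma on a relatively compact piece exhausting $X$ up to the cusps, this produces a continuous inclusion $V^\infty\hookrightarrow L^\infty(X)$, exactly as in the $\SL(2,\R)$ case.

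\textbf{Step 2: reduction to the maximal norm and proof of \eqref{eq aut dom intro}.} Because $p_{\rm aut}\in\No(V)$ and $[p_{\max}]$ is the (unique) maximal element of $\Noc(V)$, we have $p_{\rm aut}\lesssim p_{\max}$. Now apply the tight abstract convexity bound of Corollary~\ref{Cor mps convexity} to $\gf=\so(1,n)$: here $N$ is abelian of dimension $n-1$, so the optimal Sobolev order is $|\rho|=\frac{n-1}{2}$, and the bound reads $p_{\max}(v_\tau)\lesssim (1+|\tau|)^{\frac{n-1}{2}}\,q(v_\tau)$ for $v_\tau\in V[\tau]$. Chaining the two estimates gives $p_{\rm aut}(v_\tau)\leq C(1+|\tau|)^{\frac{n-1}{2}}q(v_\tau)$, which is \eqref{eq aut dom intro}.

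\textbf{Step 3: globalization to \eqref{eq Sob dom intro}.} Write $v=\sum_{\tau\in S(V)}v_\tau$ with $v_\tau\in V[\tau]$. By the triangle inequality and Step 2,
\[
p_{\rm aut}(v)\leq\sum_{\tau\in S(V)}p_{\rm aut}(v_\tau)\leq C\sum_{\tau\in S(V)}(1+|\tau|)^{\frac{n-1}{2}}q(v_\tau).
\]
Splitting the exponent as $\big(\tfrac{n-1}{2}-s\big)+s$ and applying Cauchy--Schwarz over $S(V)$, and using that $q$ is Hermitian and $K$-invariant so that $q(v)^2=\sum_\tau q(v_\tau)^2$ while $q_s(v)\asymp\big(\sum_\tau(1+|\tau|)^{2s}q(v_\tau)^2\big)^{1/2}$ (the $K$-Laplacian acts by $\asymp(1+|\tau|)^2$ on $V[\tau]$), we obtain $p_{\rm aut}(v)\lesssim q_s(v)$ provided $\sum_{\tau\in S(V)}(1+|\tau|)^{n-1-2s}<\infty$. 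Since $V$ is a principal series of $\SO(1,n)$ with $K=\SO(n)$, $M=\SO(n-1)$, its $K$-types are the $\tau$ whose restriction to $M$ contains the inducing datum $\sigma$, and the $\SO(n)\downarrow\SO(n-1)$ interlacing rule pins every coordinate of $\tau$ except the first once $\sigma$ is fixed; hence $\#\{\tau\in S(V):|\tau|\leq R\}=O(R)$, and the series converges exactly when $s>\tfrac n2$. This yields \eqref{eq Sob dom intro}.

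\textbf{Main obstacle.} The genuinely non-formal ingredients are (a) the tight exponent $\frac{n-1}{2}=|\rho|$ in the abstract convexity bound for the minimal principal series of $\so(1,n)$, i.e. the content of Corollary~\ref{Cor mps convexity}, and (b) for non-cocompact $\Gamma$, the $(X,\infty)$-boundedness of cuspidal $(V,\eta)$, which rests on uniform rapid decay of cusp forms; everything else is bookkeeping (triangle inequality, Cauchy--Schwarz, the branching count). For cocompact $\Gamma$ one may alternatively derive \eqref{eq Sob dom intro} from the relative-trace estimate of Bernstein--Reznikov \cite{BR}, and it is reassuring that the two routes produce the same Sobolev order for the Lorentz groups.
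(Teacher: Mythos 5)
Your proposal is correct and follows essentially the same route as the paper: the individual $K$-type bound comes from the Dirac-approximation convexity estimate (Corollary~\ref{Cor mps convexity}, which the paper repackages as Proposition~\ref{prop mps aut} so as to make the constant locally uniform in the spectral parameter) combined with $p_{\rm aut}\lesssim p_{\max}$; the globalization to $s>\tfrac n2$ is exactly the paper's Remark~\ref{rmk automorphic} argument, i.e. triangle inequality over $K$-types followed by Cauchy--Schwarz and the observation that the $\SO(n)\downarrow\SO(n-1)$ interlacing rule makes $S(V)$ a rank-one affine semi-lattice so that $\#\{\tau\in S(V):|\tau|\leq R\}=O(R)$. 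The only cosmetic difference is that you route through $p_{\max}$ and invoke Corollary~\ref{Cor mps convexity} directly, whereas the paper invokes Proposition~\ref{prop mps aut}; and for the cuspidal case you sketch the rapid-decay argument in place of citing Langlands' Lemma, but the content is the same.
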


The optimal Sobolev domination in case $\Gamma$ is cocompact is $s>\frac{n-1}{2}$ whereas we obtain the slightly weaker result \eqref{eq Sob dom intro} from the optimal individual $K$-type bound \eqref{eq aut dom intro}.

The bound \eqref{eq aut dom intro} is locally uniform in the representation parameter for $L^2$-normalized $\eta$ with respect to $q$ and recovers some recent results of \cite{BHMM} for $G=\SL(2,\C)$ which has Lie algebra $\gf\simeq \so(1,3)$; see Subsection \ref{Subsubsection Blomer}.

\subsection{Odds and ends: further research}

\par In Subsection \ref{sec. open problems},
we list a variety of interesting problems out of which we highlight the following:
\begin{enumerate}[(a)]
\item \label{prb1}
Let $U\subset V$ be a submodule. Is there a relation between $s(U,w)$ and $s(V,w)$?
\item  \label{prb2} What happens to $s(V, w)$ after tensoring $V$ with a finite dimensional representation $F$? 
\item  \label{prb3} Can the dependence of $s(V,w)$ on $w$ be made quantitative?
\item  \label{prb4} How does the Sobolev gap behave under parabolic induction?
\end{enumerate} 
Problem (\ref{prb1}) is related to the extension of norms from $U \subset V$ to $V$ and like Problem (\ref{prb2}) is quite challenging.  Problems (\ref{prb3}) and  (\ref{prb4}) seem to be within reach.

\subsection{Methods of proof}
 
The proof of Theorem \ref{thm.main1}(\ref{thm.main1 - item 1}), the uniform boundedness for discrete series, relies on a recently obtained domination of $L^p(X)$-norms by Sobolev-$L^\infty(X)$-norms for eigenfunctions (of the center of the universal enveloping algebra) on a real spherical space $X=H\bs G$ \cite{KSS}. We use it in the group case: $X=\diag(G)\bs G\times G\simeq G$. This gives uniform lower bounds for the minimal norm of all discrete series and self duality of $\mathcal{HC}_{\rm d}$ completes the argument.
 
The proof of Theorem \ref{thm.main1}(ii), the uniform finiteness result for minimal principal series, is based on the technique of Dirac approximation, which was introduced in \cite{BK} in order to obtain lower bounds for matrix coefficients that are uniform in $K$-types. This gives effective domination of the minimal norm in terms of a negative Sobolev norm of the standard norm on the minimal principal series. 
The crucial fact that the class $\mathcal{HC}_{\rm mp}$ is self dual completes the argument as before.
 
The proof of Theorem \ref{intothm.Thm C}, namely the verification that $s(V)=1$ holds true for all non-trivial unitarizable $(\gf,K)$-modules, is technically demanding. The easier part $s(V)\geq 1$ rests on locating and determining the maximum of matrix coefficients $m_{\tilde v, e_n}$ for fixed non-zero $\tilde v\in \tilde V$ in dependence of the $K$-type $n$. This maximum is roughly $|n|^{\frac{1}{2}}$ up to a potential logarithmic term depending on the type of $V$; see Proposition \ref{thm upper-lower}.
 
The converse bound $s(V)\leq 1$ is much harder. We need to construct appropriate test vectors to include the unit ball of $q_s$ for $s>\frac{1}{2}$ into the unit ball of $p_{\rm max}$. 
 
For this one needs precise control on the matrix coefficient $m_{\tilde e_m, e_n}$ attached to the normalized $K$-types $e_n \in V, \tilde e_m \in \tilde V$ with $m$ fixed and $n$ running.
Bargmann's (\cite{B}) exact formulas for the matrix coefficients in terms of Gau\ss{} hypergeometric functions are crucial in this analysis. 
For the precise technical statement, see Proposition \ref{thm estimate convex combination} and Section \ref{subsection Proof for estimate convex combination} for the proof.

\subsection{Related works}

This article is a natural continuation of \cite{BK} on the Casselman-Wallach theorem  where the theorem was formulated as equivalence between Sobolev shifts of $G$-continuous norms. This reformulation calls for a quantitative result. 
Also, it is strongly motivated by the remarks made in \cite[Appendix A.2]{BR2} on domination of Sobolev norms of (automorphic) representations. This is of course connected to the regularity of automorphic distributions, a view point which was taken in \cite{Sch2}.

Norms on automorphic representations became an indispensable tool in the last two decades and were developed further in \cite{BR}, \cite{KS}, \cite{MV}, \cite{NV} and \cite{Venkatesh}. Other sources where systematic use of Sobolev norms is employed, are 
\cite{BH}, \cite{EMV}, \cite{Mic} and \cite{Nel}. 

The behavior of the Sobolev gap under modulation of the weight is reflected in calculations of \cite{He} and might be of additional interest.

\subsection{Structure of the paper} 

\par The article is organized as follows. Sections 2 - 5 develop a general theory valid for any real reductive group.
After the preparations performed in Sections 2-4, Theorem
\ref{thm.main1}
is proven in Section \ref{Section 5: Uniform finiteness} (see Theorem
\ref{thm disc series} and Theorem \ref{thm sup}).

In Section \ref{Sobolev SL2} we specialize to $G=\SL(2,\R)$.
We prove Theorem  \ref{thm ultimate} that includes Theorem 
\ref{intothm.Thm C} 
as well as  Proposition 
\ref{prop intro}. 
Then we deduce the Abstract convexity bound, Corollary \ref{introthm.abst. conv. bnds} 
(see Theorem \ref{thm.abst. conv. bnds}).
Sections \ref{Sec: SR} and \ref{Sec: AF} discuss applications to harmonic analysis and automorphic forms. We prove there Theorem \ref{introthm: theorem automorphic} (see Theorem \ref{thm: key automorphic}) concerning automorphic distributions with respect to arbitrary lattices in $\SL(2,\R)$. Section \ref{Sec9: key estimates} is devoted to the derivation of key asymptotic estimates of matrix coefficients of representations of $\SL(2,\R)$ that are needed in Section \ref{Sobolev SL2}.

\subsection{Acknowledgments}

We thank Andre Reznikov for his interest in this work, for a helpful email communication, and for a particularly enlightening conversation that led to the convexity bounds for automorphic functionals with respect to any lattice. In addition we thank Valentin Blomer for answering our questions regarding \cite{BHMM}.
During the preparation of this work, E.S. was partially supported by ISF grant No. 1781/23.

\section{Norms on Harish-Chandra modules}\label{Notions}

We recall some notions and background from \cite{BK}.
Let $G$ be a real reductive group of inner type in the sense of Wallach; see \cite[\S 2.1.1 \& \S 2.2.8]{Wallach1} and $K$ a maximal compact subgroup of $G$. The Lie algebra of $G$ is denoted by $\gf$. We recall that a $(\gf, K)$-module is called a Harish-Chandra module provided it is finitely generated as a $\U(\gf)$-module and $K$-admissible, i.e. $\Hom_K (\tau, V)$ is finite dimensional for all irreducible representations $\tau$ of $K$. The category of Harish-Chandra modules is denoted by $\mathcal{HC}$.

\subsection{\texorpdfstring{$G$-continuous norms}{G-continuous norms}}

Let $V$ be a Harish-Chandra module and $p$ a norm on $V$. We write $V_p$ for the completion of the normed space $(V,p)$. The norm $p$ is called $G$-continuous provided there exists a continuous representation $\pi:G\times V_{p}\to V_{p}$ of $G$ on the Banach space $V_p$ such that $V_p^{K-\rm{finite}}\simeq_{(\gf, K)} V$.
We recall Casselman's subrepresentation theorem which asserts that every Harish-Chandra module $V$ can be embedded into the Harish-Chandra module of an induced representation $I=(\Ind_P^G\sigma)^{K-{\rm finite}}$ where $P\subset G$ is a minimal parabolic subgroup and $\sigma$ is a finite dimensional representation of $P$. As $I$ admits many $G$-continuous norms, for example $L^p$-norms on $K/K\cap P$ of $\sigma$-valued functions, we conclude that every Harish-Chandra module admits $G$-continuous norms as well. For two norms $p$ and $q$ we write $p\lesssim q$ if there is a constant $C>0$ such that $p\leq C q$. We say that two norms $p$ and $q$ are equivalent, in symbols $p\sim q$, provided that $p\lesssim q $ and $q\lesssim p $.

\begin{rmk}\,\begin{enumerate}[(a)]
\item We recall that a complete locally convex topological vector space $E$ is called a {\it globalization} or {\it completion} of $V$ provided that $E$ supports a $G$-representation such that $E^{K-{\rm finite}} \simeq_{(\gf, K)} V$.
In \cite{Sch} Schmid introduced the minimal and maximal globalizations and identified them as the spaces of analytic and hyperfunction vectors respectively (see also \cite{GKKS}).

The minimal globalization $V^\omega$ is an example of a globalization which is an inductive limit of Banach spaces with nuclear transition maps.
\item Infinite dimensional Harish-Chandra modules admit many interesting norms, many of which are not $G$-continuous, for instance those
appearing in the inductive Banach limit $V^\omega$.
\end{enumerate}
\end{rmk}

For a Harish-Chandra module $V$ we denote by $\tilde V$ its contragredient, i.e. the space of $K$-finite vectors in the algebraic dual $V^*$ of $V$. Note that $\tilde V$ is also a Harish-Chandra module and
$$
\tilde{\tilde V}
=V.
$$

Notice that given a $G$-continuous norm $p$ on $V$ and $\ell:V \to \mathbb{C}$ a $K$-finite linear functional, then $\ell$ is bounded with respect to $p$. 

Given a $G$-continuous norm $p$ on $V$ we recall that the dual norm $\tilde p$ defined by
$$
\tilde p (\tilde v)
:=\sup_{p(v)\leq 1} |\tilde v (v)|
\qquad (\tilde v \in \tilde V)
$$
is $G$-continuous as well. We summarize some basic properties:

\begin{lemma}\label{pq-lemma}
Let $p, q$ be $G$-continuous norms on a Harish-Chandra module $V$. Then
\begin{enumerate}[(i)]
\item\label{pq-lemma - item 1} $\tilde{\tilde p}=p$.
\item\label{pq-lemma - item 2} $p\leq q\iff \tilde q\leq \tilde p$.
\end{enumerate}
\end{lemma}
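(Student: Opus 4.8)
\emph{Proof plan.} The plan is to first identify the contragredient $\tilde V$, equipped with the dual norm $\tilde p$, with the space of $K$-finite vectors in the continuous dual Banach space $(V_p)^*$. Indeed, by the remark preceding the lemma every $K$-finite functional on $V$ is $p$-bounded, hence extends uniquely to an element of $(V_p)^*$; conversely, the restriction to $V$ of any $K$-finite vector of $(V_p)^*$ is a $K$-finite element of the algebraic dual $V^*$, i.e. lies in $\tilde V$, and it determines the functional since $V$ is dense in $V_p$. Under this identification $\tilde p(\tilde v)$ is precisely the operator norm $\|\tilde v\|_{(V_p)^*}$. With this in hand the easy half of (i) is the estimate $|\tilde v(v)|\le \tilde p(\tilde v)\,p(v)$, which gives $\tilde{\tilde p}(v)=\sup_{\tilde p(\tilde v)\le 1}|\tilde v(v)|\le p(v)$ for all $v\in V$.

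For the reverse inequality $\tilde{\tilde p}(v)\ge p(v)$ — the main point of the lemma — I would fix $v\in V$ and invoke Hahn--Banach to produce $\lambda\in (V_p)^*$ with $\|\lambda\|_{(V_p)^*}=1$ and $\lambda(v)=p(v)$. Such a $\lambda$ need not be $K$-finite, so a priori it does not lie in $\tilde V$; to remedy this I would regularize along $K$. Choosing a sequence of left-$K$-finite functions $f_n$ on $K$ with $\int_K f_n=1$, $\|f_n\|_{L^1(K)}\to 1$ and $f_n\to\delta_e$ weakly — these exist by Peter--Weyl density of $K$-finite functions in $C(K)$ together with a standard approximate-identity construction — one sets $\lambda_n(w):=\int_K f_n(k)\,\lambda(k^{-1}w)\,dk$. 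Then $\lambda_n\in(V_p)^*$ with $\|\lambda_n\|_{(V_p)^*}\le\|f_n\|_{L^1(K)}$; the vector $\lambda_n$ is $K$-finite, since its $K$-translates span the span of the left-translates of $f_n$, so $\lambda_n\in\tilde V$ with $\tilde p(\lambda_n)\le\|f_n\|_{L^1(K)}$; and $\lambda_n(v)\to\lambda(v)=p(v)$ by continuity of $k\mapsto k^{-1}v$ in $V_p$. Hence $\tilde{\tilde p}(v)\ge\limsup_n |\lambda_n(v)|/\tilde p(\lambda_n)\ge p(v)$, which completes (i).

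Part (ii) is then formal. If $p\le q$ then $\{v: q(v)\le 1\}\subseteq\{v: p(v)\le 1\}$, so $\tilde q(\tilde v)=\sup_{q(v)\le 1}|\tilde v(v)|\le\sup_{p(v)\le 1}|\tilde v(v)|=\tilde p(\tilde v)$ for every $\tilde v\in\tilde V$; thus $p\le q\Rightarrow\tilde q\le\tilde p$. Applying this same implication to the pair $\tilde q,\tilde p$ on $\tilde V$ yields $\tilde q\le\tilde p\Rightarrow\tilde{\tilde p}\le\tilde{\tilde q}$, and by (i) the latter reads $p\le q$, so the implication reverses. The only genuine work is in (i), and the crux there is precisely that Hahn--Banach supplies a norming functional which is merely continuous, not $K$-finite: it is the $G$-continuity hypothesis on $p$ that guarantees, via the $K$-regularization above, that $\tilde V$ contains near-norming functionals with the correct dual norm.
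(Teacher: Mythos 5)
The easy half of (i) and the formal reduction of (ii) to (i) are fine. The issue is in the regularization step for $\tilde{\tilde p}(v)\ge p(v)$.

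You write $\lambda_n(w)=\int_K f_n(k)\,\lambda(k^{-1}w)\,dk$ and claim $\|\lambda_n\|_{(V_p)^*}\le\|f_n\|_{L^1(K)}$. The correct bound is
$$
\|\lambda_n\|_{(V_p)^*}\;\le\;\int_K |f_n(k)|\,\Big(\sup_{p(w)\le 1}p(k^{-1}w)\Big)\,dk\;=\;\int_K |f_n(k)|\,w_p(k^{-1})\,dk,
$$
and your stated inequality follows only if $w_p|_K\equiv 1$, i.e.\ if $K$ acts on $V_p$ by $p$-isometries. That is not part of the hypotheses: a $G$-continuous norm is not assumed $K$-invariant. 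Nor does concentrating $f_n$ near the identity rescue the argument: $w_p$ is a supremum of continuous functions, hence only lower semicontinuous, so $w_p(e)=1$ together with $f_n\to\delta_e$ gives $\liminf_n\int|f_n|\,w_p(k^{-1})\,dk\ge 1$ but gives no control on the $\limsup$. (For a strongly but not norm-continuous Banach representation, $g\mapsto\|\pi(g)\|$ can fail to be upper semicontinuous at $e$.) Thus what your argument actually yields is $\tilde{\tilde p}(v)\ge p(v)\big/\sup_{k\in K}w_p(k)$, i.e.\ an equivalence $\tilde{\tilde p}\sim p$, whereas the lemma asserts the literal equality $\tilde{\tilde p}=p$. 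The obvious fix of pre-symmetrizing $p$ to the $K$-invariant norm $p_K(w)=\sup_{k\in K}p(kw)$, where your regularization does give $\tilde{\tilde{p_K}}=p_K$, again only yields $\tilde{\tilde p}\sim p$.

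You should either supply an argument that produces a $K$-finite functional of $\tilde p$-norm exactly $1$ (not merely $1+o(1)$ up to the operator bound of $K$) attaining $p(v)$ in the limit, or flag that the $K$-isometry hypothesis is being used and that without it the statement as written needs a separate justification. Part (ii) and the inequality $\tilde{\tilde p}\le p$, being purely formal, are correct as you have them, and the reduction of (ii) to (i) via applying the easy implication twice is a clean way to organize the proof.
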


Next we introduce matrix-coefficients. Let $V_p$ be any Banach completion for a $G$-continuous norm $p$ and write $\pi_p(g)v$ for the action of $G$ on $V_p$. Note that there is a canonical identification of $\tilde V$ with the $K$-finite vectors in the strong dual $V_p'$ of $V_p$, say
$$
\tilde V \to (V_p')^{K-\rm{finite}},
\quad \tilde v \mapsto \tilde v_p\,.
$$
Let now $v\in V\subset V_p$ and $\tilde v_p \in (V_p')^{K-\rm{finite}}$. Then the map
$$
G \to \mathbb{C},
\quad g\mapsto \tilde v_p(\pi_p(g)v)
$$
is analytic (see \cite[Definition 1.6.6 \& Theorem 3.4.9]{Wallach_book_II}). Since the Taylor expansion of this function at the identity is determined by the $\U(\gf)$-module structure of $V$, it is independent of the choice of the completion $V_p$. This justifies the notation $g \mapsto \tilde v(g\cdot v)$ without making any reference to a particular completion, and we call
$$
m_{v, \tilde v}:g\mapsto \tilde v (g \cdot v)
$$
the matrix-coefficient attached to $v$ and $\tilde v$.

A norm $p$ on $V$ is called $K$-Hermitian if it is induced by a $K$-invariant scalar product.

\begin{prop}\label{Prop K-Hermitian squeezing}
Let $p$ be a $G$-continuous norm on a Harish-Chandra module $V$. Then there exist $K$-Hermitian norms and $G$-continuous norms $q_{1}$ and $q_{2}$ on $V$ so that
$$
q_{1}
\lesssim p\lesssim q_{2}\,.
$$
\end{prop}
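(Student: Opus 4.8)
The plan is to reduce the statement to a simple averaging argument over the compact group $K$, exploiting that a $K$-invariant Hermitian norm is cheap to produce from any given norm once we can compare with the original $G$-continuous norm. First I would pick a finite set of $K$-types generating $V$—more precisely, recall that $V$ is $K$-admissible and finitely generated over $\U(\gf)$—but in fact the cleanest route does not require such a choice. Instead, start from $p$ and first replace it by its $K$-average: set $p_K(v) := \bigl(\int_K p(k\cdot v)^2\, dk\bigr)^{1/2}$ using normalized Haar measure on $K$. Since $K$ acts by bounded operators on $V_p$ (as $K$ is compact inside $G$ and $p$ is $G$-continuous), the map $k\mapsto p(k\cdot v)$ is continuous and bounded, so $p_K$ is a well-defined norm; it is manifestly $K$-invariant, and by compactness of $K$ one gets $p \lesssim p_K \lesssim p$, hence $p_K$ is again $G$-continuous with the same completion up to equivalence. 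So without loss of generality we may assume $p$ itself is $K$-invariant.

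Next, the construction of $q_2$ dominating $p$: I would take $q_2$ to be a Hermitian norm whose associated quadratic form majorizes $p^2$. The natural candidate: decompose $V = \bigoplus_{\tau\in\hat K} V[\tau]$ into $K$-isotypic components, which are finite-dimensional and mutually $p$-orthogonal once $p$ is $K$-invariant (distinct irreducible $K$-types are orthogonal for any $K$-invariant inner product, and for a $K$-invariant \emph{norm} one still gets an orthogonality estimate up to constants, or one can first squeeze $p$ between Hermitian norms on each $V[\tau]$). On each finite-dimensional $V[\tau]$ choose a $K$-invariant inner product $\langle\,\cdot\,,\cdot\,\rangle_\tau$ with $p|_{V[\tau]}^2 \leq \langle\,\cdot\,,\cdot\,\rangle_\tau \leq C_\tau\, p|_{V[\tau]}^2$—possible by equivalence of norms in finite dimensions and by averaging the inner product over $K$—and declare $q_2^2 := \sum_\tau \langle\,\cdot\,,\cdot\,\rangle_\tau$ on the algebraic direct sum. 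Then $p \lesssim q_2$ on each component, hence on $V$ after controlling the cross terms; dually, $q_1$ is obtained by the same recipe with the inequalities reversed, i.e. $q_1^2 := \sum_\tau c_\tau\langle\,\cdot\,,\cdot\,\rangle_\tau$ chosen so that $q_1 \lesssim p$. Both $q_1$ and $q_2$ are $K$-Hermitian by construction.

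The remaining point—and this is where I expect the real work to sit—is verifying that $q_1$ and $q_2$ are $G$-continuous, not merely abstract norms on $V$. Here I would invoke the Casselman-Wallach machinery exactly as quoted in the excerpt: since $q_1 \lesssim p \lesssim q_2$ and $p$ is $G$-continuous, and since the completions $V_{q_1}, V_{q_2}$ are Banach completions of $V$, one must show these completions carry continuous $G$-actions compatible with the $(\gf,K)$-structure. The cleanest argument is to produce a coarse Sobolev-type bound: any $u\in\U(\gf)$ of degree $d$ satisfies $p(u\cdot v)\lesssim (\text{$K$-Laplacian Sobolev norm of }p\text{ of order }d)(v)$ by the standard Sobolev lemma, and then one transfers this bound to $q_1,q_2$ via the squeezing inequalities, which is enough to run the Poulsen/Goodman-type construction of the $G$-action on the Banach completion, or alternatively to directly quote the result of \cite{BK} that a norm sandwiched (up to Sobolev shift) between $G$-continuous norms is $G$-continuous. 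In fact the simplest packaging: $q_2$ has $w_{q_2}(g)$ locally bounded because $q_2 \sim p$ on $K$-isotypic pieces and $p$ has moderate growth; feeding this into the criterion for $G$-continuity (existence of the globalization, guaranteed since $V$ is Harish-Chandra) closes the argument. The main obstacle, then, is not the averaging but checking that the candidate Hermitian norms do not lose $G$-continuity when we pass through the infinite direct sum over $\hat K$—one must ensure the constants $C_\tau$, $c_\tau$ can be chosen with at-worst polynomial growth in $|\tau|$, which follows from admissibility together with the moderate-growth bound on $p$, so that the domination $p\lesssim q_2$ (and $q_1\lesssim p$) holds globally with a single constant.
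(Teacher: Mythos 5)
Your approach is genuinely different from the paper's: the paper simply cites \cite[Theorem 5.5]{BK} for the existence of a $K$-Hermitian $G$-continuous norm $q_2 \gtrsim p$, and then gets $q_1$ by applying the same result to $\tilde p$ on $\tilde V$ and dualizing via Lemma \ref{pq-lemma}. You instead attempt a from-scratch construction, and the construction has two real gaps.

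The first gap is the cross-term control in the claim $p \lesssim q_2$. After $K$-averaging you have a $K$-invariant (not Hermitian) norm, and $K$-type projections on $V_p$ are bounded by $\dim\tau$, but that is weaker than orthogonality; your phrase ``orthogonality estimate up to constants'' does not survive scrutiny. More seriously, once you set $q_2^2 := \sum_\tau \langle\cdot,\cdot\rangle_\tau$ with $p|_{V[\tau]} \leq \langle\cdot,\cdot\rangle_\tau^{1/2}$ on each piece, the best you can say for a general $v=\sum v_\tau$ is $p(v) \leq \sum_\tau p(v_\tau) \leq \sum_\tau \langle v_\tau,v_\tau\rangle_\tau^{1/2}$, and you need this to be $\lesssim (\sum_\tau \langle v_\tau,v_\tau\rangle_\tau)^{1/2} = q_2(v)$ --- that is the wrong direction of Cauchy--Schwarz. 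It only works if you put in weights $(1+|\tau|)^r$ to make the $\ell^1 \hookrightarrow \ell^2$ step valid, which turns $q_2$ into a Sobolev shift. You gesture at this in your last paragraph, but never actually carry out the estimate, and the presentation reads as though the unweighted sum already works.

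The second gap is the $G$-continuity of the resulting $K$-Hermitian norm, which is exactly where the real content lies and which the paper delegates entirely to \cite[Theorem 5.5]{BK}. Once you are forced to take a Sobolev shift (as above), the $G$-continuity of the Sobolev-weighted Hermitian norm is not automatic --- the paper even says explicitly in Section \ref{Section Sobolev norms} that ``for a general $G$-continuous norm $p$ it is not clear whether all Sobolev norms $p_s$ with $s\geq 0$ are $G$-continuous as well.'' Your appeal to ``moderate growth'' and ``the criterion for $G$-continuity'' does not discharge this; it hand-waves past the point that \cite[Theorem 5.5]{BK} is invoked to settle. You would do better to notice, as the paper does, that once one side ($q_2$ from BK) is in hand, the other side ($q_1$) comes for free by duality on $\tilde V$, rather than trying a symmetric direct construction for both.
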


\begin{proof}
From \cite[Theorem 5.5]{BK} it follows that there exists a $K$-Hermitian $G$-continuous norm $q_{2}$ so that $p\lesssim q_{2}$. Likewise, there exists a $K$-Hermitian $G$-continuous norm $\tilde{q}_{1}$ on $\tilde{V}$ so that $\tilde{p}\lesssim\tilde{q}_{1}$. The assertion now follows from Lemma \ref{pq-lemma} with $q_{1}=\tilde{\tilde{q}}_{1}$.
\end{proof}

\subsection{Weights}

By a weight of $G$ we understand a locally bounded positive function $w: G \to \R_{>0}$ which is submultiplicative, i.e.
$$
w(gh)
\leq w(g) w(h)
\qquad (g, h\in G).
$$
We denote by $\No(V,w)$ the set of all $G$-continuous norms $p$ of growth bounded by a multiple of $w$, i.e.
$$
p(g \cdot v)
\leq C w(g) p(v)
\qquad (g \in G, v\in V)
$$
for some constant $C>0$.

We have the following useful:

\begin{rmk}\label{Rem Weights}
\begin{enumerate}[(a)]
\item\label{Rem Weights - item 1}
Every $G$-continuous norm $p$ is bounded by a weight, namely the operator norm
$$
w_p(g)
:= \sup_{p(v)\leq 1} p(g\cdot v).
$$
\item\label{Rem Weights - item 2}
We write $w\lesssim w'$ if there exists a $c>0$ so that $w\leq c w'$. We may introduce an equivalence relation on the set of weights analogous to the equivalence relation on the set of norms as follows: Two weights $w$ and $w'$ of $G$ are equivalent if
$w\lesssim w'$ and $w'\lesssim w$, i.e. there exists a $c>0$ so that
$$
\frac{1}{c}w(g)
\leq w'(g)
\leq c w(g)
\qquad (g\in G).
$$
\item\label{Rem Weights - item 3}
Let $V$ be finite dimensional. Then, as all norms on a finite-dimensional space are equivalent, the weight construction in (\ref{Rem Weights - item 1}) defines a unique equivalence class $[w_V]$. If we also assume that $V$ is irreducible and $G$ is split, then
$[w_V]$ determines $V$. In order to see that, let $G=K\oline{A^+}K$ be a Cartan decomposition. As $G$ is split, the Lie algebra $\af$ of $A$ is a Cartan subalgebra. Hence $V$ is determined uniquely by its highest weight $\lambda_V \in \af^*$. Now observe that there exists a constant $c>0$ so that
$$
c^{-1} a^{\lambda_V}
\leq w_V(a)
\leq c a^{\lambda_V}
\qquad (a\in \oline{A^+}).
$$
Therefore, $[w_V]$ determines $\lambda_V$, and hence also $V$.
\end{enumerate}
\end{rmk}

For a weight $w$ we define a new weight $w^\sharp$ of $G$ by
$$
w^\sharp(g)
:=w(g^{-1})
\qquad (g\in G)
$$
and record
\begin{equation}\label{dual}
p \in \No(V,w)\iff \tilde p \in \No(\tilde V, w^\sharp).
\end{equation}
We call a weight $w$ {\it reflexive} if $w=w^\sharp$. 

\subsubsection{Standard weights} We now provide a variety of basic weights which will be useful later on. To begin with, we recall the standard notation for a real reductive group $G$. 

With regard to our fixed choice of a maximal compact subgroup $K \subset G$, we fix an Iwasawa decomposition $G=KAN$. Let $P=MAN$ be the corresponding minimal parabolic subgroup of $G$ with $M=Z_K(A)$. We write $\overline{N}$ for the unipotent radical of the opposite minimal parabolic subgroup with Levi subgroup $MA$. As customary we use the symbol $\rho$ to denote the Weyl half sum, i.e. the element $\rho\in\af^*$ given by
$$
\rho(X)
=\frac{1}{2} \tr\big(\ad(X)\big|_\nf\big)
\qquad (X\in \af).
$$
Let $\Sigma=\Sigma(\af,\gf)\subset \af^*\bs\{0\}$ be the restricted root system and $\Sigma^+$ the positive system corresponding to the choice of $\nf$. With these choices we obtain
an open Weyl chamber $\af^+$ and we set $A^+=\exp(\af^+)$ as usual.
Finally, we write $W$ for the Weyl group of $\Sigma$ 
and note that $W=N_K(\af)/ M$.

Returning to weights, we first note that $\No(V,w)$ depends only on $[w]$. Next, observe that every weight $w$ is equivalent to a $K\times K$-invariant weight, that is, for
$$
\tilde w (g)
=\sup _ {k_1, k_2\in K} w(k_1 g k_2)
\qquad (g\in G)
$$
we have $[w]=[\tilde w]$. Thus, it is no loss of generality to assume weights to be $K\times K$-invariant. These weights $w$ 
are uniquely determined by its restriction
$w_A=w|_A$ to $A$. Note that $w_A$ is a $W$-invariant weight on $A$. Conversely, we may ask under which conditions can a $W$-invariant weight $w_A$ on $A$ be extended to a $K\times K$-invariant weight on $G$. The following Lemma on double $K$-cosets in $G$ seems to be standard: 
\begin{lemma} 
For $a,b\in A$ one has 
$$
aKb
\subset K \exp(\mathrm{co} (W\cdot \log a) + \mathrm{co}(W \cdot \log b))K
$$
with $\mathrm{co}(\cdot)$ referring to the convex hull of $(\cdot)$.
\end{lemma}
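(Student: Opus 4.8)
The plan is to recast the statement in terms of the Cartan (polar) projection and then reduce it to submultiplicativity of operator norms in finite-dimensional representations. Write $\mu\colon G\to\oline{\af^+}$ for the map characterized by $g\in K\exp(\mu(g))K$, write $\oline{H}$ for the dominant $W$-representative of $H\in\af$, and set
$$
C:=\mathrm{co}(W\cdot\log a)+\mathrm{co}(W\cdot\log b)\subset\af ,
$$
a compact, convex, $W$-invariant set. Since $C$ is $W$-invariant one has $K\exp(C)K=\{g\in G:\mu(g)\in C\}$, so the claim is equivalent to $\mu(akb)\in C$ for all $k\in K$. In fact I would prove the more symmetric fact that $\mu(g_1g_2)\in\mathrm{co}(W\mu(g_1))+\mathrm{co}(W\mu(g_2))$ for all $g_1,g_2\in G$: since $\mu(ak)=\oline{\log a}$ and $\mu(b)=\oline{\log b}$, the Lemma is the special case $g_1=ak$, $g_2=b$.

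To verify membership in the convex body on the right I would use its support function. As that body is closed, convex and $W$-invariant and $\mu(g_1g_2)$ is dominant, it suffices to check, for every dominant $\lambda\in\af^*$,
$$
\langle\lambda,\mu(g_1g_2)\rangle\le\langle\lambda,\mu(g_1)\rangle+\langle\lambda,\mu(g_2)\rangle ,
$$
the right-hand side being precisely the value of the support function of $\mathrm{co}(W\mu(g_1))+\mathrm{co}(W\mu(g_2))$ at the dominant weight $\lambda$. By homogeneity and continuity in $\lambda$ it is moreover enough to let $\lambda$ run over the restrictions to $\af$ of highest weights of finite-dimensional representations of $G$ — or of a finite cover of $G$, which changes neither $\af$, nor $W$, nor $K\bs G/K$ — since these generate the dominant cone of $\af^*$.

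For such a $\lambda$ one takes the associated irreducible finite-dimensional representation $(\pi_\lambda,V_\lambda)$ equipped with an inner product making $\pi_\lambda(K)$ unitary and $\pi_\lambda(\af)$ self-adjoint. Standard highest-weight theory then identifies the corresponding operator norm as $\|\pi_\lambda(g)\|=e^{\langle\lambda,\mu(g)\rangle}$ for all $g\in G$, with $\|\pi_\lambda(k)\|=1$ for $k\in K$. Submultiplicativity of the operator norm,
$$
e^{\langle\lambda,\mu(g_1g_2)\rangle}=\|\pi_\lambda(g_1g_2)\|\le\|\pi_\lambda(g_1)\|\,\|\pi_\lambda(g_2)\|=e^{\langle\lambda,\mu(g_1)\rangle+\langle\lambda,\mu(g_2)\rangle},
$$
yields the required inequality, and letting $\lambda$ vary finishes the proof.

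This argument contains no hard analysis, and I do not expect a genuine obstacle; the only point calling for a little care is the reduction step, namely that there are enough finite-dimensional representations of $G$ for their highest weights to span the dominant Weyl chamber — this is where "reductive of inner type" (and, for non-linear $G$, passage to a finite cover) enters. Alternatively, one may simply invoke the elementary inclusion $\mu(g_1g_2)\in\mathrm{co}(W\mu(g_1))+\mathrm{co}(W\mu(g_2))$, which is the "easy half" of the Kostant/Horn-type description of the polar decomposition of a product and whose standard proof is exactly the operator-norm estimate above.
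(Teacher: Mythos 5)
Your argument is correct and is essentially the paper's proof: both reduce the inclusion to the submultiplicativity $\|\pi_\lambda(g_1g_2)\|\le\|\pi_\lambda(g_1)\|\,\|\pi_\lambda(g_2)\|$ of operator norms in finite-dimensional highest-weight representations (with $K$ unitary, $A$ self-adjoint, so that $\|\pi_\lambda(g)\|=e^{\langle\lambda,\mu(g)\rangle}$), and then let $\lambda$ range over enough dominant weights to pin down the $W$-invariant convex body. The paper obtains its test weights from the Cartan--Helgason theorem and cites Kostant's convexity theorem where you spell out the support-function reduction and allow a finite cover, but these are presentational differences only.
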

\begin{proof} Since the statement is certainly known we only sketch the argument.  In view of $G=K\oline{A^+}K$ it suffices to estimate the operator norm $\|akb\|$ of $akb$ in all finite dimensional $K$-spherical highest weight representations $V_\lambda$ of $G$ with highest weight $\lambda\in \af^*$. The Cartan-Helgason theorem gives an ample supply of spherical highest weights $\lambda$, namely a semi-lattice of full rank in $\af^*$. We recall Kostant's theorem which says that the convex hull of $A$-weights of $V_\lambda$ is given by $\mathrm{co}(W\cdot \lambda)$. This implies that  $\|k_1 a k_2\|=\max_{w\in W} a^{w\cdot \lambda} $ for $k_1,k_2\in K$ and $a\in A$. With $\|akb\|\leq \|a\|\|b\|$ one completes the proof. 
\end{proof} 
The Lemma implies the following criterion. 

\begin{lemma} Let $w_A$ be a $W$-invariant weight on $A$ such that 
$$w_A(\exp(\mathrm{co} (W\cdot \log a ))\leq w_A(a) \qquad (a\in A). $$
Then $w_A$ extends uniquely to a $K\times K$-invariant weight on $G$. 
\end{lemma}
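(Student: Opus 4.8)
The plan is to use the preceding Lemma on double $K$-cosets to reduce the extension problem to a consistency check on $A$. First I would \emph{define} the candidate extension in the only way compatible with $K\times K$-invariance: using the Cartan decomposition $G=K\oline{A^+}K$, any $g\in G$ can be written $g=k_1 a k_2$ with $a\in\oline{A^+}$ uniquely determined, and I set $w(g):=w_A(a)$. Well-definedness is immediate since $a$ is unique in $\oline{A^+}$ and $w_A$ is $W$-invariant (so the value does not depend on which representative of the $W$-orbit in $\af$ one picks). Uniqueness of the extension is also clear: any $K\times K$-invariant weight extending $w_A$ must agree with $w$ on $\oline{A^+}$, hence on all of $G$ by the Cartan decomposition. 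So the only real content is \textbf{submultiplicativity} of $w$, since local boundedness and positivity are inherited from $w_A$ together with local boundedness of the Cartan projection.

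For submultiplicativity, take $g,h\in G$ and write (absorbing the $K$ factors) $g\in Ka'K$, $h\in Ka''K$ with $a',a''\in A$, so that $w(g)=w_A(a')$, $w(h)=w_A(a'')$, and $gh\in a'Kb''K$ for a suitable $K$-translate; more precisely $gh \in K\,a'K a''\,K$. By the previous Lemma applied to $a=a'$, $b=a''$ we get
$$
gh \in K\exp\!\big(\mathrm{co}(W\cdot\log a') + \mathrm{co}(W\cdot\log a'')\big)K.
$$
So it suffices to bound $w$ on the right-hand set. Any point of $\exp(\mathrm{co}(W\cdot\log a') + \mathrm{co}(W\cdot\log a''))$ is of the form $\exp(X'+X'')$ with $X'\in\mathrm{co}(W\cdot\log a')$ and $X''\in\mathrm{co}(W\cdot\log a'')$. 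Using submultiplicativity of $w_A$ on $A$ we have $w_A(\exp(X'+X'')) \le w_A(\exp X')\, w_A(\exp X'')$, and then the hypothesis $w_A(\exp(\mathrm{co}(W\cdot\log a)))\le w_A(a)$ — applied once with $a=a'$ and once with $a=a''$ — gives $w_A(\exp X')\le w_A(a')$ and $w_A(\exp X'')\le w_A(a'')$. Combining,
$$
w(gh) \le w_A(a')\, w_A(a'') = w(g)\, w(h),
$$
as desired.

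The main point requiring care — the step I expect to be the real obstacle, though a mild one — is matching the Cartan-decomposition description of $w$ with the double-coset formulation in the Lemma: one must observe that although the Lemma is stated for $a,b\in A$ (not necessarily dominant), the invariance properties of $w_A$ under $W$ make the choice of $a',a''$ harmless, and that the right-hand side of the Lemma's inclusion is itself a union of double $K$-cosets on which $w$ is already controlled by the hypothesis. One should also note that $\mathrm{co}(W\cdot\log a')+\mathrm{co}(W\cdot\log a'')$ need not equal $\mathrm{co}(W\cdot(\log a'+\log a''))$, which is precisely why the hypothesis is phrased in terms of the convex hull of a single $W$-orbit and is applied separately to each factor; the additive submultiplicativity $w_A(\exp(X'+X''))\le w_A(\exp X')w_A(\exp X'')$ bridges the gap. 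Finally, positivity and local boundedness of $w$ follow since the Cartan projection $G\to\oline{A^+}$ is continuous (hence proper-on-compacts) and $w_A$ is a weight on $A$.
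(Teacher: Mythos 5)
Your proof is correct and is exactly the intended route: define $w$ on $G$ via the Cartan decomposition, observe uniqueness and well-definedness from $W$-invariance of $w_A$, and reduce submultiplicativity to the preceding double-coset lemma combined with the convexity hypothesis applied to each factor and the submultiplicativity of $w_A$ on $A$. The paper leaves this proof implicit ("The Lemma implies the following criterion"), and your argument supplies precisely the missing details, including the observation — worth making explicit in the final write-up — that $w(gh)=w_A(\exp(X'+X''))$ even when $\exp(X'+X'')$ is not dominant, because $w_A$ is $W$-invariant.
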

The above criterion gives us the two standard families of weights. If $\|\cdot\|$ is a $W$-invariant norm on $\af$ and $d>0$ we obtain the logarithmic weights
\begin{equation} \label{standard weight log} v_d(g)= ( 1 + \|\log a\|)^d \qquad (g \in KaK, a\in A).\end{equation} 
Further for $\mu\in \af^*$ the prescription 
\begin{equation}\label{standard weight exp} 
w_\mu(g)
=\sup_{w\in W} \max \{ a^{w\cdot \mu}, a^{-w\cdot\mu}\} \qquad (g\in KaK, a\in A)
\end{equation} 
defines the standard $K\times K$-invariant weights on $G$ of exponential type $\mu$. 

A useful tool is the infimum of a finite set of weights $w_1, \ldots, w_n$. It is straighgtforward to check that 
$$(\inf_{1\leq i\leq n} w_i) (g)= \inf_{\substack{g=g_1 g_2\cdots g_n\\ g_1, \ldots, g_n \in G}} w_1(g_1)w_2(g_2) \cdots  w_n(g_n)\qquad (g\in G)$$
defines a weight with $\inf w_i \leq w_k$ for all $1\leq k\leq n$ and that $\inf w_i$ is uniquely determined by this domination property. 

Next we consider the following domination problem for a finite subset $\E\subset\af^*$ of exponents: 
Find the smallest $K\times K$-invariant weight $w_\E$ on $G$ such that $w_\E(a)\geq a^\lambda$ for $a\in \oline{A^+}$ and all $\lambda\in \E$. Write $\E^+\subset \E$ for those $\lambda$ for which $a^\lambda$ for $a\in \oline{A^+}$ is not bounded. The following Lemma will be used shortly and is crucial for Proposition \ref{prop minimal weight} below. 
\begin{lemma}\label{lemma envelope weights} Given a finite subset $\E\subset \af^*$, then there is a unique smallest $K\times K$-invariant reflexive weight $w_\E$ on $G$ with the property 
$w_\E(a)\geq a^\lambda$ for all $a\in \oline{A^+}$ and $\lambda\in \E$. It is given by 
$$w_\E (g) =\max\{\1, \sup_{\lambda\in \E^+} w_\lambda(g) \}\qquad (g\in G). $$
\end{lemma}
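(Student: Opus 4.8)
The plan is to verify that the proposed weight $w_\E(g) =\max\{\1, \sup_{\lambda\in \E^+} w_\lambda(g)\}$ is (i) a well-defined $K\times K$-invariant reflexive weight on $G$, (ii) dominates $a^\lambda$ on $\oline{A^+}$ for every $\lambda \in \E$, and (iii) is the smallest such weight. For (i), each $w_\lambda$ is a $K\times K$-invariant weight by the construction in \eqref{standard weight exp}, the constant function $\1$ is a weight, and the supremum (infimum in the ordered sense is not needed here) of finitely many $K\times K$-invariant weights is again a $K\times K$-invariant weight: submultiplicativity of $\max\{\1,\sup_i w_i\}$ follows from the pointwise inequality $\max\{1,\sup_i w_i(gh)\}\leq \max\{1,\sup_i w_i(g)w_i(h)\}\leq \max\{1,\sup_i w_i(g)\}\cdot\max\{1,\sup_i w_i(h)\}$, using $w_i \geq \1$ is \emph{not} assumed, so one should instead bound $w_i(gh)\leq w_i(g)w_i(h)\leq \max\{1,w_i(g)\}\max\{1,w_i(h)\}$ directly. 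Local boundedness and positivity are immediate. Reflexivity, $w_\E = w_\E^\sharp$, holds because each $w_\lambda$ is already reflexive: from \eqref{standard weight exp} one has $w_\lambda(g)=w_\lambda(g^{-1})$ since the defining set $\{a^{w\cdot\mu}, a^{-w\cdot\mu} : w\in W\}$ is invariant under $a\mapsto a^{-1}$ (which on $\oline{A^+}$-representatives amounts to $\log a \mapsto -w_0\log a$, and the Weyl orbit is symmetric), and $\1$ is reflexive.

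For (ii), fix $\lambda\in \E$ and $a\in\oline{A^+}$. If $\lambda\in\E^+$, then by definition $w_\lambda(a)=\sup_{w\in W}\max\{a^{w\cdot\lambda},a^{-w\cdot\lambda}\}\geq a^\lambda$, so $w_\E(a)\geq w_\lambda(a)\geq a^\lambda$. If $\lambda\in\E\setminus\E^+$, then $a\mapsto a^\lambda$ is bounded on $\oline{A^+}$; in fact, since $a\in\oline{A^+}$ means $a^\alpha\geq 1$ for all $\alpha\in\Sigma^+$, boundedness of $a^\lambda$ forces $\lambda$ to be a non-positive combination of simple roots on the chamber, hence $a^\lambda\leq 1$, so $w_\E(a)\geq\1(a)=1\geq a^\lambda$. (One should double-check here that $a^\lambda \le 1$ for all $a \in \oline{A^+}$ is exactly what ``$a^\lambda$ bounded on $\oline{A^+}$'' gives — if $\langle\lambda,\alpha^\vee\rangle > 0$ for some simple $\alpha$ then $a^\lambda\to\infty$ along that wall direction, and if all pairings are $\leq 0$ then $a^\lambda\leq a^0 = 1$; this uses that $\oline{A^+}$ is a full-dimensional closed cone.)

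For (iii), the minimality, suppose $w'$ is any $K\times K$-invariant reflexive weight with $w'(a)\geq a^\lambda$ for all $a\in\oline{A^+}$, $\lambda\in\E$. I need $w'\geq w_\E$ pointwise. Since both are $K\times K$-invariant it suffices to check this on $\oline{A^+}$. For $a\in\oline{A^+}$: first, $w'(e)\geq 1$ by submultiplicativity ($w'(e)\leq w'(e)^2$ and $w'(e)>0$), and more strongly any weight satisfies $w'(g)w'(g^{-1})\geq w'(e)\geq 1$, so by reflexivity $w'(a)^2\geq 1$, giving $w'(a)\geq 1$. Second, for $\lambda\in\E^+$ and any $w\in W$, pick $k\in N_K(\af)$ representing $w$; then $w\cdot\log a = \log(kak^{-1})$ and by $K\times K$-invariance $w'(a)=w'(kak^{-1})=w'(\exp(w\cdot\log a))$. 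Writing $\exp(w\cdot\log a)$ in its $\oline{A^+}$-Cartan representative $a_w$ (which has the same $K$-double coset), we get $w'(a)=w'(a_w)\geq a_w^\lambda$; but $a_w$ is the dominant representative of the $W$-orbit of $w\cdot\log a$, and one checks $a_w^\lambda = \max_{w'\in W} a^{w'\cdot(w\cdot\lambda)} = \max_{w'} a^{w'\cdot\lambda} \geq a^{w\cdot\lambda}$. Hence $w'(a)\geq a^{w\cdot\lambda}$ for all $w\in W$, and by reflexivity also $w'(a)=w'(a^{-1})\geq (a^{-1})^{w\cdot\lambda}=a^{-w\cdot\lambda}$ (after again moving to the dominant representative of $-w\cdot\log a$). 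Taking the supremum over $w\in W$ gives $w'(a)\geq w_\lambda(a)$, and combined with $w'(a)\geq 1$ we get $w'(a)\geq \max\{1,\sup_{\lambda\in\E^+}w_\lambda(a)\}=w_\E(a)$.

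The main obstacle I anticipate is the bookkeeping in step (iii): correctly tracking how $K\times K$-invariance, reflexivity, and passage to dominant Cartan representatives interact, i.e.\ making precise the claim that $w'(a)\geq a^{w\cdot\lambda}$ and $a^{-w\cdot\lambda}$ for \emph{every} $w\in W$ rather than just the dominant one. The clean way is to invoke the preceding Lemma on double cosets: $aKa^{-1}$-type arguments are unnecessary once one observes $K\exp(\operatorname{co}(W\cdot\log a))K \ni \exp(w\cdot\log a)$ for each $w\in W$, so any $K\times K$-invariant weight $w'$ has $w'$ constant on the set $\{\exp(w\cdot\log a): w\in W\}$ up to moving into $\oline{A^+}$, and the constraint $w'\geq a^\lambda$ on $\oline{A^+}$ then propagates across the whole $W$-orbit. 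The reflexivity hypothesis is exactly what is needed to also capture the $a^{-w\cdot\lambda}$ terms without separately assuming $-\lambda\in\E$, which is why $w_\E$ is forced to be the symmetrized envelope rather than a one-sided one.
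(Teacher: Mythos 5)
Parts (i) and (ii) of your argument are essentially fine. The minimality argument in (iii), however, breaks down, and the error is in the chain $w'(a)=w'(a_w)\geq a_w^\lambda=\max_{\sigma\in W}a^{\sigma(w\lambda)}\geq a^{w\lambda}$ (I write $\sigma$ for the dummy Weyl element in place of the colliding $w'$). Since $a\in\oline{A^+}$, $\log a$ is already dominant, so the dominant Cartan representative $a_w$ of $\exp(w\cdot\log a)$ is just $a$ itself; the inequality $w'(a_w)\geq a_w^\lambda$ therefore merely reproduces the hypothesis $w'(a)\geq a^\lambda$, and the asserted identity $a_w^\lambda=\max_{\sigma\in W}a^{\sigma(w\lambda)}$ would then read $a^\lambda=a^{\lambda^+}$ ($\lambda^+$ the dominant representative of $W\lambda$), which fails whenever $\lambda$ is not dominant. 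The structural obstruction is that the hypothesis on $w'$ is available only on $\oline{A^+}$: $W$-invariance of $w'|_A$ lets you write $w'(a)=w'(\exp(w\log a))$, but for $w\neq e$ and $a$ regular the point $\exp(w\log a)$ leaves $\oline{A^+}$, and the constraint $w'(\dotvar)\geq(\dotvar)^\lambda$ cannot be re-applied. All that $K\times K$-invariance and reflexivity actually produce, by passing to $a^{-1}$ and its dominant Cartan representative, is the single extra inequality $w'(a)\geq a^{-w_0\lambda}$ (with $w_0$ the longest element of $W$); together with $w'(a)\geq a^\lambda$ and $w'\geq\1$ this falls well short of the full envelope $\sup_{\sigma\in W}\max\{a^{\sigma\lambda},a^{-\sigma\lambda}\}$.

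The paper's own proof goes by a rather different route: it fixes $\lambda$, works with reflexive weights on $A$ alone, reduces (after a domination by the character $u_\lambda(a)=a^\lambda$) to the one-dimensional quotient $A/\ker u_\lambda$, and determines the minimal reflexive weight there to be $\max\{a^\lambda,a^{-\lambda}\}$; the Weyl-orbit sup is then meant to arise in the passage to $K\times K$-invariant weights on $G$. Your approach instead tries to extract the Weyl-orbit sup directly from the one-sided constraint on $\oline{A^+}$, which, as explained, cannot work. A final caution worth internalizing: once $\lambda|_{\oline{\af^+}}$ takes both signs --- which can happen as soon as the real rank is at least $2$ --- the constraint on $\oline{A^+}$ is genuinely weaker than its $W$-symmetrization on all of $A$, and the minimality assertion then requires considerably more care than either your argument or a surface reading of the paper's proof suggests.
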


\begin{proof} Fix $\lambda\in \E$. We claim that there is a unique smallest reflexive weight $v_\lambda$ on $A$ with $v_\lambda(a)\geq a^\lambda$ for $a\in {\oline{A^+}}$, explicitely given by 
$$ v_\lambda(a)=\begin{cases} \1 & \hbox{for} \quad \lambda\not\in \E^+\\ \max\{a^\lambda, a^{-\lambda}\} & \hbox{for}\quad 
\lambda\in \E^+\end{cases}\qquad (a \in A)\, .$$

Certainly, the character $u_\lambda(a)=a^\lambda$ for $a\in A$ is a weight which dominates $a^\lambda$ on $\oline{A^+}$. 
Thus by the infimum construction we may assume that $v_\lambda$, in case it exists, is dominated by $u_\lambda$, i.e. 
$v_\lambda\leq u_\lambda$. 
This means that $v_\lambda$ is bounded by $1$ on 
$\ker u_\lambda$. 

Let $a\in A$. Then, as $v_\lambda$ is reflexive, 

$$1\geq v_\lambda(a)^2 = v_\lambda(a)v_\lambda(a^{-1})\geq v_\lambda(\1)=1$$
implies that $v_\lambda$ is bounded from below by $1$. 
Thus our candidate $v_\lambda$ must be constant equal to $1$ on $\ker u_\lambda$. This in turn yields that 
$v_\lambda (ab)=v_\lambda(a)$ for all $b \in \ker u_\lambda$. We descend in the one-dimensional situation of $A/\ker u_\lambda$. 
Hence $v_\lambda =\1$ if $\lambda\not \in \E^+$. If $\lambda\in \E^+$, then, as $v_\lambda$ is reflexive, we obtain that 
$v_\lambda(a)=\max\{a^\lambda, a^{-\lambda}\}$. This proves the claim. 

\par Finally note that $w_\E$ is certainly a reflexive weight which dominates all $a^\lambda$ on $\oline{A^+}$ for $\lambda\in \E$. We need to see that it is unique. Let $v_\E$ be the restriction of $w_\E$ to $A$. 
If there is reflexive weight $v$ on $A$ with this property and smaller than $v_\E$, then by our claim we must have $v\geq v_\lambda$ for all $\lambda\in \E$. Hence $v\geq \sup_{\lambda\in \E} v_\lambda$ and the uniqueness follows. 
\end{proof}

\subsection{Infimum construction} 
Recall that for a family $(q_{\alpha})_{\alpha\in \A}$ of seminorms on a vector space $E$ one can define the seminorm $\inf_{\alpha\in \A} q_{\alpha}$ of the family by
\begin{equation} \label{def inf norm}
\inf_{\alpha\in \A} q_{\alpha}(v)
:=\inf_{v =\sum_{\alpha \in \A} v_\alpha }\sum_{\alpha\in \A} q_{\alpha}(v_\alpha)
\qquad(v\in E),
\end{equation}
where the infimum is taken over finite presentations $v =\sum_{\alpha \in \A} v_\alpha$.

Given a norm $p$ on a Harish-Chandra module $V$ we define
$$
p^{G,w}
=p^G
= \inf_{g\in G} w(g^{-1}) p(g\dotvar )
$$
as the infimum of the family of norms $\big(w(g^{-1}) p(g\dotvar)\big)_{g\in G}$.
More explicitly, we have
$$
p^G(v)
=\inf_{ v =\sum_{g \in G} v_g } \sum_{g\in G}w(g^{-1})p(g \cdot v_g)
\qquad (v\in V).
$$
Note that $p^G$ is the largest semi-norm on $V$ with $p^G \leq w(g^{-1}) p(g\cdot)$. It can happen that $p^G$ degenerates to zero. However, this is prevented under a natural condition.

\begin{lemma} \label{inf w lemma}
The following assertions hold for a norm $p$ (not necessarily $G$-continuous) on a Harish-Chandra module $V$.
\begin{enumerate}[(i)]
\item\label{inf w lemma - item 1} $p^G(g\cdot v) \leq w(g) p^G(v)$ for all $g\in G$ and $v\in V$.
\item\label{inf w lemma - item 2} If there exists a $q\in \No(V,w)$ such that $q\leq p$, then $q\leq p^G$ and $p^G$ is a norm. If in addition there exists a $G$-continuous norm $r$ on $V$ such that $p\leq r$, then $p^G$ is a $G$-continuous norm, and in this case $p^G\in \No(V,w)$.
\end{enumerate}
\end{lemma}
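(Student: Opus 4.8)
The plan is to prove Lemma \ref{inf w lemma} in two parts, following the structure of the statement.

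\textbf{Part (i): the weight bound.} First I would unwind the definition of $p^G$. Given $v \in V$ and $\epsilon > 0$, pick a finite presentation $v = \sum_{h \in G} v_h$ with $\sum_h w(h^{-1}) p(h \cdot v_h) \le p^G(v) + \epsilon$. Then for fixed $g \in G$ we have $g \cdot v = \sum_h g \cdot v_h = \sum_h (gh) \cdot v_h$, which is itself a finite presentation of $g\cdot v$ indexed (after reindexing by $g' = gh$) over $G$. Estimating the cost of this presentation and using submultiplicativity of $w$ in the form $w(h^{-1}) = w((g')^{-1} g) \ge w((g')^{-1})/w(g^{-1})$... wait — I need the inequality to go the right way. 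The correct move: $w((g')^{-1}) = w(h^{-1} g^{-1}) \le w(h^{-1}) w(g^{-1})$, hence $w(h^{-1}) \ge w((g')^{-1})/w(g^{-1})$, so
$$
p^G(g\cdot v) \le \sum_{g'} w((g')^{-1}) p(g' \cdot v_h) \le w(g^{-1})^{-1}\cdot\text{(wrong direction)}.
$$
So instead I should bound directly: $p^G(g \cdot v) \le \sum_{g'} w((g')^{-1}) p(g' \cdot v_h)$ where $g' = gh$, and $w((g')^{-1}) \le w(g^{-1}) w(h^{-1})$... no. Let me restart this estimate cleanly: from $g'=gh$ we get $h=g^{-1}g'$, so $h^{-1}=(g')^{-1}g$ and $w(h^{-1}) = w((g')^{-1}g) \le w((g')^{-1})w(g)$ — still not matching. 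The honest approach is to use the presentation $v = \sum_h v_h$ to build a presentation of $g\cdot v$ as $\sum_h (gh)\cdot v_h$, and compute
$$
p^G(g\cdot v) \le \sum_h w((gh)^{-1}) p((gh)\cdot v_h) = \sum_h w(h^{-1}g^{-1}) p((gh)\cdot v_h) \le w(g^{-1})\sum_h w(h^{-1}) p((gh)\cdot v_h),
$$
but now the $p$-term has $(gh)\cdot v_h$, not $h\cdot v_h$. This is circular. The right reindexing: start from a presentation of $g\cdot v$ and push it back. Let $u := g\cdot v$; for any presentation $u = \sum_{g'} u_{g'}$ we do not directly get one for $v$. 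I will therefore argue via the characterization that $p^G$ is the \emph{largest} seminorm $r$ with $r \le w(g^{-1})p(g\cdot)$ for all $g$, stated just before the lemma. The seminorm $v \mapsto w(g_0)^{-1} p^G(g_0 \cdot v)$ — hmm, that is not obviously $\le w(g^{-1})p(g\cdot)$ either. The cleanest correct derivation: directly from the explicit formula, given $\epsilon>0$ choose $v = \sum_h v_h$ realizing $p^G(v)$ to within $\epsilon$; then $g\cdot v = \sum_h g\cdot v_h$ is a presentation of $g\cdot v$ indexed by $h\in G$ where the $h$-th component is $g\cdot v_h = (gh)\cdot(h^{-1}v_h)$... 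I will present it as: $g\cdot v = \sum_{g'} ((g') \cdot w_{g'})$ with $g' = gh$ and $w_{g'} = h^{-1}\cdot v_h$ where $h = g^{-1}g'$. Then $p^G(g\cdot v) \le \sum_{g'} w((g')^{-1}) p(g' \cdot w_{g'}) = \sum_h w(h^{-1}g^{-1}) p((gh)\cdot h^{-1} v_h) = \sum_h w(h^{-1}g^{-1}) p(g\cdot v_h) \le w(g^{-1})\sum_h w(h^{-1}) p(g\cdot v_h)$... and $p(g\cdot v_h)$ is still not $p(h\cdot v_h)$. I now see the intended indexing variable in $p^G(v) = \inf \sum_g w(g^{-1})p(g\cdot v_g)$ is that $v = \sum_g v_g$ and the $g$-th term pays $w(g^{-1})p(g\cdot v_g)$; the natural translate is $g_0\cdot v = \sum_g g_0 v_g = \sum_g (g_0 g)\cdot v_g$, reindex $g'' = g_0 g$ so component $v''_{g''} = v_g = v_{g_0^{-1}g''}$, giving $p^G(g_0 v) \le \sum_{g''} w((g'')^{-1}) p(g''\cdot v''_{g''}) = \sum_g w(g^{-1}g_0^{-1}) p((g_0 g)\cdot v_g)$. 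Since $(g_0 g)\cdot v_g = g_0\cdot(g\cdot v_g)$ and... no, $p((g_0g)v_g) \ne p(g v_g)$ in general. The resolution: one does \emph{not} get (i) for free from the formula; rather $p^G$ as defined with $w(g^{-1})p(g\cdot)$ is engineered precisely so that, writing $\sigma_g(v) := w(g^{-1})p(g\cdot v)$, one has $\sigma_{g_0 g}(v) \le w(g_0^{-1})\sigma_g(g_0 v)$? Check: $\sigma_{g_0 g}(v) = w((g_0g)^{-1})p(g_0 g v) = w(g^{-1}g_0^{-1})p(g_0(gv)) \le w(g^{-1})w(g_0^{-1})p(g_0\cdot (gv))$, and $\sigma_g(g_0 v)$ doesn't have $gv$ inside. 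I will just assert the standard computation: for the infimum of the translate-family $\{\sigma_g\}$, invariance $p^G(g_0 v)\le w(g_0)p^G(v)$ holds because the family $\{\sigma_g\}_{g}$ satisfies $\sigma_g(g_0^{-1}\cdot) \le w(g_0) \sigma_{g g_0^{-1}}(\cdot)$ — indeed $\sigma_g(g_0^{-1}v) = w(g^{-1})p(g g_0^{-1}v) = \sigma_{gg_0^{-1}}(v)\cdot \frac{w(g^{-1})}{w(g_0 g^{-1})}$ and $w(g^{-1}) = w(g_0^{-1}g_0 g^{-1}) \le w(g_0^{-1})w(g_0 g^{-1})$, so $\sigma_g(g_0^{-1}v) \le w(g_0^{-1})\sigma_{gg_0^{-1}}(v)$. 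Taking infimum over $g$ (equivalently over $gg_0^{-1}$) on both sides gives $p^G(g_0^{-1}v)\le w(g_0^{-1})p^G(v)$, i.e. (i). This is the crux of part (i) and I will write it carefully.

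\textbf{Part (ii): norm and $G$-continuity.} Suppose $q \in \No(V,w)$ with $q \le p$. For any presentation $v = \sum_g v_g$, submultiplicativity of $q$'s growth gives $q(v) \le \sum_g q(v_g) \le \sum_g q(g^{-1}\cdot(g\cdot v_g)) \le \sum_g C w(g^{-1}) q(g\cdot v_g) \le C\sum_g w(g^{-1}) p(g\cdot v_g)$; taking the infimum yields $q \lesssim p^G$, so in particular $p^G$ is a genuine norm (nondegenerate) and lies in the right growth class once we know it is $G$-continuous. For $G$-continuity, I would invoke the hypothesis $p \le r$ with $r$ a $G$-continuous norm: then $p^G \le p \le r$, and sandwiching $c\,q \le p^G \le r$ between two $G$-continuous norms $q, r$ on the same Harish-Chandra module forces $p^G$ to be $G$-continuous — this is a standard consequence of the theory in \cite{BK} (the completion $V_{p^G}$ is squeezed between $V_r$ and $V_q$, and the $G$-action extends continuously; alternatively cite Proposition \ref{Prop K-Hermitian squeezing} and the surrounding framework). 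Finally, combining $p^G \in \No(V)$-type membership with (i), which gives $w_{p^G} \lesssim w$, we conclude $p^G \in \No(V,w)$.

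\textbf{Main obstacle.} The genuinely delicate point is the bookkeeping in part (i): getting the submultiplicativity of $w$ applied in the correct direction when reindexing presentations, as the scratch work above shows it is easy to get an inequality pointing the wrong way. Everything in part (ii) is soft once one is willing to cite the squeezing results of \cite{BK} for the $G$-continuity of a norm trapped between two $G$-continuous ones.
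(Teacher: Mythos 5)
Your final argument for (i), buried after the false starts, is exactly the paper's proof written in the variable $g_0^{-1}$: the paper computes directly
$$
p^G(g\dotvar)=\inf_{h\in G}w(h^{-1})p(hg\dotvar)=\inf_{h\in G}w(gh^{-1})p(h\dotvar)\leq w(g)\inf_{h\in G}w(h^{-1})p(h\dotvar)=w(g)p^G,
$$
which is precisely your observation $\sigma_g(g_0^{-1}v)\leq w(g_0^{-1})\sigma_{gg_0^{-1}}(v)$ followed by the infimum over $g$. Incidentally, the universal-property route you abandoned does work and is arguably cleaner: for any $g$ one has $p^G(g_0 v)\leq w(g_0g^{-1})p(gv)\leq w(g_0)w(g^{-1})p(gv)$, so $\tfrac{1}{w(g_0)}p^G(g_0\dotvar)\leq w(g^{-1})p(g\dotvar)$ for all $g$, whence $\tfrac{1}{w(g_0)}p^G(g_0\dotvar)\leq p^G$ by the characterization of $p^G$ as the largest such seminorm.

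In (ii) the estimate $q\lesssim p^G$ is correct and matches the paper (the paper's displayed chain suppresses the constant from the definition of $\No(V,w)$, but the conclusion that $p^G$ is a norm is unaffected). However, the claim that ``sandwiching $c\,q\leq p^G\leq r$ between two $G$-continuous norms forces $p^G$ to be $G$-continuous'' is not a theorem of \cite{BK} and is false in that generality: being squeezed between two $G$-continuous Banach norms does not by itself give boundedness of the operators $\pi(g)$ on $V_{p^G}$. The paper's argument is a two-step one: the upper bound $p^G\leq p\leq r$ makes the orbit maps $g\mapsto g\cdot v$ continuous into $V_{p^G}$ (they are continuous into $V_r$, and the identity $V_r\to V_{p^G}$ is continuous), while part (i) supplies the local operator-norm bound $p^G(g\dotvar)\leq w(g)p^G$; only the combination of these two, fed into \cite[Lemma 2.3]{BK}, yields $G$-continuity. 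You do have both ingredients in hand, so this is a misstatement rather than a missing idea, but as written the sandwiching step would not stand on its own.
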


\begin{proof} For the first assertion we note that
$$
p^G(g\dotvar)
= \inf_{h \in G} w(h^{-1} )p(hg\dotvar)
= \inf_{h \in G} w(gh^{-1})p(h\dotvar)
\leq w(g) \inf_{h\in G}w(h^{-1}) p(h\dotvar)
=w(g) p^G.
$$
For the second assertion, let $q\in \No(V,w)$ with $q\leq p$. For $g\in G$ and $v\in V$ we have
$$
q(v)
= q(g^{-1}g\cdot v)
\leq w(g^{-1}) q(g\cdot v)
\leq w(g^{-1}) p(g\cdot v).
$$
It follows that $q\leq p^G$ and therefore $p^G$ is a norm. In view of (\ref{inf w lemma - item 1}) and \cite[Lemma 2.3]{BK} it now suffices to show that the orbit map $G\to V_{p^{G}}$, $g\mapsto g \cdot v$ is continuous for every $v \in V$. But this is a consequence of the domination $p^G \leq p\leq r$ and the $G$-continuity of $r$.
\end{proof}

\subsection{Minimal and maximal norms}\label{subsect: min-max norms}

For a Harish-Chandra module $V$ and a weight $w$ of $G$ we consider the set of equivalence classes $\Noc(V, w):= \No(V, w) /\sim $ of $G$-continuous norms on $V$ with growth bounded by a multiple of $w$. We denote the equivalence class of a norm $p$ by $[p]$. Notice that $\lesssim $ on $\No(V,w)$ induces a partial order
$\leq $ on $\Noc(V,w)$.

\begin{lemma}\label{Lemma minimal norm}
Suppose that $V$ is a Harish-Chandra module and $w:G\to\R_{>0}$ is a weight so that $\No(V,w)\neq \emptyset$. Then $\Noc(V, w)$ has a unique minimal element $[p_{\rm min}]$.
Assume that $C$ is a compact subset of a finite dimensional subspace of $\tilde V$ that generates $\tilde V$ as a $(\gf,K)$-module. Then the $G$-continuous norm
\begin{equation} \label{def pmin}
p_{\min, C}:V\to\R_{\geq 0},\quad v\mapsto \sup_{g \in G}\sup_{\tilde{v}\in C} \frac{ |m_{v, \tilde{v}} (g)|}{w(g)}
\end{equation}
is in $\No(V,w)$ and satisfies $[p_{\min,C}]=[p_{\rm min}]$.
\end{lemma}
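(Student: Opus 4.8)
The plan is to check, in order, that $p_{\min,C}$ is finite-valued and dominated by every element of $\No(V,w)$, that it is a norm, that it is $G$-continuous and hence lies in $\No(V,w)$, and finally that these facts force $[p_{\min,C}]$ to be the unique minimal class, independent of $C$. I would begin with the first two points, which go together. Fix any $q\in\No(V,w)$, available by hypothesis, with $q(g\cdot v)\le C_q\,w(g)\,q(v)$. Since every $\tilde v$ is a $K$-finite functional it is $q$-bounded, so $|m_{v,\tilde v}(g)|=|\tilde v(g\cdot v)|\le\tilde q(\tilde v)\,q(g\cdot v)\le C_q\,\tilde q(\tilde v)\,w(g)\,q(v)$. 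Dividing by $w(g)$ and taking the supremum over $g\in G$ and over $\tilde v$ in the compact set $C$ — on whose finite-dimensional span $\tilde q$ is a (hence bounded) norm — yields $p_{\min,C}(v)\le\big(C_q\sup_{\tilde v\in C}\tilde q(\tilde v)\big)q(v)<\infty$. This single estimate shows at once that $p_{\min,C}$ is finite and that $p_{\min,C}\lesssim q$ for every $q\in\No(V,w)$.

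Next, $p_{\min,C}$ is a supremum of the seminorms $v\mapsto w(g)^{-1}|m_{v,\tilde v}(g)|$, hence a seminorm; and if $p_{\min,C}(v)=0$ then $m_{v,\tilde v}\equiv 0$ on $G$ for all $\tilde v\in C$, so, differentiating this analytic function at the identity (recall $R(u)m_{v,\tilde v}(e)=\tilde v(uv)$) and translating on the right by $K$, one gets $\ell(v)=0$ for every $\ell$ in the $(\gf,K)$-module generated by $C$, i.e.\ for all $\ell\in\tilde V$; as $\widetilde{\widetilde V}=V$, the module $\tilde V$ separates points of $V$ and $v=0$. Substituting $g\mapsto gh$ in the defining supremum and using $w(g)\le w(gh^{-1})w(h)$ gives the growth bound $p_{\min,C}(h\cdot v)\le w(h)\,p_{\min,C}(v)$. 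For $G$-continuity I would proceed exactly as in the proof of Lemma \ref{inf w lemma}: by \cite[Lemma 2.3]{BK} it suffices to check that each orbit map $g\mapsto g\cdot v$ into $V_{p_{\min,C}}$ is continuous, and this is immediate from the domination $p_{\min,C}\lesssim q$ together with the $G$-continuity of $q$. Combined with the growth bound, $p_{\min,C}\in\No(V,w)$.

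With all of this in hand, $[p_{\min,C}]\in\Noc(V,w)$ lies below every class and is therefore automatically the unique minimal element; applying the domination to two admissible choices $C,C'$ gives $p_{\min,C}\sim p_{\min,C'}$, so $[p_{\rm min}]:=[p_{\min,C}]$ is well defined. (Such a $C$ exists because $\tilde V$, being a Harish-Chandra module, is finitely generated over $\U(\gf)$; the unit ball of the span of a finite generating set is a compact $C$ already generating $\tilde V$ over $\U(\gf)$, a fortiori over $(\gf,K)$.) The one step with real content is the $G$-continuity claim — that $V_{p_{\min,C}}$ genuinely globalizes $V$, with the correct $K$-finite vectors — which is precisely the Casselman--Wallach-type ingredient already packaged in \cite[Lemma 2.3]{BK}; everything else is soft, relying only on the boundedness of $K$-finite functionals for $G$-continuous norms and on the submultiplicativity of $w$.
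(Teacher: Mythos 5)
Your proof is correct and follows essentially the same route as the paper's: establish finiteness and the domination $p_{\min,C}\lesssim q$ in one stroke for each $q\in\No(V,w)$, verify definiteness from the fact that $C$ generates $\tilde V$, check the $w$-growth bound via submultiplicativity, and invoke \cite[Lemma 2.3]{BK} for $G$-continuity via the domination. The only difference is cosmetic: you unpack the definiteness step (differentiation at $e$ plus $K$-translation to show the $(\gf,K)$-module generated by $C$ annihilates $v$, hence all of $\tilde V$ does) where the paper just asserts that for $v\neq 0$ some $m_{v,\tilde v}$ with $\tilde v\in C$ is nonvanishing; the one small slip is that in the paper's convention $m_{v,\tilde v}(g)=\tilde v(g\cdot v)$, it is \emph{left} translation of $g$ that corresponds to the $K$-action on $\tilde v$, not right translation, but this does not affect the argument.
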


\begin{proof}
Let us first show that $p_{\rm min,C}(v) $ is finite for every $v\in V$.
For that let $p\in \No(V,w)$ and let $c>0$ be such that $p(g\cdot v)\leq c w(g) p(v)$ for all $g\in G$ and $v\in V$. Then it follows from
$$
|\tilde{v} (g \cdot v )|\leq p (g \cdot v) \tilde p (\tilde{v})
\leq C w(g) p(v) \tilde p(\tilde{v})
\qquad\big(\tilde{v}\in\tilde{V}, v\in V, g\in G\big)
$$
that
\begin{equation}\label{eq p_j leq p}
p_{\min,C}(v)
\leq c \big( \sup_{\tilde v \in C} \tilde p(\tilde{v})\big) p(v)
= \tilde c p(v)
\qquad (v \in V)
\end{equation}
with $\tilde c : = c \sup_{\tilde v \in C} \tilde p (\tilde{v})<\infty$. Hence, $p_{\rm min, C}$ is indeed finite and defines a seminorm. Since $C$ is generating there exists for every $v\in V$ a $\tilde v \in C$ so that the matrix-coefficient $m_{v,\tilde{v}}$ does not vanish. Therefore, $p_{\rm min}$ is a norm on $V$.
Moreover, from
$$
w(gh^{-1})
\geq \frac{w(g)}{w(h)}
\qquad(g,h\in G)
$$
we infer that
\begin{equation}\label{eq p_min is w-bounded}
p_{\min,C}(g\cdot v)
\leq w(g) p_{\min,C}(v)
\qquad(g\in G, v\in V).
\end{equation}
From (\ref{eq p_j leq p}) it follows that $p_{\min,C }\lesssim p$, and hence we obtain that the orbit maps
$$
G\to V_{p_{\rm min,C }},
\quad g\mapsto g\cdot v
\qquad(v\in V)
$$
are continuous. In view of \cite[Lemma 2.3]{BK}, (\ref{eq p_min is w-bounded}) and the local boundedness of $w$, the norm $p_{\rm min,C}$ is $G$-continuous, and we thus conclude that $p_{\rm min,C }\in \No(V,w)$.

Finally, the above arguments show that $p_{\rm min, C }\lesssim p$ for every $p\in \No(V,w)$. This implies that $[p_{\min,C}]$ is the unique minimal element of $\Noc(V,w)$.
\end{proof}

\begin{rmk}
Since $\tilde V$ is finitely generated we can take for $C$ a finite generating subset $\{\tilde v_1, \ldots, \tilde v_m\}$ of $\tilde V$. In the special case where $V$, and hence also $\tilde V$, is irreducible, any singleton $C=\{\tilde v\}$ with $\tilde{v}\neq 0$ will do. For later purposes, it turns out to be useful to allow more general choices for $C$, for instance, that $C$ is $K$-invariant. By abuse of notation we will henceforth drop the extra notation on $C$ and simply write $p_{\rm min}$ instead of $p_{\rm min, C}$.
\end{rmk}

By dualizing the construction in Lemma \ref{Lemma minimal norm} we obtain:

\begin{cor}\label{Cor max norm}
Suppose that $V$ is a Harish-Chandra module and $w:G\to\R_{>0}$ is a weight so that $\No(\tilde V,w)\neq \emptyset$. Then $\Noc(V, w)$ has a unique maximal element $[p_{\rm max}]$.

Moreover, if $p\in \No(\tilde{V},w^{\sharp})$ is a representative of the minimal element in $\Noc(\tilde{V},w^{\sharp})$, then
$$
p_{\rm max}
:=\tilde{p}
$$
is a representative of $[p_{\rm max}]$.
\end{cor}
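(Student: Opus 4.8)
The plan is to derive Corollary~\ref{Cor max norm} from Lemma~\ref{Lemma minimal norm} by dualizing. The key point is that the dual-norm operation $p\mapsto\tilde p$ is order-reversing and, by \eqref{dual}, carries $\No(V,w)$ bijectively onto $\No(\tilde V,w^{\sharp})$; a maximal element of $\Noc(V,w)$ is then exactly the dual of the minimal element of $\Noc(\tilde V,w^{\sharp})$, whose existence and explicit form are already available. (Here the nonemptiness hypothesis should be read with the matching dual weight, i.e. $\No(\tilde V,w^{\sharp})\neq\emptyset$; by \eqref{dual} this is equivalent to $\No(V,w)\neq\emptyset$, which is precisely what makes $\Noc(V,w)$ contain norms at all. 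When $w$ is reflexive — in particular for $w=\1$, the case of primary interest, and for the standard exponential and logarithmic weights — this distinction is immaterial.)

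First I would record that $p\mapsto\tilde p$ is genuinely a bijection between the relevant sets. Since $\tilde V$ is again a Harish-Chandra module with $\tilde{\tilde V}=V$ and $(w^{\sharp})^{\sharp}=w$, relation \eqref{dual} sends $\No(V,w)$ into $\No(\tilde V,w^{\sharp})$ and, applied to $\tilde V$ in place of $V$, sends $\No(\tilde V,w^{\sharp})$ back into $\No(V,w)$; by Lemma~\ref{pq-lemma} (the identity $\tilde{\tilde p}=p$) these two maps are mutually inverse, and $\tilde p$ is $G$-continuous whenever $p$ is, as recalled before Lemma~\ref{pq-lemma}.

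Next I would pass to equivalence classes and check order reversal. Directly from the definition of the dual norm, the dual of $Cq$ equals $C^{-1}\tilde q$ for every $C>0$; combined with the equivalence $p\leq q\iff\tilde q\leq\tilde p$ of Lemma~\ref{pq-lemma}, this gives $p\lesssim q\iff\tilde q\lesssim\tilde p$, hence $p\sim q\iff\tilde p\sim\tilde q$. Therefore $p\mapsto\tilde p$ descends to an order-reversing bijection $\Phi\colon\Noc(V,w)\to\Noc(\tilde V,w^{\sharp})$, $[q]\mapsto[\tilde q]$. Now $\tilde{\tilde V}=V$ is finitely generated as a $\U(\gf)$-module, so it contains a finite, hence compact, generating subset of a finite-dimensional subspace; Lemma~\ref{Lemma minimal norm}, applied to $\tilde V$ and the weight $w^{\sharp}$, then supplies the unique minimal element $[p_{\min}]\in\Noc(\tilde V,w^{\sharp})$, with explicit representative given by \eqref{def pmin}. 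An order-reversing bijection carries the minimum to the maximum, so $\Phi^{-1}([p_{\min}])$ is the unique maximal element $[p_{\max}]$ of $\Noc(V,w)$. Finally, if $p\in\No(\tilde V,w^{\sharp})$ represents $[p_{\min}]$, then $\Phi([\tilde p])=[\tilde{\tilde p}]=[p]=[p_{\min}]$, whence $[\tilde p]=\Phi^{-1}([p_{\min}])=[p_{\max}]$; that is, $p_{\max}:=\tilde p$ is a representative of $[p_{\max}]$.

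I do not anticipate a real obstacle: the proof is pure duality bookkeeping resting on \eqref{dual}, Lemma~\ref{pq-lemma}, and Lemma~\ref{Lemma minimal norm}. The two spots that merit attention are the scaling identity for dual norms (so that the equivalence relation $\sim$, not merely the order $\leq$, is preserved and $\Phi$ is well defined on classes) and keeping the weights $w$ and $w^{\sharp}$ attached to the correct module at each step.
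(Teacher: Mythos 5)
Your proof is correct and uses the same duality mechanism as the paper: apply Lemma~\ref{Lemma minimal norm} to $\tilde V$ with the weight $w^{\sharp}$, and transport the resulting minimal element back to a maximal element of $\Noc(V,w)$ via the order-reversing bijection $p\mapsto\tilde p$ coming from Lemma~\ref{pq-lemma} and \eqref{dual}; you simply spell out the bookkeeping (scaling, descent to equivalence classes) that the paper's two-line proof leaves implicit. Your parenthetical observation about the hypothesis is a legitimate catch: as stated, $\No(\tilde V,w)\neq\emptyset$ is (via \eqref{dual}) equivalent to $\No(V,w^{\sharp})\neq\emptyset$, whereas what the argument actually needs is $\No(\tilde V,w^{\sharp})\neq\emptyset$, i.e. $\No(V,w)\neq\emptyset$; these agree only when $w$ is reflexive, so the stated hypothesis appears to be a typo for $\No(\tilde V,w^{\sharp})\neq\emptyset$.
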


\begin{proof}
As $\tilde{\tilde V} =V$, it follows from \eqref{dual} that $p_{\rm max}\in \No(V, w)$.
It follows from Lemma \ref{pq-lemma} that $[p_{\rm max}]$ is maximal in $\Noc(V, w)$.
\end{proof}

For a $G$-continuous norm $p$ we recall that the space of smooth vectors $V_p^\infty$ defines an SF-representation \cite[\S 2.3 \& \S 2.4]{BK}. Now the Casselman-Wallach theorem asserts that every Harish-Chandra module admits an $SF$-completion $V^\infty$ which is unique up to isomorphism. (In Theorem \ref{Thm Casselman-Wallach} another formulation of this theorem will be given.)

Having introduced now $V^\infty$ we can give an alternative construction of the minimal norm in Lemma \ref{Lemma minimal norm} which is a bit more flexible and useful in the sequel. 

\begin{prop}\label{Prop smooth minimal}
Suppose that $V$ is a Harish-Chandra module and $w:G\to\R_{>0}$ is a weight such that $\No(V,w)\neq \emptyset$.
Assume that $C \subset {\tilde V}^\infty$ is a compact subset so that $\Span G\cdot C \subset {\tilde V}^\infty$ is weakly dense in the sense that if $v\in V$ and $\tilde{v}(v)=0$ for all $\tilde{v}\in G\cdot C$, then $v=0$.
Then 
$$
p_{\min, C}:V\to\R_{\geq 0},
\quad v\mapsto \sup_{g \in G}\sup_{\tilde{v}\in C} \frac{ |m_{v, \tilde{v}} (g)|}{w(g)}
$$
is a norm in $\No(V,w)$ and satisfies $[p_{\min,C}]=[p_{\rm min}]$.
\end{prop}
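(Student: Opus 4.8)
The plan is to reduce Proposition \ref{Prop smooth minimal} to the already established Lemma \ref{Lemma minimal norm} by exploiting the density hypothesis on $\Span G\cdot C$. First I would check that $p_{\min,C}$ as defined on $V$ is finite-valued: for $v\in V$ pick any $p\in\No(V,w)$ with growth constant $c$, so that $p(g\cdot v)\le cw(g)p(v)$; since $C\subset\tilde V^\infty$ is compact, the dual norm (in the smooth sense, i.e.\ using the SF-topology on $V_p^\infty$ and its strong dual) is bounded on $C$, and the estimate $|m_{v,\tilde v}(g)|=|\tilde v(g\cdot v)|\le p(g\cdot v)\,\tilde p(\tilde v)\le cw(g)p(v)\sup_{\tilde v\in C}\tilde p(\tilde v)$ gives $p_{\min,C}(v)\le \tilde c\,p(v)$ with $\tilde c<\infty$. (Here one uses that $K$-finite functionals on $V$ extend continuously to $V_p^\infty$ and that the matrix coefficient $m_{v,\tilde v}$ is independent of the completion, as recalled in the text; the subtlety compared to Lemma \ref{Lemma minimal norm} is that $C$ now lives in $\tilde V^\infty$, not in $\tilde V$, so the pairing must be interpreted via the smooth globalization, and one needs that for $\tilde v\in\tilde V^\infty$ the function $g\mapsto \tilde v(g\cdot v)$ is still real-analytic — or at least smooth of moderate growth — which follows since $v\in V$ is $K$-finite hence analytic in $V_p^\infty$.)

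Next I would verify $p_{\min,C}$ is genuinely a norm, not merely a seminorm: if $p_{\min,C}(v)=0$ then $m_{v,\tilde v}(g)=0$ for all $g\in G$ and all $\tilde v\in C$, i.e.\ $(g^{-1}\cdot\tilde v)(v)=0$ for all $g,\tilde v$, so $v$ is annihilated by the linear span of $G\cdot C$; the weak density hypothesis then forces $v=0$. Then, exactly as in the proof of Lemma \ref{Lemma minimal norm}, the submultiplicativity of $w$ via $w(gh^{-1})\ge w(g)/w(h)$ gives $p_{\min,C}(g\cdot v)\le w(g)p_{\min,C}(v)$, and the domination $p_{\min,C}\lesssim p$ makes the orbit maps $G\to V_{p_{\min,C}}$ continuous; invoking \cite[Lemma 2.3]{BK} together with local boundedness of $w$ yields $G$-continuity, so $p_{\min,C}\in\No(V,w)$.

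Finally, for the identification $[p_{\min,C}]=[p_{\rm min}]$: one inclusion, $p_{\min,C}\lesssim p$ for every $p\in\No(V,w)$, is precisely the estimate from the first step, so $[p_{\min,C}]\le[p_{\rm min}]$ is automatic — wait, rather $p_{\min,C}$ is dominated by every $G$-continuous norm of growth $\lesssim w$, which already shows $[p_{\min,C}]\le[p_{\min}]$; but $[p_{\min}]$ is \emph{the} minimal element, so we need the reverse $[p_{\min}]\le[p_{\min,C}]$, and since $p_{\min,C}\in\No(V,w)$ this is immediate from minimality of $[p_{\min}]$. Hence equality. So in fact the only real content beyond bookkeeping is that $p_{\min,C}$ lands in $\No(V,w)$ and is a norm, and the decisive point there is the finiteness estimate, which requires knowing that elements of the compact set $C\subset\tilde V^\infty$ are uniformly bounded as functionals with respect to the dual of some fixed $G$-continuous norm.

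The main obstacle I anticipate is the careful handling of the pairing $m_{v,\tilde v}$ when $\tilde v\in\tilde V^\infty$ rather than $\tilde v\in\tilde V$: one must justify that $g\mapsto\tilde v(g\cdot v)$ makes sense and is measurable/continuous with the stated growth, and that $\sup_{\tilde v\in C}\tilde p(\tilde v)<\infty$ where $\tilde p$ is understood on the smooth dual. Both follow from the Casselman--Wallach theory recalled in the excerpt (the canonical identification of $\tilde V$ with $K$-finite vectors in $V_p'$, the density of $\tilde V$ in $\tilde V^\infty\hookrightarrow V_p'$, and continuity of the $G$-action on $V_p^\infty$), but assembling them cleanly — in particular extending the domination $|\tilde v(g\cdot v)|\le \tilde c\,w(g)\,p(v)$ from $\tilde v\in\tilde V$ to $\tilde v\in\tilde V^\infty$ by a density/continuity argument and then taking the supremum over the compact set $C$ — is the step that needs genuine care rather than routine repetition of Lemma \ref{Lemma minimal norm}.
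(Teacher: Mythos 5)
Your proposal is correct and follows essentially the same approach as the paper's proof. The one place where the paper is cleaner is the boundedness of $\tilde p$ on $C$: rather than a density/continuity extension argument phrased in terms of the ``smooth dual,'' the paper simply invokes the Casselman--Wallach continuous $G$-equivariant inclusion $\tilde V^\infty\hookrightarrow\tilde E:=\tilde V_{\tilde p}$, so that $C$ is compact (hence $\tilde p$-bounded) already as a subset of the Banach space $\tilde E$, and the estimate $|m_{v,\tilde v}(g)|\le \tilde p(\tilde v)\,p(g\cdot v)$ applies directly for $\tilde v\in C$.
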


\begin{proof}
Let $p\in \No(V,w)$ and $E=V_p$ the corresponding Banach completion and likewise $\tilde E$ the completion of $\tilde V $ with respect to $\tilde p$. According to Casselman-Wallach we have a continuous $G$-equivariant inclusion $\tilde V^\infty\to \tilde E$. In particular $C\subset \tilde E$ is compact and hence $\tilde p$ is bounded on $C$. The fact that $G\cdot C\subset \tilde V^\infty$ is weakly dense guarantees that 
$p_{\min, C}$ is a norm. The remaining arguments are now entirely parallel to those given in the proof of Lemma \ref{Lemma minimal norm}.
\end{proof}

\subsection{Minimal weights attached to Harish-Chandra modules}

Given a Harish-Chandra module $V$ we wish to determine a weight $w_V$ such that $[w_V]$ is essentially minimal with respect to the property $\No(V,w_V)\neq \emptyset$.
This is achieved by the constant term approximation of $K$-finite matrix coefficients $m_{v, \tilde v}$. 

The bi-$K$-finite matrix coefficients $m_{v,\tilde v}$ admit a convergent power series expansion \cite{CasMil} whose leading term determines the growth behavior. In more precision, the asymptotics of $m_{v,\tilde v}|_{\oline {A^+}}$ is controlled by  the exponential polynomial function 
$$
C(v,\tilde v)(a)
=\sum_{\lambda\in \E_V^+} c_\lambda(v, \tilde v)(\log a) a^\lambda$$
where 
$$
c_\lambda: \tilde V \times V \to \Pol_{\leq d}(\af)
$$
are certain continuous bilinear assignment into the finite dimensional vector space of polynomials on $\af$ of a fixed bounded degree $d\in \N_0$.
In the case $V$ is tempered $C(v,\tilde v)$ is referred to as the constant term of $m_{v,\tilde v}|_{\oline {A^+}}$. 

More precisely, $c_\lambda$ factors over $\tilde V/ \nf \tilde V\times V/\oline{\nf} V$ (see \cite[Lemma]{CasselmanICM}) and $\E_V^+\subset \E_V:=\Spec_{\af_\C}(V/\nf V)\subset \af_\C^*$ are the so-called leading exponents, i.e. the smallest subset of $\E_V$ such that $\E_V \subset \E_V^+ + \N_0[\Sigma^-]$. The automatic continuity theorem of Casselman \cite{Casselman} then implies that $c_{\lambda}$ extends to a continuous bilinear map $\tilde V^\infty\times V^\infty\to \Pol_{\leq d}(\af)$, denoted by the same symbol. 

Then recall the Cartan-decomposition $G=K\oline{A^+}K$ and stress that the constant term defines the growth of $m_{v,\tilde v}$. In more detail, there exists an $\epsilon>0$ and $G$-continuous norms $p$, resp. $r$, on $V$, resp. $\tilde V$, such that 
\begin{equation}\label{eq Constant term approximation}
|m_{v, \tilde v}(k_1ak_2)- C(k_1\cdot v,k_2^{-1}\tilde v)(a)|
\leq (\sup_{\lambda\in \E_V^+} a^{\re \lambda -\epsilon\rho})p(v) r(\tilde v)
\end{equation}
for all $k_1, k_2\in K$ and $a\in \oline {A^+}$.
This follows, for example, from \cite[Theorem 7.1]{KSS2} with the remark that the method in op. cit. is applicable to all a priori bounds given by a weight. 
Finally, we wish to control the growth of the constant terms by a suitable weight.

Let $\E_V^{++}\subset \E_V^+$ be the subset for which $a^{\re \lambda}$ is not bounded on $\oline{A^+}$ and recall the standard weights $v_d$ and $w_\mu$ from \eqref{standard weight log} and \eqref{standard weight exp}.

\begin{prop}\label{prop minimal weight}
Let $V$ be a Harish-Chandra module. Then 
\begin{equation} \label{minimal weight}
w_{V}=\begin{cases} v_d & \hbox{for}\quad \E_V^{++}=\emptyset \\ v_d \cdot(\sup_{\lambda\in \E_V^{++}}  w_{\re \lambda}) & \hbox{for}\quad 
\E_V^{++}\neq \emptyset\end{cases}
\end{equation} 
defines a weight such that $\No(V,w_V)\neq \emptyset$. Moreover, the reflexive weight $w_V$ is up to logarithmic terms minimal in the sense that if $w$ is another reflexive weight with $\sup_{g\in G} \frac{w_V(g)}{w(g)
v_d(g)} =\infty$, then $\No(V,w)=\emptyset$.
\end{prop}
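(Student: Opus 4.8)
The statement has two halves: that $w_V$ is a reflexive weight with $\No(V,w_V)\neq\emptyset$, and that it is minimal up to a logarithmic factor. For the first half I would begin by rewriting $w_V$. Applying Lemma~\ref{lemma envelope weights} to the finite set $\E:=\{\re\lambda\mid\lambda\in\E_V^+\}$, whose unbounded part is exactly $\{\re\lambda\mid\lambda\in\E_V^{++}\}$, one has $w_V=v_d\cdot w_\E$; since finite products and finite suprema of weights are weights, and $v_d$, the $w_\mu$ and $\1$ are all $K\times K$-invariant and reflexive, $w_V$ is a $K\times K$-invariant reflexive weight. To see $\No(V,w_V)\neq\emptyset$ I would fix a finite generating subset $C\subset\tilde V$ and run the argument in the proof of Lemma~\ref{Lemma minimal norm} for
\[
p_{\min,C}(v)=\sup_{g\in G}\sup_{\tilde v\in C}\frac{|m_{v,\tilde v}(g)|}{w_V(g)}\qquad(v\in V),
\]
the only point needing a new input being finiteness and domination by a $G$-continuous norm (supplied in Lemma~\ref{Lemma minimal norm} by a hypothetical element of $\No(V,w_V)$). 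For this I would feed in the constant-term approximation \eqref{eq Constant term approximation}: writing $g=k_1ak_2$ and using $|a^\lambda|=a^{\re\lambda}$, the continuity of the bilinear maps $c_\lambda$ into the finite-dimensional space $\Pol_{\le d}(\af)$ (so $\|c_\lambda(v,\tilde v)\|\lesssim p_{0,k}(v)\,r_{0,k}(\tilde v)$ for a fixed $G$-continuous norm $p_0$ on $V$, a $G$-continuous norm $r_0$ on $\tilde V$, Sobolev norms thereof and some $k$), the inequality $a^{-\epsilon\rho}\le1$ on $\oline{A^+}$, and $w_V(k_1ak_2)=w_V(a)\gtrsim v_d(a)\sup_{\lambda\in\E_V^+}a^{\re\lambda}$, one gets $p_{\min,C}\lesssim p_{0,k}$. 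As $p_{0,k}$ is $G$-continuous this makes $p_{\min,C}$ finite, makes the orbit maps $g\mapsto g\cdot v$ into $V_{p_{\min,C}}$ continuous (they factor through $V_{p_{0,k}}$), and $p_{\min,C}$ is positive-definite because $C$ generates $\tilde V$; together with the growth bound $p_{\min,C}(g\cdot v)\le w_V(g)p_{\min,C}(v)$ coming from $w_V(gh^{-1})\ge w_V(g)/w_V(h)$ and \cite[Lemma~2.3]{BK}, this gives $p_{\min,C}\in\No(V,w_V)$.

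For the minimality I would argue by contraposition: given a reflexive weight $w$ with $\No(V,w)\neq\emptyset$, I must show $w_V\lesssim v_d\cdot w$. Replacing $w$ by the equivalent (and still reflexive) $K\times K$-invariant weight $g\mapsto\sup_{k_1,k_2\in K}w(k_1gk_2)$, which does not change $\No(V,\cdot)$, I may assume $w$ is $K\times K$-invariant. Since $w_V=v_d\cdot w_\E$ it suffices to prove $w_\E\lesssim w$; and since a reflexive weight satisfies $w(g)^2=w(g)w(g^{-1})\ge w(e)\ge1$, hence $w\ge\1$, the constant part of $w_\E$ is harmless and I am reduced to showing
\[
a^{\re\lambda}\ \lesssim\ w(a)\qquad(\lambda\in\E_V^+,\ a\in\oline{A^+}),
\]
after which the minimality characterisation of $w_\E$ in Lemma~\ref{lemma envelope weights} yields $w_\E\lesssim w$.

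To prove this last bound I would pass to exponential rates. For $X\in\af$ set $\phi(X):=\lim_{t\to\infty}\tfrac1t\log w(\exp tX)$; since $A$ is abelian and $w$ submultiplicative, $t\mapsto\log w(\exp tX)$ is subadditive, so $\phi$ exists, is finite, positively homogeneous and convex, with $\phi(X)\le\log w(\exp X)$. It is enough to show $\langle\re\lambda,X\rangle\le\phi(X)$ for all $\lambda\in\E_V^+$ and $X\in\oline{\af^+}$. Fixing $p\in\No(V,w)$ one has $|m_{v,\tilde v}(g)|\le C_{v,\tilde v}\,w(g)$ for all $v,\tilde v$. For $X$ in the open chamber $\af^+$, \eqref{eq Constant term approximation} gives
\[
m_{v,\tilde v}(\exp tX)=\sum_{\lambda\in\E_V^+}c_\lambda(v,\tilde v)(tX)\,e^{t\lambda(X)}\ +\ O\!\Big(e^{t(\max_{\lambda\in\E_V^+}\re\lambda(X)-\epsilon\rho(X))}\Big),
\]
where the error has strictly smaller exponential order because $\rho(X)>0$. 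Off a finite union of hyperplanes the dominant value $\max_{\lambda\in\E_V^+}\re\lambda(X)$ is attained on a fixed set of exponents, and for a generic choice of test vectors $v_0,\tilde v_0$ the corresponding sum of exponential terms does not vanish identically on $\R X$; a standard lower bound for lacunary exponential sums then gives $\limsup_{t\to\infty}\tfrac1t\log|m_{v_0,\tilde v_0}(\exp tX)|=\max_{\lambda\in\E_V^+}\re\lambda(X)$, whence $\max_{\lambda\in\E_V^+}\re\lambda(X)\le\phi(X)$ on a dense subset of $\af^+$. Both sides being convex and continuous on $\af$, the inequality extends to all of $\oline{\af^+}$, so in particular $\langle\re\lambda,X\rangle\le\phi(X)\le\log w(\exp X)$ for every $\lambda\in\E_V^+$, i.e.\ $a^{\re\lambda}\le w(a)$ on $\oline{A^+}$.

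The main obstacle is this last step: isolating the growth rate $\max_{\lambda\in\E_V^+}\re\lambda(X)$ inside a single matrix coefficient along a dense set of directions. The delicate issues are keeping the error term of \eqref{eq Constant term approximation} subdominant --- which forces one to work in the open chamber where $\rho(X)>0$ and then recover the walls by convexity and continuity --- controlling cancellation among leading exponents of equal real part, and arranging that the polynomial prefactor $c_{\lambda^\ast}(v_0,\tilde v_0)$ does not vanish on the chosen ray, for which one uses that leading exponents carry nonvanishing leading coefficients. Everything else --- the reductions above, the passage between $\phi$ and $\log w$, and the convexity extension --- is routine once the quoted results are in hand.
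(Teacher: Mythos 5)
Your argument for the first half is the paper's: you bound $|m_{v,\tilde v}|$ by $|C(v,\tilde v)|+\hbox{error}$, use the continuity of the constant-term map into $\Pol_{\le d}(\af)$ together with the error bound from \eqref{eq Constant term approximation}, and run the machinery of Lemma~\ref{Lemma minimal norm} to promote $p_{\min,C}$ to a $G$-continuous norm in $\No(V,w_V)$. Both arguments also get the weight property of $w_V$ the same way (products and suprema of weights are weights, via Lemma~\ref{lemma envelope weights}). So that part is essentially identical.

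For the minimality, the paper is considerably terser: it deduces $|C(v,\tilde v)|\lesssim w$ directly from $\No(V,w)\neq\emptyset$ and \eqref{eq Constant term approximation}, and then simply invokes Lemma~\ref{lemma envelope weights} to "complete the proof", taking as implicit the non-degeneracy fact — already flagged in the surrounding text via the Casselman--Mili\v{c}i\'{c} citation ("the leading term determines the growth behavior") — that the constant terms of a Harish-Chandra module really do attain the growth $a^{\re\lambda}$ for each leading exponent $\lambda$. You do not take this on faith; you make it explicit via the exponential-rate functional $\phi(X)=\lim_{t\to\infty}\tfrac1t\log w(\exp tX)$, a lacunary exponential sum lower bound to handle cancellation among exponents of equal real part after choosing a generic test pair $(v_0,\tilde v_0)$, and a convexity/continuity extension from the open chamber to its closure. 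This buys you a more self-contained argument that doesn't outsource the non-degeneracy; the trade-off is length, and you must verify (as you do) that the error term in \eqref{eq Constant term approximation} is strictly subdominant, which confines you initially to the open chamber where $\rho(X)>0$. One small item worth stating explicitly: the genericity of $(v_0,\tilde v_0)$ can be arranged once and for all, uniformly in the direction $X$, because each $c_\lambda$ factors through the finite-dimensional quotients $\tilde V/\nf\tilde V\times V/\oline\nf V$, so the non-vanishing conditions are finitely many algebraic conditions on a finite-dimensional space. With that noted, your argument is sound and reproduces the paper's conclusion by the same overall route.
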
 

\begin{proof} Products and suprema of weights are weights and hence $w_V$ is a weight. 

Notice that $|C(v,\tilde v)|\lesssim w_V$ as functions on $\oline{ A^+}$ and that the equivalence class $[w_V]$ is up to logarithmic terms minimal with respect to this property for reflexive weights; see Lemma \ref{lemma envelope weights}.  Let $Q\subset \tilde V$ be a generating subset. 
It thus follows from (\ref{eq Constant term approximation}) that 
$$
q(v)
:=\sup_{\tilde v\in Q} \sup_{g\in G}\frac{|m_{v,\tilde{v}}(g)|}{w_{V}(g)}
$$
is finite for every $v\in V$, and  hence $q$ defines a norm on $V$. Moreover, as the constant term $C: \tilde V^\infty\times V^\infty \to \Pol_{\leq d} (\af)$ is continuous we obtain from $|m_{v,\tilde v}|\leq |m_{v,\tilde v} - C(v,\tilde v)| + |C(v, \tilde v) |$ and \eqref{eq Constant term approximation} that $q$ is dominated by a $G$-continuous norm. We argue now as in the proof of Lemma \ref{Lemma minimal norm} to infer that $q$ is $G$-continuous, i.e. $q\in \No(V,w_{V})$.

Now let $w$ be a weight with $\No(V,w)\neq \emptyset$. Then 
$$|m_{v,\tilde{v}}(g)| \lesssim_{v, \tilde v}  w(g)\qquad (g \in G).$$

It follows from (\ref{eq Constant term approximation}) that $C(v,\tilde{v})\lesssim w$ as functions on $A^{+}$ and with Lemma \ref{lemma envelope weights} we complete the proof. 
\end{proof}

\subsection{On the geometries of unit balls for the minimal and maximal norm}\label{Subsect geometry of balls}

We would like to make a comparison between the unit balls in the Banach completion of a Harish-Chandra module with respect to the minimal and maximal norms introduced in Section \ref{subsect: min-max norms}.
We write $\tilde V^\infty$ for a fixed choice of an $SF$-completion of $\tilde V$ and $\tilde V^{-\infty}$ for the strong dual of $\tilde V^\infty$. Note that $V\subset \tilde V^{-\infty}$. Let now $p$ be a $G$-continuous norm on $V$. We can and will realize $V_p$ in $\tilde V^{-\infty}$.

We recall from Lemma \ref{Lemma minimal norm} that for a generating compact subset $C$ of a finite dimensional subspace of $\tilde V$ the norm
$$
p_{\min,C} (v)
=\sup_{\substack{g\in G\\\tilde v \in C}} \frac{|\tilde v (g\cdot v)|}{w(g)}
\qquad(v\in V)
$$
defines a representative of $[p_{\rm min}]$ in $\No(V,w)$.

For a generating compact subset $D$ of a finite dimensional subspace of $V$ we define
$$
p_{\max,D}
:= \tilde{p_{\min,D}}\, ,
$$
where $p_{\min,D}\in \No(\tilde{V},w^{\sharp})$ is the representative for the minimal element in $\Noc(V, w^{\sharp})$ defined by the set $D$ as above.
Note that $p_{\max,D}$ is a representative of $[p_{\rm max}]$.

We introduce the notation
$$
G\cdot_w C
=\left\{ \frac{g \cdot \tilde v}{w^{\sharp}(g)}\mid g \in G, \tilde v \in C\right\}
$$
for the weighted $G$-orbit of $C\subseteq \tilde{V}$.
The following properties for unit balls follow immediately from the definitions and properties of $p_{\min,C}$ and $p_{\max,D}$.

\begin{prop}\label{Prop Balls}
Let $V$ be a Harish-Chandra module and $w$ a weight such that
$\No(V,w)\neq \emptyset$. Let $C\subset \tilde V$ and $D\subset V$ be generating compact subsets of finite dimensional subspaces of $\tilde{V}$ and $V$, respectively. Then the following assertions hold:
\begin{enumerate}[(i)]
\item The unit ball $B_{\min}$ for $p_{\min,C}$ is the absolute polar of $G\cdot_w C$ in $\tilde V^{-\infty}$, in symbols
$$
B_{\rm min}
= (G \cdot_w C )^\circ.
$$
\item
The unit ball $B_{\max}$ for $p_{\max,D}$ is given by
$$
B_{\rm max}
= \co(G\cdot_{w^{\sharp}} D),
$$
where
$\co$ refers to the $\tilde V^{-\infty}$-closure of the absolute convex hull.
\item
If $p\in \No(V,w)$ and $B_p$ is the unit ball in $V_p$, then there exists a $c>0$ so that
$$
\frac{1}{c}B_{\max}
\subset B_p \subset cB_{\min}\,.
$$
\end{enumerate}
\end{prop}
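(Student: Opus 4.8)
\textbf{Proof proposal for Proposition \ref{Prop Balls}.}

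The plan is to treat the three items essentially as unwindings of the definitions, the polar calculus, and the bipolar theorem, all carried out inside the fixed locally convex space $\tilde V^{-\infty}$ in which all the relevant completions $V_p$ are realized. First I would record once and for all that for $v\in V$ and $\tilde v\in C$ the pairing $\tilde v(g\cdot v)$ is the matrix coefficient $m_{v,\tilde v}(g)$, so that by definition $p_{\min,C}(v)=\sup\{\,|\langle \tfrac{g\cdot\tilde v}{w^\sharp(g)},v\rangle|: g\in G,\ \tilde v\in C\,\}$ once one notes that $m_{v,\tilde v}(g)=\langle g\cdot\tilde v, v\rangle$ under the natural $G$-pairing and that the weight normalization $w(g)$ on the $V$-side matches $w^\sharp(g)=w(g^{-1})$ on the $\tilde V$-side — here I would double-check the inversion convention, since this is where a sign/inverse error could creep in. Granting this, item (i) is immediate: the unit ball $B_{\min}=\{v: |\langle u,v\rangle|\le 1\ \forall u\in G\cdot_w C\}$ is by definition the absolute polar $(G\cdot_w C)^\circ$ in the dual pairing between $\tilde V^{-\infty}$ and $\tilde V^{\infty}$, restricted to $V\subset \tilde V^{-\infty}$; one should remark that $B_{\min}$ is the closed unit ball of the completion $V_{p_{\min,C}}$ realized inside $\tilde V^{-\infty}$, which is legitimate since $p_{\min,C}$ is $G$-continuous by Lemma \ref{Lemma minimal norm} and hence $V_{p_{\min,C}}\hookrightarrow \tilde V^{-\infty}$ continuously.

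For item (ii) I would dualize. By Corollary \ref{Cor max norm}, $p_{\max,D}=\widetilde{p_{\min,D}}$ where $p_{\min,D}\in\No(\tilde V,w^\sharp)$ is the minimal norm on $\tilde V$ built from $D\subset V$; applying item (i) to $\tilde V$ gives that the unit ball of $p_{\min,D}$ is the polar $(G\cdot_{w^\sharp} D)^\circ$ taken in $V^{-\infty}$. The unit ball of the dual norm $p_{\max,D}$ is then the polar of that polar, i.e. the bipolar $(G\cdot_{w^\sharp}D)^{\circ\circ}$ computed back in $\tilde V^{-\infty}$. By the bipolar theorem this equals the $\sigma(\tilde V^{-\infty},\tilde V^\infty)$-closed absolute convex hull of $G\cdot_{w^\sharp}D$, which is $\co(G\cdot_{w^\sharp}D)$ in the stated sense. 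The only point requiring care is that one is free to pass between the weak closure and the original-topology closure of a convex set — which is exactly the classical fact that closed convex sets are weakly closed — and that $G\cdot_{w^\sharp}D$ is a bounded subset of $\tilde V^{-\infty}$, so that taking its convex hull and closure stays inside a Banach disc; both follow from $p_{\max,D}\in\No(V,w)$ via Corollary \ref{Cor max norm} together with \eqref{dual}.

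Item (iii) is then just the statement that $[p_{\max}]$ and $[p_{\min}]$ are the maximal and minimal elements of $\Noc(V,w)$, re-expressed on the level of unit balls: if $p\in\No(V,w)$ then $p_{\min}\lesssim p\lesssim p_{\max}$ by Lemma \ref{Lemma minimal norm} and Corollary \ref{Cor max norm} (the latter via Lemma \ref{pq-lemma}(\ref{pq-lemma - item 2})), and a domination $p_{\min}\le c_1 p$ of norms is equivalent to the inclusion $B_p\subset c_1 B_{\min}$ of the corresponding balls, and likewise $p\le c_2 p_{\max}$ is equivalent to $\tfrac1{c_2}B_{\max}\subset B_p$; taking $c=\max\{c_1,c_2\}$ finishes it. I do not anticipate a genuine obstacle here: the whole proposition is, as the authors say, "immediate from the definitions," and the only thing that actually needs attention is bookkeeping of the two weight normalizations $w$ versus $w^\sharp$ on the module and its dual, and the invocation of the bipolar theorem in the correct duality (between $\tilde V^{-\infty}$ and $\tilde V^\infty$, with $V$ sitting weakly densely in $\tilde V^{-\infty}$). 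The main "obstacle," to the extent there is one, is simply making sure all four completions $V_p$, $V_{p_{\min,C}}$, $\tilde V_{p_{\min,D}}$, $V_{p_{\max,D}}$ are simultaneously and compatibly realized inside the single ambient space $\tilde V^{-\infty}$ so that the polarity statements literally make sense as written.
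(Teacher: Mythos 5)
The paper gives no explicit proof for this proposition — the authors simply declare that it "follows immediately from the definitions and properties of $p_{\min,C}$ and $p_{\max,D}$" — and your fill-in is precisely the expected one: item (i) by unwinding the definition of $p_{\min,C}$ and matching the weight normalization (the substitution $g\mapsto g^{-1}$ converts $m_{v,\tilde v}(g)=(g^{-1}\cdot\tilde v)(v)$ and $w(g)$ into the $G\cdot_w C$ form with $w^\sharp$), item (ii) by dualizing (i) via Corollary \ref{Cor max norm} and invoking the bipolar theorem together with Mazur to pass between weak and strong closures, item (iii) by translating $p_{\min}\lesssim p\lesssim p_{\max}$ into ball inclusions. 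One place you could simplify: in (ii) you juggle the polar $(G\cdot_{w^\sharp}D)^\circ$ living in $V^{-\infty}$ and then need to "compute the bipolar back in $\tilde V^{-\infty}$," which forces you to track two superficially different realizations of the same dual pair; a slightly cleaner route that sidesteps the bookkeeping is a direct Hahn–Banach separation — note $G\cdot_{w^\sharp}D\subset B_{\max}$ from the definitions, so $\co(G\cdot_{w^\sharp}D)\subset B_{\max}$, and if some $v\in B_{\max}\setminus\co(G\cdot_{w^\sharp}D)$, separate $v$ from the closed absolutely convex set by a continuous functional $\tilde v$ with $|\tilde v|\le 1$ on $G\cdot_{w^\sharp}D$ (hence $p_{\min,D}(\tilde v)\le 1$) but $|\tilde v(v)|>1$, contradicting $p_{\max,D}(v)\le 1$. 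This is logically equivalent to your bipolar argument but makes the relevant duality transparent. Your proposal is correct.
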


\section{On the various definitions of Sobolev norms for a representation}
\label{Section Sobolev norms}

Let $(\pi, E)$ be a Banach representation of a Lie group $G$ with Lie algebra $\gf$
and $p$ a $G$-continuous norm on $E$ which induces the topology.
In this context one can define a collection of norms called Sobolev norms on the space of smooth vectors $E^{\infty}$ in $E$. There are various ways of constructing these families.

\subsection{Standard Sobolev norms}

For a fixed basis $X_1, \ldots, X_n$ of $\gf$ we define for every $k\in\N_{0}$ a norm $p_k^{\rm st}$
by
$$
p_k^{\rm st}(v)
:=\left(\sum_{\substack{\alpha\in \N_0^n\\ |\alpha|\leq k}} p(X_1^{\alpha_1}\ldots X_n^{\alpha_n} v)^2 \right)^{\frac{1}{2}} \qquad (v\in E^\infty).
$$
Different choices of a basis for $\gf$ lead to equivalent norms. 

Note that if $p$ is $K$-Hermitian then also $p_k^{\rm st}$ is a $K$-Hermitian norm. One major advantage of this definition of Sobolev norm is that it is monotonic in the degree, i.e. $ p_k^{\rm st} \leq p_{k+1}^{\rm st} $.

\subsection{Sobolev norms constructed from Laplacians}

For the second construction we assume for convenience that $G$ is unimodular.
Again we fix a basis $X_1, \ldots, X_n$ of $\gf$ and associate to it a Laplace element in $\U(\gf)$
\begin{equation}\label{eq Delta}
\Delta
:=-(X_1^2 + \ldots + X_n^2).
\end{equation}

There exists a constant $R\geq 0$ depending on the growth rate $w_p$ of $p$ such that 
$R^2+\Delta: E^\infty\to E^\infty$ is invertible; see \cite{GK}. 
This allows us to define the Laplace Sobolev norms of even order $p_{2k}^\Delta$ on $E^{\infty}$ for $k\in\Z$ by
$$
p_{2k}^\Delta(v)
:=p((R^2+\Delta)^kv)
\qquad(v\in E^{\infty}).
$$
A priori these norms depend on the choices of the basis and $R$, but we suppress this dependence in the notation.

The two families of Sobolev norms mentioned above are related in an effective manner.

\begin{prop}[{ \cite[Prop. 4.1]{GK}}]\label{Prop equivalence of Sobolev norms}
Let $m\in2\N_{0}$ be the smallest even number with $m\geq \dim(G)+1$. Then for all $k\in \N_{0}$
\begin{equation}\label{standard Sobolev squeeze}
p_{2k}^\Delta
\lesssim p_{2k}^{\rm st}
\lesssim p_{2k+m}^\Delta.
\end{equation}
\end{prop}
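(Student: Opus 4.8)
The statement to be proven is Proposition \ref{Prop equivalence of Sobolev norms}, the two-sided comparison $p_{2k}^\Delta \lesssim p_{2k}^{\rm st} \lesssim p_{2k+m}^\Delta$, cited from \cite[Prop. 4.1]{GK}. The plan is to reduce both inequalities to elliptic regularity for the shifted Laplacian $R^2 + \Delta$ on the Banach space of smooth vectors, exactly as in the classical Sobolev theory but keeping careful track of how the operator norm $w_p$ (equivalently the growth rate of $p$) enters through the choice of $R$.

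First I would establish the easy inequality $p_{2k}^\Delta \lesssim p_{2k}^{\rm st}$. Write $(R^2+\Delta)^k = \bigl(R^2 - \sum_{i} X_i^2\bigr)^k$ and expand this as a fixed finite sum of monomials in $X_1,\ldots,X_n$ of order at most $2k$, with coefficients depending only on $R$ and $k$. Applying $p$ and the triangle inequality, $p\bigl((R^2+\Delta)^k v\bigr)$ is bounded by a constant times $\sum_{|\alpha|\le 2k} p(X^\alpha v)$, which in turn is $\lesssim p_{2k}^{\rm st}(v)$ by Cauchy--Schwarz (the $\ell^1$ and $\ell^2$ norms on a fixed finite index set are comparable). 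Here one should note that the monomials $X^\alpha$ appearing after expansion are in an arbitrary order, so one uses that reordering via the bracket relations $[X_i,X_j] = \sum_\ell c_{ij}^\ell X_\ell$ only produces lower-order terms, again with structure constants as the only new data; this keeps everything within the span of monomials of order $\le 2k$.

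The substantive direction is $p_{2k}^{\rm st} \lesssim p_{2k+m}^\Delta$, and this is where the elliptic estimate is needed. The key point is that $\Delta = -\sum X_i^2$ is, up to the sign convention, the image under the right regular representation of an elliptic second-order differential operator on $G$; concretely, one works with the operator $R^2 + \Delta$ acting on $E^\infty$ and uses that its inverse $(R^2+\Delta)^{-1}$ --- which exists as a bounded operator on $E^\infty$ by \cite{GK} once $R$ is large enough relative to $w_p$ --- smooths by ``two derivatives'' in an $L^2$-type sense but only by ``two minus $(\dim G)/2$ derivatives'' if one wants to pass from the Hilbert-flavoured Laplace norms to the $\ell^2$-sum-of-monomials norms $p_k^{\rm st}$. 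This loss of $\dim G$ (rounded up to the even integer $m$) is precisely the Sobolev embedding deficit: controlling a single derivative $X_i$ pointwise-in-the-Banach-norm in terms of powers of $\Delta$ costs more than one half-order of $\Delta$ per derivative because $\Delta$ only sees the ``total'' second-order behaviour. I would make this quantitative by the standard argument: for a single vector field, $p(X_i v)^2 \lesssim p(v)\,p(\Delta v) + (\text{lower order})$ via the identity $p(X_i v)^2 \le |\langle X_i v, X_i v\rangle|$ is not available for a general Banach norm, so instead one iterates the bounded inverse: $X^\alpha v = X^\alpha (R^2+\Delta)^{-k-m/2}\,(R^2+\Delta)^{k+m/2} v$, and one shows that $X^\alpha (R^2+\Delta)^{-k-m/2}$ is a bounded operator on $E$ whenever $|\alpha|\le 2k$, by writing it as a convolution-type operator whose kernel lies in $L^1(G)$ precisely when the order of the inverse exceeds the order of $X^\alpha$ by more than $\dim G$ --- this is the point where $m \ge \dim G + 1$ is used. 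Summing over the finitely many $\alpha$ with $|\alpha|\le 2k$ and taking square roots gives $p_{2k}^{\rm st}(v) \lesssim p\bigl((R^2+\Delta)^{k+m/2}v\bigr) = p_{2k+m}^\Delta(v)$.

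The main obstacle, and the step I would spend the most care on, is the kernel estimate in the second direction: making precise that $X^\alpha (R^2+\Delta)^{-N}$ extends to a bounded operator on the Banach space $E$ (not just on an $L^2$ space) when $2N - |\alpha| \ge m \ge \dim G + 1$. The clean way is to represent $(R^2+\Delta)^{-N}$ as right-convolution against a function $\phi_N \in C^\infty(G \setminus \{e\})$ with controlled decay and a singularity at $e$ of order $\dim G - 2N$, so that $X^\alpha \phi_N$ is still integrable near $e$ once $2N - |\alpha| > \dim G$; one also needs $\phi_N$ to decay fast enough at infinity on $G$, which is exactly where the growth bound $w_p$ on the representation --- and the corresponding lower bound on $R$ --- guarantees that the relevant integrals converge and that convolution by $\phi_N$ is bounded on $(E,p)$. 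Since the statement is quoted verbatim from \cite{GK}, in the paper itself one would simply cite that reference; the above is the argument one would reconstruct if a self-contained proof were wanted.
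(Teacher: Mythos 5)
The paper does not prove this proposition; it is quoted verbatim from \cite[Prop.~4.1]{GK}, as you note at the end of your proposal. So there is no ``paper's own proof'' to compare against, and citing the reference is exactly what is expected. That said, your reconstruction is worth commenting on because it is close in spirit to GK's argument but contains one misstatement in the bookkeeping.

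The easy direction $p_{2k}^\Delta \lesssim p_{2k}^{\rm st}$ by expanding $(R^2+\Delta)^k$ into a polynomial of degree $\le 2k$ in the $X_i$ and using equivalence of $\ell^1$ and $\ell^2$ on a finite index set is correct. For the hard direction, the mechanism you describe --- represent $(R^2+\Delta)^{-N}$ as right-convolution against a Bessel-type kernel $\phi_N$ built from the heat semigroup, show $X^\alpha\phi_N$ is controllably small near $e$ and decays fast enough relative to $w_p$ at infinity --- is indeed the kind of argument GK run. However, your statement that ``$X^\alpha\phi_N$ is still integrable near $e$ once $2N-|\alpha|>\dim G$'' is not the right threshold. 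Near $e$ the kernel $\phi_N$ has a singularity comparable to $|x|^{2N-\dim G}$, so $X^\alpha\phi_N$ behaves like $|x|^{2N-\dim G-|\alpha|}$; this is locally integrable in $\dim G$ dimensions as soon as $2N-\dim G-|\alpha| > -\dim G$, i.e.\ $2N>|\alpha|$, which is far weaker. What $2N-|\alpha|>\dim G$ actually guarantees is that $X^\alpha\phi_N$ is \emph{bounded} near $e$, with no singularity at all. Since the proposition's constant $m\ge\dim G+1$ exactly enforces $2N-|\alpha|\ge m>\dim G$ for $N=k+m/2$ and $|\alpha|\le 2k$, it is this boundedness (or equivalently, the existence of the kernel $\phi_N$ as a continuous function, which already forces $2N>\dim G$) --- not mere $L^1$-integrability --- that drives the dimension count. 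So the reason you give for where $\dim G+1$ enters is off by a constant; if $L^1$-integrability near $e$ were the operative criterion the stated loss of roughly $\dim G$ derivatives would be a gross over-count. Otherwise the shape of the argument is right, and for the purposes of the paper the correct move is what you already suggest: cite \cite[Prop.~4.1]{GK}.
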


\begin{rmk}
Let us emphasize that it is a priori not clear that $p_{2k}^\Delta$ is $G$-continuous for an arbitrary Banach norm $p$. However, because of \eqref{standard Sobolev squeeze}, the topology on $E^{\infty}$ induced by the family of norms $(p_{2k}^\Delta)_k$ is the usual Fr\'echet topology.
\end{rmk}

\subsection{The Casselman-Wallach globalization}
We return to reductive groups and recall the Casselman-Wallach theorem from \cite{Casselman}, \cite{Wallach} and \cite[Chapter 11]{Wallach_book_II}. The theorem may be reformulated as follows. See \cite[Theorem 1.1]{BK}.

\begin{theorem}\label{Thm Casselman-Wallach}
For any pair of $G$-continuous norms $p, q$ on a Harish-Chandra module $V$ there exists a $k\in \N$ such that $p \lesssim q_k^{\rm st}$.
\end{theorem}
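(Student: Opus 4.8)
The plan is to reduce the statement to its essential content, which is the single assertion that any two $G$-continuous norms on a Harish-Chandra module $V$ are Sobolev equivalent, i.e. the reformulated Casselman-Wallach theorem. By Proposition \ref{Prop equivalence of Sobolev norms} it does not matter whether one works with $p_k^{\rm st}$ or $p_{2k}^\Delta$, so I will phrase the argument in terms of whichever family is more convenient and translate at the end. The symmetry of the statement (swapping $p$ and $q$) together with Proposition \ref{Prop K-Hermitian squeezing} shows it suffices to treat the case where $q$ is a $K$-Hermitian $G$-continuous norm: indeed if $q_1 \lesssim q \lesssim q_2$ with $q_1, q_2$ $K$-Hermitian, a Sobolev domination $p \lesssim (q_2)_k^{\rm st}$ does the job, since $(q_2)_k^{\rm st} \lesssim (\text{anything equivalent to } q)_{k'}^{\rm st}$ by the same reduction applied to $q_2$ versus $q$. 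So the real content is: given an arbitrary $G$-continuous $p$ and a $K$-Hermitian $G$-continuous $q$, find $k$ with $p \lesssim q_k^{\rm st}$.

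The main step is to exploit the minimal norm. By Lemma \ref{Lemma minimal norm} (in the weight $w = w_p$, or more robustly by first passing to the operator-norm weight of $p$ via Remark \ref{Rem Weights}\eqref{Rem Weights - item 1}) there is a minimal element $[p_{\rm min}]$ of $\Noc(V, w_p)$ with $p_{\rm min} \lesssim p$, and dually by Corollary \ref{Cor max norm} a maximal element $[p_{\rm max}]$ with $p \lesssim p_{\rm max}$. Thus it is enough to bound $p_{\rm max} \lesssim q_k^{\rm st}$ for suitable $k$, equivalently (by Lemma \ref{pq-lemma} and the fact that the dual of a $K$-Hermitian Sobolev norm is again comparable to a Sobolev norm of the dual $K$-Hermitian norm) to bound $\widetilde{q_k^{\rm st}} \lesssim p_{\min}$ on $\tilde V$, i.e. to dominate the minimal norm of $\tilde V$ from below by a negative-order Sobolev norm of the dual $K$-Hermitian norm. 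This is precisely a statement about lower bounds for matrix coefficients that are uniform across $K$-types, and it is the technical heart of \cite{BK}: the Dirac approximation / constant-term machinery produces an explicit $k$ (governed by the weight and the leading exponents $\E_V^+$ entering the constant term approximation \eqref{eq Constant term approximation}) such that $p_{\min}$ dominates $q_{-k}$ for the standard $K$-Hermitian $q$. Feeding this back through the duality and through Proposition \ref{Prop equivalence of Sobolev norms} yields $p \lesssim p_{\rm max} \lesssim q_k^{\rm st}$.

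I would organize the write-up as: (1) reduce to $q$ $K$-Hermitian via Proposition \ref{Prop K-Hermitian squeezing}; (2) sandwich $p_{\rm min} \lesssim p \lesssim p_{\rm max}$ using Lemma \ref{Lemma minimal norm} and Corollary \ref{Cor max norm} with weight $w_p$; (3) invoke the lower matrix-coefficient bound from \cite{BK} to get $q_{-k} \lesssim \tilde q_{\,-k}^{\,\text{-dual}} \lesssim p_{\min}$-type estimates on the dual module, hence $p_{\rm max} = \widetilde{p_{\min}^{\tilde V}} \lesssim q_k^{\rm st}$ after translating between the Laplace and standard families via Proposition \ref{Prop equivalence of Sobolev norms}; (4) conclude. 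The main obstacle is step (3): one genuinely needs the uniform-in-$K$-type lower bound on matrix coefficients, which is not formal — it rests on the constant term asymptotics \eqref{eq Constant term approximation} and the Dirac-sequence argument of \cite{BK}, and controlling the Sobolev order $k$ uniformly is exactly the subtle point that the rest of the paper quantifies. Everything else (the duality bookkeeping via Lemma \ref{pq-lemma}, the comparison of Sobolev families, the $K$-Hermitian reduction) is routine given the results already assembled in Sections 2 and 3.
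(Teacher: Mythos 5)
The paper does not actually prove Theorem \ref{Thm Casselman-Wallach}: it recalls it as a known reformulation of the Casselman--Wallach theorem and points the reader to \cite{Casselman}, \cite{Wallach}, \cite[Ch.~11]{Wallach_book_II}, and specifically \cite[Theorem 1.1]{BK}. So there is no internal proof to compare against; what you have written is an attempted reconstruction of the argument of \cite{BK}, and should be judged as such.

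Your overall architecture --- reduce to $q$ $K$-Hermitian, sandwich $p_{\min}\lesssim p\lesssim p_{\max}$ for the operator-norm weight $w_p$, dualize to a lower bound $\widetilde{q_k^{\rm st}}\lesssim p_{\min}^{\tilde V}$, and recognize that this is a uniform-in-$K$-type lower bound on matrix coefficients to be supplied by Dirac approximation --- is indeed the right skeleton for the \cite{BK} proof, and you are honest that the Dirac-approximation step is where the actual analysis lives. But there are two concrete problems. First, the $K$-Hermitian reduction as you state it is circular: you pick $q_1\lesssim q\lesssim q_2$ with $q_i$ $K$-Hermitian and then claim $(q_2)_k^{\rm st}\lesssim q_{k'}^{\rm st}$ ``by the same reduction applied to $q_2$ versus $q$,'' which is exactly an instance of the theorem being proved (with $p:=(q_2)_k^{\rm st}$, and $q$ still non-Hermitian). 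The correct and entirely elementary reduction uses the \emph{lower} sandwich: from $q_1\lesssim q$ one gets $(q_1)_k^{\rm st}\lesssim q_k^{\rm st}$ directly from the definition of the standard Sobolev norm (apply $q_1\leq Cq$ to each term $q_1(X^\alpha v)$), so it suffices to prove the theorem with $q$ replaced by the $K$-Hermitian $q_1$. Second, you should not leave the central step as a black box: the Dirac-approximation lower bound (the analogue of Key Lemma \ref{Lemma Uniform lower bounds}) is an estimate in a principal series model, and getting from there to an arbitrary Harish-Chandra module $V$ uses Casselman's subrepresentation theorem $V\hookrightarrow V_{\sigma,\lambda}$ together with the compatibility of minimal norms under restriction ($[p_{\min}^V]=[q_{\min}^{V_{\sigma,\lambda}}|_V]$, cf.\ Lemma \ref{prop embed}); this descent is not automatic for the dual (maximal) side, which is precisely why the paper lists submodule monotonicity of $s(V,w)$ as an open problem. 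A complete write-up should explain how \cite{BK} navigates this, rather than citing the conclusion.
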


The above theorem can also be phrased as follows: For any two $G$-continuous norms $p$ and $q$ on $V$ the identity map $V\to V$ extends to a $G$-isomorphism of Fr\'echet spaces $V_p^\infty\to V_q^\infty$. Hence there is up to isomorphism only one $SF$-globalization of $V$. (See \cite[\S 2.3 \& \S 2.4]{BK} for the notion of an SF-representation.) This globalization is denoted by $V^{\infty}$.

We now characterize $V^{\infty}$ in terms of $K$-types. For this we fix a maximal torus $T\subset K$, select a positive system for the roots $\Sigma(\kf_\C, \tf_\C)\subset i \tf^*$, identify each $\tau\in \hat K$ with its highest weight $\lambda_\tau\in i\tf^*$ and write $|\tau|$ for the Cartan-Killing norm of $\lambda_\tau$.
We choose an orthonormal basis of $\kf$ and extend it to an orthonormal basis $X_1, \ldots, X_n$ of $\gf$. Let $\Delta_{K}$ and $\Delta$ be the corresponding Laplace elements for $\kf$ and $\gf$, respectively, as in (\ref{eq Delta}). Then
\begin{equation}\label{eq Delta=-Cc+2Delta_K}
\Delta
=-\Cc + 2 \Delta_K,
\end{equation}
where $\Cc$ is the Casimir element. We recall from \cite[Cor. 3.10]{BK} that a vector $v \in V_p$ is smooth
if and only if it is $K$-smooth and note that
\begin{equation}\label{eq restriction of Delta_K to V[tau]}
\Delta_K|_{V[\tau]}
=(|\lambda_\tau+\rho_\kf|^2 -|\rho_\kf|^2)\cdot \id_{V[\tau]} \qquad(\tau\in\hat{K}).
\end{equation}
Here $V[\tau]$ is the $\tau$-isotypical component of $V$ and $\rho_{\kf}$ is the half-sum of the positive roots of $\tf$ in $\kf_{\C}$.
This leads us to the following.

\begin{prop} \label{prop rapid decay}
Let $V$ be a Harish-Chandra module and let $p$ be a $G$-continuous norm on $V$. Consider the formal series $v =\sum_{\tau \in \hat K}v_\tau$, where $v_\tau \in V[\tau]$ for every $\tau\in\hat{K}$. Then the following are equivalent.
\begin{enumerate}[(i)]
\item\label{prop rapid decay - item 1} The series $v$ converges in $V^\infty$.
\item\label{prop rapid decay - item 2} $\displaystyle \sum_{\tau \in \hat K} ( 1+|\tau|)^n p(v_\tau)<\infty$ for every $n\in\N$.
\item \label{prop rapid decay - item 3}$\displaystyle \sup_{\tau\in \hat K} (1+|\tau|)^n p(v_\tau)<\infty$ for every $n\in\N$.
\end{enumerate}
\end{prop}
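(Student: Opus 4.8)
The plan is to reduce the statement to a comparison between the $G$-continuous norm $p$ on $V$ and a $K$-Hermitian $G$-continuous norm, for which the action of $\Delta_K$ on $K$-types is explicit via \eqref{eq restriction of Delta_K to V[tau]}, and then to identify convergence in $V^\infty$ with the Fr\'echet topology governed by the Laplace Sobolev norms $p^\Delta_{2k}$ (equivalently, by the standard Sobolev norms $p^{\rm st}_k$ via Proposition \ref{Prop equivalence of Sobolev norms}).

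First I would use Proposition \ref{Prop K-Hermitian squeezing} to sandwich $p$ between two $K$-Hermitian $G$-continuous norms $q_1 \lesssim p \lesssim q_2$; since all the conditions (i), (ii), (iii) are manifestly insensitive to replacing $p$ by an equivalent norm (and, a fortiori, are sandwiched between the corresponding conditions for $q_1$ and $q_2$), it suffices to prove the proposition when $p$ itself is $K$-Hermitian. In that case, for each $\tau$ the restriction $p|_{V[\tau]}$ comes from a $K$-invariant inner product, and by \eqref{eq restriction of Delta_K to V[tau]} the operator $(R^2+\Delta_K)$ acts on $V[\tau]$ as the scalar $R^2 + |\lambda_\tau+\rho_\kf|^2 - |\rho_\kf|^2$, which is comparable to $(1+|\tau|)^2$ uniformly in $\tau$ (for $R$ large enough to make it positive). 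Hence the Laplace–$K$ Sobolev norm $p^{\Delta_K}_{2k}(v_\tau) = p((R^2+\Delta_K)^k v_\tau)$ is comparable to $(1+|\tau|)^{2k} p(v_\tau)$ on each $V[\tau]$; by the orthogonality of the isotypical decomposition, $p^{\Delta_K}_{2k}(\sum_\tau v_\tau)^2 = \sum_\tau p^{\Delta_K}_{2k}(v_\tau)^2$ is comparable to $\sum_\tau (1+|\tau|)^{4k} p(v_\tau)^2$.

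Next I would invoke the fact, recalled just before the proposition (\cite[Cor. 3.10]{BK}), that a vector in $V_p$ is smooth if and only if it is $K$-smooth; this means the Fr\'echet topology on $V^\infty$ is induced by the family $(p^{\Delta_K}_{2k})_{k\in\N_0}$ just as well as by $(p^\Delta_{2k})_{k}$ or $(p^{\rm st}_k)_k$ (here one uses \eqref{eq Delta=-Cc+2Delta_K} together with the fact that the Casimir $\Cc$ acts by a scalar on the irreducible constituents, so modifying $\Delta$ by $\Cc$ changes the Sobolev scale only up to equivalence, and Proposition \ref{Prop equivalence of Sobolev norms}). Consequently $v=\sum_\tau v_\tau$ converges in $V^\infty$ if and only if its partial sums form a Cauchy sequence for every $p^{\Delta_K}_{2k}$, which — because the $V[\tau]$ are mutually $p^{\Delta_K}_{2k}$-orthogonal and $V$ is $K$-admissible (so each $V[\tau]$ is finite-dimensional) — happens exactly when $\sum_\tau (1+|\tau|)^{4k} p(v_\tau)^2 < \infty$ for every $k$. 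Unwinding, this is equivalent to $\sum_\tau (1+|\tau|)^n p(v_\tau)^2 < \infty$ for every $n$, and, since the terms are nonnegative, by a standard argument (comparing $\sum a_\tau^2$ at scale $n$ with $\sum a_\tau$ at scale $n/2$, using that only finitely many $\tau$ have $|\tau| \le N$ for any $N$ and that polynomial weights dominate one another) this is equivalent to both (ii) and (iii).

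I expect the only real subtlety — and thus the main obstacle — to be the careful passage from the $\ell^2$-type summability condition that falls out naturally from the $K$-Hermitian picture to the $\ell^1$ condition (ii) and the sup condition (iii), and the verification that replacing $p$ by the non-Hermitian original norm does not break the equivalences; both are handled by the elementary observation that for nonnegative reals $(a_\tau)$ indexed by $\hat K$, the three conditions ``$\sum (1+|\tau|)^n a_\tau < \infty$ for all $n$'', ``$\sum (1+|\tau|)^n a_\tau^2 < \infty$ for all $n$'', and ``$\sup (1+|\tau|)^n a_\tau < \infty$ for all $n$'' coincide, since $\hat K$ has at most polynomially many elements of bounded $|\tau|$ (Weyl's dimension/lattice-point count) so polynomial weights can be freely traded. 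Everything else is bookkeeping with the orthogonal decomposition and the already-cited structural results.
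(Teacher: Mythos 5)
Your proposal is correct and follows essentially the same route as the paper's proof: both reduce (ii) and (iii) to the intermediate $\ell^2$ summability condition using the polynomial count of $K$-types, and both identify that condition with membership in $V^\infty$ by exploiting the scalar action of $\Delta_K$ on isotypic components under a $K$-Hermitian norm (obtained from Proposition \ref{Prop K-Hermitian squeezing}) together with the Casselman--Wallach theorem and Proposition \ref{Prop equivalence of Sobolev norms}. The only cosmetic difference is that you reduce to $K$-Hermitian $p$ at the outset, whereas the paper keeps $p$ arbitrary and introduces an auxiliary $K$-Hermitian $q$ for the comparison.
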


\begin{proof}
The equivalence of (\ref{prop rapid decay - item 2}) and (\ref{prop rapid decay - item 3}) follows from Harish-Chandra's estimate on the growth of $K$-isotypical components in admissible $(\gf,K)$-modules $V$
$$
\dim V[\tau]
\leq C (1+|\tau|)^N
$$
for some $C>0$ and $N\in\N$.
Note that (\ref{prop rapid decay - item 2}) and (\ref{prop rapid decay - item 3}) are equivalent to
\begin{equation}\label{prop rapid decay - item 4}
\sum_{\tau \in \hat K} ( 1+|\tau|)^{2n} p(v_\tau)^{2}<\infty\qquad(n\in\N_{0}).
\end{equation}

We move on to prove the equivalence of (\ref{prop rapid decay - item 1}) and (\ref{prop rapid decay - item 4}).
We may assume that $\Cc$ acts by a scalar, say $c$.
For $\tau\in\hat{K}$ set $c_\tau:= (|\lambda_\tau+\rho_\kf|^2 -|\rho_\kf|^2)$.
Then for every $\tau\in\hat K$
\begin{equation}\label{eq Delta v}
\Delta v
= (- c +2c_\tau) v
\qquad (v\in V[\tau]).
\end{equation}
Let $q$ be a $G$-continuous norm on $V$ which is $K$-Hermitian. It follows from Proposition \ref{Prop K-Hermitian squeezing} that such norms exist.
In view of Proposition \ref{Prop equivalence of Sobolev norms}, the topology on $V^{\infty}$ is induced from the family $(q^{\Delta}_{2k})_{k\in\N_{0}}$.
For $n\in\N_{0}$ we define
$$
p^n(v)
:=\Big(\sum_{\tau \in \hat K} ( 1+|\tau|)^{2n} p(v_\tau)^2\Big)^{\frac{1}{2}}
\qquad(v\in V).
$$
To prove the equivalence of (\ref{prop rapid decay - item 1}) and (\ref{prop rapid decay - item 4}) it suffices to show that families of seminorms $(p^n)_{n\in\N_{0}}$ and $(q^\Delta_{2n})_{n\in\N_{0}}$ determine the same topology.

By the Casselman-Wallach theorem (Theorem \ref{Thm Casselman-Wallach}) and (\ref{standard Sobolev squeeze}) there exists a $k\in \N_{0}$ so that $p\lesssim q_{2k}^\Delta$. Let $C>0$ be a constant so that $p\leq Cq_{2k}^{\Delta}$.

Now if $v = \sum_{\tau}v_{\tau}\in V$, 
with $v_{\tau}\in V[\tau]$,
then
$$
p^n(v)^2
\leq C\sum_{\tau \in \hat K} ( 1+|\tau|)^{2n} q_{2k}^\Delta (v_\tau)^{2}.
$$
From (\ref{eq Delta v}) and the fact that $q_{2k+2n}^\Delta$ is $K$-Hermitian as well it follows that
$$
p^n(v)^2
\leq C'\sum_{\tau\in\hat{K}}(R^{2}-c+2c_{\tau})^{n}q_{2k}^{\Delta}(v_{\tau})^{2}
= C'\sum_{\tau \in \hat K} q_{2k+2n}^\Delta (v_\tau)^2
=C'q_{2k+2n}^\Delta (v)^2
$$
for some constant $C'$ independent of $v$.
We thus find that $p^n \lesssim q_{2k+2n}^\Delta$.

On the other hand the Casselman-Wallach theorem (Theorem \ref{Thm Casselman-Wallach}) together with (\ref{standard Sobolev squeeze}) imply that there exists a $k\in\N_{0}$ so that $q\lesssim p^{\Delta}_{2k}$. For every $n\in\N_{0}$ we have $q^{\Delta}_{2n}\lesssim p^{\Delta}_{2n+2k}$. Now if $v=\sum_{\tau}v_{\tau}\in V$ with $v_{\tau}\in V[\tau]$ then
$$
p^{\Delta}_{2n+2k}(v)
\leq\Big(\sum_{\tau\in\hat{K}}p^{\Delta}_{2n+2k}(v_{\tau})^{2}\Big)^{\frac{1}{2}}
=\Big(\sum_{\tau\in\hat{K}}p(\Delta^{n+k}v_{\tau})^{2}\Big)^{\frac{1}{2}}.
$$
From (\ref{eq Delta v}) we thus obtain
$$
q^{\Delta}_{2n}(v)
\leq \Big(\sum_{\tau\in\hat{K}} (R^2 - c +2c_\tau) ^{n+k}p(v_{\tau})^{2}\Big)^{\frac{1}{2}}
\leq C\Big(\sum_{\tau\in\hat{K}}(1+|\tau|) ^{2n+2k}p(v_{\tau})^{2}\Big)^{\frac{1}{2}}
=Cp^{n+k}(v).
$$
Here $C>0$ is a constant independent of $v$. We thus obtain $q^{\Delta}_{2n}\lesssim p^{n+k}$ for every $n\in\N_{0}$. This concludes the proof of the equivalence of (\ref{prop rapid decay - item 1}) and (\ref{prop rapid decay - item 4}).
\end{proof}

\subsection{Sobolev norms with real Sobolev parameter}

For a given $G$-continuous norm $p$ we now construct a family of Sobolev norms $p_{s}$ with $s\in\R$.
The operator $D_s:= (1+\Delta_K)^{\frac{s}{2}}$ acts diagonally on
$\prod_{\tau\in \hat K} V[\tau]=(\tilde V)^*\supset V$ by
\begin{equation}\label{eq D_s}
D_s|_{V[\tau]} = \big(1+|\lambda_\tau+\rho_\kf|^2 -|\rho_\kf|^2\big)^{\frac{s}{2}}\cdot \id_{V[\tau]}
\qquad(\tau\in\hat{K}).
\end{equation}
From Proposition \ref{prop rapid decay} (\ref{prop rapid decay - item 3}) and the fact that $D_{-s}$ is the inverse of $D_{s}$ we see that $D_{s}$ induces a continuous isomorphism on $V^\infty$. We define an $s$-th Sobolev norm $p_s$ on $V$ by
$$
p_s(v)
:= p(D_s v )
\qquad (v \in V).
$$
We list the following properties
\begin{align}
\label{pq comparison}
&p\leq q \quad\Rightarrow\quad p_s \leq q_s \qquad (s\in \R)\\
\label{s plus t}
&(p_s)_t = p_{s+t} \qquad (s, t\in \R)\\
\label{ps dual}
&\tilde{(p_s)}= \tilde p_{-s} \qquad (s\in \R)\\
\label{ps double dual}
&\tilde{\tilde{(p_s)}}=p_{s}\qquad (s\in\R).
\end{align}
If $p$ is $K$-Hermitian then it is easily seen from (\ref{eq D_s}) that $p\lesssim p_{s}$ holds for all $s\geq 0$. In this case the family $(p_{s})_{s\geq 0}$ is monotonous, i.e. $p_{s}\lesssim p_{t}$ if $s\leq t$.
In general however it is not clear that $(p_s)_{s\geq 0}$ is monotonous. In the sequel, we call a $G$-continuous norm {\it monotonous} provided $p\lesssim p_s$ for all $s\geq 0$.

\begin{rmk}
An alternative definition for $p_s$ which would overcome the possible non-monotonicity would be by setting
$$
p^s(v)
:= \sup_{s'\leq s} p_{s'}\,.
$$
Then $p\leq p^s$ is clear by definition. The notion $p^s$ also appears natural in the sense that all fractional derivatives up to order $s$ are considered quite reminiscent to the definition of standard Sobolev norms. Likewise the properties \eqref{pq comparison} and \eqref{s plus t} hold equally well for $p^s$. The drawback is that $p^s$ does not behave to well under duality; one has
$$
\tilde{(p^s)}
=\inf_{s'\leq s} \tilde p_{-s'} \qquad (s\in \R)
$$
with the infimum of seminorms defined as in \eqref{def inf norm}.
This yields an undesired break in symmetry with regard to $s$-th Sobolev norms on $V$ and $\tilde V$ and gives reason for our preference of $p_s$ over $p^s$.
\end{rmk}

For a general $G$-continuous norm $p$ it is not clear whether all Sobolev norms $p_s$ with $s\geq 0$ are $G$-continuous as well. However, in practice, this is only a minor drawback as $p_s$ can be squeezed in between $G$-continuous norms.
Also it would be interesting to study the dependence of $p_{s}$ on the choice of the maximal compact subgroup $K$. Notice that if $(p'_{s})_{s\geq 0}$ denotes the family of Sobolev norms obtained from a conjugate $K'=gKg^{-1}$ of $K$, then
$$
p'_{s}
\lesssim p_{s}(g^{-1}\dotvar).
$$
Therefore, these two questions are closely related.

It is important for our purposes that the family $(p_s)_{s\geq 0}$ defines the Fr\'echet topology on $V^\infty$.

\begin{prop}\label{Prop s-topology}
Let $V$ be a Harish-Chandra module and let $p$ be a $G$-continuous norm on $V$. For every $k\in \N$ there exists an $s\geq 0$ such that $p_k^{\rm st} \lesssim p_s$, and vice versa for every $s\geq 0$ there exists a $k\in \N_{0}$ so that $p_s\lesssim p_k^{\rm st}$. In particular, the family $(p_s)_{s\geq 0}$ determines the topology on $V_{p}^\infty\simeq V^{\infty}$.
\end{prop}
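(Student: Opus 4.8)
The plan is to transfer the already-established comparison between the two standard families $(p_k^{\mathrm{st}})_k$ and $(q^\Delta_{2k})_k$ (Proposition \ref{Prop equivalence of Sobolev norms}) to the real-parameter family $(p_s)_{s\geq 0}$, using the explicit diagonal action \eqref{eq D_s} of $D_s=(1+\Delta_K)^{s/2}$ on $K$-isotypic components together with the relation \eqref{eq Delta=-Cc+2Delta_K} between $\Delta$, $\Cc$ and $\Delta_K$. Since the conclusion "$(p_s)_{s\geq 0}$ determines the Fréchet topology on $V^\infty$" will follow formally from the two cofinal comparisons once the inclusion $p_s\in$ (topology of $V^\infty$) is clear (which it is, because $D_s$ is a continuous automorphism of $V^\infty$ by the remark after \eqref{eq D_s}), the real content is the pair of Sobolev-order comparisons: given $k$ find $s$ with $p_k^{\mathrm{st}}\lesssim p_s$, and given $s$ find $k$ with $p_s\lesssim p_k^{\mathrm{st}}$.

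First I would reduce to the case that $p$ is $K$-Hermitian. By Proposition \ref{Prop K-Hermitian squeezing} there are $K$-Hermitian $G$-continuous norms $q_1\lesssim p\lesssim q_2$; by \eqref{pq comparison} this passes to all Sobolev norms, $(q_1)_s\lesssim p_s\lesssim (q_2)_s$ and $(q_1)_k^{\mathrm{st}}\lesssim p_k^{\mathrm{st}}\lesssim (q_2)_k^{\mathrm{st}}$, so a comparison proven for $q_1,q_2$ yields one for $p$ (possibly at the cost of enlarging $k$ or $s$, which is harmless). For $K$-Hermitian $p$ the norms $p_k^{\mathrm{st}}$ are $K$-Hermitian, $p^\Delta_{2k}$ is $K$-Hermitian, and by Proposition \ref{Prop equivalence of Sobolev norms} we have $p^\Delta_{2k}\lesssim p_k^{\mathrm{st}}\lesssim p^\Delta_{2k+m}$. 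So it suffices to compare $(p_s)_{s\geq 0}$ with $(p^\Delta_{2k})_{k\in\N_0}$.

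Next, the heart of the matter is a pointwise (in $K$-type) comparison. Decompose $v=\sum_\tau v_\tau$ with $v_\tau\in V[\tau]$; because $p$ is $K$-Hermitian, $p(v)^2=\sum_\tau p(v_\tau)^2$, and $D_s$, $\Delta_K$, $\Delta$ (via \eqref{eq Delta v}, assuming as there that $\Cc$ acts by a scalar $c$, which one may since $V$ has a finite composition series and the statement is stable under finite direct sums) all act on $V[\tau]$ by scalars: $D_s$ by $(1+c_\tau)^{s/2}$ with $c_\tau=|\lambda_\tau+\rho_\kf|^2-|\rho_\kf|^2$, and $(R^2+\Delta)$ by $R^2-c+2c_\tau$. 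Hence $p_s(v)^2=\sum_\tau (1+c_\tau)^{s}p(v_\tau)^2$ and $p^\Delta_{2k}(v)^2=\sum_\tau (R^2-c+2c_\tau)^{2k}p(v_\tau)^2$. Since $c_\tau$ grows like $|\tau|^2$, for fixed $R$ both $(1+c_\tau)$ and $(R^2-c+2c_\tau)$ are comparable to $(1+|\tau|)^2$ up to positive multiplicative constants uniform in $\tau$ (note $c_\tau\geq 0$ and $c_\tau\to\infty$, so only finitely many $\tau$ need separate attention and those contribute finite-dimensional, hence equivalent, pieces). Therefore, given $s\geq 0$, choosing $k\in\N_0$ with $2k\cdot 2\geq s$, i.e. comparing exponents, one gets $(1+c_\tau)^s\lesssim (R^2-c+2c_\tau)^{2k}$ uniformly in $\tau$, whence $p_s\lesssim p^\Delta_{2k}\lesssim p_k^{\mathrm{st}}$; conversely given $k$, pick $s$ with $s\geq 4k+2m$ (to absorb the shift $m$ and the factor comparing $(1+c_\tau)$ to $(R^2-c+2c_\tau)^2$) so that $p_k^{\mathrm{st}}\lesssim p^\Delta_{2k+m}\lesssim p_s$.

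Finally, assemble: the two cofinal comparisons show $(p_s)_{s\geq 0}$ and $(p_k^{\mathrm{st}})_{k\in\N_0}$ induce the same locally convex topology on $V^\infty$, and by the Casselman–Wallach theorem (Theorem \ref{Thm Casselman-Wallach}, or rather the version \eqref{standard Sobolev squeeze} plus Proposition \ref{prop rapid decay}) this is the canonical Fréchet topology of $V^\infty\simeq V^\infty_p$. I expect the only genuinely delicate point to be bookkeeping around the finitely many $K$-types $\tau$ for which $R^2-c+2c_\tau$ is not bounded below by a positive multiple of $(1+|\tau|)^2$ (or is close to zero): one handles these by noting $R$ can be chosen large enough (cf.\ the invertibility of $R^2+\Delta$ from \cite{GK}) or simply by isolating a finite-dimensional subspace on which all norms in sight are equivalent. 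Everything else is exponent arithmetic and the reduction to the $K$-Hermitian case.
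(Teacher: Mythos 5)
Your direct $K$-isotypic comparison for a $K$-Hermitian norm is sound --- it is essentially the content of Lemma~\ref{Lemma equivalence of sobolev norms for K-hermitian norms} that $p^{\Delta}_{2k}\sim p_{2k}$, which together with Proposition~\ref{Prop equivalence of Sobolev norms} does yield the desired cofinal comparisons between $(p_s)_s$ and $(p_k^{\mathrm{st}})_k$ for $K$-Hermitian $p$. The genuine gap is the reduction to that case. Having $q_1\lesssim p\lesssim q_2$ with $q_1,q_2$ $K$-Hermitian gives $(q_1)_s\lesssim p_s\lesssim (q_2)_s$ and $(q_1)_k^{\mathrm{st}}\lesssim p_k^{\mathrm{st}}\lesssim (q_2)_k^{\mathrm{st}}$, but these are one-sided inequalities and do not close the loop: from $p_k^{\mathrm{st}}\lesssim (q_2)_k^{\mathrm{st}}\lesssim (q_2)_{s}$ you have landed on a norm that \emph{dominates} $p_{s}$ (since $p\lesssim q_2$), and to get back down to some $p_{s'}$ you would need $(q_2)_{s}\lesssim (q_1)_{s'}$, i.e.\ a domination of one $K$-Hermitian $G$-continuous norm by a Sobolev shift of another. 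That is exactly what the Casselman--Wallach theorem (Theorem~\ref{Thm Casselman-Wallach}) provides and there is no a priori substitute for it; but your proposal only invokes Casselman--Wallach at the very end, for identifying $V_p^\infty$ with the canonical $V^\infty$, not in the transfer step where it is actually indispensable.

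The paper's proof is structured precisely to apply Casselman--Wallach where yours leaves the hole: with $p\lesssim q$, $q$ $K$-Hermitian, one gets $p_s\leq q_s$ and then Casselman--Wallach applied to the pair $(q_s,p)$ gives $q_s\lesssim p_k^{\mathrm{st}}$ directly; in the other direction one passes through $q_k^{\mathrm{st}}\lesssim r^{\Delta}_{2l}$ (Casselman--Wallach plus Proposition~\ref{Prop equivalence of Sobolev norms}) and then $r^{\Delta}_{2l}\lesssim r_{2l}\lesssim p_{2l}$ using $\Delta=-\Cc+2\Delta_K$. The $K$-Hermitian squeezing is scaffolding around that invocation, not a means of eliminating it. A minor secondary point: in your exponent arithmetic the requirement should be $2k\geq s$, not $4k\geq s$, since $1+c_\tau$ and $R^2-c+2c_\tau$ both scale like $(1+|\tau|)^2$ and you are comparing $(1+c_\tau)^{s/2}$ against $(R^2-c+2c_\tau)^k$; this is easily repaired but the condition as you wrote it is insufficient.
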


\begin{proof}
Let $q$ be a $K$-Hermitian $G$-continuous norm on $V$ so that $p\lesssim q$. Such norms exist in view of Proposition \ref{Prop K-Hermitian squeezing}.
For every $s\geq 0$ we then have $p_{s}\leq q_{s}$. Since both $q_{s}$ and $p$ are $G$-continuous, the Casselman-Wallach theorem (Theorem \ref{Thm Casselman-Wallach}) asserts the existence of a $k\in\N$ so that $q_{s}\lesssim p_{k}^{\rm st}$.

For the other estimate, let $r$ be a $K$-Hermitian $G$-continuous norm on $V$ so that $r\lesssim p$. The existence of such a norm is guaranteed by Proposition \ref{Prop K-Hermitian squeezing}. Let $k\in\N$. Then $p_{k}^{\rm st}\lesssim q_{k}^{\rm st}$. By the Casselman-Wallach theorem and Proposition \ref{Prop equivalence of Sobolev norms} there exists an $l\in\N$ so that $q_{k}^{\rm st}\lesssim r_{2l}^{\Delta}$. In view of the formula $\Delta= -\Cc + 2 \Delta_K$ we have $r_{2l}^{\Delta}\lesssim r_{2l}$. Finally, as $r\lesssim p$, we have $r_{2l}\lesssim p_{2l}$.
\end{proof}

Lemma \ref{inf w lemma} and Proposition \ref{Prop s-topology} have the following immediate corollary.

\begin{cor}
Let $p\in \No(V,w)$. If $s\geq0$ is such that $p\leq p_{s}$, then
$$
p_s^G
:= (p_s)^G
\in \No(V,w).
$$
\end{cor}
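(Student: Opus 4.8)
The plan is to apply the second assertion of Lemma \ref{inf w lemma} with the norm $p_s$ playing the role of ``$p$'' there. That statement requires two inputs: a norm $q\in\No(V,w)$ with $q\le p_s$, and a $G$-continuous norm $r$ on $V$ with $p_s\le r$. Granting both, the lemma yields at once that $(p_s)^G$ is a $G$-continuous norm lying in $\No(V,w)$, which is precisely the claim (recall $p_s^G:=(p_s)^G$). The first input is free: since $p\in\No(V,w)$ and, by hypothesis, $p\le p_s$, we may simply take $q=p$.

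For the second input I would first use Proposition \ref{Prop K-Hermitian squeezing} to choose a $K$-Hermitian $G$-continuous norm $q$ on $V$ with $p\le Cq$ for some $C>0$. Since $p_s=p\circ D_s$ and $q_s=q\circ D_s$, and $D_s$ preserves $V$ (it acts by a scalar on each of the finitely many $K$-isotypic components occurring in a given vector of $V$), this gives $p_s\le Cq_s$. Now $q_s$ is $G$-continuous: this is the fact used in the proof of Proposition \ref{Prop s-topology} for a $K$-Hermitian $G$-continuous norm $q$ and $s\ge 0$ (alternatively, Proposition \ref{Prop s-topology} gives $p_s\lesssim p_k^{\rm st}$ for some $k\in\N_0$, and the standard Sobolev norm of a $G$-continuous norm is again $G$-continuous). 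Hence $r:=Cq_s$ is a $G$-continuous norm with $p_s\le r$, completing the verification of the hypotheses of Lemma \ref{inf w lemma}.

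Beyond this the argument is bookkeeping: feeding $q=p$ and $r=Cq_s$ into Lemma \ref{inf w lemma} reads off $p_s^G=(p_s)^G\in\No(V,w)$. The hypothesis $p\le p_s$ is used exactly to supply the lower norm $q$, and the assumption $s\ge 0$ enters there and in having $q_s$ available as a $G$-continuous dominating norm. The only step that is more than formal manipulation is the $G$-continuity of $q_s$ (equivalently, of a standard Sobolev norm of a $G$-continuous norm), used to build $r$; I expect that to be the main, though mild, obstacle.
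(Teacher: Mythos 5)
Your proposal is correct and follows the paper's intended route: the paper states the corollary as immediate from Lemma \ref{inf w lemma} and Proposition \ref{Prop s-topology}, and you invoke exactly these, using $q=p$ (via the hypothesis $p\le p_s$) as the lower bound and a $G$-continuous majorant of $p_s$ as the upper bound. The parenthetical alternative you offer — taking $r\lesssim p_k^{\rm st}$ from Proposition \ref{Prop s-topology} and using that standard Sobolev norms of a $G$-continuous norm are $G$-continuous — is the cleaner and more direct of your two options and matches the paper's phrasing most closely.
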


We end this section with the following strengthening of Proposition \ref{Prop s-topology} for $K$-Hermitian norms.

\begin{lemma}\label{Lemma equivalence of sobolev norms for K-hermitian norms}
Let $V$ be a Harish-Chandra module and let $p$ be a $G$-continuous $K$-Hermitian norm on $V$. Then
$$
p^{\Delta}_{2k}
\sim p_{2k}
\qquad (k\in\Z).
$$
\end{lemma}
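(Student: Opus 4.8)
My plan rests on the identity $\Delta=-\Cc+2\Delta_K$, which exhibits $R^2+\Delta$ and $1+\Delta_K$ as two elliptic operators differing only by the Casimir $\Cc$ — an element that, because $V$ has finite length, acts \emph{finitely} on $V$ and is therefore negligible against $\Delta_K$ along $\hat K$. Concretely, I would argue as follows.

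\emph{Reduction to $K$-types.} Both $R^2+\Delta$ and $1+\Delta_K$ commute with $K$ and are invertible on $V$: on the finite-dimensional space $V[\tau]$ the element $1+\Delta_K$ is the scalar $1+c_\tau$ with $c_\tau:=|\lambda_\tau+\rho_\kf|^2-|\rho_\kf|^2\ge 0$, while $R^2+\Delta$ is invertible on $V^\infty\supseteq V[\tau]$. Hence $p^\Delta_{2k}$ and $p_{2k}$ are again $K$-Hermitian norms, the components $V[\tau]$ are mutually orthogonal for both associated inner products, and $p^\Delta_{2k}(v)^2=\sum_\tau p^\Delta_{2k}(v_\tau)^2$, $p_{2k}(v)^2=\sum_\tau p_{2k}(v_\tau)^2$. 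Writing $A_\tau:=(R^2+\Delta)|_{V[\tau]}=(R^2+2c_\tau)\,\id_{V[\tau]}-\Cc|_{V[\tau]}$ and noting $p_{2k}(v_\tau)=(1+c_\tau)^k p(v_\tau)$, $p^\Delta_{2k}(v_\tau)=p(A_\tau^k v_\tau)$, the claim reduces to the uniform bounds $\|A_\tau^{\,j}\|_{\mathrm{op}}\lesssim (1+c_\tau)^{\,j}$ for every integer $j$ (operator norm for $p|_{V[\tau]}$, constants independent of $\tau$); summing over $\tau$ then gives $p^\Delta_{2k}\sim p_{2k}$. (If $\Cc$ acts by a scalar $c$ this is immediate: $A_\tau=(R^2-c+2c_\tau)\,\id$ and the ratio of the two norms on $V[\tau]$ equals $\big(|R^2-c+2c_\tau|/(1+c_\tau)\big)^k$, which is bounded above on $[0,\infty)$ and bounded away from $0$ on the discrete set $\{c_\tau:V[\tau]\ne 0\}$, since $R^2-c+2c_\tau$ never vanishes there — $R^2+\Delta$ being invertible.)

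\emph{The easy direction.} For $j\ge 0$ one has $(R^2+\Delta)^j\in\U(\gf)$ of degree $2j$, so the standard estimate $p(u\cdot v)\lesssim_u(1+|\tau|)^{\deg u}p(v)$ for $u\in\U(\gf)$ and $v\in V[\tau]$ (see \cite{BK}) gives $\|A_\tau^{\,j}\|_{\mathrm{op}}=\|(R^2+\Delta)^j|_{V[\tau]}\|_{\mathrm{op}}\lesssim_j(1+|\tau|)^{2j}\asymp(1+c_\tau)^{\,j}$. Applied to $u=\Cc$ the same estimate gives $\|\Cc|_{V[\tau]}\|_{\mathrm{op}}\lesssim 1+c_\tau$.

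\emph{The inverse estimate and the main obstacle.} It remains to prove $\|A_\tau^{-1}\|_{\mathrm{op}}\lesssim(1+c_\tau)^{-1}$ uniformly in $\tau$, which then yields $\|A_\tau^{\,j}\|_{\mathrm{op}}\lesssim(1+c_\tau)^{\,j}$ for $j<0$ too. Here finiteness of $V$ enters decisively: decompose $V=\bigoplus_\chi V_\chi$ into generalized infinitesimal-character eigenspaces — $(\gf,K)$-submodules whose decomposition persists as a topological direct sum in every globalization, so it suffices to bound $A_\tau^{-1}$ on each $V_\chi[\tau]$. On $V_\chi$ the Casimir acts as $\Cc=\chi(\Cc)\,\id+N$ with $N:=\Cc-\chi(\Cc)\,\id\in\U(\gf)$ of degree $2$ and nilpotent of a fixed order $m$. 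Thus on $V_\chi[\tau]$, for $c_\tau$ large, $A_\tau=t_\tau\bigl(\id-N/t_\tau\bigr)$ with $t_\tau:=R^2+2c_\tau-\chi(\Cc)\asymp 1+c_\tau$, and the Neumann series \emph{terminates}: $A_\tau^{-1}=t_\tau^{-1}\sum_{l=0}^{m-1}(N/t_\tau)^l$. Together with $\|N|_{V[\tau]}\|_{\mathrm{op}}\lesssim 1+c_\tau$ from the previous step this gives $\|A_\tau^{-1}\|_{\mathrm{op}}\lesssim t_\tau^{-1}\sum_l(1+c_\tau)^l t_\tau^{-l}\lesssim(1+c_\tau)^{-1}$, the finitely many $\tau$ with small $c_\tau$ being absorbed into the constant via invertibility of $R^2+\Delta$. \textbf{The crux} is precisely this uniform-in-$\tau$ control of $\Cc$ on $K$-types: it fails for modules of infinite length (where $\Cc$ genuinely competes with $\Delta_K$), so the argument truly uses that $V$ is a Harish-Chandra module — through the degree-$2$ estimate for the $\U(\gf)$-action and the termination of the Neumann series forced by the nilpotent action of $\Cc$ on each infinitesimal-character block — and one must also justify that this block decomposition survives, as a topological direct sum, in the completion $V_p$.
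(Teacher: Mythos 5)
Your proof takes essentially the same route as the paper's (which is the terse one-liner that the claim ``follows from \eqref{eq Delta=-Cc+2Delta_K}, \eqref{eq restriction of Delta_K to V[tau]} and the fact that $\Cc$ acts finitely on $V$''): you spell out the intended reduction to $K$-types, the decomposition into generalized infinitesimal-character blocks, and the handling of the nilpotent part of $\Cc$ via a terminating Neumann series. One small caution: the ``standard estimate'' $p(u\cdot v_\tau)\lesssim_u(1+|\tau|)^{\deg u}p(v_\tau)$ for general $K$-Hermitian $G$-continuous $p$ is the real load-bearing input here and is not spelled out in \cite{BK} in that exact form, so you should either cite a precise location (e.g.\ \cite{GK} together with the proof of Proposition~\ref{prop rapid decay}) or include the short argument, but the structure of your proof matches the authors' intent.
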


\begin{proof}
The assertion follows from (\ref{eq Delta=-Cc+2Delta_K}), (\ref{eq restriction of Delta_K to V[tau]}) and the fact that $\Cc$ acts finitely on $V$.
\end{proof}

\section{The Sobolev gap of a Harish-Chandra module}\label{section Sobolev gap}

Throughout this section $V$ is a Harish-Chandra module for a real reductive group $G$ and $w$ is a weight on $G$.
Let $p_{\rm max}$ and $p_{\rm min}$ be representatives of the maximal and minimal element in $\Noc(V,w)$.

\subsection{Definition of the Sobolev gap}

From the Casselman-Wallach theorem (Theorem \ref{Thm Casselman-Wallach}) and Proposition \ref{Prop s-topology} we deduce that there exists an $s\geq 0$
such that
$$
p_{\rm max}
\lesssim p_{{\rm min}, s}\,.
$$
We now come to a key concept of this paper. We define the {\it Sobolev $w$-gap} of $V$ to be the non-negative number
$$
s(V,w)
:= \inf\{ s\geq 0\mid p_{\rm max} \lesssim p_{{\rm min},s}\}.
$$
We consider this number as an interesting invariant of the pair $(V, w)$ and propose to investigate bounds for $s(V,w)$.
In Remark \ref{Rem pseudometric} we will show that $s(V,w)$ does not depend on the choice of $K$.

\begin{prop}\label{prop Duality and Monotonicity}
\, \begin{enumerate}[(i)]
\item\label{prop Duality and Monotonicity - item Duality} {\em Duality:} $s(V, w) = s(\tilde V, w^\sharp)$. In particular,
$s(V, w)=s(V, w^\sharp)$ if $V$ is self-dual, i.e. $V\simeq \tilde V$.
\item\label{prop Duality and Monotonicity - Item Monotonicity}{\em Monotonicity:} Let $w_1, w_2$ be two weights. If $w_1\leq w_2$, then $s(V,w_1)\leq s(V, w_2)$.
\end{enumerate}
\end{prop}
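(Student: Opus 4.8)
\emph{Duality.} The plan is to observe that Sobolev norms on $V$ and $\tilde V$ are exchanged by the duality operation in a way that is compatible with the Sobolev parameter. Concretely, the operator $D_s=(1+\Delta_K)^{s/2}$ is $K$-invariant and self-adjoint in the relevant pairing, so by \eqref{ps dual} we have $\widetilde{(p_s)}=\tilde p_{-s}$. Now recall from Lemma \ref{Lemma minimal norm} and Corollary \ref{Cor max norm} that if $p$ is a representative of $[p_{\rm min}]$ in $\Noc(V,w)$, then $\tilde p$ is a representative of $[p_{\rm max}]$ in $\Noc(\tilde V, w^\sharp)$, and vice versa; the contragredient operation is an order-reversing involution on equivalence classes by Lemma \ref{pq-lemma}. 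Given $p_{\rm max}\lesssim p_{{\rm min},s}=(p_{\rm min})_s$ on $V$, apply Lemma \ref{pq-lemma}(\ref{pq-lemma - item 2}) to dualize: $\widetilde{(p_{\rm min})_s}\lesssim \tilde p_{\rm max}$, i.e. $\widetilde{(p_{\rm min})}_{-s}\lesssim \tilde p_{\rm max}$ by \eqref{ps dual}. Since $\widetilde{(p_{\rm min})}$ represents $[p_{\rm max}]$ of $\tilde V$ (with weight $w^\sharp$) and $\tilde p_{\rm max}$ represents $[p_{\rm min}]$ of $\tilde V$, applying $D_s$ to both sides — equivalently using \eqref{s plus t} — gives $\widetilde{(p_{\rm min})}\lesssim (\tilde p_{\rm max})_s$, which is exactly a domination of the maximal norm of $\tilde V$ by the $s$-th Sobolev norm of the minimal norm of $\tilde V$ with weight $w^\sharp$. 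Hence $\{s\geq 0: p_{\rm max}\lesssim p_{{\rm min},s}\}$ for $(V,w)$ and the corresponding set for $(\tilde V,w^\sharp)$ coincide, so taking infima yields $s(V,w)=s(\tilde V,w^\sharp)$. The self-dual case is then immediate: if $V\simeq\tilde V$ then $s(V,w)=s(\tilde V, w^\sharp)=s(V,w^\sharp)$.

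\emph{Monotonicity.} Here the plan is to trace how enlarging the weight enlarges the class $\No(V,w)$ and moves the extremal norms. If $w_1\leq w_2$ then $\No(V,w_1)\subseteq \No(V,w_2)$, so both $\Noc(V,w_1)$ and $\Noc(V,w_2)$ have minimal and maximal elements (assuming nonemptiness, which is the standing hypothesis). From the explicit formula \eqref{def pmin}, $p_{\rm min}^{w_2}(v)=\sup_{g,\tilde v\in C}|m_{v,\tilde v}(g)|/w_2(g)\leq \sup_{g,\tilde v\in C}|m_{v,\tilde v}(g)|/w_1(g)=p_{\rm min}^{w_1}(v)$ up to the usual constants, so $[p_{\rm min}^{w_2}]\leq [p_{\rm min}^{w_1}]$ in the partial order; dually, $[p_{\rm max}^{w_1}]\leq [p_{\rm max}^{w_2}]$, since the maximal norm is the contragredient of the minimal norm of $\tilde V$ for the weight $w^\sharp$ (Corollary \ref{Cor max norm}) and $w_1^\sharp\leq w_2^\sharp$. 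The point is that shrinking $p_{\rm min}$ and growing $p_{\rm max}$ can only increase the Sobolev order needed to bridge the gap. Precisely: if $s\geq 0$ satisfies $p_{\rm max}^{w_2}\lesssim (p_{\rm min}^{w_2})_s$, I would like to conclude $p_{\rm max}^{w_1}\lesssim (p_{\rm min}^{w_1})_s$, using $p_{\rm max}^{w_1}\lesssim p_{\rm max}^{w_2}$ and $(p_{\rm min}^{w_2})_s\lesssim (p_{\rm min}^{w_1})_s$ (the latter from $p_{\rm min}^{w_2}\lesssim p_{\rm min}^{w_1}$ via \eqref{pq comparison}). Chaining these three dominations gives $p_{\rm max}^{w_1}\lesssim (p_{\rm min}^{w_1})_s$, hence the admissible set of $s$ for $w_1$ contains that for $w_2$, so $s(V,w_1)\leq s(V,w_2)$.

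\emph{Main obstacle.} The routine inequalities above are unproblematic; the one subtlety worth care is that $p_s$ need not itself be $G$-continuous for a general $G$-continuous norm $p$ (flagged before Proposition \ref{Prop s-topology}), so all domination statements should be read between genuine $G$-continuous norms obtained by squeezing, using Proposition \ref{Prop K-Hermitian squeezing} to replace $p_{\rm min}$ by an equivalent $K$-Hermitian norm when a monotone Sobolev scale is needed. A second point, minor but needing a remark, is that $s(V,w)$ as defined uses chosen representatives $p_{\rm min},p_{\rm max}$; independence of the representatives follows because $\lesssim$ is insensitive to scaling and \eqref{pq comparison} upgrades an equivalence of base norms to an equivalence of all Sobolev norms of a fixed order, so the infimum set is unchanged. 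With these caveats handled, both parts are short.
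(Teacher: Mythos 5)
Your proposal is correct and follows essentially the same route as the paper. For duality, both arguments rest on the same chain: $p\lesssim q_s$ dualizes to $\tilde q_{-s}\lesssim \tilde p$ via Lemma~\ref{pq-lemma}(\ref{pq-lemma - item 2}) and \eqref{ps dual}, then shifting by $D_s$ (i.e.\ \eqref{s plus t} and \eqref{pq comparison}) converts this to $\tilde q\lesssim \tilde p_s$, and the identifications $[\widetilde{p_{\min}}]=[p_{\max}]$ of $\Noc(\tilde V,w^\sharp)$ and $[\widetilde{p_{\max}}]=[p_{\min}]$ of $\Noc(\tilde V,w^\sharp)$ (Corollary~\ref{Cor max norm}) close the loop; the paper states this more compactly as $\tilde{p}_{\max}\sim\widetilde{p_{\min}}\lesssim(\widetilde{p_{\max}})_s\sim(\tilde p_{\min})_s$, but the content is identical. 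For monotonicity, both proofs exploit $\No(V,w_1)\subseteq\No(V,w_2)$ to get $p_{\max}^{1}\lesssim p_{\max}^{2}$ and $p_{\min}^{2}\lesssim p_{\min}^{1}$ and then chain via \eqref{pq comparison}; you additionally deduce $p_{\min}^{w_2}\lesssim p_{\min}^{w_1}$ from the explicit formula \eqref{def pmin}, which is fine but not strictly necessary once the containment of the norm classes is observed. Your caveat about $p_s$ not being known to be $G$-continuous is appropriate but applies equally to the paper's own use of Lemma~\ref{pq-lemma}; the dual norm $\tilde{(p_s)}$ is still well-defined on $\tilde V$ because $p_s$ is squeezed between $G$-continuous norms (Proposition~\ref{Prop s-topology}), so $K$-finite functionals remain bounded.
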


\begin{proof} For any two $G$-continuous norms $p$ and $q$ on $V$ and $s\in\R$ we have $p \lesssim q_s$ if and only if $\tilde q_{-s}\lesssim \tilde p$ by Lemma \ref{pq-lemma} (\ref{pq-lemma - item 2}) and \eqref{ps dual}. In view of \eqref{pq comparison} the latter is equivalent to $\tilde q\lesssim \tilde p_s$.
Let $s\in\R$ be so that $p_{\max}\lesssim p_{\min,s}$. We apply the above and the identities $[\tilde{p}_{\rm max}]=[\tilde{p_{\rm min}}]$ and $[\tilde{p_{\rm max}}]=[\tilde p_{\rm min}]$ and thus obtain
$$
\tilde{p}_{\max}
\sim \tilde{p_{\min}}
\lesssim (\tilde{p_{\max}})_{s}
\sim (\tilde p_{\rm min})_{s}\,.
$$
Hence $s(\tilde{V},w^{\sharp})\geq s\geq s(V,w)$. Similarly we find $s(V,w)\geq s(\tilde{V},w^{\sharp})$. This proves duality.

We move on to prove monotonicity. Let $p_{\min}^{1}$ and $p_{\min}^{2}$ be representatives of the minimal elements in $\Noc(V,w_{1})$ and $\Noc(V,w_{2})$, respectively. Likewise, let $p_{\max}^{1}$ and $p_{\max}^{2}$ be the maximal elements. Since $w_{1}\leq w_{2}$ we have $\No(V,w_{1})\subseteq \No(V,w_{2})$. It follows that for every $s\in \R$ with $p_{\max}^{2}\lesssim p_{\min,s}^{2}$ we have
$$
p_{\max}^{1}
\lesssim p_{\max}^{2}
\lesssim p_{\min,s}^{2}
\lesssim p_{\min,s}^{1}\,.
$$
For the last comparison we have used (\ref{pq comparison}).
It thus follows that $s(V,w_{1})\leq s(V, w_{2})$.
\end{proof}

Alternative definitions of a Sobolev gap can be made by using other families of Sobolev norms. An example is the standard Sobolev gap for a Harish-Chandra module $V$
$$
s^{\rm st}(V,w)
=\min\{ k \in \N_0\mid p_{\rm max}
\lesssim (p_{\rm min})_k^{\rm st}\}
$$
using the standard Sobolev norms.
Note that the $s^{\rm st}(V,w)$ is a more coarse invariant of $V$ than the Sobolev gap $s(V,w)$.
Furthermore, the monotonicity property from Proposition \ref{prop Duality and Monotonicity} holds for the standard Sobolev gap, i.e.
$$
s^{\mathrm{st}}(V,w_1)
\leq s^{\mathrm{st}}(V, w_2)
$$
for all Harish-Chandra modules $V$ and weights $w_{1}$ and $w_{2}$ with $w_{1}\leq w_{2}$. The argument for this is the same as the argument in the proof of Proposition \ref{prop Duality and Monotonicity}(\ref{prop Duality and Monotonicity - Item Monotonicity}).

The following is a partial result relating Sobolev gaps for a Harish-Chandra module and a submodule.

\begin{lemma} \label{prop embed}
Let $V\subset U$ be Harish-Chandra modules and let $w$ be a weight.
Let $p_{\min}$ and $q_{\min}$ be representatives for the minimal elements in $\Noc(V,w)$ and $\Noc(U,w)$, respectively.
Then
$$
[p_{\min}]
= [q_{\min}|_{V}].
$$
If there exists a norm $q\in \Noc(V,w)$ such that $p_{\max}\lesssim q|_V$, then
\begin{equation} \label{submodule comparison}
s(V,w)\leq s(U,w)
\qquad\hbox{and}\qquad
s^{\mathrm{st}}(V,w)\leq s^{\mathrm{st}}(U, w).
\end{equation}
In particular, the inequalities above hold in case $V\subset U$ is a direct summand.
\end{lemma}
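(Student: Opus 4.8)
\emph{Proof strategy.}
The plan is first to establish $[p_{\min}]=[q_{\min}|_{V}]$ from the matrix-coefficient description of the minimal norm in Lemma \ref{Lemma minimal norm}, and then to deduce the inequalities for the Sobolev gaps. For the first claim, the starting point is that the contragredient of the inclusion $V\subset U$ is a surjective $(\gf,K)$-morphism $r\colon\tilde U\to\tilde V$, $\tilde u\mapsto\tilde u|_{V}$ --- indeed every $K$-finite functional on $V$ extends $K$-finitely to $U$, e.g. by $0$ on a $K$-stable complement of $V$. I would fix a compact generating subset $D$ of a finite-dimensional subspace of $\tilde U$, so that $q_{\min}=q_{\min,D}$ represents the minimal class of $\Noc(U,w)$; then $C:=r(D)$ is a compact generating subset of the finite-dimensional subspace $r(\Span D)\subset\tilde V$. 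The crucial observation is that for $v\in V$ and $\tilde u\in\tilde U$ one has $m_{v,\tilde u}=m_{v,\,\tilde u|_{V}}$ as analytic functions on $G$, since both have the same Taylor expansion $u\mapsto\tilde u(uv)$ at the identity and $uv\in V$ for every $u\in\U(\gf)$. Substituting this into \eqref{def pmin} gives $q_{\min,D}|_{V}=p_{\min,C}$, and Lemma \ref{Lemma minimal norm} identifies $[p_{\min,C}]$ with $[p_{\min}]$, which proves the first claim.

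\emph{From the first claim to the Sobolev gaps.}
Let $q_{\max}$ represent the maximal class of $\Noc(U,w)$, and let $q\in\No(U,w)$ be as hypothesised, so $p_{\max}\lesssim q|_{V}$. Maximality gives $q\lesssim q_{\max}$, hence $p_{\max}\lesssim q_{\max}|_{V}$. Fix $\epsilon>0$. By the definition of $s(U,w)$ as an infimum there is an $s<s(U,w)+\epsilon$ with $q_{\max}\lesssim (q_{\min})_{s}$ on $U$; restricting to $V$ yields $q_{\max}|_{V}\lesssim (q_{\min})_{s}|_{V}$. Now $D_{s}=(1+\Delta_{K})^{s/2}$ acts by a scalar on each $K$-type, see \eqref{eq D_s}, hence preserves the $K$-stable submodule $V$, so $(q_{\min})_{s}|_{V}=(q_{\min}|_{V})_{s}$, and by the first claim together with \eqref{pq comparison} the right-hand side is equivalent to $(p_{\min})_{s}$. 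Chaining these comparisons gives $p_{\max}\lesssim (p_{\min})_{s}$, whence $s(V,w)\leq s<s(U,w)+\epsilon$; letting $\epsilon\to0$ we get $s(V,w)\leq s(U,w)$. The argument for $s^{\mathrm{st}}$ is the same and even simpler, since the standard Sobolev norms are monotone in the degree and $s^{\mathrm{st}}$ is a minimum: taking $k=s^{\mathrm{st}}(U,w)$, the operators $X_{1}^{\alpha_{1}}\cdots X_{n}^{\alpha_{n}}$ with $|\alpha|\leq k$ preserve $V$, so $(q_{\min})^{\mathrm{st}}_{k}|_{V}=(q_{\min}|_{V})^{\mathrm{st}}_{k}$, and $p_{\max}\lesssim q_{\max}|_{V}\lesssim (q_{\min})^{\mathrm{st}}_{k}|_{V}\sim(p_{\min})^{\mathrm{st}}_{k}$ forces $s^{\mathrm{st}}(V,w)\leq k$. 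Finally, when $U=V\oplus V'$ as $(\gf,K)$-modules we have $\tilde U=\tilde V\oplus\tilde{V'}$, and restriction of norms to the summands shows $\No(\tilde V,w^{\sharp})\neq\emptyset$ and $\No(V',w)\neq\emptyset$; choosing a representative $p_{\max}$ of the maximal class of $\Noc(V,w)$ and any $r'\in\No(V',w)$, the norm $q(v+v'):=p_{\max}(v)+r'(v')$ on $U$ has completion the direct sum of the two Banach completions with the product $G$-action --- a globalization of $U$ --- and satisfies $q(g\cdot x)\leq C\,w(g)\,q(x)$, so $q\in\No(U,w)$; since $q|_{V}=p_{\max}$, the hypothesis of the previous paragraph applies.

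\emph{Expected main obstacle.}
I anticipate no analytic difficulty: the whole argument is book-keeping around the fact that matrix coefficients, $K$-Laplacian Sobolev norms and standard Sobolev norms of vectors of $V$ are computed inside $V$, because $V$ is a $(\gf,K)$-submodule of $U$. The one genuinely non-formal input is the hypothesis $p_{\max}\lesssim q|_{V}$ of the second statement --- the restriction to $V$ of a $G$-continuous norm on $U$ need not dominate the maximal norm $p_{\max}$ of $V$ --- which is precisely why the unconditional conclusion is confined to direct summands, and why the general comparison of $s(U,w)$ with $s(V,w)$, namely Problem (\ref{prb1}), is left open.
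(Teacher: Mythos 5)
Your proposal is correct and follows essentially the same route as the paper's proof: you identify $[p_{\min}]=[q_{\min}|_V]$ by pushing a generating set forward along the restriction map $\tilde U\twoheadrightarrow\tilde V$ and observing that matrix coefficients of $v\in V$ against $\tilde u$ and against $\tilde u|_V$ agree, and you then chain $p_{\max}\lesssim q_{\max}|_V\lesssim (q_{\min})_s|_V\sim(p_{\min})_s$, noting that $D_s$ (and the $X^\alpha$) preserve the $(\gf,K)$-submodule $V$; the paper states this chain more tersely but the reasoning is the same, as is your construction of $q(v+v')=p_{\max}(v)+r'(v')$ for the direct-summand case.
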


\begin{proof}
Dual to the inclusion $V\hookrightarrow U$ is the projection
$\pi:\tilde U \twoheadrightarrow \tilde V$. If $C_U=\{\tilde{u}_{1}, \dots, \tilde{u}_{n}\}$ is a set of generators of $\tilde{U}$, then $C_V:=\{\pi(\tilde{u}_{1}),\dots,\pi(\tilde{u}_{n})\} $ is a set of generators for $\tilde{V}$. Let $p_{\min}$, resp. $q_{\rm min}$ be representatives of the minimal elements in $\Noc(V,w)$, resp. $\Noc(U,w)$ given by (\ref{def pmin}) for $C$ given by $C_V$ and $C_U$, respectively. Then
$$
p_{\min}
= q_{\min}|_{V}\,.
$$
If $p_{\max} \lesssim q|_V$ for some $q\in \No(U,w)$ then
$p_{\max}\lesssim q_{\max}|_V$ and hence we obtain
$$
p_{\min}
= q_{\min}|_V
\lesssim p_{\max}
\lesssim q_{\max}|_{V}\,.
$$
Thus (\ref{submodule comparison}) follows.

If $U=V\oplus W$ for some submodule $W$ of $U$ and $r\in \No(W,w)$, then the norm
$$
q:U=V\oplus W\to\R_{\geq0},\quad v+w\mapsto p_{\max}(v)+r(w)
$$
is contained in $\No(U,w)$ and satisfies $q|_{V}=p_{\max}$.
Therefore, \eqref{submodule comparison} holds in this case.
\end{proof}

\subsection{A stabilization property for norms}

This section is devoted to the phenomenon of stabilization of $p_s^G$ for a norm $p\in \No(V,w)$ and $s$ large. We make it precise:

\begin{prop}\label{Prop stabilization 1} Let $p\in \No(V,w)$.
For any $\epsilon >0$ there exists an $s\geq 0$ so that
$$
s(V,w)\leq s < s(V,w)+\epsilon
\quad\text{and}\quad
[p_{\rm max}]=[p_s^G].
$$
Moreover, the following assertions hold: 
\begin{enumerate}[(i)]
\item {\rm(Weak stabilization)}\label{Prop stabilization 1 - weak stabilization}
There exists a $S>0$ so that
$$
[p_{\rm max}]=[p_s^G]\qquad (s\geq S).
$$
\item {\rm(Strong stabilization)}\label{Prop stabilization 1 - strong stabilization}
If $p\in \No(V,w)$ is $K$-Hermitian or more generally a monotonous norm, that is $p\lesssim p_s$ for all $s\geq 0$, then 
$$
[p_{\rm max}]
= [p_s^G] \qquad (s > d([p], [p]_{\max})).
$$
\end{enumerate}
\end{prop}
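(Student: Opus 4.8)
The plan is to reduce everything to the basic facts already assembled: that $\Noc(V,w)$ has a maximal element $[p_{\max}]$ (Corollary \ref{Cor max norm}), that $p^G$ is the largest seminorm on $V$ with $p^G\leq w(g^{-1})p(g\dotvar)$ and lies in $\No(V,w)$ whenever squeezed between $G$-continuous norms (Lemma \ref{inf w lemma}), and the characterization of $s(V,w)$ as the threshold for $p_{\max}\lesssim p_{\min,s}$, together with the identity $d([p_{\min}],[p_{\max}])=s(V,w)$. First I would record the easy inequality: for \emph{any} norm $p\in\No(V,w)$ and any $s\geq 0$ one has $p_s^G\lesssim p_s\lesssim p_{\min,s}$ — wait, that goes the wrong way; rather, since $p_{\min}\lesssim p$ we get $p_{\min,s}\lesssim p_s$ by \eqref{pq comparison}, and applying the infimum construction $q\mapsto q^G$ (which is monotone and acts as the identity up to equivalence on $G$-invariant norms) gives $p_{\min,s}^G\lesssim p_s^G$; but $p_{\min,s}^G$ is, by Lemma \ref{inf w lemma}, a $G$-invariant norm in $\No(V,w)$ dominating $p_{\min}$, hence it dominates... no: a $G$-invariant norm in $\No(V,w)$ need not dominate $p_{\max}$. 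The correct direction is the \emph{upper} bound: $p_s^G\lesssim p_s$ always, and if $p\lesssim p_{\max}$ (true for every $p\in\No(V,w)$ up to constants? no — $p\lesssim p_{\max}$ holds since $[p_{\max}]$ is maximal), then $p_s\lesssim p_{\max,s}$, but we want to compare with $p_{\max}$ itself, not $p_{\max,s}$. So the genuine content is: $p_s^G$ dominates $p_{\max}$ once $s$ is large, and is always dominated by something equivalent to $p_{\max}$.

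For the domination $[p_s^G]\leq[p_{\max}]$: since $p\in\No(V,w)$ we have $p\lesssim p_{\max}$, and I claim $p_{\max}^G\sim p_{\max}$ because $p_{\max}$ may be taken isometric (i.e. $G$-invariant for the action twisted by $w$), whence $p_s^G\lesssim (p_{\max})_s^G$; here one must be careful — $p_{\max}$ itself is $G$-invariant but $(p_{\max})_s$ need not be, so $(p_{\max})_s^G\lesssim (p_{\max})_s$ and I need the reverse to conclude $(p_{\max})_s^G\sim p_{\max}$. The right statement is simply $p_s^G\lesssim p_s$ for all $s$, and separately $p_s\lesssim p_{\min,s}$ is false — rather $p\lesssim q$ for $q$ the maximal norm, so $p_s\lesssim q_s$; the point of stabilization is that $q_s^G\sim q^G\sim q=p_{\max}$ once the Sobolev shift is absorbed. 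So the clean line is: \textbf{(a)} $p_s^G\lesssim p_{\max}$, proved via $p\lesssim p_{\max}$, $p_s\lesssim (p_{\max})_s$, $(p_s)^G\lesssim ((p_{\max})_s)^G$, and then the key sub-lemma that for a $G$-invariant norm $r$ with $r\lesssim r_s$ (monotonicity, automatic if $r$ is $K$-Hermitian — but $p_{\max}$ need not be) one has $(r_s)^G\sim r$; \textbf{(b)} $p_{\max}\lesssim p_s^G$ once $s>d([p_{\min}],[p_{\max}]) = s(V,w)$, shown by: $p_{\min,s}\gtrsim p_{\max}$ for such $s$ by definition of $s(V,w)$, then $p_{\min,s}^G\gtrsim p_{\max}^G\sim p_{\max}$ — no, one needs $p_{\min}\lesssim p$, giving $p_{\min,s}^G\lesssim p_s^G$, and $p_{\min,s}^G\gtrsim$ ... here $p_{\min,s}^G$ is a $G$-invariant norm in $\No(V,w)$ dominating $p_{\min}$; does it dominate $p_{\max}$? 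Only if $p_{\min,s}\gtrsim p_{\max}$, which holds precisely for $s>s(V,w)$, and then $p_{\min,s}^G\gtrsim p_{\max}^G$; since $p_{\max}$ is $G$-invariant, $p_{\max}^G\sim p_{\max}$, so indeed $p_s^G\gtrsim p_{\min,s}^G\gtrsim p_{\max}$.

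So the three bullet structure of the proof is: prove the general statement and \ref{Prop stabilization 1 - weak stabilization} simultaneously by taking $S=s(V,w)+\epsilon$ (any $\epsilon$), combining \textbf{(a)} and \textbf{(b)}; for \textbf{(a)} the needed sub-lemma is that $(p_{\max})_s^G\sim p_{\max}$, which when $p_{\max}$ is not monotone requires instead routing through a $K$-Hermitian $q_2\gtrsim p_{\max}$ from Proposition \ref{Prop K-Hermitian squeezing}, noting $p_s^G\lesssim (q_2)_s^G\lesssim (q_2)_s$ and that $(q_2)_s^G\in\No(V,w)$ so $(q_2)_s^G\lesssim p_{\max}$ by maximality — this last move is the cleanest: \emph{any} $G$-continuous norm in $\No(V,w)$ is dominated by $p_{\max}$, and $p_s^G$ is such a norm by the Corollary after Proposition \ref{Prop s-topology} provided $p\lesssim p_s$, i.e. $p$ monotone; for non-monotone $p$ one squeezes $p\lesssim q_2$ with $q_2$ $K$-Hermitian and uses $p_s^G\lesssim (q_2)_s^G\in\No(V,w)\Rightarrow p_s^G\lesssim p_{\max}$. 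Finally, for \ref{Prop stabilization 1 - strong stabilization}, the monotonicity hypothesis $p\lesssim p_s$ makes $p_s^G\in\No(V,w)$ directly, and the threshold sharpens because $p\lesssim p_{\max}$ already at Sobolev order $0$ means we only need $p_s\gtrsim p_{\max}$, which by definition of $d([p],[p_{\max}])$ via $d_\to$ holds for $s>d([p],[p_{\max}])$; running the argument of \textbf{(b)} with $p$ in place of $p_{\min}$ (legitimate since $p\gtrsim p_{\min}$, so $p_s^G\gtrsim p_{\min,s}^G$, but more directly $p_s^G$ is $G$-invariant, in $\No(V,w)$, and $\gtrsim p_{\max}$ because $p_s\gtrsim p_{\max}$ and $p_{\max}^G\sim p_{\max}$) gives $[p_s^G]=[p_{\max}]$ for $s>d([p],[p_{\max}])$.

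\textbf{Main obstacle.} The delicate point throughout is the non-monotonicity of a general $G$-continuous norm: statements like $(r_s)^G\sim r$ or "$r_s$ dominates $r$" are only clean for $K$-Hermitian (or monotone) $r$, and $p_{\max}$ itself need not be monotone. The plan handles this by never asserting $(p_{\max})_s\gtrsim p_{\max}$ directly; instead it exploits the \emph{maximality} of $[p_{\max}]$ — once one knows $p_s^G$ is a bona fide element of $\No(V,w)$ (which needs either monotonicity of $p$, or a $K$-Hermitian squeeze from Proposition \ref{Prop K-Hermitian squeezing} plus the Corollary to Proposition \ref{Prop s-topology}), the domination $p_s^G\lesssim p_{\max}$ is free, and the reverse domination is extracted purely from the definition of $s(V,w)$ resp. $d([p],[p_{\max}])$ via the monotone operation $q\mapsto q^G$ applied to $p_{\min,s}\gtrsim p_{\max}$. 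Verifying that $p_s^G\in\No(V,w)$ in the non-monotone case, and pinning down that the threshold is exactly $s(V,w)+\epsilon$ for arbitrary small $\epsilon$ (and that some finite $S$ works for weak stabilization even without the $\epsilon$), are the steps that require the most care but no new ideas beyond Lemma \ref{inf w lemma}, Proposition \ref{Prop K-Hermitian squeezing}, and property \eqref{pq comparison}.
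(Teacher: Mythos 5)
Your plan for the opening $\epsilon$-statement and for strong stabilization (ii) essentially recovers the paper's argument: for the $\epsilon$-statement pick an $s$ in the window with $p_{\max}\lesssim p_{\min,s}$, deduce $p_{\max}\lesssim p_s$ from $p_{\min}\lesssim p$ and \eqref{pq comparison}, then apply Lemma~\ref{inf w lemma}(\ref{inf w lemma - item 2}) and maximality of $[p_{\max}]$; for (ii) the monotonicity hypothesis on $p$ makes $(p_s)_{s\geq 0}$ a monotone family, so $p_{\max}\lesssim p_s$ for every $s>d_\to([p_{\max}],[p])$, and since $d_\to([p_{\max}],[p])\leq d([p],[p_{\max}])$ the same closing move applies.

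However, your treatment of weak stabilization (i) has a genuine gap. You take $S=s(V,w)+\epsilon$ and in step \textbf{(b)} justify $p_{\min,s}\gtrsim p_{\max}$ for \emph{all} $s>s(V,w)$ ``by definition of $s(V,w)$''. That is not what the definition gives: $s(V,w)$ is an infimum, which only guarantees \emph{some} $s$ in each interval $[s(V,w),s(V,w)+\epsilon)$ with $p_{\max}\lesssim p_{\min,s}$. To upgrade this to $(s(V,w),\infty)\subseteq\{s:p_{\max}\lesssim p_{\min,s}\}$ you would need the family $(p_{\min,s})_{s\geq0}$ to be monotone, i.e.\ $p_{\min}$ to be a monotonous norm; the paper explicitly cautions that a general $G$-continuous norm need not be. So the ray property you need for (i) is unproven. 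The paper sidesteps this: for (i) it produces a $K$-Hermitian $G$-continuous $q\lesssim p$ from Proposition~\ref{Prop K-Hermitian squeezing}, finds $S$ with $p_{\max}\lesssim q_S$ via Theorem~\ref{Thm Casselman-Wallach} and Proposition~\ref{Prop s-topology}, and then exploits the monotonicity of the $K$-Hermitian family $(q_s)_s$ to write $p_{\max}\lesssim q_S\lesssim q_t\lesssim p_t$ for every $t\geq S$, after which Lemma~\ref{inf w lemma}(\ref{inf w lemma - item 2}) and maximality finish. Note the $S$ obtained this way may be much larger than $s(V,w)$; the proposition asserts only that \emph{some} $S$ works, which is strictly weaker than your claimed $S=s(V,w)+\epsilon$.
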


\begin{proof}
Let $\e>0$. Then there exists an $s$ such that
$s(V,w)\leq s < s(V,w)+\e$ and $p_{\rm max} \lesssim p_{{\rm min}, s}$.
As $p_{\rm min} \lesssim p$ we obtain from (\ref{pq comparison}) that $p_{{\rm min}, s} \lesssim p_s$ and therefore $p_{\max}\lesssim p_{s}$. By Lemma \ref{inf w lemma} (\ref{inf w lemma - item 2}) $p_{\max}\lesssim p_{s}^{G}$. Since $p_{s}^{G}\in \No(V,w)$ and $[p_{\max}]$ is the unique maximal element in $\Noc(V,w)$ it follows that $[p_{\rm max}]= [p_s^G]$.

We move on to (\ref{Prop stabilization 1 - weak stabilization}). Let $q$ be a $K$-Hermitian $G$-continuous norm on $V$ satisfying $q\lesssim p$. Such a norm exists in view of Proposition \ref{Prop K-Hermitian squeezing}. By the Casselman-Wallach theorem (Theorem \ref{Thm Casselman-Wallach}), there exists an $S\geq 0$ so that $p_{\max}\lesssim q_{S}$. The family of norms $(q_{s})_{s\geq 0}$ is monotonous. Therefore, we have in view of (\ref{pq comparison}) 
$$
p_{\max}
\lesssim q_{S}
\lesssim q_{S+s}
\lesssim p_{S+s}
\qquad (s\geq 0).
$$
Now (\ref{Prop stabilization 1 - weak stabilization}) follows from Lemma \ref{inf w lemma} (\ref{inf w lemma - item 2}). The last statement (\ref{Prop stabilization 1 - strong stabilization}) is immediate from the definitions.
\end{proof}

\subsection{\texorpdfstring{$\Noc(V,w)$ as a pseudometric space}{Norm(V,w) as a pseudometric space}}

Given $[p], [q] \in \Noc(V,w)$ we set
$$
d_\to ([p], [q])
= \inf\{ s\geq 0\mid p \lesssim q_s\}
$$
and define a pseudometric $d$ on $\Noc(V,w)$ by
$$
d([p], [q])
=\max\{ d_\to ([p], [q]), d_\to ([q], [p])\}.
$$
Note that $s(V,w)=d([p_{\min}],[p_{\max}])$.

\begin{rmk}\label{Rem pseudometric}
The pseudometric $d$ and the Sobolev gap $s(V,w)$ are independent of the choice of the maximal compact subgroup $K$. In fact, any other maximal compact subgroup is given by a conjugate $K_g:=gKg^{-1}$ for some $g\in G$. Instead of the $(\gf, K)$-module $V$ we then consider the space $V_g$ of $K_{g}$-finite vectors in $V^{\infty}$. Clearly $V_{g}=g\cdot V\subset V^\infty$. Now suppose that $p$ and $q$ are $G$-continuous norms and $p\lesssim q_s$. Let $q_{s,g}$ be the
$s$-th $K_g$-Sobolev norm on $V^\infty$. Then $p(g\dotvar)\lesssim q_{s}(g\dotvar)\sim q_{s,g}(g\dotvar)$, which implies
$p\lesssim q_{s,g}$ on $V_g$.
\end{rmk}

\subsection{\texorpdfstring{$G$-invariant norms}{G-invariant norms}}

We will now consider the case where $w=\1$. For every $p\in \No(V) := \No(V,\1)$ there exists a $c>0$ such that for all $g\in G$ we have $c^{-1}p\leq p(g\dotvar)\leq cp$. Taking a supremum over all $g\in G$ it is now easily seen that every equivalence class in $\Noc(V):=\Noc(V,\1)$ contains an isometric norm, i.e., a norm $p$ such that $p(g\dotvar)=p$.
As before, we write $p_{\min}$ and $p_{\max}$ for representatives of the minimal and maximal elements in $\Noc(V)$, respectively.

\begin{prop}\label{prop unitary}
Assume that $V$ is unitarizable and let $q$ be a unitary norm.
Then, in the pseudometric space $(\Noc(V), d)$ we have
$$
d([q], [p_{\rm min}])
= d([q], [p_{\rm max}])
$$
and in particular
$$
s(V)
:=s(V,\1)
\leq 2 d([q], [p_{\rm max}]).
$$
\end{prop}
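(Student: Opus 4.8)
The plan is to exploit the self-duality structure available to a unitarizable module together with the duality property of $d_\to$. Recall first that for any two $G$-continuous norms $p,q$ on $V$ and any $s\in\R$ we have $p\lesssim q_s$ if and only if $\tilde q_{-s}\lesssim\tilde p$, equivalently $\tilde q\lesssim\tilde p_s$ by \eqref{ps dual} and \eqref{pq comparison}; this was already used in the proof of Proposition \ref{prop Duality and Monotonicity}. Consequently, for any $[p],[p']\in\Noc(V)$ one has the ``duality isometry''
\begin{equation}\label{eq d duality}
d_\to([p],[p'])
=d_\to([\tilde{p'}],[\tilde p]),
\end{equation}
and hence $d([p],[p'])=d([\tilde p],[\tilde{p'}])$. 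Moreover, passing to contragredients swaps the extremal classes: $[\widetilde{p_{\min}}]=[(\widetilde{p})_{\max}]$ and $[\widetilde{p_{\max}}]=[(\widetilde p)_{\min}]$ by the construction of $p_{\max}$ in Corollary \ref{Cor max norm}.

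The key step is the observation that a unitary norm $q$ is \emph{self-dual}: since $q$ comes from a $G$-invariant scalar product on $V$, the induced identification of $V$ with a subspace of $\tilde V$ via $v\mapsto\langle\,\cdot\,,v\rangle$ is a $(\gf,K)$-isomorphism $V\simeq\tilde V$ under which $q$ corresponds to the dual norm $\tilde q$. Thus under the duality correspondence $[q]\leftrightarrow[\tilde q]=[q]$, while $[p_{\min}]\leftrightarrow[p_{\max}]$. Applying \eqref{eq d duality} with $[p]=[q]$, $[p']=[p_{\min}]$ therefore gives
$$
d_\to([q],[p_{\min}])
=d_\to([\widetilde{p_{\min}}],[\tilde q])
=d_\to([p_{\max}],[q]),
$$
and symmetrically $d_\to([p_{\min}],[q])=d_\to([q],[p_{\max}])$. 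Taking the maximum of the two directions yields $d([q],[p_{\min}])=d([q],[p_{\max}])$, which is the first assertion.

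For the second assertion we combine this equality with the triangle inequality for the pseudometric $d$:
$$
s(V)
=d([p_{\min}],[p_{\max}])
\leq d([p_{\min}],[q])+d([q],[p_{\max}])
=2\,d([q],[p_{\max}]).
$$
The main obstacle, and the only point requiring genuine care, is justifying the self-duality of the unitary norm at the level of $(\gf,K)$-modules: one must check that the anti-linear map $V\to\tilde V$ induced by the invariant inner product indeed lands in the \emph{$K$-finite} algebraic dual (it does, since it is $K$-equivariant and $V$ is $K$-admissible) and intertwines the $G$-actions correctly, so that it identifies $q$ with $\tilde q$ up to the anti-linear twist, which does not affect norms. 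Once this identification is in place, everything else is a formal manipulation of $d_\to$ using only \eqref{ps dual}, \eqref{pq comparison}, Lemma \ref{pq-lemma} and the triangle inequality.
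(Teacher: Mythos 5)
Your proof is correct and follows essentially the same route as the paper's: the conjugate-linear self-duality of $V$ furnished by the unitary inner product (the Riesz map $R:\overline{V_q}\to V_q'$) identifies $[q]$ with $[\tilde q]$ and swaps the roles of $[p_{\min}]$ and $[p_{\max}]$, and the duality formula for $d_\to$ then yields the equidistance, followed by the triangle inequality. The one point you dispatch with the phrase ``which does not affect norms'' is that the resulting bijection $\Noc(V)\simeq\Noc(\tilde V)$ also commutes with the Sobolev operators $D_s$; this compatibility is exactly what makes the identification an isometry for $d_\to$ rather than merely a bijection, and the paper records it explicitly as $(R_*\overline{p})_s = R_*\overline{p_s}$ before drawing the conclusion. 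It is true and easy, but it is the step that needs stating.
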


\begin{proof}
First note that $V_q$ is a Hilbert space. In particular, by the Riesz representation theorem the dual representation of $(\pi, V_q)$ can be realized on the complex conjugate representation via the linear isomorphism
$$
R:\overline{V_q}\to V_{q}',
\quad v\mapsto \tilde v:=\la \cdot, v\ra.
$$
We write $R_{*}$ for the push-forward along $R$.
Let $c:V\to\overline{V}$ be the canonical anti-linear map, which is the identity map when $V$ and $\overline{V}$ are considered as sets.
If $p\in\No(V)$, then we obtain a norm $\overline{p}\in \No(\overline{V})$ by setting
$$
\overline{p}\big(c(v)\big)
=p(v)
\qquad (v\in V).
$$
As $\overline{p_{\min}}$ and $\overline{p_{\max}}$ are representatives of the minimal and maximal elements in $\Noc(\overline{V})$, the norms $\tilde{p}_{\min}:=R_{*}\overline{p_{\min}}$ and $\tilde{p}_{\max}:=R_{*}\overline{p_{\max}}$ are representative for the minimal and maximal elements in $\Noc(\tilde{V})$, respectively. Notice that,
$$
(R_{*}\overline{p})_{s}
=R_{*}\overline{p_{s}}
\qquad (p\in \No(V), s\geq 0).
$$
Therefore, if $p,r\in \No(V)$, then
$$
d(p,r)
=d(R_{*}\overline{p},R_{*}\overline{r}).
$$
Thus
$$
d ([\tilde q], [\tilde{p}_{\max}])
=d([R_{*}\overline{q}],[R_{*}\overline{p_{\max}}])
=d([q],[p_{\max}]).
$$
Now let $s\geq 0$. If $\tilde p_{\rm max}\lesssim \tilde q_s$, then from Lemma \ref{pq-lemma} (\ref{pq-lemma - item 2}) and (\ref{pq comparison}) we obtain $q\lesssim p_{{\rm min},s}$. On the other hand, if $q\lesssim p_{{\rm min},s}$ then from similar arguments it follows that $q_{-s} \lesssim p_{\rm min}$.
We thus obtain
\begin{align*}
d([q],[p_{\max}])
&=d ([\tilde q], [\tilde{p}_{\max}])
=\inf\{s\geq 0:\tilde p_{\rm max}\lesssim \tilde q_s\}\\
&=\inf\{s\geq 0:q \lesssim p_{\rm min,s}\}
=d ([q],[p_{\min}]).
\end{align*}
\end{proof}

\begin{rmk}\,
\begin{enumerate}[(a)]
\item
In Theorem \ref{thm ultimate} below, we will show for $G=\SL(2,\R)$ that $[q]$ lies indeed in the middle, i.e.
$$
s(V)
= 2 d([q], [p_{\rm max}])
$$
holds true.
\item
In the context of Proposition \ref{prop unitary} we recall from Proposition \ref{Prop stabilization 1} that
$[p_{\rm max}] = [q_s^G]$ for any $s >s(V)$. Likewise
$[p_{\rm min}] = [{}^Gq_{-s}]$ with
$$
{}^Gq_{-s}(v)
:=\sup_{g \in G} q_{-s}(g\cdot v)
\qquad(v\in V).
$$
Hence
$$
s(V)
= d([{}^Gq_{-s}], [q_s^G])
\qquad\big(s>s(V)\big).
$$
\end{enumerate}
\end{rmk}

\subsection{Open problems}\label{sec. open problems}

We close this section with a presentation of some open problems about the nature of the Sobolev gaps and the pseudometric space $\big(\Noc(V,w),d\big)$.
It will be convenient to extend our terminology a bit. So far we defined $s(V,w)$ in case
$\No(V,w)\neq \emptyset$. We drop this assumption and set $s(V,w)=0$ for any Harish-Chandra module $V$ and weight $w$ for which $\No(V,w)=\emptyset$.

\subsubsection{Could $\big(\Noc(V,w),d\big)$ be a metric space?}

We expect that it is in general not a metric space, i.e., there exists a Harish-Chandra module $V$ and norms $p,q\in \No(V,w)$ such that $[p]\neq [q]$ but $d([p],[q])=0$.

\subsubsection{Dependence of the Sobolev gap $s(V,w)$ on the weight $w$}

In the extremal case where $V$ is a finite dimensional representation we have for any weight $w$ that $\Noc(V,w)$ is either empty or consists of the unique equivalence class of any norm on $V$. In either case we have $s(V,w)=0$ for all choices of weights $w$.

For general Harish-Chandra modules, a weight $w$ and $t\geq 0$ we may define a new weight
$$
w_t(g)
:= w(g) \big[\max\big\{ \|\Ad(g)\|, \|\Ad(g)\|\big\}\big]^t
\qquad (g \in G).
$$
Note that for any weight $\tilde w\geq w$ there exist $t\geq 0$ such that $\tilde w\lesssim w_t$.

\begin{con}\label{conj: gap is Lip with $w$}
Let $G$ be a real reductive group, $w$ be a weight and $V$ a Harish-Chandra module such that  $\No(V,w)\neq \emptyset$.
Then 
there exist a constant
$C=C_G>0$ only depending on $G$ such that
$$
s(V,w_t)
\leq s(V,w) + C t \qquad (t\geq 0).
$$
\end{con}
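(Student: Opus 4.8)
The plan is to relate the Sobolev gap for the modulated weight $w_t$ to the original gap by controlling how the minimal norm changes and then applying the stabilization machinery. First I would observe that by Proposition~\ref{prop minimal weight} (or more directly by the explicit construction in Lemma~\ref{Lemma minimal norm}) the minimal norm $p_{\min}^{t}$ in $\No(V,w_t)$ can be compared to the minimal norm $p_{\min}=p_{\min}^{0}$ in $\No(V,w)$: since $w\le w_t$, we have $\No(V,w)\subseteq\No(V,w_t)$, and the explicit formula \eqref{def pmin} shows $p_{\min}^{t}\lesssim p_{\min}$. Conversely, the gain in the weight $w_t/w = [\max\{\|\Ad(g)\|,\|\Ad(g^{-1})\|\}]^{t}$ is exactly the growth rate that a Sobolev shift of order proportional to $t$ can absorb. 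The key quantitative input is a standard Sobolev/Nelson-type estimate: for a $G$-continuous norm $p$ there is a constant $c$, depending only on $G$, such that for every $g\in G$ and every $v\in V$,
\begin{equation}\label{eq:proposal-ad-bound}
p(g\cdot v)\ \lesssim\ \big[\max\{\|\Ad(g)\|,\|\Ad(g^{-1})\|\}\big]^{\,c\cdot(\text{order})}\,p_{\text{order}}^{\rm st}(v),
\end{equation}
i.e.\ the operator norm of $\pi(g)$ on a fixed Sobolev space is polynomially controlled by $\|\Ad(g)\|$ with exponent growing \emph{linearly} in the Sobolev order, with linear coefficient $C_G$ depending only on $\dim G$ (through the number of derivatives needed in the Sobolev lemma). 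This is the technical heart and the step I expect to be the main obstacle, since one must make the dependence on the Sobolev order genuinely linear and the multiplicative constant independent of $V$; I would extract it from the proof of \cite[Lemma~2.3]{BK} together with the elliptic-regularity estimates behind Proposition~\ref{Prop equivalence of Sobolev norms}, tracking the exponent carefully.

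Granting \eqref{eq:proposal-ad-bound}, the argument proceeds as follows. Fix $\e>0$ and choose $s$ with $s(V,w)\le s<s(V,w)+\e$ and $p_{\max}\lesssim (p_{\min})_{s}$, where $p_{\max}$, $p_{\min}$ are representatives of the extremal classes in $\Noc(V,w)$. Let $p_{\max}^{t}$, $p_{\min}^{t}$ be the extremal representatives in $\Noc(V,w_t)$. On the one hand $p_{\min}^{t}\lesssim p_{\min}$ as noted above. On the other hand, a maximal norm for $w_t$ is the dual of a minimal norm of $\tilde V$ for $w_t^{\sharp}$, and the same comparison gives $p_{\max}^{t}\lesssim p_{\max}\cdot w_t$-twisting; more precisely, using Corollary~\ref{Cor max norm} together with \eqref{eq:proposal-ad-bound} applied to $\tilde V$, one gets that the isometrization of $p_{\max}$ with respect to $w_t$, namely $p_{\max,t}^{\rm iso}(v):=\sup_{g}w_t(g)^{-1}p_{\max}(g\cdot v)$, satisfies $p_{\max,t}^{\rm iso}\lesssim (p_{\max})_{C_G t}$ because the extra factor $[\max\{\|\Ad g\|,\|\Ad g^{-1}\|\}]^{-t}$ beats the growth of $p_{\max}(g\cdot\,)$ relative to $w(g)$ after shifting by $C_G t$ Sobolev degrees. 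Combining,
\begin{equation}\label{eq:proposal-chain}
p_{\max}^{t}\ \lesssim\ p_{\max,t}^{\rm iso}\ \lesssim\ (p_{\max})_{C_G t}\ \lesssim\ \big((p_{\min})_{s}\big)_{C_G t}\ =\ (p_{\min})_{s+C_G t}\ \lesssim\ (p_{\min}^{t})_{s+C_G t},
\end{equation}
where the last step uses $p_{\min}^{t}\lesssim p_{\min}$ together with \eqref{pq comparison}, and the penultimate equality is \eqref{s plus t}. Hence $s(V,w_t)\le s+C_G t< s(V,w)+C_G t+\e$, and letting $\e\downarrow 0$ gives $s(V,w_t)\le s(V,w)+C_G t$.

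Two points need care and I would address them explicitly. First, the Sobolev norms $(p_{\min})_{s+C_Gt}$ need not a priori be $G$-continuous, but as in Proposition~\ref{Prop stabilization 1} one squeezes them between $G$-continuous norms using a $K$-Hermitian comparison norm (Proposition~\ref{Prop K-Hermitian squeezing}), so all the $\lesssim$-comparisons in \eqref{eq:proposal-chain} remain meaningful and the final inequality is a statement purely about the extremal classes in $\Noc(V,w_t)$. Second, one should check that $w_t$ is indeed a weight (submultiplicativity follows from submultiplicativity of $w$ and of $g\mapsto\max\{\|\Ad g\|,\|\Ad g^{-1}\|\}$, the latter since $\|\Ad(gh)\|\le\|\Ad g\|\,\|\Ad h\|$), that $w_t$ is reflexive when $w$ is, and that $\No(V,w_t)\supseteq\No(V,w)\ne\emptyset$. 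The genuinely nontrivial content, and the only place where real work is hidden, is the linear-in-order operator bound \eqref{eq:proposal-ad-bound} with $G$-universal constant; everything else is bookkeeping with the formalism of Sections~2--4.
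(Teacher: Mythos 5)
This statement is Conjecture~\ref{conj: gap is Lip with $w$} in the paper: it is explicitly left open, appearing in the open problems subsection with the remark that, combined with the Uniform Finiteness Conjecture, it would yield an effective Casselman--Wallach theorem. So you are attempting to settle a conjecture that the authors did not prove, and there is no paper proof to compare against.

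Turning to the proposal itself, the displayed chain of comparisons contains two arrows pointing the wrong way. Since $w\leq w_{t}$ we have $\No(V,w)\subseteq\No(V,w_{t})$, so the maximal element of $\Noc(V,w_{t})$ dominates that of $\Noc(V,w)$, i.e.\ $p_{\max}\lesssim p_{\max}^{t}$, and dually $p_{\min}^{t}\lesssim p_{\min}$. The auxiliary norm you introduce, $p_{\max,t}^{\rm iso}(v)=\sup_{g}w_{t}(g)^{-1}p_{\max}(g\cdot v)$, is in fact equivalent to $p_{\max}$ itself: the ratio $w(g)/w_{t}(g)=[\max\{\|\Ad g\|,\|\Ad g^{-1}\|\}]^{-t}$ is bounded above by $1$ (since $\|\Ad g\|\,\|\Ad g^{-1}\|\geq 1$), so $w_{t}(g)^{-1}p_{\max}(g\cdot v)\lesssim w_{t}(g)^{-1}w(g)\,p_{\max}(v)\leq p_{\max}(v)$ and hence $p_{\max,t}^{\rm iso}\lesssim p_{\max}$, while evaluating the supremum at $g=e$ gives the reverse domination. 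Consequently the first comparison in your chain, $p_{\max}^{t}\lesssim p_{\max,t}^{\rm iso}$, is equivalent to claiming $p_{\max}^{t}\lesssim p_{\max}$, which is backwards. Likewise, the last comparison, $(p_{\min})_{s+C_{G}t}\lesssim(p_{\min}^{t})_{s+C_{G}t}$, would require $p_{\min}\lesssim p_{\min}^{t}$, again the opposite of what the monotonicity of minimal elements gives. So even granting your Nelson-type estimate, the chain does not close, and the Sobolev-shift term $C_{G}t$ never actually enters where you need it.

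The genuinely missing ingredient is a bound of the form $p_{\max}^{t}\lesssim(p_{\max})_{ct}$, equivalently (by duality) $p_{\min}\lesssim(p_{\min}^{t})_{ct}$, with $c$ depending only on $G$. Given this, the argument does close: $p_{\max}^{t}\lesssim(p_{\max})_{ct}\lesssim(p_{\min})_{s+ct}\lesssim(p_{\min}^{t})_{s+2ct}$ using $p_{\min}\lesssim(p_{\min}^{t})_{ct}$ and \eqref{s plus t}, giving $s(V,w_{t})\leq s(V,w)+2ct$. In terms of matrix coefficients the needed estimate asks that a relaxation of the allowed growth of $m_{v,\tilde v}$ by a factor $[\max\{\|\Ad g\|,\|\Ad g^{-1}\|\}]^{t}$ can be compensated by differentiating $v$ to Sobolev order proportional to $t$, uniformly in $V$. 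Your estimate in the proposal is not this estimate: it bounds $p(g\cdot v)$ by a $K$-Sobolev norm of $v$ times a power of $\|\Ad g\|$, which, as you note, already holds for trivial reasons once the order is taken large enough relative to the growth of $p$; the real difficulty, tying together the uniformity in $V$, the linear dependence on $t$, and the comparison between the two extremal norms $p_{\max}^{t}$ and $p_{\max}$, is precisely what makes the conjecture nontrivial and is not addressed by the proposal.
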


Combined with the Uniform Finiteness Conjecture \ref{conj uniform finiteness} this will provide an effective version of the Casselman-Wallach theorem.

\subsubsection{Tensoring with finite dimensional modules}

Given a Harish-Chandra module $V$ and a finite dimensional representation $F$ we might ask for a general relation between $s(V,w)$ and $s(V\otimes F, w)$.

\subsubsection{Sobolev gap for submodules}

Let $V\subset U$ be Harish-Chandra modules and let $w$ be a weight 
for which $\No(V,w)\neq\emptyset$. Let $p_{\max}$ be a representative for the maximal element in $\Noc(V,w)$.
In case $V$ is a direct summand of $U$ or there exists a norm $q\in \Noc(V,w)$ such that $p_{\max}\lesssim q|_V$, then
$$
s(V,w)
\leq s(U,w)
\qquad\hbox{and}\qquad
s^{\mathrm{st}}(V,w)
\leq s^{\mathrm{st}}(U, w).
$$
by Lemma \ref{prop embed}.
It is an open question whether the inequality $s(V,w)\leq s(U,w)$ holds for all submodules $V\subset U$. This question is related to the extension of $G$-continuous norms, i.e. if $p_{\max}\in \No(V,w)$ would extend to a norm
$q\in \No(U,w)$, then $q\lesssim q_{\max}$ and the inequality would follow.

\subsubsection{Sobolev gap for parabolic induction}

Given a parabolic subgroup $P$ and a representation $\sigma$ of $P$ factoring over a Levi one might ask for a relation between the Sobolev gap $s(\Ind_{P}^{G}(\sigma))$ for the representation of $G$ induced from $\sigma$ and the Sobolev gaps $s(\Ind_{P}^{G}(triv))$ of the induction from the trivial representation and $s(\sigma)$ of $\sigma$.

\subsubsection{Topology and geometry of $(\Noc(V,w), d)$}\label{Section contract}

For a weight $w$ we let ${\bf Norm}(V,w)$ be the metric space associated to the pseudometric space $(\Noc(V,w),d)$.
There is some evidence that ${\bf Norm}(V,w)$ is a contractible space. To explain that we consider the map
$$
\Phi: [0,\infty)\times {\bf Norm}(V,w)\to {\bf Norm}(V,w),
\quad (s, [p])\mapsto [p_s^G].
$$
Note that $\Phi(0, \cdot)= \id$ and that there exists an $S=S(V)\geq 0$ such that $\Phi(s, [p])=[p_{\max}]$ for all $s \geq S$; see Proposition \ref{Prop stabilization 1}.
It is reasonable to expect that $\Phi$ is a continuous map with contractibility as a consequence. Probably ${\bf Norm}(V,w)$ is a complete metric space, but compactness is perhaps too much to hope for in general. Further, one might ask to what extent ${\bf Norm}(V,w)$ is a $\mathrm {\bf CAT}(k)$ space for some $k\in \Z$.

\subsubsection{Tempered representations}

Let $G$ be a semi-simple Lie group with finite center. Throughout we let $\pi \in \hat G$ and $V$ the corresponding Harish-Chandra module. We fix a unitary norm $q$. Recall that
$\pi$ is called tempered provided that all matrix coefficients $m_{v, \tilde v}$ lie in
$L^r(G)$ for $r>2$. Henceforth $\pi$ is assumed to be tempered.
Fix now $0\neq \tilde v\in \tilde V$ and define isometric norms
$$
p^r(v)
:= \|m_{v, \tilde v}\|_{L^r(G)} \qquad (v \in V)
$$
for all $2<r \leq \infty$. We note that $[p^r]$ is independent of the particular choice of
$\tilde v \neq 0$ as $\tilde V$ is an irreducible module for the Hecke-algebra
$ (C_c^\infty(G))^{K\times K-{\rm finite}}$.
Observe that with $r=\infty$ we recover the minimal norm, i.e. $[p^\infty]=[p_{\rm min}]$. We recall that the Kunze-Stein phenomenon, established in full by Cowling \cite{C}, can be phrased as
$$
p^r
\lesssim q
\qquad (r >2)
$$
for all tempered $V$.
It would be an interesting problem to determine $d([p^r], [q])$ in $\Noc(V)$, in particular the behavior for $r\to 2^+$.

\subsubsection{Multiparametric Invariants}

Instead of the Sobolev norms from Section \ref{Section Sobolev norms} on may consider the norms parameterized by elements $\af^{*}$ as introduced by the first author in \cite{BernsteinICM}. It is an interesting question whether there is a useful corresponding notion of multi-parameter Sobolev gap.

\section{Uniform Finiteness results for the Sobolev gap}\label{Section 5: Uniform finiteness}

In this section we establish uniform finiteness theorems for Sobolev gaps for families of Harish-Chandra modules, i.e. results of the form
$$
\sup_{V\in \F} s(V, w_V)
<\infty.
$$
Where $\F$ is one of two possible families of Harish-Chandra modules.
The first family consist of all Harish-Chandra modules of the discrete series with the constant weight $w_V\equiv \1$ whereas the second family consists of Harish-Chandra modules of the minimal principal series with natural weights depending on the parameter (see Proposition \ref{prop minimal weight}).
Finally, we present the uniform finiteness conjecture.

\subsection{Discrete series}

\begin{theorem} \label{thm disc series}
There exists a constant $C>0$ such that $s(V)\leq C$ for all Harish-Chandra modules of the discrete series of $G$.
\end{theorem}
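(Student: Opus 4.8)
The goal is a bound $s(V) \le C$ uniform over all discrete series $V$, i.e., a uniform comparison $p_{\max} \lesssim p_{\min, s}$ with $s$ independent of $V$. The plan is to exploit the duality $s(V,\1) = s(\tilde V, \1^\sharp) = s(\tilde V)$ from Proposition \ref{prop Duality and Monotonicity}(\ref{prop Duality and Monotonicity - item Duality}) together with the fact that the class $\mathcal{HC}_{\rm d}$ of discrete series is \emph{self-dual}: the contragredient of a discrete series representation is again a discrete series representation. Thus it suffices to produce a \emph{uniform lower bound on the minimal norm}, quantitatively linking $p_{\min}$ to a negative-order Sobolev norm of a fixed reference norm --- for discrete series the natural reference is the unitary norm $q$, which satisfies $q \sim p_{\max}$ up to the gap. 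Concretely, I would aim to show there is a fixed $s_0$ with $q \lesssim p_{\min, s_0}$ uniformly in $V \in \mathcal{HC}_{\rm d}$; dualizing and using self-duality then gives $p_{\max} \lesssim p_{\min, 2s_0}$ (roughly), hence $s(V) \le 2 s_0$.

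The technical input for the uniform lower bound is the domination of $L^r$-norms by Sobolev-$L^\infty$ norms for eigenfunctions on a real spherical space, as in \cite{KSS}, applied in the group case $X = \diag(G)\backslash (G\times G) \simeq G$ with the $Z(\U(\gf))$-eigenvalue fixed by the infinitesimal character. For a discrete series $V$ with unitary norm $q$ and $0 \ne \tilde v \in \tilde V$, the matrix coefficients $m_{v,\tilde v}$ lie in $L^2(G)$, so the $L^2$-norm is (up to a constant depending only on $\tilde v$) comparable to $q(v)$. The result of \cite{KSS} then bounds $\|m_{v,\tilde v}\|_{L^2(G)}$ by a Sobolev-$L^\infty(G)$-norm of $m_{v,\tilde v}$ of some fixed order, and the $L^\infty(G)$-norm of derivatives of $m_{v,\tilde v}$ is exactly (a version of) $p_{\min, s_0}$ after passing derivatives through the matrix coefficient. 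The crucial point is that the Sobolev order in \cite{KSS} depends only on the real rank (or $\dim G$, $\dim X$), \emph{not} on the representation, which is what yields uniformity over $\mathcal{HC}_{\rm d}$.

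The main obstacle I anticipate is \emph{bookkeeping the constants to ensure genuine uniformity}: one must check that the implied constant in $\|m_{v,\tilde v}\|_{L^2} \lesssim q(v)$, and the constant relating the Sobolev-$L^\infty$ norm of the matrix coefficient to $p_{\min,s_0}(v)$, can be taken independent of $V$ --- this requires choosing $\tilde v$ (or a generating compact set $C \subset \tilde V$) in a way that is compatible across the whole family, e.g. using lowest $K$-types whose norms are controlled, and tracking the dependence of the Dirac-type lower bounds on the $K$-type. A secondary subtlety is that $p_{\min,s}$ is defined via the $K$-Laplacian $D_s$ rather than via left/right-invariant derivatives on $G$, so one needs the comparison of Section \ref{Section Sobolev norms} (Proposition \ref{Prop s-topology}, and \eqref{eq Delta=-Cc+2Delta_K} to trade the Casimir for $\Delta_K$ on a fixed infinitesimal character) to convert the \cite{KSS}-type Sobolev-$L^\infty$ estimate into the $p_{\min,s}$ language --- again with constants that do not blow up along the family. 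Once uniformity of $s_0$ is secured, the self-duality argument closes the proof immediately.
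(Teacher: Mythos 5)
Your proof plan follows the same route as the paper: it applies \cite{KSS} in the group case $Z=G\times G/\diag(G)$, uses the orthogonality relations, aims for a uniform bound $q\lesssim p_{\min,s}$, and closes via self-duality of the discrete series class, correctly identifying that the uniformity comes from the Sobolev index in \cite{KSS} being independent of the representation.

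However, there is a genuine gap precisely at the point where you invoke \cite{KSS}. Lemma \ref{KSS-lemma} does not directly bound $\|f\|_{L^p(Z)}$ for a smooth eigenfunction $f\in L^p(Z)^\infty$; it provides an \emph{improvement} of integrability, bounding $\|f\|_{L^{p+\epsilon}(Z)}$ by $\|f\|_{L^\infty(Z),k}$. To control $\|m_{v,\tilde v}\|_{L^2(G)}$ --- and hence $q(v)$ via the orthogonality relations --- one must already know that $m_{v,\tilde v}\in L^{2-\epsilon}(G)$ for some $\epsilon>0$, and that this $\epsilon$ can be chosen \emph{uniformly} over the discrete series. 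That is exactly Mili\v{c}i\'c's theorem \cite{Milicic}, which is the missing ingredient in your plan. On the other hand, the obstacle you anticipate --- that the constants in $\|m_{v,\tilde v}\|_{L^2}\lesssim q(v)$ and in passing from the Sobolev-$L^\infty$ norm of $m_{v,\tilde v}$ to $p_{\min,s}(v)$ should be independent of $V$, forcing a compatible choice of $\tilde v$ across the whole family --- is a non-issue: the Sobolev gap $s(V)$ is defined via the relation $\lesssim$, which allows constants depending on $V$ (for instance on the formal degree $d_V$ and on the chosen $\tilde v$). Only the Sobolev \emph{order} needs to be uniform, and that is supplied by \cite{KSS} together with Proposition \ref{Prop equivalence of Sobolev norms} and Lemma \ref{Lemma equivalence of sobolev norms for K-hermitian norms}, whose bounds depend only on $G$.
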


This result is based on the following general estimate \cite[(1.3)]{KSS} for unimodular wavefront real spherical spaces $Z=G/H$, which we will apply to the group case $Z=G\times G/ \diag(G)=G$.

\begin{lemma}\label{KSS-lemma}
Let $Z$ be a unimodular wavefront real spherical homogeneous space for $G$. Let $1\leq p\leq \infty$ and $\e>0$ and let $\chi$ be a character of the center $\Zc(\gf)$ of the universal enveloping algebra of $\gf$. Then there exists a Sobolev index $k\in \N$ only depending on $Z$ and a constant $C>0$ such that for every joint eigenfunction $f\in L^p(Z)^\infty$ of $\Zc(\gf)$ with eigencharacter $\chi$ we have
$$
\|f\|_{L^{p+\e}(Z)}
\leq C \|f\|_{L^\infty(Z), k}.
$$
\end{lemma}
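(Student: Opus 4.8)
This lemma is quoted verbatim from \cite[(1.3)]{KSS}, so I only outline the strategy one would follow to prove it. The overall plan is: reduce, via the polar geometry of $Z$, to an estimate on a single ``cusp''; on the cusp replace $f$ by its $\Zc(\gf)$-determined asymptotic expansion along the compression cone; and then bootstrap the a priori membership $f\in L^p(Z)$ up to $L^{p+\e}(Z)$ by a convexity argument in which the extra power $\e$ is what buys uniform integrability. Concretely, I would first invoke the structure theory of wavefront real spherical spaces: outside a relatively compact subset, $Z$ is a finite union of cusps, each of the shape $\Omega\cdot\exp(\af_Z^-)\cdot z_0$ with $\Omega\subset G$ compact and $\af_Z^-$ the compression cone, in coordinates in which the $G$-invariant measure is comparable to $\v(\exp(t)z_0)\,dt\,d\omega$, with the volume weight $\v$ growing exponentially as $t$ runs deep into $\af_Z^-$. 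The relatively compact part contributes at most $C\|f\|_{L^\infty(Z)}$ to $\|f\|_{L^{p+\e}(Z)}$, so it is enough to bound $f$ on one cusp.

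On a fixed cusp I would apply the constant-term theorem for $\Zc(\gf)$-eigenfunctions: modulo an error term decaying strictly faster than every exponential that occurs, $f(\omega\exp(t)z_0)$ equals a finite sum $\sum_{\xi} q_\xi(t)\,e^{\xi(t)}\,f_\xi(\omega)$, in which the exponents $\xi$ lie in a finite subset of $\af_{Z,\C}^*$ cut out by the Harish--Chandra homomorphism applied to $\chi$, the polynomial coefficients $q_\xi$ have degree bounded solely in terms of $\dim\af_Z$, and both the functions $f_\xi$ on $\Omega$ and the error term are dominated by a fixed-order $L^\infty$-Sobolev norm of $f$ whose order depends only on $Z$ --- because differentiating by $\gf$ changes only the $q_\xi$, never the $\xi$. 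The wavefront hypothesis enters precisely here: it guarantees that the system defining $f$ along $\af_Z^-$ is of Fuchsian type and that the resulting estimates hold with all constants uniform in $\chi$.

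It then remains to integrate the expansion over the cusp. Since $f\in L^p(Z)$ and the density is $\v$, every exponent $\xi$ that actually occurs must make $e^{p\,\re\xi(t)}\,\v(\exp(t)z_0)$ integrable over $\af_Z^-$; in particular $\re\xi$ is strictly negative on the interior of $\af_Z^-$, so there $e^{\e\,\re\xi(t)}\le 1$ and hence $e^{(p+\e)\re\xi(t)}\,\v(\exp(t)z_0)$ is integrable too. Using the pointwise bound $|f(\omega\exp(t)z_0)|\lesssim (1+\|t\|)^{N}\,e^{\re\xi(t)}\,\|f\|_{L^\infty(Z),k}$ with $N=N(Z)$, integrating term by term, and using $\|f_\xi\|_{L^{p+\e}(\Omega)}\lesssim\|f\|_{L^\infty(Z),k}$, one obtains $\|f\|_{L^{p+\e}(\mathrm{cusp})}\le C\,\|f\|_{L^\infty(Z),k}$ with $k$ depending only on $Z$ --- the constant $C$ absorbing the $p$-, $\e$- and $\chi$-dependent convergence rates. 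Summing over the finitely many cusps finishes the argument.

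I expect the main obstacle to be the uniformity asserted in the expansion step, namely that the polynomial degrees $\deg q_\xi$ and the Sobolev order $k$ controlling the coefficient functions $f_\xi$ can be taken independently of the eigencharacter $\chi$. This is the real substance of the asymptotic/constant-term theory for $\Zc(\gf)$-eigenfunctions on real spherical spaces; it is false for general smooth functions and genuinely exploits the eigenequation together with wavefrontedness. A secondary but essential point is that the $\e$-loss cannot be dropped: an $L^p$-eigenfunction may well have a leading exponent sitting exactly at the $L^p$-threshold, so the integrability gain --- and with it the finiteness of $C$ --- degenerates as $\e\to 0^+$.
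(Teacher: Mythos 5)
The paper gives no proof of this lemma: it is imported verbatim from \cite[(1.3)]{KSS} and then applied to the group case $Z=G\times G/\diag(G)$ in the proof of Theorem~\ref{thm disc series}. You correctly recognize this and offer a proof sketch in lieu of a comparison. Your outline --- polar reduction to a relatively compact piece plus finitely many cusps, a constant-term expansion of the $\Zc(\gf)$-eigenfunction along the compression cone, and an integrability gain bought by the $\e$-shift --- follows the general shape of the wavefront real spherical machinery of Kr\"otz--Sayag--Schlichtkrull, and you rightly flag the uniformity of the Sobolev order $k$ in the eigencharacter $\chi$ as the genuine technical content.

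One step is asserted rather than justified. Your claim that $f\in L^p(Z)$ forces, for \emph{each individual} exponent $\xi$ occurring in the expansion, the weight $e^{p\,\re\xi}\,\mathbf{v}$ to be integrable over $\af_Z^-$ does not follow from $\|f\|_p<\infty$ alone: a priori, distinct terms of the expansion might cancel along a subcone and conspire to make the sum integrable without each summand being so. What is needed here is the non-degeneracy of the constant-term (Casselman--Mili\v{c}i\'{c}-type) map, which recovers each coefficient $f_\xi$ from $f$ continuously, so that $L^p$-finiteness of $f$ really does propagate termwise. Without that, your bootstrap $e^{p\,\re\xi}\mathbf{v}\in L^1\Rightarrow e^{(p+\e)\re\xi}\mathbf{v}\in L^1$ has no starting point. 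Relatedly, the parenthetical "differentiating by $\gf$ changes only the $q_\xi$, never the $\xi$" is stated as the reason for $\chi$-independence of $k$; the more accurate justification is that the eigenequation, reduced along $\af_Z^-$, is Fuchsian, so the set of characteristic exponents and their multiplicities (hence the polynomial degrees) is a priori bounded in terms of the structure of $Z$ and the Harish-Chandra image of $\chi$, before any differentiation in $\gf$ is invoked. With those two points made explicit your sketch is a sound reconstruction.
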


\begin{proof}[Proof of Theorem \ref{thm disc series}]
As already mentioned we apply Lemma \ref{KSS-lemma} to the group $G$ considered as the homogeneous space $Z=G \times G/ \diag(G)$.
Let $V$ be an irreducible Harish-Chandra module of the discrete series for $G$. Let $\chi$ be the infinitesimal character of $V$. Then for every $\tilde{\v}\in\tilde{V}$ and $v\in V$ the matrix coefficient $f=m_{v, \tilde v}$ is an eigenfunction of $\Zc(\gf)$ with eigencharacter $\chi$.

Let $q$ be a unitary norm on $V$ and recall the orthogonality relations
$$
\|m_{v, \tilde v}\|_{L^2(G)}^2
= \frac{1}{d_V} q(v)^2 \tilde q(\tilde v)^2
\qquad (v \in V, \tilde v\in \tilde V)
$$
with $d_V>0$ the formal degree of $V$.
By the main theorem in \cite{Milicic}, there exists a uniform $\e>0$ such that $m_{v, \tilde v} \in L^{2-\e}(G)$ for all discrete series $V$. Lemma \ref{KSS-lemma} now yields that
$$
\frac{1}{d_V} q(v)^2
\tilde q(\tilde v)^2
\leq C \|m_{v, \tilde v}\|_{L^\infty(G),k}
$$
for a constant $C$ depending only on $V$ and a Sobolev index $k\in\N$ only depending on $G$.
In the terminology of minimal norms on $V$ this implies
$$
q
\lesssim p_{\min, k}^{\rm st}
$$
for a $k\in\N$ independent of $V$.
In view of Proposition \ref{Prop equivalence of Sobolev norms} there exists an $s\in 2\N$, only depending on $G$ such that
$$
q
\lesssim p_{\min, k}^{\rm st}
\lesssim (p_{\min})_s^{\rm \Delta}
$$
Applying a Sobolev shift, we get
$$
q_{-s}^{\rm \Delta}
\lesssim ((p_{\min})_s^{\rm \Delta})_{-s}^{\rm \Delta}
=p_{\min}
$$
Since $q$ is $K$-Hermitian, $q^{\Delta}_{-s}$ is equivalent to $q_{-s}$ by Lemma \ref{Lemma equivalence of sobolev norms for K-hermitian norms}.
Interchanging $V$ with $\tilde V$ we obtain dually $p_{\rm max}\lesssim q_s$
and thus
$$
s(V)
\leq 2s
$$
for all Harish-Chandra modules of the discrete series $V$. This proves Theorem \ref{thm disc series}.
\end{proof}

\subsection{Minimal principal series}\label{subsection minimal principal series}

For $\sigma\in \hat M$ we denote by $U_\sigma$ a model Hilbert space on which $M$ acts. For $\lambda\in \af_\C^*$ we shall define Harish-Chandra modules
$V_{\sigma, \lambda}$ of the (minimal) principal series which feature natural Hilbert completions $\Hc_{\sigma,\lambda}$. For that let us define a finite dimensional representation of the minimal parabolic $P$ on $U_\sigma$ by
$$
\sigma_\lambda(m a n)
= \sigma(m) a^{\lambda +\rho}
\qquad (man\in P = MAN)
$$
and define the smooth principal series by
$$
V_{\sigma, \lambda}^\infty
= \{ f \in C^\infty (G, U_\sigma)\mid f(gp)
= \sigma_\lambda(p)^{-1} f (g) \text{ for all }g\in G, p\in P\}.
$$
The left regular action of $G$ on $V_{\sigma, \lambda}^\infty$ is a smooth Fr\'echet representation of moderate growth and we denote by $V_{\sigma, \lambda}$ its Harish-Chandra module of $K$-finite vectors. By restricting to $K$ we obtain an isomorphism of $K$-modules
$$
V_{\sigma, \lambda}^\infty
\simeq C^\infty(K,\sigma),
$$
where
$$
C^\infty(K,\sigma)
:=\{ f \in C^\infty(K, U_\sigma) \mid f(km)= \sigma_\lambda(m)^{-1} f (k)\text{ for all }k\in K, m\in M\}.
$$
Integration over $K$ yields the non-degenerate $G$-invariant pairing
$$
V_{\sigma, \lambda}^\infty \times V_{\sigma^*, -\lambda}^\infty \to \C,
\quad(f_1, f_2)\mapsto \int_K \la f_1(k), f_2(k)\ra_\sigma \,dk,
$$
with $\la\cdot, \cdot\ra_\sigma$ denoting the natural pairing of $U_\sigma$ and its dual $U_{\sigma^*} = U_\sigma^*$. In particular, we obtain the natural isomorphism of Harish-Chandra modules:
$$
\tilde V_{\sigma, \lambda}
\simeq V_{\sigma^*, - \lambda}\,.
$$
We define
$$
\Hc_{\sigma, \lambda}
:=L^2(K,\sigma)
:=\{ f \in L^{2}(K, U_\sigma) \mid f(km)= \sigma_\lambda(m)^{-1} f (k)\text{ for a.e. }k\in K, m\in M\}
$$
and recall that the $G$-action on $V_{\sigma,\lambda}$ extends to a Hilbert
representation on $\Hc_{\sigma, \lambda}$ which is unitary if and only if $\lambda\in i\af^*$. We denote the $K$-Hermitian norm on $\Hc_{\sigma, \lambda}$ by $q$. More precisely, for $f \in \Hc_{\sigma,\lambda}$:
$$
q(f)^2
=\int_{K} \|f(k)\|_{\sigma}^2 \,dk.
$$

For $\mu\in \af^*$ we recall the exponential weights from \eqref{standard weight exp}
$$
w_\mu(g)
= \max_{w\in W} \{ a^{w\cdot \mu}, a^{-w\cdot \mu}\}
\qquad (g\in KaK, a\in \overline{A^{+}}).
$$
Note that $w_\mu = w_\mu^\sharp = w_{-\mu}$.
It is a straightforward computation to see that $q\in \No(V_{\sigma, \lambda}, w_{\re \lambda}) \neq \emptyset$. 

To formulate our main result concerning minimal principal series we introduce a structural constant $c_\gf$ as follows.
Let $X_0\in \af^+$ such that $\alpha(X_0)=1$ for all simple roots $\alpha\in \Sigma^+$ and set 
\begin{equation} \label{def str const}
c_\gf
:=\rho(X_0)
\in \tfrac{1}{2}\N_0.
\end{equation}

\begin{rmk}\label{rmk constant}
The constant $c_\gf$ can be computed from the structural data of $\gf$. For example, if $\gf$ has real rank one, then $\{\alpha\} \subset \Sigma^+\subset \{\alpha, 2\alpha\}$ and 
$$
c_\gf
=\frac{1}{2}(\dim \gf^{\alpha} + 2\dim \gf^{2\alpha})
$$
with $\gf^\alpha$ and $\gf^{2\alpha}$ the root spaces. In particular,
\begin{enumerate}[(a)]
    \item $c_\gf=\frac{n-1}{2}$ for $\gf=\so(1,n)$, $n\geq 2$.
    \item $c_\gf=n$ for $\gf=\su(1,n)$, $n\geq 2$,
    \item $c_\gf=2n+1$ for $\gf=\sp(1,n)$, $n\geq 2$,
    \item $c_\gf=11$ for $\gf=\mathfrak{f}_{4(-20)}$.
\end{enumerate}

A look in the tables of \cite{Bou} gives us $2\rho$ in the split cases as an explicit sum of simple roots. For example, if $\gf=\sl(n,\R)$ we obtain that 
$$
c_\gf
=\rho(X_0)
=\frac{1}{2} \sum_{j=1}^{n-1} j(n-j)=\frac{n(n^2-1)}{12}\, .
$$
\end{rmk}

\begin{theorem} \label{thm sup}
There exists a $c>0$ such that the following uniform finiteness result holds for all $R\geq 0$
\begin{equation} \label{eq mps bound}
\sup_{\substack{\sigma\in \hat M ,\lambda\in \af_\C^*\\ \|\re \lambda\|\leq R}} s (V_{\sigma, \lambda}, w_{\re \lambda})
\leq 2c_\gf +\rank K+c R
< \infty.
\end{equation} 
Moreover, one has 
$$
[p_{\max}]
=[q_s^G]
\qquad \big(s> c_{\gf}+ \tfrac{1}{2}(\rank K+c R)\big).
$$
\end{theorem}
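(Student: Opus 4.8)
The plan is to follow the scheme of the proof of Theorem \ref{thm disc series}: first produce a negative-Sobolev lower bound for the minimal norm in terms of the standard Hilbert norm $q$ which is uniform in the parameters, and then exploit the self-duality of the class of minimal principal series. Write $V=V_{\sigma,\lambda}$ and recall the $G$-isomorphism $\tilde V\simeq V_{\sigma^*,-\lambda}$. Since $w_\mu=w_\mu^\sharp=w_{-\mu}$, the contragredient of a member of this family is again a member of the family with the same value of $\|\re\lambda\|$ and the same attached weight $w_{\re\lambda}$; moreover $\tilde q$ is equivalent, via the $L^2(K,\cdot)$-pairing, to the standard Hilbert norm on $\Hc_{\sigma^*,-\lambda}$. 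Hence it suffices to establish the single uniform estimate: there is a constant $c>0$ such that, writing $s_0:=c_\gf+\tfrac{1}{2}(\rank K+cR)$,
\[
q_{-s_0}\lesssim p_{\min}
\]
holds for all $\sigma\in\hat M$ and all $\lambda\in\af_\C^*$ with $\|\re\lambda\|\le R$.

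Granting this, the theorem follows by the formal steps already used for the discrete series. Applying the estimate to $\tilde V=V_{\sigma^*,-\lambda}$ gives $\tilde q_{-s_0}\lesssim\tilde p_{\min}$; dualizing by Lemma \ref{pq-lemma} and \eqref{ps dual} and using Corollary \ref{Cor max norm} yields $p_{\max}\lesssim q_{s_0}$. Since $q\lesssim p_{\min, s_0}$ is just a restatement of $q_{-s_0}\lesssim p_{\min}$, the Sobolev-shift rules \eqref{pq comparison} and \eqref{s plus t} give
\[
p_{\max}\lesssim q_{s_0}\lesssim(p_{\min, s_0})_{s_0}=p_{\min, 2s_0},
\]
so that $s(V,w_{\re\lambda})\le 2s_0=2c_\gf+\rank K+cR$, which is \eqref{eq mps bound}. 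For the final assertion, recall that $q$ is $K$-Hermitian, hence monotonous; from $q\lesssim p_{\max}$ and $p_{\max}\lesssim q_{s_0}$ one obtains $d_\to([q],[p_{\max}])=0$ and $d_\to([p_{\max}],[q])\le s_0$, hence $d([q],[p_{\max}])\le s_0=c_\gf+\tfrac{1}{2}(\rank K+cR)$, and Proposition \ref{Prop stabilization 1}(\ref{Prop stabilization 1 - strong stabilization}) then yields $[p_{\max}]=[q_s^G]$ for every $s>c_\gf+\tfrac{1}{2}(\rank K+cR)$.

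It remains to prove the uniform estimate, which is the real content and where the Dirac-approximation technique of \cite{BK} enters. I would realize $p_{\min}$ via Proposition \ref{Prop smooth minimal}, using a $K$-invariant compact generating set $C\subset\tilde V^\infty\simeq C^\infty(K,\sigma^*)$ built from approximate deltas concentrated near the base point of $K/M\simeq G/P$, so that, with $w_{\re\lambda}$ taken $K\times K$-invariant, $p_{\min}(v)\gtrsim\sup_{t\ge 0,\,k\in K}|m_{v,k\cdot\delta}(a_t)|/w_{\re\lambda}(a_t)$ where $a_t:=\exp(tX_0)$ and $X_0$ is as in \eqref{def str const}. Fixing $v$, one truncates at a level $N$, replaces $\delta$ by the corresponding Dirichlet-type approximate delta $\delta_N$ assembled from the $K$-types below level $N$, and analyses $m_{v,\delta_N}(a_t)$ through the contraction dynamics of $a_t$ on $G/P$: the base point is attracted along the $\overline N$-directions at rates $e^{-t\alpha(X_0)}$ whose product over $\Sigma^+$ (counted with multiplicities) equals $e^{-2t\rho(X_0)}=e^{-2tc_\gf}$, and together with the $\rho$-shift built into the normalization $\sigma_\lambda(man)=\sigma(m)a^{\lambda+\rho}$ this is precisely the mechanism that converts the pointwise values of $v$ near the base point into its $L^2$-mass, up to a factor governed by $c_\gf$; meanwhile $w_{\re\lambda}(a_t)\le e^{t\|\re\lambda\|\,\|X_0\|}\le e^{tR\|X_0\|}$. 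Balancing the $L^2$-mass of $\delta_N$ (controlled by the number of $K$-types below level $N$, which grows like $N^{\rank K}$) against these two exponential factors, choosing $N=N(t)$ optimally and letting $t\to\infty$, should yield exactly $q_{-s_0}\lesssim p_{\min}$ with $s_0=c_\gf+\tfrac{1}{2}(\rank K+cR)$. The uniformity in $\lambda$ is visible because $\|\re\lambda\|$ enters only through $w_{\re\lambda}(a_t)$, and the uniformity in $\sigma$ holds because $\sigma$ enters only through the fiber $U_{\sigma^*}$ and the $M$-equivariance, so $\delta$ and all estimates can be taken independently of $\sigma$.

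The main obstacle is precisely making this Dirac approximation quantitative and parameter-uniform. One must carefully track the interplay between the $\rho$-normalization of the induced representation and the anisotropic volume contraction of $a_t$ on $K/M$ so as to extract exactly the $\rho$-contribution $c_\gf=\rho(X_0)$ and not a larger order coming from a crude Sobolev-lemma bound, choose the truncation level $N(t)$ so that the loss incurred in passing from a pointwise Dirac evaluation to the $L^2$-norm $q$ is exactly $\tfrac{1}{2}\rank K$, and bound the $\re\lambda$-dependence so that it contributes only the linear-in-$R$ term $\tfrac{1}{2}cR$ with $c$ independent of $\sigma$, $\lambda$ and $R$. Once this explicit and uniform version of the \cite{BK} technique is available, the remainder of the argument is the formal bookkeeping above.
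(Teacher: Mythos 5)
Your formal reduction is correct and essentially identical to the paper's: the theorem is equivalent to establishing, uniformly in $\sigma$ and in $\lambda$ with $\|\re\lambda\|\leq R$, the single inequality $p_{\max}\lesssim q_{s_0}$ (or dually $q_{-s_0}\lesssim p_{\min}$) with $s_0=c_\gf+\tfrac{1}{2}(\rank K+cR)$, after which duality via $\tilde V_{\sigma,\lambda}\simeq V_{\sigma^*,-\lambda}$, the Sobolev-shift calculus \eqref{pq comparison}--\eqref{ps dual}, and strong stabilization (Proposition \ref{Prop stabilization 1}) give both conclusions exactly as you write. This is the paper's bookkeeping (collected in the proof of Theorem \ref{thm sup} together with Corollary \ref{Cor mps convexity}).

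The gap is in the central estimate itself, which is the real content and which you only sketch. The paper proves it in two cleanly separated steps. First, Key Lemma \ref{Lemma Uniform lower bounds} produces a \emph{$K$-type-wise} lower bound $p_{\min}(\psi_\tau)\gtrsim(1+|\tau|)^{-s}\,q^{\infty}(\psi_\tau)$ with $s=c_\gf+cR$ only (no $\rank K$ term): this uses a \emph{fixed} smooth bump $\phi_{u^*}$ in the $\oline N$-model, the quantitative derivative bound for $\psi_\tau$ near the origin of size $O(|\tau|)\psi_\tau(0)$ (from \cite[Sect.~12]{BK}, locally uniform in $\lambda$), and the mean-value theorem to pick the effective Dirac time $t_\tau$ with $e^{-t_\tau}\asymp(1+|\tau|)^{-1}$; the exponent $c_\gf$ appears as $a_{t_\tau}^{-\rho}\asymp(1+|\tau|)^{-c_\gf}$ and the $cR$ term from $w_{\re\lambda}(a_{t})\leq a_t^{c'R\rho}$. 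Second, the $\tfrac{1}{2}\rank K$ loss arises only when passing from the $K$-type-wise bound to an actual Sobolev comparison of the full norms, via Lemma \ref{Lemma q comparison}: $p_{\max}\lesssim q_s^1:=\sum_\tau q_s(\cdot_\tau)$ and then $q_s^1\lesssim q_{s+r}$ for $r>\tfrac{1}{2}\rank K$ by Cauchy--Schwarz using $\sum_{\tau\in\hat K}(1+|\tau|)^{-2r}<\infty$.

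Your proposed mechanism for the central estimate -- replacing the fixed bump by a truncated Dirichlet kernel $\delta_N$, balancing the $L^2$-mass of $\delta_N$ against the contraction, and optimizing $N(t)$ to bake the $\tfrac{1}{2}\rank K$ in at the same time -- is a genuinely different decomposition, but it is left entirely heuristic and the quantitative claims are shaky. In particular, the assertion that the $L^2$-mass of $\delta_N$ is ``controlled by the number of $K$-types below level $N$, which grows like $N^{\rank K}$'' conflates the count of dominant weights (which does grow like $N^{\rank K}$) with the $L^2$-norm of the kernel, which depends on the dimensions $d_\tau$ and does not generically scale as $N^{\rank K}$; and it is not clear how to keep the admissible test set $C$ in Proposition \ref{Prop smooth minimal} compact and fixed if $\delta_N$ is to vary with $v$ or $t$. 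The paper avoids both difficulties by working with a fixed compactly supported bump and isolating the $\rank K$ contribution in the separate summability argument of Lemma \ref{Lemma q comparison}. Until you either carry out your Dirichlet-kernel argument rigorously or substitute the paper's two-step route, the proof of the theorem is not complete.
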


The proof for the theorem relies on lower bounds for matrix coefficients from \cite[Sect. 12]{BK} which we adapt now for our purpose. To start with we need an appropriate model to compute the minimal norm. For that in turn it is suitable to employ the non-compact model for principal series representations: We write $\Nc_{\sigma, \lambda}^\infty \subset C^\infty (\oline N, U_\sigma)$ for the restrictions of the elements in $V_{\sigma, \lambda}^\infty \subset C^\infty(G, U_\sigma)$ to $\oline N$. Integration over $\oline N $ yields a non-degenerate $G$-invariant pairing
$$
\Nc_{\sigma, \lambda}^\infty \times \Nc_{\sigma^*, -\lambda}^\infty \to \C,
\quad (\phi_1, \phi_2)\mapsto \la \phi_1, \phi_2\ra
:=\int_{\oline N} \la \phi_1(\oline n ), \phi_2(\oline n )\ra_\sigma \, d\oline n.
$$
In order to define the minimal norm, we introduce a suitable compact set 
$C_\sigma \subset \Nc_{\sigma^*, -\lambda}^\infty$
and invoke Proposition \ref{Prop smooth minimal}. Using the exponential function we identify $\oline N$ with $\oline \nf$ and let $\phi\in C^\infty(\oline \nf)$ be a non-negative function supported in the unit ball $B\subset \oline \nf$ with 
$\int \phi=1$. For $u^*\in U_{\sigma^*}$ we set $\phi_{u^*}(\oline n)=\phi(\oline n)u^*$ and note that $\phi_{u^*} \in \Nc_{\sigma^*, -\lambda}^\infty$ is a smooth vector. 
Let $B_{\sigma^*}$ be the unit ball in $U_\sigma^*$
and 
$$
C_\sigma
:=\{ \phi_{u^*}\mid u^*\in B_{\sigma^*}\}.
$$
This is certainly a compact set in $\Nc_{\sigma^*, -\lambda}^\infty$, and we need to see that $G\cdot C_\sigma$ is weakly dense. Indeed, 
recall we choose $X_0\in \af^+$ such that $\alpha(X_0)=1$ for all simple roots $\alpha\in \Sigma^+$. We set 
$$
a_t
:=\exp(tX_0) \qquad (t\in \R).
$$
Let $\phi_{u^*, t}:= \pi_{\sigma^*, -\lambda}(a_t)\phi_{u^*}$ and note that 
$$
\phi_{u^*, t}(\oline n)
= a_t^{-\lambda +\rho} \phi_{\oline u^*}(\Ad(a_t)^{-1} \oline n)\qquad (\oline n \in \oline N).
$$
Hence $\supp \phi_{u^*, t} \subset \Ad(a_t)B \to \{\1\}$ for $t\to \infty$ 
and Dirac approximation yields that 
\begin{equation} \label{Dirac1}
\lim_{t\to \infty} a_t^{\rho+\lambda} \la \phi_{u^*, t}, \psi\ra
= u^*(\psi(\1)) \qquad (\psi \in \Nc_{\sigma, \lambda}^\infty).\end{equation} 
If $\psi\neq 0$, then in the orbit $K\cdot \psi\subset G\cdot \psi$ exist a representative which is not vanishing at $\1$ and we conclude with \eqref{Dirac1} that $G\cdot C_\sigma$ is weakly dense. 
Hence a representative of the minimal element in $\Noc(V_{\sigma, \lambda},w_{\re \lambda})$, is a $G$-continuous norm for $V_{\sigma, \lambda}$ that can be defined as: 
\begin{equation} \label{def pmin mps}
p_{\rm min}(\psi)
=\sup_{u^*\in B_{\sigma^*}}
\sup_{g\in G} \frac{|\la \phi_{u^*}, g\cdot \psi\ra|}{w_{\re \lambda}(g)}
\qquad (\psi\in \Nc_{\sigma, \lambda}^{\infty}).
\end{equation}
The key-observation in \cite[Sect.12]{BK} was that the Dirac-approximation in \eqref{Dirac1} can be made quantitative in $t$ if $\psi$ belongs to a fixed $K$-type. It is stated in the following Lemma. 
The reader is advised to look in \cite[Theorem 12.2, Theorem 12.3 \& Corollary 12.4]{BK} where the running assumption is that $\sigma=\bf 1.$

We define an auxiliary norm on $V_{\sigma, \lambda}^\infty= C^\infty(K,\sigma)$ by 
$$
q^\infty(f)
=\sup_{k\in K} \|f(k)\|_\sigma
$$
and note that $q^\infty\geq q$. Note that we have a correspondence $C^\infty(K,\sigma)\leftrightarrow \Nc_{\sigma,\lambda}^\infty$ given by
\begin{equation}\label{eq switch picture}
\psi(\oline n) 
=f({\bf k}(\oline n)) {\bf a}(\oline n)^{-(\lambda +\rho)} \qquad (\oline n \in \oline N),
\end{equation}
where $\oline n= {\bf k}(\oline n){\bf a}(\oline n){\bf n}(\oline n)$ according to the Iwasawa decomposition $G=KAN$.
Using this correspondence we define $q^\infty(\psi):=q^\infty(f)$. 

\begin{lemma}\label{Lemma Uniform lower bounds}{\rm(Key Lemma)}
Let $R\geq 0$ and $p_{\min}$ the minimal norm on $V_{\sigma,\lambda}\simeq \Nc_{\sigma,\lambda}$ as defined in \eqref{def pmin mps}. There exist constants $c>0$ only depending on $\gf$, and a constant $C>0$ locally uniform in the parameter $\lambda$ such that for all $\sigma \in \widehat{M}$ and $\lambda \in \af_{\C}^*$ with $\|\re \lambda\|\leq R$ we have with $s=c_\gf+ c R$:
$$
p_{\rm min}(\psi_{\tau})
\geq  C (1+|\tau|)^{-s} q^\infty(\psi_\tau)
$$
for all $\tau\in\hat{K}$ and $\psi_\tau\in \Nc_{\sigma,\lambda}[\tau]$.
\end{lemma}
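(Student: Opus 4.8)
The plan is to bound $p_{\min}(\psi_\tau)$ from below by retaining a single well-chosen element $g=a_t^{-1}k$ in the supremum \eqref{def pmin mps}, with $k\in K$ and $t>0$ to be optimised at the end, and to make the Dirac limit \eqref{Dirac1} effective in $t$ for a vector of a fixed $K$-type $\tau$. This is precisely the strategy of \cite[Sect.~12]{BK}, carried out there under the assumption $\sigma=\mathbf 1$; what remains is to reproduce it while keeping track of the dependence on $\sigma$, $\lambda$ and $\tau$. Since $w_{\re\lambda}$ is $K\times K$-invariant and reflexive ($w_\mu=w_\mu^\sharp=w_{-\mu}$) and $a_t=\exp(tX_0)\in\oline{A^+}$, formula \eqref{standard weight exp} gives $w_{\re\lambda}(a_t^{-1}k)=w_{\re\lambda}(a_t)\le e^{t\|\re\lambda\|\,\|X_0\|}$. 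Writing $g\cdot\psi_\tau=a_t^{-1}\cdot(k\cdot\psi_\tau)$ and recalling from \eqref{Dirac1} that $a_t^{\rho+\lambda}\langle\phi_{u^*},a_t^{-1}\cdot\psi\rangle\to u^*(\psi(\1))$, everything comes down to an effective bound on the rate of this convergence for $\psi$ in a fixed $K$-type.

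Substituting $\oline n=\Ad(a_t)\oline m$ in the pairing (the Jacobian of $\Ad(a_t)$ on $\oline\nf$ being $a_t^{-2\rho}$) identifies $a_t^{\rho+\lambda}\langle\phi_{u^*},a_t^{-1}\cdot\psi\rangle$ with $\int_{\oline\nf}\phi(\oline m)\langle u^*,\psi(\Ad(a_t)\oline m)\rangle\,d\oline m$, whence
\[
\bigl|\,a_t^{\rho+\lambda}\langle\phi_{u^*},a_t^{-1}\cdot\psi\rangle-u^*(\psi(\1))\,\bigr|\ \le\ \sup_{\oline m\in B}\bigl\|\psi(\Ad(a_t)\oline m)-\psi(\1)\bigr\|.
\]
Since $\alpha(X_0)=1$ for every simple $\alpha\in\Sigma^+$, every positive root is $\ge1$ on $X_0$, so $\Ad(a_t)$ contracts $\oline\nf$ by a factor at most $e^{-t}$ for $t\ge0$, and the mean value inequality bounds the right-hand side by $\lesssim e^{-t}\sup_B\|D\psi\|$, where $D\psi$ denotes the differential of $\psi$ on the unit ball $B\subset\oline\nf$. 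For $\psi$ of fixed $K$-type $\tau$ one controls $\sup_B\|D\psi\|$ via \eqref{eq switch picture}: on $B$ one has $\psi(\oline n)=f({\bf k}(\oline n))\,{\bf a}(\oline n)^{-(\lambda+\rho)}$ with $f\in C^\infty(K,\sigma)[\tau]$, and ${\bf k}$, ${\bf a}$, the factor ${\bf a}(\cdot)^{-(\lambda+\rho)}$ and their first derivatives are bounded on $B$, locally uniformly in $\lambda$ (the imaginary part of $\lambda$ contributing only unimodular factors), so $\sup_B\|D\psi\|\lesssim_\lambda\|f\|_{C^1(K)}$. Now the Bernstein--Markov inequality on the compact group $K$ --- convolving on the left by a smooth band-limiter $\kappa_\Lambda$ with $\widehat{\kappa_\Lambda}(\tau)=\id$ for $|\tau|\le\Lambda$ and $\|\nabla\kappa_\Lambda\|_{L^1(K)}\lesssim\Lambda$ --- gives $\|f\|_{C^1(K)}\lesssim(1+|\tau|)\,\|f\|_{L^\infty(K)}$ for every $f$ in the left-$K$-isotype $\tau$, with constant depending only on $K$; this argument is blind to the $U_\sigma$-values and the right $M$-equivariance, hence uniform in $\sigma$ and $\lambda$. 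As $q^\infty$ is $K$-invariant, $\|f\|_{L^\infty(K)}=q^\infty(\psi)$, and combining the three steps yields, for some $C_1=C_1(\gf,\lambda)$ locally uniform in $\lambda$,
\[
\bigl|\,a_t^{\rho+\lambda}\langle\phi_{u^*},a_t^{-1}\cdot\psi\rangle-u^*(\psi(\1))\,\bigr|\ \le\ C_1\,e^{-t}\,(1+|\tau|)\,q^\infty(\psi)\qquad(\psi\in\Nc_{\sigma,\lambda}[\tau]).
\]

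Given $\psi_\tau$, let $f_\tau\in C^\infty(K,\sigma)[\tau]$ be the corresponding function and choose $k_0\in K$ with $\|f_\tau(k_0)\|\ge\tfrac12 q^\infty(\psi_\tau)$. Put $\psi:=k_0^{-1}\cdot\psi_\tau\in\Nc_{\sigma,\lambda}[\tau]$; then $\psi(\1)=f_\tau(k_0)$ and $q^\infty(\psi)=q^\infty(\psi_\tau)$, and pick $u^*\in B_{\sigma^*}$ with $|u^*(f_\tau(k_0))|=\|f_\tau(k_0)\|$. Applying the displayed estimate to this $\psi$ with $t=t_\tau:=\log\!\bigl(4C_1(1+|\tau|)\bigr)$ forces the error term to be $\le\tfrac14 q^\infty(\psi_\tau)$, hence $|\langle\phi_{u^*},a_{t_\tau}^{-1}k_0^{-1}\cdot\psi_\tau\rangle|\ge|a_{t_\tau}^{-\rho-\lambda}|\cdot\tfrac14 q^\infty(\psi_\tau)$. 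By \eqref{def str const}, $c_\gf=\rho(X_0)$, so $|a_{t_\tau}^{-\rho}|=(4C_1(1+|\tau|))^{-c_\gf}\gtrsim(1+|\tau|)^{-c_\gf}$, while $\|\re\lambda\|\le R$ gives $|a_{t_\tau}^{-\lambda}|\ge e^{-t_\tau\|\re\lambda\|\,\|X_0\|}\gtrsim(1+|\tau|)^{-R\|X_0\|}$ and $w_{\re\lambda}(a_{t_\tau})\le e^{t_\tau\|\re\lambda\|\,\|X_0\|}\lesssim(1+|\tau|)^{R\|X_0\|}$. Dividing, $p_{\min}(\psi_\tau)\ge|\langle\phi_{u^*},a_{t_\tau}^{-1}k_0^{-1}\cdot\psi_\tau\rangle|/w_{\re\lambda}(a_{t_\tau})\gtrsim(1+|\tau|)^{-(c_\gf+2\|X_0\|R)}q^\infty(\psi_\tau)$, which is the assertion with $c:=2\|X_0\|$ depending only on $\gf$ and implied constant locally uniform in $\lambda$.

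The geometric input here is soft; the genuine content --- and the reason \cite[Sect.~12]{BK} is cited --- is the $K$-type–uniform effective Dirac estimate. Two features are decisive. First, the Bernstein--Markov bound on $K$ must carry the power of $(1+|\tau|)$ equal to one: this is what makes $t_\tau$ of size $\log(1+|\tau|)$ and produces the sharp leading exponent $c_\gf=\rho(X_0)$; a cruder Sobolev bound $\|f_\tau\|_{C^1(K)}\lesssim(1+|\tau|)^{N}q^\infty(\psi_\tau)$ with $N\sim\tfrac12\dim K$ would only give exponent $Nc_\gf$. Second, every constant entering --- the Bernstein constant on $K$, the mean value constant, and the bounds on ${\bf a}(\cdot)^{-(\lambda+\rho)}$ and its derivatives over the fixed ball $B$ --- must be checked to be independent of $\sigma$ (automatic, since only left-$K$-harmonic analysis on $K$ is used) and locally uniform in $\lambda$. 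Verifying that the argument of \cite[Theorems~12.2--12.4]{BK}, written for $\sigma=\mathbf 1$, survives the passage to $U_\sigma$-valued functions and a nontrivial $M$-type without degradation is the remaining bookkeeping.
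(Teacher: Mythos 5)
Your proof is correct and follows essentially the same route as the paper: a quantitative Dirac approximation on $\oline N$ with the scale $e^{-t_\tau}\asymp(1+|\tau|)^{-1}$, the identity $c_\gf=\rho(X_0)$ producing the leading exponent, and $\|\re\lambda\|\leq R$ absorbing the weight; the paper too restricts the supremum in $p_{\min}$ to a single $a_{t_\tau}^{-1}$ and cites \cite[(12.3)]{BK} for the derivative bound $O(|\tau|)q^\infty(\psi_\tau)$ that you instead derive by a Bernstein--Markov band-limiter argument on $K$. The substitution $\oline n=\Ad(a_t)\oline m$ cancelling the Jacobian and the scaling factor, and the use of reflexivity of $w_{\re\lambda}$ under $K\times K$, all match the paper's (condensed) argument.
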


\begin{proof}
Let $f_\tau\in V_{\sigma, \lambda}[\tau]\subset C^\infty(K,\sigma)$ correspond to $\psi_\tau\in \Nc_{\sigma,\lambda}^\infty$ via (\ref{eq switch picture}).
We may assume that $\|f_\tau(k)\|_\sigma$ has a maximum at $k=\1$.
Given the statement of the lemma, 
it thus suffices to establish 
\begin{equation} \label{ulb1}
\sup_{u^*\in B_{\sigma^*}} \sup_{a\in A^+} \frac{|\la \phi_{u^*}, a\cdot \psi_\tau\ra| }{w_{\re \lambda}(a)}
\geq (1+|\tau|)^{-s} \, \underbrace{q^\infty (\psi_\tau)}_{=\|\psi_\tau(0)\|_\sigma}\, .
\end{equation} 

In order to establish \eqref{ulb1} we proceed as in \cite[Sect.12]{BK}. Recall our choice of $X_0$ 
and $a_t=\exp(tX_0)$. 

Let $B \subset \oline \nf$ be the unit ball in $\oline \nf$. From the introduced notions we obtain 
$$
\supp \phi_{u^*, t}
\subset \Ad(a_t)B
\subset e^{-t} B
\qquad (t\geq 0).
$$
We view $\psi_\tau$ as a function on $\oline \nf$. The derivative of $\psi_\tau$ near $0$ is of order $O(|\tau|)\cdot \psi_\tau(0)$ and locally uniform in $\lambda$; see \cite[(12.3)]{BK} and \eqref{eq switch picture}. Hence, the mean value theorem implies the existence of a constant $r>0$ independent of $\tau$ and locally uniform in $\lambda$ such that 
$$
\| \psi_\tau(X) - \psi_\tau(0)\|_{\sigma}
\geq \tfrac{1}{2} \|\psi_\tau(0)\|_\sigma
\qquad (\|X\| \leq r (1+|\tau|)^{-1}).
$$

This means that Dirac-approximation \eqref{Dirac1} becomes effective once $t=t_\tau$ satisfies 
$e^{-t}=r (1+|\tau|)^{-1}$. Therefore we obtain that 
\begin{equation}\label{Dirac2}
\sup_{u_{\sigma^*}\in B_{\sigma^*}} a_{t_\tau}^{\rho+\re \lambda} |\la \phi_{u^*, t_\tau}, \psi_\tau \ra |\geq \tfrac{1}{2}q^\infty (\psi_\tau)
\end{equation} 
and consequently, as $a_t^{\re \lambda}\leq w_{\re\lambda}(a_t)$
$$
\sup_{u_{\sigma^*}\in B_{\sigma^*}}\frac{|\la \phi_{u^*, t_\tau}, \psi_\tau \ra |}{w_{\re \lambda}(a_{t_\tau})}
\geq \frac{a_{t_\tau}^{-\rho}}{2w_{\re \lambda}(a_t)^2} q^\infty (\psi_\tau)
$$
As $a_{t_\tau}^{-\rho}\gtrsim (1+|\tau|)^{-c_\gf}$ and $w_{\re\lambda}(a_t)\leq a_t^{c'R\rho}$ for $c'>0$, the requested bound \eqref{ulb1} and all other assertions follow. 
\end{proof} 

Next to $q^\infty$ it is useful to define useful auxiliarly norms $q_s^1$ on $C^{\infty}(K,\sigma)$ forn any $s\in\R$ as follows: Expand $f\in C^\infty(K,\sigma)$ in $K$-types $f=\sum_{\tau \in \hat K} f_\tau$ and define 
$$
q_s^1(f)
=\sum_{\tau\in \hat K}q_s(f_{\tau}).
$$

\begin{lemma} \label{Lemma q comparison}
The following assertions regarding the norms $q$ and $q^1$ hold for all $s\in\R$: 
\begin{enumerate}[(i)]
\item\label{Lemma q comparison - item 1}
$q_s \leq q_s^1$.
\item\label{Lemma q comparison - item 2}
$q_s^1 \lesssim q_{s+r}$ for all $r>\frac{\rank K}{2}.$
\end{enumerate}
\end{lemma}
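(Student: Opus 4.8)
The plan is to reduce both inequalities to two elementary properties of the norm $q$ on $\Hc_{\sigma,\lambda}=L^2(K,\sigma)$. Since $q$ is $K$-Hermitian, the isotypical decomposition $f=\sum_{\tau\in\hat K}f_\tau$ is $q$-orthogonal; and by \eqref{eq D_s} the operator $D_s=(1+\Delta_K)^{s/2}$ acts on each $V_{\sigma,\lambda}[\tau]$ by the single scalar $\mu_\tau^{s/2}$, where $\mu_\tau:=1+|\lambda_\tau+\rho_\kf|^2-|\rho_\kf|^2$. Together these give, for every $f\in C^\infty(K,\sigma)$ and $s\in\R$, the Plancherel-type identity $q_s(f)^2=\sum_{\tau}q_s(f_\tau)^2$ and the scaling relation $q_s(f_\tau)=\mu_\tau^{s/2}q(f_\tau)$. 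These are the only inputs I would use.

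For part (i): $q_s^1(f)=\sum_{\tau}q_s(f_\tau)$ is the $\ell^1$-norm of the non-negative sequence $\big(q_s(f_\tau)\big)_{\tau\in\hat K}$, while the Plancherel identity exhibits $q_s(f)$ as its $\ell^2$-norm; since $\|\cdot\|_{\ell^2}\leq\|\cdot\|_{\ell^1}$, this yields $q_s\leq q_s^1$ directly, with constant $1$.

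For part (ii), fix $r>\tfrac{\rank K}{2}$ and rewrite the scaling relation as $q_s(f_\tau)=\mu_\tau^{-r/2}q_{s+r}(f_\tau)$. Cauchy--Schwarz then gives
$$
q_s^1(f)=\sum_{\tau\in\hat K}\mu_\tau^{-r/2}\,q_{s+r}(f_\tau)
\leq\Big(\sum_{\tau\in\hat K}\mu_\tau^{-r}\Big)^{1/2}\Big(\sum_{\tau\in\hat K}q_{s+r}(f_\tau)^2\Big)^{1/2}
=\Big(\sum_{\tau\in\hat K}\mu_\tau^{-r}\Big)^{1/2}q_{s+r}(f),
$$
using the Plancherel identity (now with $s+r$) in the last step. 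So everything reduces to the finiteness of the spectral zeta sum $Z(r):=\sum_{\tau\in\hat K}\mu_\tau^{-r}$. Since $\lambda_\tau$ and $\rho_\kf$ are dominant we have $\langle\lambda_\tau,\rho_\kf\rangle\geq0$, hence $\mu_\tau\geq 1+|\tau|^2$, and it is enough to bound $\sum_{\tau\in\hat K}(1+|\tau|^2)^{-r}$.

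The only step with genuine content is this last convergence, and I would settle it by the standard lattice count for a compact connected group: the highest weights $\lambda_\tau$ range over the dominant weights, a full-rank subset of a rank-$\rank K$ lattice in $i\tf^*$, so $\#\{\tau\in\hat K:\ N\leq|\tau|<N+1\}=O\big((1+N)^{\rank K-1}\big)$; hence $\sum_{\tau}(1+|\tau|^2)^{-r}$ is controlled by a multiple of $\sum_{N\geq0}(1+N)^{\rank K-1}(1+N^2)^{-r}$, which converges exactly when $2r-(\rank K-1)>1$, i.e. $r>\tfrac{\rank K}{2}$. This gives $Z(r)<\infty$ and $q_s^1\leq Z(r)^{1/2}q_{s+r}$. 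I expect this weight count to be the only real obstacle, and only in the sense of stating it cleanly; it is also exactly what pins down the threshold $r>\tfrac{\rank K}{2}$, while the rest is bookkeeping with the diagonal action of $D_s$.
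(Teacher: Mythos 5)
Your proof is correct and follows essentially the same route as the paper: part (i) is the contractive inclusion $\ell^1\hookrightarrow\ell^2$ applied to the $q$-orthogonal $K$-type decomposition, and part (ii) is Cauchy--Schwarz after writing $q_s(f_\tau)=\mu_\tau^{-r/2}q_{s+r}(f_\tau)$, with everything hinging on the convergence of $\sum_\tau (1+|\tau|)^{-2r}$ for $r>\tfrac{\rank K}{2}$. The only difference is that the paper asserts that convergence without proof, whereas you justify it via the standard shell count $\#\{\tau:N\leq|\tau|<N+1\}=O(N^{\rank K-1})$ in the dominant cone of the weight lattice; that is a sensible addition and is exactly what pins down the threshold.
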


\begin{proof}
The first assertion is the contractive inclusion $\ell^1(\N)\to \ell^2(\N)$ in disguise. For the second assertion note 
that $\sum_{\tau\in \hat K} \frac{1}{(1+|\tau|)^{2r}}<\infty$ for all 
$r>\frac{\rank K}{2}$. With Cauchy-Schwarz we thus obtain 
$$
q_s^1(f)
=\sum_{\tau\in \hat K} q_s(f_\tau)
\leq C \sum_{\tau\in \hat K}\frac{1}{(1+|\tau|)^{r}}q_{s+r}(f_\tau)
\leq C_r q_{s+r}(f)
$$
for a constant $C_r>0$.
\end{proof}

Key Lemma \ref{Lemma Uniform lower bounds} features an abstract convexity bound for minimal principal series which is of particular interest in the unitary case, i.e. $R=0$.
We record the following corollary. 

\begin{cor}\label{Cor mps convexity}
Let $R\geq 0$ and $V_{\sigma,\lambda}$ be a minimal principal series with  $\|\re \lambda\|\leq R$ and $s=c_\gf+cR $. Let $p_{\min}$ and $p_{\max}$ be representatives of the minimal and maximal norms in $\Noc(V_{\sigma,\lambda}, w_{\re \lambda})$. Then the following assertions hold:
\begin{enumerate}[(i)]
\item\label{Cor mps convexity - item 1}
$p_{\max}\lesssim q_s^1$and in particular $[p_{\max}]=[(q_s^1)^G]$
\item\label{Cor mps convexity - item 2}
$p_{\max}\lesssim q_{s+r}$ for all $r>(\rank K)/2$ and in particular $[p_{\max}]=[q_{s+r}^G]$.
\item\label{Cor mps convexity - item 3} There exists constants $c,C>0$ such that 
for all $\tau\in \hat K$ and $v_\tau\in V_{\sigma, \lambda}[\tau]$ one has 
\begin{enumerate}[(a)]
    \item\label{Cor mps convexity - item 3a}
    $p_{\min}(v_\tau)\geq C\,  q_{-s} (v_\tau)\,.$
    \item\label{Cor mps convexity - item 3b}
    $p_{\max}(v_\tau)\leq c\, q_{s} (v_\tau)\,.$
\end{enumerate}
\end{enumerate}
\end{cor}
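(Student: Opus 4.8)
The plan is to deduce all four assertions from Key Lemma~\ref{Lemma Uniform lower bounds}, the elementary behaviour of the operators $D_s$ on $K$-types, and duality, proceeding in the order (iii)(a) $\Rightarrow$ (iii)(b) $\Rightarrow$ (i) $\Rightarrow$ (ii). Throughout, whenever a representative of $[p_{\min}]$ or $[p_{\max}]$ other than the ones in~\eqref{def pmin mps} and Corollary~\ref{Cor max norm} is used, one absorbs the corresponding equivalence constant into $c$, resp. $C$.

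\emph{Step 1: (iii)(a).} Fix $\tau\in\hat K$ and $v_\tau\in V_{\sigma,\lambda}[\tau]$. By \eqref{eq D_s} the operator $D_{-s}$ acts on $V_{\sigma,\lambda}[\tau]$ by the scalar $(1+|\lambda_\tau+\rho_\kf|^2-|\rho_\kf|^2)^{-s/2}$; since $\langle\lambda_\tau,\rho_\kf\rangle\ge 0$ for the dominant weight $\lambda_\tau$ one has $1+|\lambda_\tau+\rho_\kf|^2-|\rho_\kf|^2\ge 1+|\tau|^2\ge\tfrac12(1+|\tau|)^2$, hence $q_{-s}(v_\tau)\le 2^{s/2}(1+|\tau|)^{-s}q(v_\tau)$. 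On the other hand $q^\infty\ge q$, so Key Lemma~\ref{Lemma Uniform lower bounds} gives $p_{\min}(v_\tau)\ge C(1+|\tau|)^{-s}q^\infty(v_\tau)\ge C(1+|\tau|)^{-s}q(v_\tau)$. Combining the two displays yields $p_{\min}(v_\tau)\ge 2^{-s/2}C\,q_{-s}(v_\tau)$, which is (iii)(a).

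\emph{Step 2: (iii)(b).} I would derive this from (iii)(a) applied to the contragredient. The dual module $\widetilde V_{\sigma,\lambda}\simeq V_{\sigma^*,-\lambda}$ is again a minimal principal series with $\|\re(-\lambda)\|=\|\re\lambda\|\le R$, so the Key Lemma applies to it with the \emph{same} $s=c_\gf+cR$, and the attached weight $w_{\re(-\lambda)}=w_{\re\lambda}$ is reflexive. Thus (iii)(a) for $V_{\sigma^*,-\lambda}$ says that the minimal norm $\widetilde p_{\min}$ of $\widetilde V_{\sigma,\lambda}$ satisfies $\widetilde p_{\min}(\widetilde v_\tau)\gtrsim\widetilde q_{-s}(\widetilde v_\tau)$ for every $\widetilde v_\tau$ in a single $K$-isotypic block $\widetilde V_{\sigma,\lambda}[\tau]$, where $\widetilde q$ is the unitary norm of $\widetilde V_{\sigma,\lambda}$, i.e. the dual of $q$. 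Since $q$ is $K$-Hermitian, the norms $q_s$ and their duals $\widetilde{(q_s)}=\widetilde q_{-s}$ (see \eqref{ps dual}) are block-diagonal for the $K$-isotypic decompositions, and the block $V_{\sigma,\lambda}[\tau]$ is perfectly paired with $\widetilde V_{\sigma,\lambda}[\tau^*]$. By Corollary~\ref{Cor max norm} a representative of $[p_{\max}]$ is $p_{\max}=\widetilde{\widetilde p_{\min}}$; dualising the single-block estimate for $\widetilde p_{\min}$ and using the perfect pairing of blocks then gives $p_{\max}(v_\tau)\lesssim q_s(v_\tau)$ for $v_\tau\in V_{\sigma,\lambda}[\tau]$, which is (iii)(b).

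\emph{Step 3: (i) and (ii).} For $v=\sum_{\tau}v_\tau$ the triangle inequality and (iii)(b) give $p_{\max}(v)\le\sum_\tau p_{\max}(v_\tau)\le c\sum_\tau q_s(v_\tau)=c\,q_s^1(v)$, i.e. $p_{\max}\lesssim q_s^1$; combining with Lemma~\ref{Lemma q comparison}(ii) yields $p_{\max}\lesssim q_s^1\lesssim q_{s+r}$ for every $r>(\rank K)/2$, which is (ii). For the two "in particular" statements I would argue directly: $q_s^1\lesssim q_{s+r}\lesssim q_k^{\mathrm{st}}$ for $k$ large (Lemma~\ref{Lemma q comparison}(ii) and Proposition~\ref{Prop s-topology}), $q_k^{\mathrm{st}}$ is $G$-continuous, and $p_{\min}\lesssim p_{\max}\lesssim q_s^1$, so Lemma~\ref{inf w lemma}(ii) (applied with a multiple of $p_{\min}$ and a multiple of $q_k^{\mathrm{st}}$) shows $(q_s^1)^G\in\No(V_{\sigma,\lambda},w_{\re\lambda})$; moreover $p_{\max}(v)\le C_1C_2\,w_{\re\lambda}(g^{-1})\,q_s^1(g\cdot v)$ for all $g$ (using $p_{\max}\in\No(V_{\sigma,\lambda},w_{\re\lambda})$ and $p_{\max}\le C_2 q_s^1$), whence $p_{\max}\lesssim(q_s^1)^G$, while $(q_s^1)^G\lesssim q_s^1\lesssim p_{\max}$ by maximality, so $[p_{\max}]=[(q_s^1)^G]$. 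The same argument with $q_{s+r}$ in place of $q_s^1$ gives $[p_{\max}]=[q_{s+r}^G]$. This is precisely the stabilisation phenomenon of Proposition~\ref{Prop stabilization 1} (note that $q_s^1$ and $q_{s+r}$ are monotonous).

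\emph{Main obstacle.} The hard analytic input — the effective, $K$-type-uniform Dirac approximation — is already packaged in Key Lemma~\ref{Lemma Uniform lower bounds}, so once that is granted the rest is bookkeeping. The step that genuinely requires care is the dualisation in Step~2: one must pass from a single-$K$-type \emph{lower} bound for the minimal norm of the contragredient to a single-$K$-type \emph{upper} bound for $p_{\max}$ with \emph{no} loss in the Sobolev exponent $s$. This amounts to controlling how the $K$-isotypic projections interact with the minimal (equivalently maximal) Banach completion; in the rank-one case, in particular for $G=\SL(2,\R)$ where $K=\SO(2)$ and these projections are contractive on every isometric norm, it is automatic, and in general one exploits the explicit $K$-stable family of test vectors in~\eqref{def pmin mps} underlying the minimal norm.
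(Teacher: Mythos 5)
Your proposal takes essentially the same route as the paper: (iii)(a) from the Key Lemma together with the scalar action of $D_{-s}$ on $K$-types, (iii)(b) by passing to the contragredient and dualising, (i) by summing over $K$-types, and (ii) via Lemma~\ref{Lemma q comparison}(\ref{Lemma q comparison - item 2}); your derivation of the ``in particular'' parts in Step~3 from Lemma~\ref{inf w lemma}(\ref{inf w lemma - item 2}) and the maximality of $[p_{\max}]$ is precisely the stabilisation mechanism of Proposition~\ref{Prop stabilization 1}, as you note.

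The one place where you go beyond what the paper actually writes is Step~2, and you are right to flag it. The paper disposes of (iii)(b) with the single remark that it is ``the dual statement'' of (iii)(a); you correctly observe that dualising a \emph{block-wise} inequality for the non-Hermitian norm $\tilde p_{\min}$ is not formal. Concretely, $p_{\max}|_{V[\tau]}$ is the dual of the \emph{quotient} of $\tilde p_{\min}$ onto $\tilde V[\tau^*]$, whereas (iii)(a) applied to the contragredient only controls the \emph{restriction} $\tilde p_{\min}|_{\tilde V[\tau^*]}$; these two norms on $\tilde V[\tau^*]$ differ by the operator norm of the $K$-isotypic projection $P_{\tau^*}$ on $(\tilde V,\tilde p_{\min})$. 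Your remark that for $K$ abelian --- hence for $\SL(2,\R)$ --- these projections are contractive for every $K$-invariant norm, so the dualisation closes with no loss, is exactly right. For general $G$, however, the generic bound $\|P_{\tau^*}\|\lesssim d_{\tau}$ for a $K$-invariant Banach norm would shift $s$ upward by roughly $\tfrac12(\dim K-\rank K)$, which still gives the finiteness in Theorem~\ref{thm sup} but not its stated constant; your appeal to ``the explicit $K$-stable family of test vectors in \eqref{def pmin mps}'' as a way to avoid this loss is plausible but not actually carried out. Since the paper's own one-sentence argument is no more explicit at this juncture, your write-up is at least as careful; to recover the exponent as stated in Corollary~\ref{Cor mps convexity} one must make the projection estimate quantitative.
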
 

\begin{proof}
The Key Lemma gives a choice of $p_{\min}$ such that 
$$
p_{\min}(v_\tau)
\geq (1+|\tau)^{-s} q^\infty(v_\tau)
\geq (1+|\tau)^{-s  } q(v_\tau)\gtrsim q_{-s}(v_\tau)
$$
and with that (\ref{Cor mps convexity - item 3a}) is established. Note that (\ref{Cor mps convexity - item 3b}) is the dual statement to (\ref{Cor mps convexity - item 3a}). Now (\ref{Cor mps convexity - item 1}) follows from (\ref{Cor mps convexity - item 3b}) and the definition of $q_s^1$. Finally (\ref{Cor mps convexity - item 2}) follows from (\ref{Cor mps convexity - item 1}) and Lemma \ref{Lemma q comparison}(\ref{Lemma q comparison - item 2}).
\end{proof} 

\begin{rmk}
Since Dirac approximation appears to be tight, the constant $\cf_\gf$ in Key Lemma \ref{Lemma Uniform lower bounds} is likely to be sharp for $R=0$. For $\gf=\sl(2,\R)$ one has $\cf_\gf=\frac{1}{2}$ and we will see that confirmed in Proposition \ref{thm upper-lower} below.
\end{rmk}

\begin{proof}[Proof of Theorem \ref{thm sup}] Corollary \ref{Cor mps convexity}
yields 
$$
p_{\max}
\lesssim q_{s+r}
$$
for $s=c_\gf +cR$ and $r>\frac{\rank K }{2}$.
By duality we have $p_{\min}\gtrsim q_{-s-r}$ and the Theorem follows.
\end{proof}

\begin{rmk} 
\begin{enumerate}[(a)]
\item The bound in Theorem \ref{thm sup} is not sharp. 
For $R=0$ it might be true that the bound in \eqref{eq mps bound} is in fact  $2c_\gf$, i.e. the additional summand of $\rank K$ can be dropped. 
For $G=\SL(2,\R)$ this is the case as Theorem \ref{thm ultimate} below shows. An interesting test case would be the calculation 
for all groups of real rank one. 

\item The above proof relies on powerful lower bounds and, in fact, gives a bit more information, namely flexibility in the weight.
If $w$ is any weight, then a straightforward modification starting from \eqref{Dirac2} onwards gives
$$
\sup_{\substack{\sigma\in \hat M, \lambda\in \af_\C^*\\ \|\re \lambda\|\leq R,\, \No(V_{\sigma, \lambda}, w) \neq \emptyset}}s(V_{\sigma, \lambda}, w)
<\infty.
$$
Recall the construction of the minimal weight $w_V$ from the previous subsection. 
Now $\E(V_{\sigma, \lambda})\supseteq -\rho + W\cdot \lambda$ which entails that $w_{V_{\sigma,\lambda}}(a)\geq \sup_{w\in W} a^{-\rho + \re w\lambda}$ for $a\in \oline{A^+}$. With Proposition \ref{prop minimal weight} we obtain from Theorem \ref{thm sup} the following finiteness assertion
\begin{equation}\label{thm sup reloaded}
\sup_{\substack{\sigma\in \hat M ,\lambda\in \af_\C^*\\ \No(V_{\sigma, \lambda}, w)\neq \emptyset}} s (V_{\sigma, \lambda}, w)
<\infty\, .
\end{equation} 
\end{enumerate}
\end{rmk}

\subsubsection{The finiteness conjecture}

Based on the results in this article we can formulate the following 
\begin{con}\label{Finiteness conjecture} {\rm (Uniform finiteness conjecture)} For a real reductive group $G$ and a fixed weight $w$ one has 
\begin{equation} \label{superbound1}
\sup_{\substack{V \in \mathcal{HC}\\ \No(V,w)\neq \emptyset}} s(V,w)
<\infty.
\end{equation}
In particular,
$$
\sup_{\pi \in \hat G} s(\pi)<\infty.
$$
\end{con}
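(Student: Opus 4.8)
The plan is to reduce Conjecture \ref{Finiteness conjecture} to the minimal principal series, where uniform finiteness is already available in Theorem \ref{thm sup} (and, for the discrete series, in Theorem \ref{thm disc series}); what follows is a strategy rather than a proof, and I will isolate where the real difficulty sits. First I would reduce to \emph{irreducible} $V$: for the statement about $\hat G$ nothing is needed, since elements of $\hat G$ are irreducible, while for the full bound \eqref{superbound1} one would pass from a finite-length module to its composition factors, which is precisely the open Problem (\ref{prb1}); so at this point I would either restrict to the irreducible (in particular unitarizable) case or grant the submodule comparison $s(V,w)\le s(U,w)$. For irreducible $V$ I would then use Casselman's subrepresentation theorem to fix embeddings $V\hookrightarrow V_{\sigma,\lambda}$ and $\tilde V\hookrightarrow V_{\sigma',\lambda'}$ into minimal principal series. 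When $V$ is unitarizable its leading exponents are controlled by $\rho$, so $\|\re\lambda\|$ and $\|\re\lambda'\|$ are bounded in terms of $G$ alone; more generally, when $\No(V,w)\neq\emptyset$, the constant-term analysis behind Proposition \ref{prop minimal weight} bounds $\re\lambda$ in terms of $w$. By the first part of Lemma \ref{prop embed}, $[p_{\min}^{V}]=[p_{\min}^{V_{\sigma,\lambda}}|_{V}]$, and similarly for $\tilde V$.

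Next I would transport the Key Lemma \ref{Lemma Uniform lower bounds}. Restricted to $v_\tau\in V[\tau]\subset V_{\sigma,\lambda}[\tau]$ it gives $p_{\min}^{V}(v_\tau)\gtrsim (1+|\tau|)^{-s}\,q_V(v_\tau)$ with $s=c_\gf+c\|\re\lambda\|$ bounded and $q_V$ the restriction to $V$ of the Hilbert norm of $V_{\sigma,\lambda}$, a $K$-Hermitian $G$-continuous norm on $V$ of controlled growth. Dualizing this estimate $K$-type by $K$-type is legitimate because projection onto a single $K$-type is bounded on $p_{\min}^{V}$ by a polynomial in $|\tau|$ (an isometric, or at worst $w$-bounded, representative is available and $K$ is compact); summing over $K$-types as in Lemma \ref{Lemma q comparison} and Corollary \ref{Cor mps convexity} then produces a \emph{global} Sobolev domination, and dualizing globally yields $q_V\lesssim(p_{\min}^{V})_{t}$ with $t$ bounded in terms of $G$ and $\|\re\lambda\|$. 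Running the same machine on $\tilde V\hookrightarrow V_{\sigma',\lambda'}$ gives, dually, a $K$-Hermitian $G$-continuous norm $q_V'$ on $V$ of controlled growth with $p_{\max}^{V}\lesssim(q_V')_{t'}$, $t'$ bounded in terms of $G$ and $\|\re\lambda'\|$. Monotonicity in the weight (Proposition \ref{prop Duality and Monotonicity}) lets one pass freely between $w$ and the larger $w_{\re\lambda}$, so the final bound will depend only on $G$ and $\|\re\lambda\|,\|\re\lambda'\|$, hence ultimately on $w$.

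It remains to splice the two halves by comparing $q_V'$ with $q_V$ through a Sobolev shift of \emph{uniformly bounded} order: if $q_V'\lesssim(q_V)_{C_0}$ with $C_0$ depending only on the data, then $p_{\max}^{V}\lesssim(q_V')_{t'}\lesssim(q_V)_{C_0+t'}\lesssim(p_{\min}^{V})_{C_0+t+t'}$, so $s(V,w)\le C_0+t+t'$, which together with Theorem \ref{thm disc series} would finish the argument. \textbf{This comparison is the main obstacle.} A priori Casselman--Wallach (Theorem \ref{Thm Casselman-Wallach}) only supplies $q_V'\lesssim(q_V)_k^{\rm st}$ with $k$ depending on $V$, and bounding $k$ uniformly over $\hat G$ is essentially the conjecture itself, restricted to $K$-Hermitian norms — so this reduction is not, as it stands, a reduction to something easier.

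The natural way around the obstacle would be a rigidity statement: any two $K$-Hermitian $G$-continuous norms on an irreducible module sharing a growth weight $w$ differ by a Sobolev shift whose order is bounded in terms of $w$ — a growth-$w$ analogue of the uniqueness of the unitary structure. Equivalently, one could use an affirmative answer to the norm-extension question raised in the introduction, namely that $p_{\max}$ on $V$ extends to a $G$-continuous norm of comparable growth on the ambient principal series, which would immediately give $s(V,w)\le s(V_{\sigma,\lambda},w_{\re\lambda})$ and thus the bound via Theorem \ref{thm sup}. I note that in the unitarizable tempered case $q_V$ and $q_V'$ are both equivalent to the unitary norm, and the splicing closes with no extra input; it is the non-tempered unitarizable case, and a fortiori the general finite-length case, where new ideas are needed — which is why the statement is posed as a conjecture.
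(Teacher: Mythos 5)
This statement is posed in the paper as a \emph{conjecture}, and the paper offers no proof of it; what is proved are Theorems \ref{thm disc series} and \ref{thm sup}, handling the discrete series and the minimal principal series respectively, and the paper itself remarks that only for real rank one does Theorem \ref{thm.main1} suffice to settle the unitary-dual case \eqref{sgapu}. You correctly recognize this: you do not claim a proof, you sketch a reduction to the two proved families via Casselman's subrepresentation theorem and the Key Lemma \ref{Lemma Uniform lower bounds}, and you accurately locate the point where the argument cannot be completed with the paper's tools. That honest diagnosis is the right answer here.

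Two small corrections to the strategy, both of which only reinforce your conclusion. First, the two obstacles you list — passage to submodules (Problem~(\ref{prb1})) and splicing $q_V$ with $q_V'$ by a uniformly bounded Sobolev shift — are not independent. Lemma \ref{prop embed} identifies restricted minimal norms but says nothing about maximal norms precisely because the dual of a restriction is a quotient, not a restriction; so the inability to control $p_{\max}^{V}$ from the ambient $p_{\max}^{V_{\sigma,\lambda}}$ is the same difficulty as the norm-extension problem you invoke at the end. Second, the claim that ``in the unitarizable tempered case $q_V$ and $q_V'$ are both equivalent to the unitary norm'' is too strong: a tempered irreducible (e.g. a discrete series) embeds in a \emph{minimal} principal series only with non-unitary inducing parameter $\lambda$, and the restriction of the $L^2(K,\sigma)$-norm to such a submodule is $K$-Hermitian and $G$-continuous but need not be equivalent — without a Sobolev shift — to the unitary norm of $V$; Casselman--Wallach gives Sobolev equivalence, which is exactly what one is trying to bound uniformly. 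The splicing closes cleanly only when $V$ embeds as a direct summand into a \emph{unitary} minimal principal series. So the ``tempered case'' is not quite free, and the obstruction you name is genuine across the board, consistent with the statement being a conjecture.
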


In Section \ref{Sobolev SL2} we will see for $G=\SL(2,\R)$ that $s(\pi)=1$ for all $\pi \in \hat G\bs \{\1\}$. Together with Theorem \ref{thm sup} this establishes for $G=\SL(2,\R)$ the bound
$$
\sup_{\substack{V \in \rm{Irr}(\mathcal{HC})\\ \No(V,w)\neq \emptyset}}s(V,w)
<\infty
$$ 
for all irreducible Harish-Chandra modules which is close to \eqref{superbound1}.

It is an open question to what extend $s(\pi)$ and $s^{\rm st}(\pi)$ depend on $\pi\in \hat G\bs\{\1\}$ for a simple Lie group $G$.

\section{\texorpdfstring{Sobolev gap for $\SL(2,\R)$}{Sobolev gap for SL(2,R)}}\label{Sobolev SL2}

In this section we let $G=\SL(2,\R)$ and $K=\SO(2)$. 
For a unitarizable and irreducible Harish-Chandra module $V$ we denote by
$S(V)=\Spec_K(V)\subset \hat K=\Z$ the $K$-spectrum. We fix a $G$-invariant unitary norm $q$ on $V$ and we write $V=\bigoplus_{n\in S(V)}\C e_{n}$ with the $e_{n}$ normalized with respect to $q$.

\begin{theorem} \label{thm ultimate}
Let $V\neq \C$ be a unitarizable irreducible Harish-Chandra module and
$[q]$ be the equivalence class of the unitary norm. Then
$$
d([q], [p_{\rm max}])
=\frac{1}{2}.
$$
Moreover,
$$
s(V)
=1
$$
and $[p_{max}]=[q_s^{G}]$ for any $s>\frac{1}{2}$.
\end{theorem}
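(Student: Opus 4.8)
The plan is to reduce the theorem to two one-sided inequalities and then feed them into the general machinery developed above. Recall that $s(V)=d([p_{\min}],[p_{\max}])=\inf\{s\ge0\mid p_{\max}\lesssim(p_{\min})_s\}$. By Proposition \ref{prop unitary} one has $s(V)\le 2\,d([q],[p_{\max}])$; conversely, since $p_{\min}\lesssim q$ gives $(p_{\min})_s\lesssim q_s$ by \eqref{pq comparison}, the relation $p_{\max}\lesssim(p_{\min})_s$ (which holds for $s$ arbitrarily close to $s(V)$) yields $p_{\max}\lesssim q_s$ for such $s$, so $d([q],[p_{\max}])=\inf\{s\ge0\mid p_{\max}\lesssim q_s\}\le s(V)$. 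Hence $\tfrac12 s(V)\le d([q],[p_{\max}])\le s(V)$, and it suffices to prove
\[
s(V)\ge 1\qquad\text{and}\qquad d([q],[p_{\max}])\le\tfrac12 .
\]
Granting these, $1\le s(V)\le 2\cdot\tfrac12=1$ and $\tfrac12\le d([q],[p_{\max}])\le\tfrac12$, so $s(V)=1$ and $d([q],[p_{\max}])=\tfrac12$; the identity $[p_{\max}]=[q_s^G]$ for $s>\tfrac12$ is then immediate from the strong stabilization in Proposition \ref{Prop stabilization 1}(\ref{Prop stabilization 1 - strong stabilization}), applied to the $K$-Hermitian norm $q$, since $d([q],[p_{\max}])=\tfrac12$.

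For the lower bound $s(V)\ge1$ I would use the asymptotics of a single matrix coefficient worked out in Section \ref{Sec9: key estimates}. Since $V\ne\C$, the $K$-spectrum $S(V)\subset\Z$ is infinite, and Proposition \ref{thm upper-lower} determines, up to a logarithmic factor depending on the type of $V$, the sizes $p_{\min}(e_n)\asymp(1+|n|)^{-1/2}$ and $p_{\max}(e_n)\asymp(1+|n|)^{1/2}$ on the $q$-normalized $K$-types $e_n$. If $p_{\max}\lesssim(p_{\min})_s$ held, restricting both sides to the line $\C e_n$ and using $(p_{\min})_s(e_n)=(1+n^2)^{s/2}p_{\min}(e_n)$ would force $(1+|n|)^{1/2}\lesssim(1+|n|)^{s-1/2}$ up to logarithms, hence $s\ge1$; therefore $s(V)\ge1$.

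The substantial part is the convexity bound $d([q],[p_{\max}])\le\tfrac12$, i.e. $p_{\max}\lesssim q_s$ for every $s>\tfrac12$. Using Proposition \ref{Prop Balls}(ii) I would realize the $p_{\max}$-unit ball as $\co(G\cdot D)$ for a fixed $K$-stable finite-dimensional generating set $D$ (for instance $D=K\cdot e_{n_0}$), and rephrase the goal as an inclusion of unit balls $B_{q_s}\subset C\,B_{\max}$: every $v$ with $q_s(v)\le1$ must be written as a convergent combination $v=\sum_j c_j\,\pi(g_j)d$ with $\sum_j|c_j|\le C$ and $d\in D$. Writing $g_j$ in Cartan form $k_1 a_t k_2$ and expanding in $K$-types reduces this to inverting, with controlled $\ell^1$-mass, the system carried by the elementary matrix coefficients $\langle\pi(a_t)e_{n_0},e_m\rangle$; this is exactly where Bargmann's closed formulas for these coefficients in terms of Gauss hypergeometric functions enter, producing the quantitative test-vector estimate stated as Proposition \ref{thm estimate convex combination} and proved in Section \ref{subsection Proof for estimate convex combination}. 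The threshold $\tfrac12$ appears as the abscissa of convergence of $\sum_{m\in\Z}(1+|m|)^{-2s}$, that is, as $\tfrac{\rank K}{2}$.

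I expect this last step to be the main obstacle. A soft argument --- bounding the supremum of a Fourier series from below by its $L^2$-norm and then isolating a single $K$-type --- only delivers $p_{\max}\lesssim q_s$ for $s>1$; reaching the optimal $s>\tfrac12$ requires exploiting the near-orthogonality of the coefficients $\langle\pi(a_t)e_{n_0},e_m\rangle$ and the precise location and concentration of their maxima in the variables $t$ and $m$, which is why the sharp Bargmann-type estimates of Section \ref{Sec9: key estimates}, rather than the coarse asymptotics that suffice for $s(V)\ge1$, are indispensable. The whole argument is uniform over the nontrivial part of the unitary dual, the sole dependence on the type of $V$ being the logarithmic factors in the $K$-type asymptotics, which play no role in the final inequalities.
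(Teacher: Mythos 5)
Your proposal is correct and follows essentially the same route as the paper: reduce to $s(V)\geq 1$ (from Proposition \ref{thm upper-lower}) and $d([q],[p_{\max}])\leq\tfrac12$ (from the test-vector/measure construction of Proposition \ref{thm estimate convex combination}), then combine via Proposition \ref{prop unitary} and invoke strong stabilization for the last claim. The only inaccuracy is a cosmetic one in your closing speculation about where the threshold $\tfrac12$ originates: it is not the abscissa of convergence of $\sum(1+|m|)^{-2s}$, but rather the decay rate $|n|^{-(1/2+\epsilon)}$ of the quantities $I_{n,\epsilon}$ established in Proposition \ref{thm estimate convex combination}, which in turn reflects the $\asymp|n|^{\pm 1/2}$ size of the extremal norms on $K$-types.
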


We record the following obvious consequence of Theorem \ref{thm ultimate} and its proof that leads to somewhat surprising consequences. 

\begin{theorem}[\rm Abstract Convexity Bound]\label{thm.abst. conv. bnds}
Let $V$ be an irreducible unitarizable Harish-Chandra module for $\SL(2,\R).$ Let $q$ be a unitary norm, $S=\Spec_K(V)\subset \hat K=\Z$ and $(e_n)_{n\in S}$ an orthonormal basis consisting of $K$-types. Let $p$ be any isometric $G$-continuous norm on $V$ and $\epsilon>0$. Then there exists a constant $C(\epsilon)>0$, such $p\leq C(\e)  q_{\frac{1}{2}+\epsilon}$. Moreover, there exists a constant $C>0$ such that 
\begin{equation} \label{abstr conv bound}
p(e_n)
\leq C (1+|n|)^{\frac{1}{2}} \qquad (n \in S(V)).
\end{equation} 
\end{theorem}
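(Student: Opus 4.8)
The plan is to reduce both assertions to the single norm $p_{\max}$ and then to read off the Sobolev bound from Theorem \ref{thm ultimate} and the $K$-type bound \eqref{abstr conv bound} from the construction used in its proof.

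First I would observe that an isometric $G$-continuous norm $p$ on $V$ has growth weight $\1$, so $[p]\in\Noc(V)=\Noc(V,\1)$, and maximality of $[p_{\max}]$ yields a constant $C_0>0$ with $p\le C_0\,p_{\max}$. (For $V=\C$ both claims are trivial, so assume $V\neq\C$.) Hence it suffices to prove both bounds for $p_{\max}$. For the first one, Theorem \ref{thm ultimate} gives $d([q],[p_{\max}])=\tfrac12$, so in particular $d_\to([p_{\max}],[q])\le\tfrac12$; since $q$ is $K$-Hermitian the family $(q_s)_{s\ge0}$ is monotone, and therefore for each $\epsilon>0$ there is an $s_0<\tfrac12+\epsilon$ with $p_{\max}\lesssim q_{s_0}\lesssim q_{\frac12+\epsilon}$. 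Combined with $p\le C_0\,p_{\max}$ this produces a constant $C(\epsilon)$ with $p\le C(\epsilon)\,q_{\frac12+\epsilon}$.

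For \eqref{abstr conv bound} I would use the finer, $K$-type by $K$-type, information contained in the proof of the inequality $s(V)\le1$; note that \eqref{abstr conv bound} is strictly stronger than what the Sobolev bound gives on a single $K$-type, since evaluating the latter at $e_n$ only produces $p(e_n)\lesssim_\epsilon(1+|n|)^{\frac12+\epsilon}$, and the loss of $\epsilon$ there is genuine. By Proposition \ref{Prop Balls}(ii), taken with $w=\1$, the unit ball of $p_{\max}$ is the closed absolutely convex hull of $\{\pi(g)v_0\mid g\in G\}$ for a fixed cyclic vector $v_0\in V$. The proof that $s(V)\le1$ constructs, for every $n\in S(V)$, a test vector realizing the normalized $K$-type vector $(1+|n|)^{-1/2}e_n$ inside a bounded (independent of $n$) multiple of that convex hull — equivalently, a finite measure $f_n$ on $G$ of total mass $\lesssim(1+|n|)^{1/2}$ with $\int_G f_n(g)\,\pi(g)v_0\,dg=e_n$. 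This is the content of Proposition \ref{thm estimate convex combination}, which rests on Bargmann's explicit hypergeometric formulas for the matrix coefficients $m_{\tilde e_m,e_n}$. It follows that $p_{\max}(e_n)\le C(1+|n|)^{1/2}$ with $C$ independent of $n$, and combining once more with $p\le C_0\,p_{\max}$ gives \eqref{abstr conv bound}.

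The main obstacle is precisely the uniform-in-$n$ test-vector estimate of Proposition \ref{thm estimate convex combination}; granting that (and Theorem \ref{thm ultimate}), the statement is immediate, which is why it is recorded here as a corollary rather than being established from scratch.
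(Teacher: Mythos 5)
The first half of your argument is exactly the paper's: reduce to $p_{\max}$ via maximality and then invoke Theorem \ref{thm ultimate} for $p_{\max}\lesssim q_{\frac12+\epsilon}$. That part is fine.

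The second half has a genuine gap. You claim that the test-vector construction of Proposition \ref{thm estimate convex combination} places a bounded multiple of $(1+|n|)^{-1/2}e_n$ inside $B_{\max}$ with constants independent of $n$. That is not what the proposition says: it produces $I_{n,\epsilon}\asymp |n|^{-(\frac12+\epsilon)}$, and the measure $\beta_\epsilon$ depends on $\epsilon$. Running your convex-hull argument with it yields $p_{\max}(e_n)\lesssim_\epsilon(1+|n|)^{\frac12+\epsilon}$, which is exactly the $\epsilon$-lossy bound you correctly observed one cannot afford. There is no way to take $\epsilon\to0$ in that construction uniformly: the normalizing constant $c$ in the definition of $\beta_\epsilon$ blows up as $\epsilon\to0^+$. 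So Proposition \ref{thm estimate convex combination} is the right tool for $s(V)\le1$ and the first assertion, but not for \eqref{abstr conv bound}.

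What the paper actually uses for \eqref{abstr conv bound} is Proposition \ref{thm upper-lower}\,(ii), which gives directly the two-sided estimate
$$
\frac{c\sqrt{1+|n|}}{(1+\log(1+|n|))^{\varepsilon(V)}}\le p_{\max}(e_n)\le C\sqrt{1+|n|},
$$
with the upper bound having no $\epsilon$-loss. That upper bound is obtained by duality from the lower bound $p_{\min}(e_n)\gtrsim(1+|n|)^{-1/2}$ on the minimal norm, which in turn comes from an explicit Dirac-type test functional (Lemma \ref{lemma Dirac} for the principal/complementary series and the analogous computation in Section \ref{ds} for discrete series) rather than from the $\beta_\epsilon$ construction. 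Replacing your citation of Proposition \ref{thm estimate convex combination} with Proposition \ref{thm upper-lower}\,(ii) and then dominating $p$ by $p_{\max}$ closes the gap and recovers the paper's proof.
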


\begin{proof}
The first assertion $p\lesssim q_{\frac{1}{2}+\e}$ for any $\e>0$ follows from $p\lesssim p_{\max}$ and Theorem \ref{thm ultimate}. For \eqref{abstr conv bound} we use the estimate from Proposition \ref{thm upper-lower} (\ref{thm upper-lower - item 2}) below which is slightly sharper on invidual $K$-types. 
\end{proof} 

The proof of Theorem \ref{thm ultimate} is based on the following two results

\begin{prop}\label{thm upper-lower}
Let $V\neq \C$ be a unitarizable irreducible Harish-Chandra module and let $p_{\min}$, resp. $p_{\max}$, be a representative for the minimal, resp. maximal, element in $\Noc(V)$. Let $\varepsilon(V) =0$ if $V$ belongs to discrete series, and $\varepsilon(V) =1$ otherwise.
Then there exist constants $c,C>0$ so that for all $n\in S(V)$ the following sandwich bounds hold: 
\begin{enumerate}[(i)]
\item\label{thm upper-lower - item 1}
$\frac{c}{\sqrt{1+|n|}}\leq p_{\min}(e_{n})\leq C\frac{\left(1+\log(1+|n|)\right)^{\varepsilon(V)}}{\sqrt{1+|n|}}$.
\item\label{thm upper-lower - item 2}
$\frac{c\sqrt{1+|n|}}{\left(1+\log(1+|n|)\right)^{\varepsilon(V)}}
\leq p_{\max}(e_{n})
\leq C \sqrt{1+|n|}$.
\end{enumerate}
\end{prop}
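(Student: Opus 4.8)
The plan is to reduce both bounds to the explicit hypergeometric description of the matrix coefficients of $\SL(2,\R)$, and then to pass from $p_{\rm min}$ to $p_{\rm max}$ by an abstract duality argument. Since $V$ is irreducible, Lemma~\ref{Lemma minimal norm} lets us take as a representative of $[p_{\rm min}]$ the isometric norm $p_{\rm min}(v)=\sup_{g\in G}|m_{v,\tilde e_{m_0}}(g)|$ for a single $q$-normalised $K$-type vector $\tilde e_{m_0}\in\tilde V$ with $m_0$ of minimal absolute value in $S(\tilde V)$. As $\SL(2,\R)$ has $K$-multiplicity one, both $e_n$ and $\tilde e_{m_0}$ are $K$-eigenvectors, so by the Cartan decomposition $G=K\overline{A^+}K$, writing $\overline{A^+}=\{a_t:t\geq0\}$ for the closed positive chamber, we get
$$
p_{\rm min}(e_n)=\sup_{t\geq0}\bigl|m_{e_n,\tilde e_{m_0}}(a_t)\bigr|.
$$
Thus part~(i) amounts to determining, up to a factor $(\log(2+|n|))^{\varepsilon(V)}$ in the upper bound, the maximum of one matrix coefficient over $\overline{A^+}$, uniformly in the $K$-type $n$.

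For this we use Bargmann's formulas \cite{B}, which express $m_{e_n,\tilde e_{m_0}}(a_t)$, in a suitable variable such as $\tanh^2t$, as an elementary prefactor times a Gauss hypergeometric function whose parameters depend on $n$, on $m_0$ and on the fixed infinitesimal character; the prefactor has a different shape for the principal, complementary and discrete series, which is precisely what $\varepsilon(V)$ records. The asymptotic analysis then splits $t\in[0,\infty)$ into two regions. Near $t=0$, on the scale $t\sim(1+|n|)^{-1}$, one is in a Mehler--Heine/Bessel transitional regime: as a function of $u:=(1+|n|)t$ the suitably renormalised coefficient converges to a fixed Bessel function of $u$, whose first hump has height $\asymp(1+|n|)^{-1/2}$; this simultaneously yields the lower bound $p_{\rm min}(e_n)\gtrsim(1+|n|)^{-1/2}$ and controls the contribution of this region to the upper estimate. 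On the complementary range $t\gtrsim(1+|n|)^{-1}$ one estimates the hypergeometric factor directly: for the discrete series the rapid exponential decay of the coefficient keeps this contribution $\lesssim(1+|n|)^{-1/2}$, whereas for the principal and complementary series the decay is only of order $e^{-t}$, which forces the logarithmic loss $\lesssim(\log(2+|n|))\,(1+|n|)^{-1/2}$. These uniform estimates are the subject of Section~\ref{Sec9: key estimates}. For the lower bound in the unitary principal series one may alternatively quote Key Lemma~\ref{Lemma Uniform lower bounds} with $R=0$: as $c_{\gf}=\tfrac12$ for $\sl(2,\R)$ it gives $p_{\rm min}(e_n)\gtrsim(1+|n|)^{-1/2}q^\infty(e_n)\geq(1+|n|)^{-1/2}$; but the sharp upper bound and the discrete and complementary cases still require the explicit computation above.

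To obtain (ii) we invoke duality. By Corollary~\ref{Cor max norm} with $w=\1$, a representative of $[p_{\rm max}]$ is the dual norm $\tilde p$, where $p$ is a minimal norm of $\tilde V$; we take for $p$ the isometric, hence $K$-invariant, representative from Lemma~\ref{Lemma minimal norm}. Using $K$-multiplicity one, the supremum $p_{\rm max}(e_n)=\sup\{|\tilde v(e_n)|:p(\tilde v)\leq1\}$ is attained on the one-dimensional space $\tilde V[n]=\C\tilde e_n$, since replacing $\tilde v$ by its $K$-isotypic component of type $n$ leaves $\tilde v(e_n)$ unchanged and does not increase $p(\tilde v)$; hence $p_{\rm max}(e_n)=1/p(\tilde e_n)$, with $\tilde e_n$ the $q$-dual of $e_n$. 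Now $\tilde V\cong\overline V$ is again a non-trivial unitarizable irreducible Harish-Chandra module, with $\varepsilon(\tilde V)=\varepsilon(V)$ and $\{|n|:n\in S(\tilde V)\}=\{|n|:n\in S(V)\}$, so applying part~(i) to $\tilde V$ and taking reciprocals yields part~(ii).

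The main obstacle is the uniform asymptotic analysis of the Gauss hypergeometric matrix coefficients underlying Section~\ref{Sec9: key estimates}: the estimates must be uniform simultaneously in the $K$-type $n\to\infty$ and in the group variable $t\in[0,\infty)$, one must locate the maximum in the Bessel transition region to pin down the $(1+|n|)^{-1/2}$ scale, and --- most delicately --- one must establish the upper bound without any logarithmic loss in the discrete series case.
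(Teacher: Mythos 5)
Your duality reduction of part (ii) to part (i) matches the paper's (the paper just asserts the two parts are equivalent by duality; your explicit identity $p_{\max}(e_n)=1/p_{\min,\tilde V}(\tilde e_n)$ via $K$-invariance of the isometric minimal norm and multiplicity one is the content of that remark), and for the discrete series your use of Bargmann's hypergeometric \emph{polynomial} formula is precisely what the paper does in Section~\ref{ds}. However, for the principal and complementary series you take a genuinely different route. The paper does \emph{not} use hypergeometric asymptotics there: the upper bound in Corollary~\ref{cor upper} is obtained in the line model $V_{\sigma,\lambda}\subset C^\infty(\R)$ by writing $\mathcal{I}_{\lambda,n}(t)$ as an explicit integral, splitting according to $t\lessgtr\sqrt{n}$ and $|x|\lessgtr 1$ (resp. $|x|\lessgtr\sqrt{n}$), and integrating by parts against $e^{in\arctan x}$ — completely elementary and with the $\log n$ loss appearing transparently. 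The lower bound (Lemma~\ref{lemma Dirac}) is obtained not with a $K$-type test vector but with a compactly supported smooth vector $\phi$, which Proposition~\ref{Prop smooth minimal} allows; this makes the Dirac approximation $\pi(a_{\sqrt{|n|}})\phi\to$ point mass clean and avoids any need to locate a Bessel hump. Your proposal to use Bargmann's ${}_2F_1$ expressions together with Mehler--Heine/Bessel transitional asymptotics for all three series is conceptually more uniform and should work in principle, but it is analytically heavier: you would need uniform-in-$(n,t)$ asymptotics of the hypergeometric function through the turning-point region, including non-vanishing of the Bessel main term for the lower bound, and a global bound over the whole of $\overline{A^+}$ for the upper bound, whereas the paper sidesteps all of this with direct integral estimates and a well-chosen non-$K$-finite test vector. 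So: correct structure, same discrete-series and duality arguments, but a harder and different path through the principal/complementary series than the one the paper actually takes.
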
 

For two complex valued quantities $A(n)$, $B(n)$ depending on $n\in S\subset \Z$ we use the notation 
$$
A(n)\asymp B(n)
$$
if there exists a constant $c>0$ such that 
$c^{-1} |A(n)|\leq |B(n)|\leq c |A(n)|$ for all but finitely many $n\in S$.

\begin{prop}\label{thm estimate convex combination}
Let $V\neq \C$ be a unitarizable irreducible Harish-Chandra module and
$[q]$ be the equivalence class of the unitary norm.
There exists an $m\in S(V)$ and for almost all $\epsilon>0$ a complex Borel measure $\beta_{\epsilon}$ on $\overline{A^{+}}$ with total variation $\|\beta_{\epsilon}\|=1$, so that 
\begin{equation}\label{eq Def I_(nu,epsilon)}
I_{n,\epsilon}
:=\int_{\overline{A^{+}}}\la a\cdot e_{m}, e_{n}\ra\,d\beta_{\epsilon}(a)
\qquad \big(n\in S(V)\big)
\end{equation}
satisfies
\begin{equation}
\label{main estimate}
I_{n,\epsilon}
\asymp |n|^{-(\frac{1}{2}+\epsilon)}.
\end{equation}
\end{prop}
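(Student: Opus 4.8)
The plan is to exploit Bargmann's explicit formulas for the matrix coefficients $\langle a_t\cdot e_m, e_n\rangle$ on $\overline{A^+}=\{a_t=\mathrm{diag}(e^{t},e^{-t})\mid t\ge 0\}$, which express them through Gauss hypergeometric functions ${}_2F_1$ whose parameters depend on the representation type and on $m,n$. First I would fix a convenient ``base'' $K$-type $m\in S(V)$: for the discrete series one takes $m$ to be the lowest (or highest) weight, and for the (spherical or non-spherical) principal and complementary series one can take $m$ minimal in $|m|$; the point of fixing $m$ is that, with $m$ bounded, the hypergeometric parameters stabilize and the $n$-dependence of $\langle a_t\cdot e_m,e_n\rangle$ is governed by a single universal asymptotic profile in the two variables $(t,n)$. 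The key analytic input is the large-$n$ asymptotics of these matrix coefficients: as recorded in Section~\ref{Sec9: key estimates} (the asymptotic estimates of matrix coefficients of $\SL(2,\R)$ alluded to in the introduction and underlying Proposition \ref{thm upper-lower}), one has a transition regime around $t\asymp 1/|n|$ where $|\langle a_t\cdot e_m,e_n\rangle|$ passes from being of size $\asymp 1$ for $t\lesssim 1/|n|$ to decaying like $t^{-1/2}$ (times a bounded oscillatory factor, and up to the logarithmic corrections accounted for by $\varepsilon(V)$) in an intermediate range, before the exponential decay $a_t^{\mathrm{Re}\,\lambda-\rho}$ of the constant term takes over for $t$ large.

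Next I would choose the measure $\beta_\epsilon$ to ``sample'' precisely this transition scale. Concretely, for $\epsilon>0$ the natural candidate is (a normalized version of) $d\beta_\epsilon(a_t)=c_\epsilon\, t^{-(\frac12+\epsilon)}\,\mathbf 1_{[\delta,\infty)}(t)\,\psi(t)\,\tfrac{dt}{t}$ for a suitable cutoff $\psi$ ensuring convergence against the exponential decay at $t\to\infty$ and a small fixed $\delta>0$; the constant $c_\epsilon$ is fixed by $\|\beta_\epsilon\|=1$. The phrase ``for almost all $\epsilon$'' is there to avoid the measure-zero set of $\epsilon$ where the oscillatory factor in the matrix coefficient could produce accidental cancellation in the integral; for the remaining $\epsilon$ a stationary-phase / non-vanishing-of-an-oscillatory-integral argument gives a genuine lower bound matching the trivial upper bound. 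Plugging the asymptotic profile into \eqref{eq Def I_(nu,epsilon)} and splitting the $t$-integral into the ``flat'' range $t\lesssim 1/|n|$, the ``decay'' range $1/|n|\lesssim t\lesssim 1$, and the ``exponential'' tail $t\gtrsim 1$, one finds that the dominant contribution comes from $t\asymp 1/|n|$ and is of exact order $|n|^{-(\frac12+\epsilon)}$: the flat range contributes $\int_{\delta}^{1/|n|}t^{-(\frac12+\epsilon)}\tfrac{dt}{t}\asymp |n|^{\frac12+\epsilon}$ before normalization but the normalization constant $c_\epsilon$ is itself of order a constant, so after balancing one extracts the claimed $\asymp |n|^{-(\frac12+\epsilon)}$; the decay range contributes a convergent integral $\int t^{-(1+\epsilon)}\cdot t^{-1/2}\cdot(\text{osc})\,dt$ of the same order up to the logarithmic factor, and the exponential tail is negligible. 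I would then verify that the upper and lower bounds combine into \eqref{main estimate}, where the symbol $\asymp$ already absorbs the bounded logarithmic discrepancies between the various representation types.

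The main obstacle I anticipate is controlling the oscillation in the matrix coefficients uniformly in $n$ so as to obtain the \emph{lower} bound in \eqref{main estimate} (the upper bound being a routine triangle-inequality estimate using $\|\beta_\epsilon\|=1$ and the uniform boundedness of $|\langle a\cdot e_m,e_n\rangle|\le q(e_m)q(e_n)=1$). Near the transition point the ${}_2F_1$ is in a delicate regime between its series expansion and its asymptotic (connection-formula) expansion, and one must show that the phase does not conspire to cancel the integral for generic $\epsilon$; this is exactly where Bargmann's exact formulas — rather than soft asymptotics — are indispensable, since they give the precise phase and amplitude and hence allow a clean stationary-phase or integration-by-parts estimate. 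A secondary technical point is checking that the chosen $m$ can be taken independent of $\epsilon$ and that the construction is uniform across all non-trivial unitarizable $V$; this follows once one observes that in all cases the relevant hypergeometric asymptotics have the same leading shape in $n$, with the type entering only through the $\varepsilon(V)$-logarithm and through bounded spectral shifts. The detailed estimates are carried out in Section~\ref{subsection Proof for estimate convex combination}.
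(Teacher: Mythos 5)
Your proposal misidentifies where the matrix coefficient $\langle a\cdot e_m, e_n\rangle$ lives on $\overline{A^+}$, and consequently builds the measure $\beta_\epsilon$ around the wrong transition scale. For $m$ fixed and $n\to\infty$ the coefficient is \emph{not} of size $\asymp 1$ near the identity and decaying like $t^{-1/2}$ beyond $t\asymp 1/|n|$: since $e_m$ and $e_n$ are orthogonal $K$-types, $\langle a_t\cdot e_m,e_n\rangle\to 0$ as $t\to 0$, and in the Bargmann/Klimyk--Vilenkin coordinate $x=\tanh^2 t$ the coefficient carries a factor $x^{(n-m)/2}$ which vanishes to high order at $x=0$ and peaks near $x\approx 1-c/n$, i.e. at $t\asymp \log n$. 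Your proposed measure $d\beta_\epsilon(a_t)\propto t^{-(\frac12+\epsilon)}\mathbf{1}_{[\delta,\infty)}(t)\,\psi(t)\,dt/t$ concentrates near $t=\delta$, where the coefficient is small and oscillating, and the split into ``flat range $t\lesssim 1/|n|$'' and ``decay range'' is vacuous once $|n|>1/\delta$ — the displayed integral $\int_\delta^{1/|n|}$ is over an empty interval for large $n$. The paper instead identifies $\overline{A^+}$ with $[0,1)$ and takes the $n$-independent measure $d\beta_\epsilon(x)=c(1-x)^{-1+\epsilon}\,dx$, whose mass piles up near $x=1$; the $n^{-\frac12-\epsilon}$ then emerges from a Beta-function-type integral $\int_0^1 x^{n/2}(1-x)^{-1+\epsilon+\text{(type-dependent shift)}}\,dx$ against the hypergeometric series, evaluated term by term via Stirling and a Faulhaber-type summation lemma.

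A second gap is the mechanism for non-vanishing and the treatment of the principal and complementary series together. You appeal to stationary phase / non-cancellation of an oscillatory integral; the paper uses none of that. Instead it expresses $I_{n,\epsilon}$ through a quantity $J_{n,\epsilon}(\sigma,\lambda)$ that is holomorphic in $\lambda$ on the Cowling strip $-1<\re\lambda<0$ (via the uniform boundedness of $\pi_{\sigma,\lambda}$ there), expands the hypergeometric function as an absolutely convergent series, and obtains an asymptotic of the form $J_{n,\epsilon}\sim n^{-\sigma-\epsilon}\big(\phi(\lambda,\sigma,\epsilon)+2^{-\lambda+\epsilon}\psi(\lambda,\sigma)n^{\lambda+\sigma}\big)$; the generic non-vanishing in $\epsilon$ then comes from observing that the $\epsilon$-dependent part of the coefficient is a non-constant analytic function of $\epsilon$, not from controlling oscillations of the integrand. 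So while your instinct that ``almost all $\epsilon$'' protects against accidental cancellation is in the right spirit, the mechanism you sketch (phase analysis) is not present and would be hard to run, whereas the paper's argument is purely an asymptotic-series computation plus a one-variable analyticity trick. You would need to (a) switch to the $x$-coordinate and the measure $(1-x)^{-1+\epsilon}dx$, (b) replace the stationary-phase heuristic by the series/Stirling/Faulhaber computation, and (c) handle the complementary series via the analytic continuation in $\lambda$ through the Cowling strip rather than by assuming the same asymptotic profile.
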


The proofs for the Propositions \ref{thm upper-lower} and \ref{thm estimate convex combination} are given in Sections \ref{subsection Proof for Theorem upper-lower} and \ref{subsection Proof for estimate convex combination}.

\begin{proof}[Proof of Theorem \ref{thm ultimate}]
We first claim that $s(V)\geq 1$. Indeed, this is immediate from Proposition \ref{thm upper-lower}(\ref{thm upper-lower - item 2}). 

By Proposition \ref{prop unitary} we have $s(V)\leq 2 d([q], [p_{\rm max}])$. Therefore, in order to prove the theorem, it now suffices to show that
\begin{equation}\label{eq d(q,pmax)leq 1/2}
d([q], [p_{\rm max}])
\leq\frac{1}{2}.
\end{equation}

Now let $m\in S(V)$ and $\beta_{\epsilon}$ for $\epsilon>0$ be as in Theorem \ref{thm estimate convex combination}.
Let $B_{\rm max} = \co(G\cdot e_{m})$.
We claim that for every $\epsilon>0$ there exists a $c>0$ so that $B_{\rm max}$ contains all series
$$
c\sum_{n \in S(V)} a_{n} |n|^{-(\frac{1}{2} +\e)}e_{n}
$$
with $\sum |a_n|^2 \leq 1$. The claim implies that the unit ball of the norm $q_{\frac{1}{2}+\e}$ is contained in $c^{-1}B_{\rm max}$. Therefore,
$$
p_{\rm max}
\lesssim q_{\frac{1}{2}+\e}\qquad (\e>0),
$$
and hence \eqref{eq d(q,pmax)leq 1/2} follows.

We move on to prove the claim. We fix $\epsilon>0$ and a sequence $(a_{n})_{n\in S(V)}$ in $\C$ with $\sum |a_n|^2 \leq 1$. Without loss of generality, we may assume that $a_{-m}\neq 0$.
By convexity
$$
\int_G g \cdot e_{m} \,d\gamma(g)
\in B_{\rm max}
$$
for every complex Borel measure $\gamma$ on $G$ with total variation $\|\gamma\|\leq 1$.
We choose $\gamma$ as follows. For $n\in S(V)$ let $I_{n,\epsilon}$ be as in (\ref{eq Def I_(nu,epsilon)}) and let $f\in L^{2}(K)$ be so that 
$$
\hat{f}(n)
=\left\{
\begin{array}{ll}
a_{n}|n|^{-(\frac{1}{2}+\epsilon)}I_{n,\epsilon}^{-1}a_{-m}^{-1} & \big(n\in S(V), I_{n,\epsilon}\neq 0\big) \\
0 & \big(n\notin S(V)\text{ or }I_{n,\epsilon}=0\big).
\end{array}
\right.
$$
(In view of (\ref{main estimate}) the right-hand side does indeed describe the Fourier series of a square integrable function on $K$.)
Let
$$
\phi:K \times \oline{A^+}\times K\to G,
\quad (k_{1},a,k_{2})\to k_{1}ak_{2}.
$$
Note that $\phi$ is surjective and the fibers are compact. We define
$$
\gamma
:=\phi_{*}(f\,dk_{1}\otimes \beta_{\epsilon}\otimes f\,dk_{2}).
$$
Then $\gamma $ is a complex measure on $G$ with bounded total variation.

For every $n\in S(V)$ with $I_{n,\epsilon}\neq 0$ we now have
\begin{align*}
\left\langle \int_{G}g\cdot e_{m}\,d\gamma(g),e_{n}\right\rangle
&=\int_{G}\langle g\cdot e_{m},e_{n}\rangle\,d\gamma(g)\\
&=\int_K\int_{\oline{A^+}}\int_K \la k_1a k_2\cdot e_{m}, e_{n}\ra f(k_{1})f(k_{2})\, dk_1\,d\beta_{\epsilon}(a)\,dk_2\\
&=\int_{\overline{A^{+}}} \int_{S^{1}} \int_{S^{1}} f(z_1)f(z_2)z_1^{-n}z_2^{m} \la a\cdot e_{m}, e_{n}\ra \,dz_1\,dz_2 \,d\beta_{\epsilon}(a)\\
&=\hat f(n)\hat f(-m)I_{n,\epsilon}
=a_{n}|n|^{-(\frac{1}{2}+\epsilon)}
\end{align*}
Since there are only finitely many $n\in S(V)$ with $I_{n,\epsilon}=0$, it follows that there exists a $c>0$ so that $c\sum_{n \in S(V)} a_{n} |n|^{-(\frac{1}{2} +\e)} e_{n}\in B_{\max}$.
\end{proof}

\section{\texorpdfstring{Sobolev regularity and bounds on Fourier coefficients of $H$-distinguished functionals}{Bounds on fourier coefficients of H-distinguished functionals}}\label{Sec: SR}

In this section $G$ will be a real reductive group. 
Let us assume that $H<G$ is a closed unimodular subgroup of $G$ and let $X=H \bs G$ with an invariant measure $\mu_X.$
Motivated by questions of Harmonic analysis on $X$, we study irreducible Harish-Chandra modules $V$ such that
$$
(V^{-\infty})^{H}
= \Hom_{H, {\rm cont}} (V^\infty, \C)\neq \{0\}.
$$
For $\eta\in (V^{-\infty})^{H}\setminus\{0\}$ the associated matrix coefficients
$$
m_{v,\eta}(H g)
:= \eta(g \cdot v) \qquad (v\in V^\infty, g \in G)
$$
are commonly referred to as generalized matrix coefficients. Note that the $m_{v, \eta}$'s are smooth functions on $X$. 
We record the bijection from Frobenius reciprocity 
$$
(V^{-\infty})^{H}\to \Hom_{G, {\rm cont}}(V^\infty, C^\infty(X)),\quad \eta\mapsto T_\eta,
$$
where
$$
T_\eta(v)
=m_{v,\eta}\qquad (v\in V^{\infty}).
$$

We recall that an $H$-distinguished pair is a pair $(V,\eta)$ of a Harish-Chandra module $V$ and a functional $\eta\in (V^{-\infty})^{H}\setminus\{0\}$. 
Let $1 \leq r \leq \infty.$ We say that the pair $(V,\eta)$ is {\it $(X,r)$-bounded} if all matrix coefficients $m_{v,\eta}$, $v\in V^\infty$, lie in $L^{r}(X,\mu_X)$. In such a case we can define a natural isometric norm $q_{\eta,X,r}$ on $V^\infty$ given by:
$$
q_{\eta,X,r}(v)
:=\|m_{v, \eta}\|_{r} \qquad (v \in V).
$$
These norms are, in fact, all $G$-continuous: For $1\leq r<\infty$ this is standard and for $r=\infty$ use $C_c^\infty(G)*V^\infty=V^\infty$; see \cite[Th{\'e}or{\`e}me 3.3]{DixmierMalliavin}. For the important special case of $r=\infty$ we abbreviate  $(X,\infty)$-bounded to 
$X$-bounded and use the shortened notation 
$$
q_{\eta,X}
:=q_{\eta,X,\infty}\, .
$$
The concept of $X$-boundedness is ubiquitous in harmonic analysis on homogeneous spaces. We collect some general results from the literature. 

\begin{lemma}\label{Lemma X-bounded}
Let $(V,\eta)$ be an $H$-distinguished Harish-Chandra module. Then the following assertions hold:
\begin{enumerate}[(i)]
\item\label{Lemma X-bounded - item 1}
If $X=H\bs G$ is compact, for instance if $H=\Gamma$ is a cocompact lattice, then $(V,\eta)$ is $X$-bounded.
\item\label{Lemma X-bounded - item 2}
Suppose $H$ is an algebraic reductive subgroup. If $(V,\eta)$ is $(X,p)$-bounded for some $1 \leq p < \infty$ then $(V,\eta)$ is $X$-bounded. 
\item\label{Lemma X-bounded - item 3}
Suppose $H$ is symmetric or more generally a real spherical reductive subgroup of wavefront type and suppose $V$ is unitarizable. Then $(V,\eta)$ is $X$-bounded. 
\end{enumerate} 
\end{lemma}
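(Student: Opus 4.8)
The plan is to handle the three assertions in increasing order of difficulty. Assertion (\ref{Lemma X-bounded - item 1}) is immediate: if $X=H\bs G$ is compact --- in particular when $H=\Gamma$ is a cocompact lattice --- then each $m_{v,\eta}$ is a smooth function on a compact manifold and hence bounded. For (\ref{Lemma X-bounded - item 2}) and (\ref{Lemma X-bounded - item 3}) the common point of departure is that, since $V$ is a Harish-Chandra module, $f:=m_{v,\eta}$ is a smooth function on $X$ annihilated by a cofinite ideal of $\Zc(\gf)$ (an eigenfunction of the infinitesimal character when $V$ is irreducible) and of moderate growth, the latter because $|\eta(g\cdot v)|\le C\,p(g\cdot v)\le C\,w_p(g)\,p(v)$ for any $G$-continuous norm $p$ on $V$. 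The strategy is then to analyse the behaviour of $f$ at infinity on $X$ and to read off boundedness from the exponents that occur.

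For assertion (\ref{Lemma X-bounded - item 2}), where $H$ is algebraic reductive, I would use the Mostow decomposition $X\cong K/(K\cap H)\times(\qf\cap\pf)$ together with the theory of asymptotic expansions of $\Zc(\gf)$-finite functions along the radial rays $t\mapsto\exp(tY)$, $Y\in\qf\cap\pf$: such a function equals, up to a term of strictly smaller exponent, a finite sum of generalized eigenfunctions of the form $a\mapsto a^{\mu}(\log a)^{N_\mu}$. Since the invariant density on $X$ grows exponentially along these rays, $(X,p)$-boundedness for a single finite $p$ forces every occurring exponent to satisfy $\re\mu<0$, so that $f$ in fact decays and a fortiori $f\in L^{\infty}(X)$. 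Applying this to $m_{uv,\eta}$ for all $u\in\U(\gf)$ (note $uv\in V^{\infty}$) yields $X$-boundedness of $(V,\eta)$; alternatively one may quote the statement directly from the work on integrability of generalized matrix coefficients on reductive homogeneous spaces (Benoist--Kobayashi; Kr\"otz--Sayag--Schlichtkrull).

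For assertion (\ref{Lemma X-bounded - item 3}), $H$ is moreover real spherical of wavefront type, so one has the finer polar decomposition $X=\Omega\,A_X\,H$ and the full $X$-analogue of Harish-Chandra's constant-term theory. Here the plan is: (a) the unitarizability of $V$ bounds the leading exponents of $V$ by $\rho$; (b) the exponents of $f=m_{v,\eta}$ along $A_X$ are governed by those of $V$, shifted by the modular character of $X$, and therefore lie in the closed ``$L^{\infty}$-range''; (c) consequently one obtains a weak inequality $|f(x)|\le C(v,\eta)\,\Theta_X(x)$, where $\Theta_X$ is the generalized spherical function of $X$, and $\Theta_X$ is bounded on $X$; hence $f\in L^{\infty}(X)$. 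For $X=G$ this specializes to Harish-Chandra's weak inequality for unitary representations, and in the stated generality it is part of the decay theory for generalized matrix coefficients on wavefront real spherical spaces that also underlies \cite{KSS}.

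The step I expect to be the main obstacle is the precise control of the exponents. For (\ref{Lemma X-bounded - item 2}) the asymptotic expansion must be tracked not only along the generic radial directions but along every boundary degeneration of the reductive space $X$, and for a non-spherical reductive $H$ the structure of $X$ at infinity is genuinely more intricate; this is where the reductivity hypothesis is really consumed. For (\ref{Lemma X-bounded - item 3}) one must pin down the exact passage from the unitarity bound on the exponents of $V$ to the $L^{\infty}$-range on $X$, in particular ruling out --- via unitarity --- the borderline exponents with $\re\mu=0$ carrying a nonzero polynomial factor $(\log a)^{N_\mu}$, which would otherwise destroy boundedness.
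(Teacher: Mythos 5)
Your proof is correct and matches the paper's approach: item (i) is immediate, and for (ii) and (iii) the paper simply cites \cite[Theorem 1.2]{KSS2} (vanishing at infinity for $(X,p)$-bounded generalized matrix coefficients on homogeneous spaces of reductive type) and \cite[Th.~7.6]{KKSS} (boundedness of generalized matrix coefficients of unitarizable modules on wavefront real spherical spaces), which are precisely the results you sketch and explicitly offer to quote. The machinery you outline --- Mostow and polar decompositions, moderate growth, asymptotic expansions controlling the leading exponents, and the weak inequality via unitarity --- is indeed the content behind those two citations.
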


\begin{proof}
Item (\ref{Lemma X-bounded - item 1}) is clear and (\ref{Lemma X-bounded - item 2}) is \cite[Theorem 1.2]{KSS2}. Finally (\ref{Lemma X-bounded - item 3}) is \cite[Th. 7.6]{KKSS}.
\end{proof}

We will consider the $K$-coefficients of $H$-fixed functionals as follows: For $\tau \in \hat{K}$ we 
let $\eta_{\tau}=\eta|_{V[\tau]}$ be the restriction of the functional $\eta$ to the $\tau$-isotypical component $V[\tau] \subset V$.

\begin{prop}\label{Prop X-bounded}
Let $V$ be a unitarizable Harish-Chandra module equipped with a unitary norm $q$. Let $\eta$ be an $H$-fixed functional on $V$. Assume that the pair $(V,\eta)$ is $X$-bounded. Then for any $s>d([p_{\rm max}],[q])$ the functional $\eta$ extends continuously to the completion $V_{q_s}$.
In particular, there exist constants $C=C(s)>0$ so that for any $\tau \in \hat{K}$ we have
$$
\tilde q(\eta_{\tau})
\leq C (1+|\tau|)^s
$$
with $\tilde q$ the dual norm of $q$. 
\end{prop}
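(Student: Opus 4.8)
The plan is to repackage the $X$-boundedness hypothesis as a domination of the maximal norm, to pass from $[p_{\max}]$ to the Sobolev scale of $q$ using the hypothesis on $s$, and finally to restrict everything to individual $K$-types.

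First I would note that $X$-boundedness of $(V,\eta)$ says precisely that $p:=q_{\eta,X}$, $p(v)=\|m_{v,\eta}\|_{L^\infty(X)}$, is the isometric $G$-continuous norm on $V$ introduced in this section, so $[p]\in\Noc(V)=\Noc(V,\1)$. Since $V$, hence $\tilde V$, is unitarizable we have $\No(\tilde V)\neq\emptyset$, so by Corollary~\ref{Cor max norm} the class $[p_{\max}]$ is the unique maximal element of $\Noc(V)$, and therefore $p\lesssim p_{\max}$. Next, the unitary norm $q$ is $K$-Hermitian, so its Sobolev scale $(q_t)_{t\geq 0}$ is monotonous; hence $\{t\geq 0:p_{\max}\lesssim q_t\}$ is an up-set with infimum $d_\to([p_{\max}],[q])$, and since $s>d([p_{\max}],[q])\geq d_\to([p_{\max}],[q])$ this gives $p_{\max}\lesssim q_s$. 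Combining, $p\leq C_0\,q_s$ on $V$ for some $C_0>0$.

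I would then estimate $\eta$ pointwise. Since $m_{v,\eta}\in C^\infty(X)$ is a genuine continuous function and the invariant measure $\mu_X$ has full support, the value of $m_{v,\eta}$ at the base coset $He$ is bounded by its $L^\infty$-norm, whence
$$
|\eta(v)|=|m_{v,\eta}(He)|\leq\|m_{v,\eta}\|_{L^\infty(X)}=p(v)\leq C_0\,q_s(v)\qquad(v\in V).
$$
Thus $\eta$ is $q_s$-bounded on $V$ and extends uniquely and continuously to the Banach completion $V_{q_s}$, which is the first assertion. For the quantitative part, fix $\tau\in\hat K$: by \eqref{eq D_s} the operator $D_s$ acts on $V[\tau]$ by the scalar $\big(1+|\lambda_\tau+\rho_\kf|^2-|\rho_\kf|^2\big)^{s/2}$, which is $\asymp(1+|\tau|)^s$ since $|\tau|=|\lambda_\tau|$, so $q_s(v_\tau)\leq C_1(1+|\tau|)^s q(v_\tau)$ for all $v_\tau\in V[\tau]$ with $C_1$ uniform in $\tau$. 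Since $\eta_\tau=\eta|_{V[\tau]}$ is $K$-finite it defines an element of $\tilde V$, and for $v_\tau\in V[\tau]$ with $q(v_\tau)\leq1$ we get $|\eta(v_\tau)|\leq C_0\,q_s(v_\tau)\leq C_0C_1(1+|\tau|)^s$; taking the supremum yields $\tilde q(\eta_\tau)\leq C_0C_1(1+|\tau|)^s$, so $C:=C_0C_1$ works.

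I do not anticipate a genuine obstacle here: the only non-mechanical step is the implication $s>d([p_{\max}],[q])\Rightarrow p_{\max}\lesssim q_s$, which uses crucially that the unitary norm $q$ is $K$-Hermitian, so that its Sobolev scale is monotonous — without monotonicity one would only obtain $p_{\max}\lesssim q_t$ for some $t<s$, which here would still suffice. Everything else is bookkeeping: checking that $[p_{\max}]$ exists (via unitarizability of $\tilde V$) and that $q_{\eta,X}$ is a bona fide $G$-continuous norm.
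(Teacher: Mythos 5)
Your proof is correct and follows essentially the same chain as the paper's: $q_{\eta,X}\lesssim p_{\max}\lesssim q_s$ followed by the scalar action of $D_s$ on $K$-types giving $q_s(v_\tau)\lesssim(1+|\tau|)^s q(v_\tau)$. You spelled out one step the paper leaves implicit, namely that $s>d_\to([p_{\max}],[q])$ forces $p_{\max}\lesssim q_s$ because the Sobolev scale of a $K$-Hermitian norm is monotonous; that observation is needed and correct.
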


\begin{proof}
Notice that $q_{\eta,X} \lesssim p_{\rm max} \lesssim q_{s}$ for any $s>d([p_{\rm max}],[q])$. 
Let $\tau$ be an irreducible representation of $K$.
Observe that
$$
\tilde q(\eta_{\tau})
=\sup_{v \in V[\tau] \setminus \{0\}} \frac{|\eta(v)|}{q(v)}.
$$
Now there exists constants $C,C'$ such that for any $\tau \in \hat{K}$ and any $v \in V[\tau]$ we have
$$
|\eta(v)|
\leq \| m_{v,\eta}\|_{\infty}
=q_{\eta,X}(v) \leq C'q_{s}(v)
\leq C(1+|\tau|)^{s} q(v).
$$
This finishes the proof.
\end{proof}

\begin{rmk}\label{rmk Sobolev class of functionals for SL2} Proposition \ref{Prop X-bounded} becomes of relevance when $d([p_{\rm max}],[q])$ is exactly known, for example 
in case of $G=\SL(2,\R)$ where $d([p_{\rm max}],[q])=\frac{1}{2}$ for all non-trivial unitarizable Harish-Chandra modules $V$, 
see Theorem \ref{thm ultimate}. Consider, for example, $X=\SO(1,1)\bs \SL(2,\R)$ and note that $H=\SO(1,1)$ is a symmetric subgroup. Lemma \ref{Lemma X-bounded} (\ref{Lemma X-bounded - item 3}) implies that all distinguished pairs $(V,\eta)$ are $X$-bounded; hence Proposition \ref{Prop X-bounded} applies and gives the result that all $\eta \in (V^{-\infty})^H$ are of Sobolev class $-\frac{1}{2}-\epsilon$ for all $\epsilon>0$, regardless of the type of unitarizable module $V$. However, the bound is not sharp as these $H$-invariants can be calculated (e.g. in a line model) and one can see that their Sobolev class is $-\e$ for all $\e>0$.
\end{rmk}

\begin{rmk}\label{rmk: analytic vs rep theory}
In case that $(V,\eta)$ is an irreducible $H$-discrete series then the generalized matrix coefficients $m_{v,\eta}$ are square integrable and
$$
q(v)^2
=\int_{X}|m_{v,\eta}(x)|^2d\mu(x)
$$
provides a unitary norm on $V$. Note that unitary norms on $V$ are unique up to a multiplication by a positive constant. Furthermore, by the invariant Sobolev inequality on $X$ (see \cite[Lemma p. 686]{BernsteinPlanch} or \cite[Lemma 4.2]{KS16}) the following holds:
there exists a constant $C_{X}>0$ so that for all $f \in C^{\infty}(X) \cap W^{2,\frac{\dim(X)}{2}}(X)$ we have
$$
\|{\bf v}^{\frac{1}{2}} f\|_{\infty}
\leq C_X \|f\|_{2,\frac{\dim(X)}{2}}.
$$
Here ${\bf v}:X \to \mathbb{R}$ is the volume weight, unique up to equivalence, and given by
$$
{\bf v}(x)
=\vol_{X}(Bx)
$$
for $B\subset G$ a compact neighborhood of $\1$. 
Specializing to the cases where $X$ is compact or $H$ is algebraic and reductive we get from \cite[Lemma 5.1]{KSS2} that $\inf_{x \in X} {\bf v}(x)>0.$ Thus,
$$
q_{\eta,X}
\lesssim q_{\frac{\dim(X)}{2}}\,.
$$
Hence,
$$
d([q_{\eta,X}],[q])
\leq \frac{\dim(X)}{2}.
$$
This simple geometric analysis argument should be compared with our Proposition \ref{Prop X-bounded} and Remark \ref{rmk Sobolev class of functionals for SL2} above. Note that in the case where $X=\SL(2,\R)/\SO(2,\R)$ this argument yields 
$$
d([q_{\eta,X}],[q])
\leq 1.
$$
\end{rmk}

\section{Relations to automorphic forms}\label{Sec: AF}

In this Section we give applications of our work to the theory to automorphic forms. We start with $G=\SL(2,\R)$ and apply the Abstract Convexity Bound in Theorem \ref{thm.abst. conv. bnds} and relate it to automorphic forms.  As the literature is quite different for cocompact latices and non-cocompact latices, we separate these two cases in the presentation below. 

We discuss via Corollary \ref{Cor mps convexity} automorphic forms with regard to unitary minimal principal series for a general real reductive groups. We relate to the work of Bernstein-Reznikov \cite{BR} on tight Sobolev domination of the automorphic norms in the cocompact case. In particular, we can drop the assumption on cocompactness and can offer a new and almost optimal bound for all Lorentzian groups $G=\SO_e(1,n)$, $n\geq 2$. Finally, we show that some estimates of \cite{BHMM} for the group $\SL(2,\C)$ can effortlessly obtained as well.

\subsection{Cocompact lattices} 

Let us assume that $\Gamma<G$ is a cocompact lattice and let $X=\Gamma\bs G$.
Our interest lies in irreducible Harish-Chandra modules $V$ such that
$$
(V^{-\infty})^\Gamma
= \Hom_{\Gamma, {\rm cont}} (V^\infty, \C)\neq \{0\}.
$$
Let now $\eta\in (V^{-\infty})^\Gamma\setminus\{0\}$. The associated generalized matrix coefficients
$$
m_{v,\eta}(\Gamma g)
:= \eta(g \cdot v) \qquad (v\in V, g \in G)
$$
are commonly referred to as automorphic functions. Note that the $m_{v, \eta}$'s are smooth functions on $X$. Therefore, they are bounded as $X$ is compact. It follows that
$V$ is unitarizable with a unitary norm $q$ given by
$$
q(v)^2
= \int_X | m_{v,\eta}(\Gamma g)|^2\,d(\Gamma g)
\qquad(v\in V).
$$
Another isometric norm of interest is
$$
p_{\aut}(v)
:= \sup_{x \in X} |m_{v, \eta}(x)|
\qquad (v \in V).
$$
The possible dependence of $p_{\rm aut}$ on $\eta$ is suppressed in the notation and will be discussed at the end of this Section. 

Note that $q\leq \sqrt{\vol(X)}p_{\aut} $ and thus $q\lesssim p_{\aut}$.
Hence $d_{\to} ([q], [p_{\aut}])=0$.

\subsection{Automorphic forms for \texorpdfstring{$G=\SL(2,\R)$}{G=SL(2,R)} - cocompact case}

The following result is implicit in \cite{BR}.

\begin{prop}\label{Prop SL2 distance}
Let $G=\SL(2,\R)$ and let $\Gamma$ be a cocompact lattice of $G$. Let $V$ be a Harish-Chandra module of a $K$-spherical unitary principal series representation of $G$ and assume that $V$ is $\Gamma$-automorphic. Then
$$
d([p_{\rm aut}],[q])
=\frac{1}{2}.
$$
\end{prop}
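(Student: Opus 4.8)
\smallskip
\noindent\emph{Proof strategy.}
The plan is to compute the two arrows of the pseudometric $d([p_{\rm aut}],[q])=\max\{d_\to([p_{\rm aut}],[q]),\,d_\to([q],[p_{\rm aut}])\}$ separately. Since $X$ is compact one has $q\leq\sqrt{\vol(X)}\,p_{\rm aut}$, hence $q\lesssim p_{\rm aut}$ and $d_\to([q],[p_{\rm aut}])=0$; therefore $d([p_{\rm aut}],[q])=d_\to([p_{\rm aut}],[q])=\inf\{s\geq0\mid p_{\rm aut}\lesssim q_s\}$, and it remains to show this infimum equals $\tfrac12$. I would establish the two inequalities by rather different means.

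For $d_\to([p_{\rm aut}],[q])\leq\tfrac12$ I would feed $p_{\rm aut}$ into the abstract machinery: the automorphic sup-norm $p_{\rm aut}=q_{\eta,X}$ is an isometric $G$-continuous norm on the irreducible unitarizable module $V$, hence lies in $\No(V)$, so that $p_{\rm aut}\lesssim p_{\rm max}$ because $[p_{\rm max}]$ is the maximal element of $\Noc(V)$; and Theorem~\ref{thm ultimate} (equivalently the Abstract Convexity Bound, Theorem~\ref{thm.abst. conv. bnds}) gives $p_{\rm max}\lesssim q_{\frac12+\epsilon}$ for every $\epsilon>0$. Combining these yields $p_{\rm aut}\lesssim q_{\frac12+\epsilon}$ for all $\epsilon>0$ and hence the bound. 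This half is ``soft'': it uses only the value $s(V)=1$ for $\SL(2,\R)$ and no analysis on $X$.

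For the reverse inequality $d_\to([p_{\rm aut}],[q])\geq\tfrac12$ I would argue by contradiction, using that $p_{\rm aut}$ dominates the value of the automorphic function at the base point $x_0=\Gamma e$: for every $v\in V$ one has $|\eta(v)|=|m_{v,\eta}(\Gamma e)|\leq p_{\rm aut}(v)$. Thus, if $p_{\rm aut}\lesssim q_s$ for some $s<\tfrac12$ then $|\eta(v)|\lesssim q_s(v)$, so $\eta$ extends continuously to the Sobolev completion $V_{q_s}$; expanding in the $q$-orthonormal $K$-types this reads $\sum_{n\in S(V)}|\eta(e_n)|^2(1+n^2)^{-s}<\infty$, i.e. the automorphic functional $\eta$ would be of Sobolev class strictly better than $-\tfrac12$. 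For $V$ a $K$-spherical unitary principal series and $\Gamma$ cocompact this contradicts the sharpness half of the Bernstein--Reznikov theorem \cite{BR}, namely $\sum_{n\in S(V)}|\eta(e_n)|^2(1+|n|)^{-1}=\infty$; this is precisely where the hypotheses on $V$ and on $\Gamma$ are used. Equivalently, one can make the contradiction explicit with the ``coherent state'' vectors $v_N=\sum_{|n|\leq N}\overline{\eta(e_n)}\,e_n$, for which $p_{\rm aut}(v_N)\geq|\eta(v_N)|=\sum_{|n|\leq N}|\eta(e_n)|^2=q(v_N)^2$ while $q_s(v_N)\leq(1+N^2)^{s/2}q(v_N)$, so $p_{\rm aut}(v_N)/q_s(v_N)\gtrsim q(v_N)(1+N^2)^{-s/2}$ is unbounded along a subsequence as soon as $\sum_{|n|\leq N}|\eta(e_n)|^2$ is not $O(N^{2s})$ — again the content of \cite{BR}. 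Together the two inequalities give $d([p_{\rm aut}],[q])=\tfrac12$.

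I expect the lower bound to be the main obstacle. The abstract theory controls $[p_{\rm min}]$, $[p_{\rm max}]$ and the unitary class $[q]$ and shows $d([p_{\rm min}],[q])=d([q],[p_{\rm max}])=\tfrac12$, but $p_{\rm aut}$ need not be comparable to $p_{\rm max}$ — indeed $d([p_{\rm aut}],[p_{\rm max}])\geq\tfrac16$ by \cite{R} — so the abstract lower bound on $p_{\rm max}$ does not transfer to $p_{\rm aut}$. One genuinely needs the input from \cite{BR} that the automorphic distribution is no smoother than Sobolev class $-\tfrac12$; within the formalism of norms alone there appears to be no substitute.
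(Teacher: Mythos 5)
Your proof correctly splits $d([p_{\rm aut}],[q])$ into the two one-sided distances, and the upper bound $d_\to([p_{\rm aut}],[q])\leq\tfrac12$ via $p_{\rm aut}\lesssim p_{\max}\lesssim q_{\frac12+\epsilon}$ is a genuinely different (and cleaner) route than the paper's: the paper's proof of this proposition is a one-line citation of the Bernstein--Reznikov equivalence $p_{\rm aut}\lesssim q_s\iff s>\tfrac12$, whereas you derive the upper bound purely from the abstract theory, needing no automorphic input. This is exactly the viewpoint the paper itself adopts later (Theorem~\ref{thm: key automorphic}) for non-cocompact lattices.

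The lower bound, however, has a gap. You pass from $p_{\rm aut}\lesssim q_s$ to $|\eta(v)|\lesssim q_s(v)$, i.e.\ you evaluate only at the base point $x_0=\Gamma e$, and then invoke as a known fact the divergence $\sum_{n\in S(V)}|\eta(e_n)|^2(1+|n|)^{-1}=\infty$, attributed to \cite{BR}. But that is not the statement of \cite{BR}: the Bernstein--Reznikov sharpness (recorded in Proposition~\ref{prop BR}(\ref{prop BR - item 1}) of the paper) is the equivalence $p_{\rm aut}\lesssim q_s\iff\sum_\tau m_\tau d_\tau(1+|\tau|)^{-2s}<\infty$, a statement about the supremum over $x\in X$ rather than about the Fourier coefficients of $\eta$ at a single fixed point. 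The functions $\phi_n:=m_{e_n,\eta}$ are an orthonormal system in $L^2(X)$, so $\int_X\sum_{|n|\leq N}|\phi_n(x)|^2\,dx\asymp N$, but a priori nothing prevents this local density from being anomalously small at your chosen base point; you would need a pointwise local Weyl-law lower bound at $\Gamma e$, and that is not what you can read off from the proposition you cite. The standard fix is to let the base point move: take $x_N\in X$ maximizing $\sum_{|n|\leq N}|\eta_{x_N}(e_n)|^2$, so that $\sum_{|n|\leq N}|\eta_{x_N}(e_n)|^2\gtrsim N$ by the average, and then run your coherent-state computation with $\eta_{x_N}$ in place of $\eta$; this still produces a sequence $v_N$ with $p_{\rm aut}(v_N)/q_s(v_N)\gtrsim N^{\frac12-s}\to\infty$ for $s<\tfrac12$, because $p_{\rm aut}(v_N)\geq|\eta_{x_N}(v_N)|$. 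As written, your argument asserts a pointwise divergence that needs separate justification, while the paper sidesteps this entirely by citing the equivalence in \cite{BR} directly.
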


\begin{proof}
It was shown in
\cite{BR} that
$$
p_{\aut}
\lesssim q_s \iff s > \frac{1}{2}.
$$
Therefore, $d_{\to} ( [p_{\aut}],[q])=\frac{1}{2}$.
\end{proof}

In view of $s(V)=1$ and the fact that the unitary norm $[q]$ has equal distance $\frac{1}{2}$ from $[p_{\rm min}]$ and $[p_{\rm aut}]$; see Theorem \ref{thm ultimate}, the proposition suggests that $p_{\rm aut}$ should be close to $p_{\rm max}$. That quite the opposite is true was surprising to us.

\begin{prop}\label{prop upper gap}
Let $G=\SL(2,\R)$ and let $\Gamma$ be a cocompact lattice of $G$. Let $V$ be a Harish-Chandra module of a $K$-spherical unitary principal series representation of $G$ and assume that $V$ is $\Gamma$-automorphic. Then
$$
d([p_{\rm aut}], [p_{\rm max}])
\geq \frac{1}{6}.
$$
\end{prop}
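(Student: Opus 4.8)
The plan is to establish a lower bound on $d_{\to}([p_{\rm max}],[p_{\rm aut}])$, i.e.\ to show that $p_{\rm max}$ is \emph{not} dominated by $(p_{\rm aut})_s$ for $s<\frac16$; equivalently, to exhibit a sequence of test vectors on which $(p_{\rm aut})_s$ is too small relative to $p_{\rm max}$. Since $p_{\rm max}(e_n)\asymp (1+|n|)^{1/2}$ up to a logarithmic factor by Proposition \ref{thm upper-lower}(\ref{thm upper-lower - item 2}), and $(p_{\rm aut})_s(e_n)=(1+|n|)^{s}p_{\rm aut}(e_n)$ (up to the $\Delta_K$-normalization), it suffices to obtain a \emph{lower} bound on $p_{\rm aut}(e_n)$ of the form $p_{\rm aut}(e_n)\gtrsim (1+|n|)^{1/2-s-\epsilon}$ being violated — more precisely, to produce sufficiently many $n$ with $p_{\rm aut}(e_n)$ small, namely $p_{\rm aut}(e_n)\lesssim (1+|n|)^{1/3}$ along a subsequence would give $d_{\to}([p_{\rm max}],[p_{\rm aut}])\geq \frac12-\frac13=\frac16$. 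So the crux is an \emph{upper} bound $p_{\rm aut}(e_n)\lesssim (1+|n|)^{1/3+\epsilon}$ on a suitable infinite set of $K$-types, which is exactly the kind of sup-norm estimate on automorphic eigenfunctions furnished by Reznikov's work \cite{R}.

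The key steps, in order: (i) recall from \cite{R} the bound on the sup-norm of (derivatives of) Maass forms / automorphic vectors in a fixed principal series, phrased in our normalization as a bound $p_{\rm aut}(e_n)\lesssim (1+|n|)^{1/3+\epsilon}$ (Reznikov obtains an exponent $1/3$ improving the trivial $1/2$ via his triple-product / amplification method); (ii) combine this with the lower bound $p_{\rm max}(e_n)\gtrsim (1+|n|)^{1/2}(1+\log(1+|n|))^{-1}$ from Proposition \ref{thm upper-lower}(\ref{thm upper-lower - item 2}); (iii) for any $s<\frac16$ and the $D_s$-normalization, estimate $(p_{\rm aut})_s(e_n)=(1+|\lambda_n+\rho_\kf|^2-|\rho_\kf|^2)^{s/2}\,p_{\rm aut}(e_n)\asymp (1+|n|)^{s}p_{\rm aut}(e_n)\lesssim (1+|n|)^{s+1/3+\epsilon}$, which for small $\epsilon$ is $o((1+|n|)^{1/2}/\log)$; hence $p_{\rm max}\not\lesssim (p_{\rm aut})_s$, giving $d_{\to}([p_{\rm max}],[p_{\rm aut}])\geq \frac16$ and therefore $d([p_{\rm aut}],[p_{\rm max}])\geq \frac16$.

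The main obstacle is making sure the cited sup-norm bound from \cite{R} is stated for the right objects: Reznikov's estimate controls $\|m_{v,\eta}\|_\infty$ for $v$ a $K$-type vector (a "raised" Maass form), and one must check the normalization matches our $q$-orthonormal $e_n$ and that the exponent is indeed $\frac13$ (not just $\frac12-\delta$ for some unspecified $\delta$); if only a weaker exponent $\frac12-\delta$ is available one still gets $d([p_{\rm aut}],[p_{\rm max}])\geq \delta$, and the statement would need $\delta=\frac16$. A secondary, purely bookkeeping, point is to absorb the logarithmic factor in $p_{\rm max}(e_n)$ (harmless, since $\epsilon>0$ is free) and to recall that $\varepsilon(V)=1$ for unitary principal series so the log is present but inert. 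I expect no difficulty beyond correctly importing and normalizing the analytic input from \cite{R}.
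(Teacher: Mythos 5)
Your proposal is correct and follows essentially the same route as the paper: combine Reznikov's sup-norm bound $p_{\rm aut}(e_n)\lesssim |n|^{1/3+\epsilon}$ with the lower bound $p_{\rm max}(e_n)\gtrsim |n|^{1/2}/\log|n|$ from Proposition~\ref{thm upper-lower}(\ref{thm upper-lower - item 2}) to conclude $p_{\rm max}\not\lesssim (p_{\rm aut})_s$ for $s<\frac16$. The one bookkeeping point you flag — translating Reznikov's bound $|\eta(e_n)|\lesssim |n|^{1/3+\epsilon}$ (an evaluation at the identity coset) into a bound on $p_{\rm aut}(e_n)=\sup_{g\in G}|\eta(g\cdot e_n)|$ — is handled in the paper exactly as you anticipate: Reznikov notes on p.~468 of \cite{R} that the estimate persists uniformly for $K$ replaced by $gKg^{-1}$ with $g$ in a compact set, and taking that compact set to be a fundamental domain for $\Gamma$ gives the full supremum by $\Gamma$-invariance of $m_{e_n,\eta}$.
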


The proof is immediate from Proposition \ref{thm upper-lower} and the following Theorem of Andre Reznikov, which is implicit in \cite{R}.

\begin{theorem} 
Under the assumptions of Proposition \ref{prop upper gap} there exists for every $\epsilon >0$ a constant $c>0$ so that
$$
p_{\rm aut} (e_n)
\leq c\, |n|^{\frac{1}{3}+\epsilon}
\qquad \big(n\in S(V)\big).
$$
\end{theorem}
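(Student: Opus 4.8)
We prove the equivalent sup-norm estimate $\|f_n\|_{L^\infty(X)}\ll_\epsilon|n|^{\frac13+\epsilon}$ for the automorphic vectors $f_n:=m_{e_n,\eta}$ on $X=\Gamma\backslash G$; here we normalize the unitary norm by $q(v)^2=\int_X|m_{v,\eta}|^2$, so that $\|f_n\|_{L^2(X)}=1$. First I would record the baseline $p_{\rm aut}(e_n)=\|f_n\|_\infty\le p_{\max}(e_n)\ll|n|^{1/2}$, which is Proposition~\ref{thm upper-lower}(\ref{thm upper-lower - item 2}) (the automorphic sup-norm is an isometric $G$-continuous norm, hence dominated by $p_{\max}$); this is the ``convexity bound'' that must be broken. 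Since $\pi$ is a $K$-spherical unitary principal series, $|f_n|$ descends to the surface $Y=\Gamma\backslash\mathbb{H}$, where it is the modulus of a weight-$n$ Maass form whose Laplace eigenvalue is the one attached to $V$; as $n\to\infty$ this behaves microlocally like an eigenfunction of frequency $\asymp|n|$, so the task is a subconvex sup-norm bound in the weight aspect, for an arbitrary cocompact lattice $\Gamma$ (in particular without recourse to Hecke operators or to Fourier expansions at a cusp).

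The route I would take, following Reznikov's treatment in \cite{R}, goes through the triple-product (equivalently, period) structure of $|f_n(x_0)|^2$, where $x_0=\Gamma g_0$ realizes the supremum. One expands $|f_n|^2$ along the automorphic spectrum of $Y$,
\[
|f_n|^2=\vol(Y)^{-1}+\sum_{j\ge 1}t(f_n,\overline{f_n},\phi_j)\,\phi_j+(\text{continuous part}),
\]
so that $\|f_n\|_\infty^2=\bigl\||f_n|^2\bigr\|_\infty\le \vol(Y)^{-1}+\sum_j|t(f_n,\overline{f_n},\phi_j)|\,\|\phi_j\|_\infty+\cdots$. The triple coefficients factor (Ichino--Watson--Bernstein--Reznikov) into an archimedean local integral times $L(\tfrac12,\phi_j\times\Ad V)^{1/2}$. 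The archimedean integral involves the weight-$\pm n$ matrix coefficients of $V$, which one controls explicitly from Bargmann's hypergeometric formulas (cf. Section~\ref{Sec9: key estimates}) and the Bernstein--Reznikov asymptotics; it decays in $n$ at an explicit rate and effectively truncates the $j$-sum, the remaining tail being absorbed by the smoothness of $|f_n|^2$. I would then bound the surviving $L$-values by convexity and use the trivial bound $\|\phi_j\|_\infty\ll(1+\lambda_j)^{1/4+\epsilon}$, valid for every cocompact $\Gamma$, and optimize the truncation length against $|n|$; it is the interplay of the archimedean decay rate with the convexity exponent that makes this balance land at $|n|^{1/3+\epsilon}$.

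The main obstacle — the real content of \cite{R} — is the uniform control of the weight-$n$ archimedean local factors together with the truncation/tail error, carried out so that nothing uses the arithmetic of $\Gamma$; an equivalent, more hands-on avatar of the same difficulty is an amplified pre-trace formula in the weight aspect, in which a point-pair kernel on $G$ of $K$-weight matched to $n$ and concentrated at scale $\asymp|n|^{-1}$ is combined with an amplifier built from the nearby $K$-types $e_{n'}$, $|n'-n|\le L$ (using that $f_n$ is a joint Casimir eigenfunction), and one balances the diagonal contribution $\asymp(\sum|\alpha_\ell|^2)|n|$ against the $\Gamma$-lattice-point count in the relevant balls. Granting this analysis, $p_{\rm aut}(e_n)\ll_\epsilon|n|^{1/3+\epsilon}$, and together with Proposition~\ref{thm upper-lower} this yields Proposition~\ref{prop upper gap}.
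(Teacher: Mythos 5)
Your proposal takes a genuinely different route from the paper, and in doing so misses that this statement is essentially a direct citation. The paper's proof is two sentences long: it quotes \cite[Th.~1.5]{R}, which already states $|\eta(e_n)|\leq c\,|n|^{1/3+\epsilon}$ for all $n\in S(V)$, and then upgrades the bound at the base point to the sup-norm $p_{\rm aut}(e_n)=\sup_{x\in X}|m_{e_n,\eta}(x)|$ by invoking Reznikov's remark \cite[p.~468]{R} that the same estimate holds with $K$ replaced by $gKg^{-1}$, uniformly for $g$ in a compact set; one then lets $g$ range over a compact fundamental domain for $\Gamma$. What you have written instead is a high-level sketch of the \emph{interior} of Reznikov's proof — the spectral expansion of $|f_n|^2$, the triple-product period identity, the archimedean local analysis, the convexity bound for $L$-values and the amplified pre-trace alternative. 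That is the right circle of ideas, and you correctly identify the exponent $1/3$ as coming from balancing the archimedean decay against the truncation/tail, but you yourself flag that the ``real content'' (the uniform control of the weight-$n$ archimedean local factors and the tail error) is deferred, so as a proof this is incomplete. It is much heavier machinery than the statement requires in this paper's framework.

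There is also a concrete point your sketch silently elides which the paper treats explicitly. Reznikov's Theorem~1.5 is an estimate for $|\eta(e_n)|$, a single-point evaluation. To bound $p_{\rm aut}(e_n)$ one needs the same estimate at the maximizing point $x_0=\Gamma g_0$, i.e., one needs the entire argument to be uniform as the $K$-type basis is replaced by its $g_0$-conjugate for $g_0$ in a compact fundamental domain. You write ``where $x_0=\Gamma g_0$ realizes the supremum'' but do not address this uniformity; it is exactly what Reznikov's p.~468 remark supplies and what the paper's short proof hinges on. If you want to run your own version of the argument rather than cite, you would need to verify that the Bargmann hypergeometric asymptotics and the resulting archimedean integrals are uniform over $g_0$ in a compactum — true, but not free.
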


\begin{proof}
In \cite[Th. 1.5]{R} it is stated that for every $\epsilon>0$ there exists a $c>0$ so that $|\eta(e_n)|\leq c\, |n|^{\frac{1}{3}+\epsilon}$ for all $n\in S(V)$. As remarked in \cite[p. 468]{R} the same estimate holds if $K$ is replaced by $gKg^{-1}$ with $g$ contained in a compact subset $C\subseteq G$. It thus follows that $|\eta(g\cdot e_n)|\leq c\,|n|^{\frac{1}{3}+\epsilon}$ for all $g\in C$ and $n\in S(V)$. The assertion now follows by taking $C$ equal to a compact fundamental domain.
\end{proof} 

\begin{rmk}
It is conjectured in \cite{R} that for every $\epsilon >0$ there exists a $c>0$ so that $p_{\rm aut} (e_n)\leq c\, |n|^{\epsilon}$ for all $n\in S(V)$. This is a version of the so-called Lindel{\"o}f hypothesis in this situation. If true, then this would mean that $d([p_{\rm aut}], [p_{\rm max}])=\frac{1}{2}$. Geometrically it means that the ordered pseudometric space 
$(\Noc(V),d)$ of diameter $1$ contains two points, namely $[q]$ and $[p_{\rm aut}]$, which have of distance $\frac{1}{2}$ to each other and both also have distance $\frac{1}{2}$ to the extreme points $[p_{\rm min}]$ and $[p_{\rm max}]$. Certainly, this cannot occur in a flat situation and hints at a positively curved geometry of $(\Noc(V),d)$.
\end{rmk}

\subsection{Automorphic forms for \texorpdfstring{$G=\SL(2,\R)$}{G=SL(2,R)} - general lattices}

We will now exploit the Abstract Convexity Bound from Theorem \ref{thm.abst. conv. bnds} some more.
The requirement of cocompactness of the lattice $\Gamma$ is now dropped.

Let us assume for the moment that the automorphic functional $\eta$ is cuspidal. This ensures that each automorphic function $m_{\eta, v}$ is bounded and that $p_{\rm aut}(v):= \sup_{x \in X} |m_{v, \eta}(x)|$ defines an isometric $G$-continuous norm on $V$. 
We specialize to $G=\SL(2,\R)$ and the problem of estimating $p_{\rm aut}$ raised by Bernstein and Reznikov in \cite{BR}. For instance, in case $V$ is spherical unitary principal series, they could show that $p_{\rm aut} \lesssim q_{\frac{1}{2}+\epsilon}$ for all $\epsilon >0$, but only under the assumption that $\Gamma$ is cocompact. The techniques of \cite{BR} are geometric and cannot be applied to non-cocompact lattices. However, it is remarked in \cite[App. A.2]{BR2} that this bound also holds in the cuspidal case for the unitary principal series. With the following, we can complete the literature. From Theorem \ref{thm.abst. conv. bnds}, we deduce: 

\begin{theorem}\label{thm: key automorphic}
Let $\Gamma$ be a lattice in $G=\SL(2,\R)$ and $\eta:V^\infty \to \C$ an automorphic functional on some unitarizable Harish-Chandra module $V$ with $K$-spectrum $S=S(V)\subset \Z$. Let $(e_n)_{n \in S}$ be an orthonormal basis of $K$-types and $\epsilon >0$. Then for every $\epsilon >0$ there exist a constant $C_\e$ such that the following assertions hold: 
\begin{enumerate}[(i)]
\item Suppose that $\eta$ is cuspidal. Then $p_{\rm aut}\leq C_\e\,  q_{\frac{1}{2}+\epsilon}$. Moreover, there exists a constant $C>0$ such that 
$$
|\eta(e_n)|
\leq C (|n|+1)^{\frac{1}{2}}
\qquad (n\in S).
$$
\item Suppose that $\eta$ gives a realization in $L^r(X)$ for some $1\leq r \leq \infty$, then 
$$
\|m_{v,\eta}\|_{L^r(X)}
\leq C_\e\,  q_{\frac{1}{2}+\epsilon}(v)
\qquad (v\in V^\infty).
$$
\end{enumerate}
\end{theorem}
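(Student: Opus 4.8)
The plan is to deduce both assertions directly from the Abstract Convexity Bound (Theorem~\ref{thm.abst. conv. bnds}), once the relevant automorphic norms are recognized as isometric $G$-continuous norms on $V$. Since a finitely generated admissible unitarizable Harish-Chandra module for $\SL(2,\R)$ is a finite direct sum of irreducible unitarizable submodules, with $\eta$ and all the norms involved decomposing accordingly, it suffices to treat each irreducible summand; so I may and will assume $V$ irreducible and fix a unitary norm $q$ together with the orthonormal $K$-type basis $(e_n)_{n\in S}$.

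For~(i), I would first verify that $p_{\rm aut}(v)=\|m_{v,\eta}\|_{L^\infty(X)}$ is finite for every $v\in V^\infty$, that is, that the pair $(V,\eta)$ is $X$-bounded. When $X$ is compact this is Lemma~\ref{Lemma X-bounded}(\ref{Lemma X-bounded - item 1}); in the non-cocompact case the cuspidality of $\eta$ forces each generalized matrix coefficient $m_{v,\eta}$ to decay in every cusp (through its Fourier expansion at the cusps and the theory of the constant term for $\SL(2,\R)$), which, combined with continuity on the compact core of $X$, yields boundedness. Right translation being an isometry of $L^\infty(X)$, $p_{\rm aut}$ is isometric, and it is $G$-continuous by the Dixmier--Malliavin identity $C_c^\infty(G)*V^\infty=V^\infty$, exactly as for $q_{\eta,X}$ in Section~\ref{Sec: SR}. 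Theorem~\ref{thm.abst. conv. bnds} then applies with $p=p_{\rm aut}$ and gives, for each $\epsilon>0$, a constant $C_\epsilon>0$ with $p_{\rm aut}\le C_\epsilon\,q_{\frac{1}{2}+\epsilon}$, and a constant $C>0$ with $p_{\rm aut}(e_n)\le C(1+|n|)^{1/2}$. The coefficient bound is then immediate from $|\eta(e_n)|=|m_{e_n,\eta}(\Gamma e)|\le\|m_{e_n,\eta}\|_{L^\infty(X)}=p_{\rm aut}(e_n)$.

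For~(ii), the hypothesis that $\eta$ yields a realization in $L^r(X)$ is precisely $\operatorname{im}T_\eta\subset L^r(X)$, so $q_{\eta,X,r}(v)=\|m_{v,\eta}\|_{L^r(X)}$ is a well-defined norm on $V^\infty$; it is isometric because $X$ carries a $G$-invariant measure, and $G$-continuous by the standard argument for $1\le r<\infty$ and again by Dixmier--Malliavin for $r=\infty$. Applying Theorem~\ref{thm.abst. conv. bnds} to $p=q_{\eta,X,r}$ produces, for each $\epsilon>0$, a constant $C_\epsilon>0$ with $\|m_{v,\eta}\|_{L^r(X)}=q_{\eta,X,r}(v)\le C_\epsilon\,q_{\frac{1}{2}+\epsilon}(v)$ for all $v\in V^\infty$, which is the assertion; note that~(i) is the case $r=\infty$ of this, supplemented by the individual $K$-type estimate.

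The only genuinely non-formal step is establishing $X$-boundedness---that is, the finiteness and $G$-continuity of $p_{\rm aut}$---for a non-cocompact lattice $\Gamma$; this is where cuspidality is indispensable and where one invokes the decay of cusp forms at the cusps. All the rest is a formal consequence of the Abstract Convexity Bound, whose proof (via Theorem~\ref{thm ultimate}) carries the representation-theoretic content: no input on the lattice beyond the cuspidal, respectively $L^r$, realization of $\eta$ is used.
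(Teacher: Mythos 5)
Your proposal is correct and follows exactly the route the paper takes: the paper's own proof is nothing more than the one-line remark ``From Theorem~\ref{thm.abst. conv. bnds}, we deduce'', and your write-up is a faithful unpacking of that deduction. The two points you add beyond the paper's terse presentation---the reduction to irreducible $V$ (which is genuinely needed, since Theorem~\ref{thm.abst. conv. bnds} is stated only for irreducible modules, while Theorem~\ref{thm: key automorphic} allows arbitrary unitarizable $V$) and the verification that cuspidality yields $X$-boundedness and hence $G$-continuity of $p_{\rm aut}$---are both taken for granted in the text but are exactly the right things to check. One small caution on the reduction: $p_{\rm aut}$ does not split as an $\ell^{2}$-direct-sum norm on $V=\bigoplus_{i}V_{i}$, so the passage from the irreducible bounds $p_{{\rm aut},i}\lesssim q_{i,\frac12+\epsilon}$ to the global bound uses the triangle inequality on matrix coefficients followed by Cauchy--Schwarz against the Hilbert-sum structure of $q_{\frac12+\epsilon}$; your phrase ``all the norms involved decomposing accordingly'' glosses over this, but the argument goes through.
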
 

\begin{rmk}
Let $\eta$ be cuspidal. In \cite[Prop. 4.1]{BR} it was shown that $p_{\rm aut}\leq C q_{3}$ with $q_3$ denoted there by $S_3$, the third Sobolev norm. In \cite[Appendix A.2]{BR} it was remarked that $[S_k^G]=[p_{\max}]$ does not depend on $k$ for $k>\frac{1}{2}$ (stabilization). Since $p_{\rm aut}$ is $G$-invariant, it follows that $p_{\rm aut}\leq C q_{\frac{1}{2}+\epsilon}$. For cusp forms with respect to congruence subgroups we would also like to mention the explicit finer results on automorphic distributions by Schmid in \cite{Sch2}.
\end{rmk}

\subsection{Automorphic norms attached to unitary minimal principal series}

We maintain the notation of Subsection \ref{subsection minimal principal series} for a real reductive group $G$, let $P=MAN$ be a minimal parabolic subgroup and $V=V_{\sigma, \lambda}$ be a Harish-Chandra module of the unitary minimal principal series with $\sigma \in \hat{M}$ and $\lambda\in i\af^*$. We denote by 
$q$ the standard unitary norm on $V_{\sigma,\lambda}$, i.e. the Hilbert completion to $L^2(K,\sigma)$. The overall setup is that we keep $\sigma$ fixed and let $\lambda\in i\af^*$ vary. 

We let $\eta\in (V_{\sigma,\lambda}^{-\infty})^\Gamma$ be an automorphic functional. We either assume that $X=\Gamma\bs G$ is cocompact or that $\eta$ is cuspidal. This guarantees that the automorphic norm 
$$
p_{\rm aut} (v)
=\|m_{v,\eta}\|_{L^\infty(X)}
$$
is defined. We assume that $\eta$ is $L^2$-normalized, that is 
$$
\|m_{v,\eta}\|_{L^2(X)}
= q(v) \qquad (v \in V_{\sigma,\lambda}^\infty).
$$

We recall the structural constant $c_{\gf}$ from equation \eqref{def str const} and offer an abstract convexity bound for automorphic distributions for minimal principal series of real reductive groups: 

\begin{prop} \label{prop mps aut}
Let $V_{\sigma,\lambda}$ be a minimal unitary principal series with $\sigma\in\hat M$ fixed and $\lambda\in i\af^*$ varying in a compact subset $Q\subset i\af^*$. Let $q$ be the standard unitary norm on $V_{\sigma,\lambda}$. Let $\eta \in (V_{\sigma,\lambda}^{-\infty})^\Gamma$ be an an automorphic functional. Assume either that $X=\Gamma\bs G$ is compact or that $\eta$ is cuspidal. 
Suppose that $\eta$ is $L^2$-normalized, i.e. $\|m_{v,\eta}\|_{L^2(X)} = q(v)$ for all $v \in V_{\sigma,\lambda}^\infty$.
Then there is constant $C>0$ such that $p_{\rm aut}\leq C q_{c_\gf}^1$ for all $\lambda\in Q$ and in particular 
$$
p_{\rm aut}(v_\tau)
\leq C\, (1+|\tau|)^{c_\gf} q(v_\tau)
$$
for all $v_\tau\in V_{\sigma,\lambda}[\tau]$. 
\end{prop}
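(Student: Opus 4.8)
The plan is to combine two uniform estimates: a domination $p_{\rm aut}\lesssim p_{\max}$ with a constant independent of $\lambda\in Q$, together with the domination $p_{\max}\lesssim q^1_{c_\gf}$, which is essentially Corollary \ref{Cor mps convexity}. First I would record that $p_{\rm aut}$ is a well-defined $G$-invariant $G$-continuous norm on $V_{\sigma,\lambda}$: in the cocompact case $X$-boundedness is Lemma \ref{Lemma X-bounded}(\ref{Lemma X-bounded - item 1}), and in the cuspidal case it follows from the rapid decay of cusp forms; the $L^2$-normalization moreover gives $q\leq\vol(X)^{1/2}p_{\rm aut}$, so only the upper bound on $p_{\rm aut}$ is at stake.

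For the uniform domination $p_{\rm aut}\lesssim p_{\max}$, I would fix a $K$-type $\tau_0$ occurring in $V_{\sigma,\lambda}|_K\simeq\Ind_M^K\sigma$ --- a restriction that is independent of $\lambda$ --- and take $D$ to be the $q$-unit ball of $V_{\sigma,\lambda}[\tau_0]$ (enlarged, if $V_{\sigma,\lambda}$ is reducible, to the union of the $q$-unit balls of the $K$-types with $|\tau|\le N_0$ for an $N_0$ chosen uniformly on $Q$), so that $D$ is a compact generating set. By Proposition \ref{Prop Balls}(ii) (with weight $\1$) the unit ball of the maximal norm is $B_{\max}=\co(G\cdot D)$, and since $p_{\rm aut}$ is $G$-invariant and lower semicontinuous, approximating $v\in B_{\max}$ by absolutely convex combinations of points of $G\cdot D$ gives $p_{\rm aut}(v)\le\sup_{v_0\in D}p_{\rm aut}(v_0)=:C'$, that is $p_{\rm aut}\le C'p_{\max}$. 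To see that $C'$ is bounded on $Q$ --- this is where the hypotheses on $\eta$ enter --- I would invoke the invariant Sobolev inequality on $X$ (\cite{BernsteinPlanch} in the cocompact case; in the cuspidal case the sup--Sobolev bound for cusp forms used in \cite{BR}) together with the identity $\|m_{uv,\eta}\|_{L^2(X)}=q(uv)$ for $u\in\U(\gf)$, obtaining for $v\in D$ and a fixed $k>\tfrac{\dim G}{2}$
$$
p_{\rm aut}(v)=\|m_{v,\eta}\|_{L^\infty(X)}\lesssim\|m_{v,\eta}\|_{W^{k,2}(X)}\lesssim\sum_{|\alpha|\le k}q(X_1^{\alpha_1}\cdots X_n^{\alpha_n}v),
$$
with implied constants depending only on $X$ and $k$; since the vectors of $D$ lie in a fixed finite set of $K$-types and the $\gf$-action on $V_{\sigma,\lambda}^\infty\simeq C^\infty(K,\sigma)$ is polynomial in $\lambda$, the right-hand side, and hence $C'$, stays bounded for $\lambda\in Q$.

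Finally I would feed in Corollary \ref{Cor mps convexity} with $R=0$ (here $\lambda\in i\af^*$, so $w_{\re\lambda}=\1$ and $s=c_\gf$): the constant produced there is locally uniform in $\lambda$, hence uniform on the compact set $Q$, giving $p_{\max}(v_\tau)\le c_2\,q_{c_\gf}(v_\tau)$ for all $\tau$ and $v_\tau\in V_{\sigma,\lambda}[\tau]$; summing over $K$-types and using the triangle inequality for $p_{\max}$ yields $p_{\max}\le c_2\,q^1_{c_\gf}$. Composing the two bounds gives $p_{\rm aut}\le C'c_2\,q^1_{c_\gf}$, and on a single $K$-type $p_{\rm aut}(v_\tau)\le C'c_2\,q_{c_\gf}(v_\tau)$; since $q_{c_\gf}(v_\tau)=(1+|\lambda_\tau+\rho_\kf|^2-|\rho_\kf|^2)^{c_\gf/2}q(v_\tau)\asymp(1+|\tau|)^{c_\gf}q(v_\tau)$, the pointwise estimate follows. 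I expect the main obstacle to be the uniformity bookkeeping: verifying that the invariant-Sobolev constant, the $\U(\gf)$-action estimates on a fixed bundle of $K$-types, and the Dirac-approximation constant underlying Corollary \ref{Cor mps convexity} are all genuinely independent of $\lambda\in Q$, and --- in the non-cocompact case --- supplying the sup--Sobolev inequality for cusp forms, where the volume weight on $X$ degenerates in the cusps.
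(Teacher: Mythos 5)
Your argument is correct and follows essentially the same route as the paper: bound $p_{\rm aut}\lesssim p_{\max}$ with constant $c_\lambda=\sup_{v\in D}p_{\rm aut}(v)$ via the unit-ball geometry of $p_{\max}$ (Proposition \ref{Prop Balls}) together with $G$-invariance, control $c_\lambda$ uniformly on $Q$ by the invariant Sobolev inequality (Langlands' Lemma in the cuspidal case) and the polynomial dependence of the $\gf$-action on $\lambda$ in the compact picture, and then compose with the uniform convexity bound $p_{\max}\lesssim q^1_{c_\gf}$ from Corollary \ref{Cor mps convexity}. The only cosmetic difference is the choice of compact generating set (you take $q$-unit balls of low $K$-types, the paper uses the set $C_\sigma$ from the non-compact model underlying the definition of $p_{\min}$ in \eqref{def pmin mps}); both are $\lambda$-independent and yield the same estimate.
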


\begin{proof}
Let $p_{\min}$ be the representative of the minimal norm as defined \eqref{def pmin mps} and $p_{\max}$ the corresponding dual norm. Then $p_{\rm aut}\leq c_\lambda \, p_{\max}$ for a constant 
$c_\lambda>0$. The convexity bound in Corollary \ref{Cor mps convexity} then gives 
$$
p_{\rm aut}(v_\tau)
\leq c_\lambda \, (1+|\tau|)^{c_\gf} q(v_\tau)
\qquad (v_\tau\in V_{\sigma,\lambda}[\tau], \tau \in \hat K).
$$
We wish to see that the constant $c_\lambda$ can be locally bounded in the parameter. Recall the geometry of balls from Subsection \ref{Subsect geometry of balls}. By definition the unit ball of $p_{\rm aut}$ is given by $(G\cdot \eta)^\circ$ whereas the unit ball of $p_{\rm max}$ is by given by $\co(G\cdot C_\sigma)$ where $C_\sigma\subset V_{\sigma,\lambda}^\infty$ can be chosen as a compact subset locally independent of $\lambda$.
Hence we obtain 
$$
c_\lambda
\leq \sup_{v\in \co(G \cdot C_\sigma)} p_{\rm aut}(v)
= \sup_{v\in C_\sigma} p_{\rm aut}(v)
< \infty.
$$
Now, given the $L^2$-normalization of $\eta$ we have in general $p_{\rm aut} \leq C q_k^{\rm st}$ for $k\in \N$ and $C>0$ independent of the representation. For $X$ compact this is the standard Sobolev Lemma and in the cuspidal case this follows from Langland's Lemma (see \cite[Prop. 4.1]{BR2} for $G=\SL(2,\R)$ for a nice proof which adapts to higher rank; see \cite[Lemma 12]{HC} for the general statement). Hence we obtain 
$$
c_\lambda
\leq C \sup_{v\in C_\sigma} q_k^{\rm st}(v)
$$
which is locally bounded in $\lambda$ by Proposition \ref{Prop equivalence of Sobolev norms}. 
\end{proof} 

We will now compare Proposition \ref{prop mps aut} with the results of \cite{BR} on Sobolev domination of the automorphic norm as well with recent results of \cite{BHMM} for the group $\SL(2,\C)$.

\subsubsection{Tight Sobolev domination of the automorphic norm}

We now summarize the results of Bernstein-Reznikov \cite{BR} on tight Sobolev domination of the automorphic norm in the cocompact case which was obtained via the elementary method of relative traces.

To proceed we need a bit extra but standard terminology from algebraic groups. A homogeneous space $X=K/H$ attached to a compact connected Lie group $K$ and closed algebraic subgroup $H$ is called spherical if its complexification $K_\C/H_\C$ is spherical. The latter means that one of the following equivalent conditions is satisfied:
\begin{itemize}
    \item There is a Borel subroup $B_\C\subset K_\C$ such that 
$B_\C H_\C\subset K_\C$ is open.
\item There is Borel subalgebra $\mathfrak{b}_\C\subset \kf_\C$ such that $\kf_\C=\hf_\C+\mathfrak{b}_\C$.
\item For every finite dimensional irreducible representation $V$ of $G_\C$ one has $\dim V^H\leq 1$. 
\end{itemize}
In particular, if $(K\times H)/\diag H $ is spherical, then $\dim \Hom_M(\sigma, \tau|_H)\leq 1$ for all $\sigma\in \hat H$ and $\tau \in \hat K$.

In case $G$ is connected reductive group $G$ of real rank one we recall that $K/M$ is spherical where $M=Z_K(\af)$. Moreover, among the simple real rank one algebras $\gf$ we have $(K\times M)/\diag (M)$ spherical if and only if  $\gf=\so(1,n)$ or $\gf=\su(1,n)$.
In these two cases the pairs $(\kf_\C,\mf_\C)$ are given by 
\begin{itemize}
    \item $(\so(n,\C), \so(n-1,\C))$ for $\gf=\so(1,n)$, $n\geq 2$,
    \item $(\gl(n,\C), \gl(n-1,\C))$ for $\gf=\su(1,n)$, $n\geq 2$.
\end{itemize}

For $\tau\in\hat K$ we abbreviate $d_\tau:=\dim \tau$. The next result explicates \cite[Th. 1.1]{BR}. 

\begin{prop}\label{prop BR}
Let $G$ be a connected real reductive group and $V$ a unitarizable irreducible Harish-Chandra module with $q$ a unitary norm. 
Let $V=\bigoplus_{\tau \in \hat K} V[\tau]$ be the isotypical decomposition and $m_\tau$ the multiplicity of the $K$-type $\tau$. Let $\Gamma<G$ be cocompact lattice, $\eta\in (V^{-\infty})^\Gamma$ be a non-zero automorphic functional and $p_{\rm aut}$ the associated automorphic norm. 
Then the following assertions hold true:
\begin{enumerate}[(i)]
\item\label{prop BR - item 1}
$p_{\rm aut}\lesssim q_s$ if and only if 
\begin{equation}\label{eq BR1} 
\sum_{\tau\in \hat K} \frac{m_\tau d_\tau}{ (1+|\tau|)^{2s}}<\infty.
\end{equation} 
\item\label{prop BR - item 2}
Condition \eqref{eq BR1} is satisfied if 
\begin{equation} \label{eq BR2}
s> \frac{\dim K}{2}.
\end{equation}
\item\label{prop BR - item 3} If $(K\times M)/\diag (M)$ is spherical, then condition \eqref{eq BR1} is equivalent to 
\begin{equation} \label{eq BR3}
\sum_{\tau\in \Spec_K(V)} \frac{d_\tau}{ (1+|\tau|)^{2s}}
<\infty.
\end{equation}
\item\label{prop BR - item 4} Suppose that $\gf=\so(1,n)$ for $n\geq 2$ and that $V$ is non-trivial. Then condition \eqref{eq BR1} is satisfied if and only if  $s>\frac{n-1}{2}$. 
\item\label{prop BR - item 5} Suppose that $\gf=\su(1,n)$ for $n\geq 2$ and that $V$ is non-trivial.
Then condition \eqref{eq BR1} is satisfied if and only if 
\begin{enumerate}[(a)]
        \item\label{prop BR - item 5a}
    $s>n-\frac{1}{2}$ if $V \simeq V_{\sigma,\lambda}$,
    \item\label{prop BR - item 5b}
    $s>\frac{n}{2}$ if $V$ is a generalized Verma module, for example a module of the holomorphic or antiholomorphic discrete series,
    \item\label{prop BR - item 5c}
    $s>n-1$ if $V\not\simeq V_{\sigma,\lambda}$ and $V$ is not a generalized Verma module. 
         \end{enumerate}
\end{enumerate}
\end{prop}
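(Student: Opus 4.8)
The plan is to deduce assertion (i) from the theorem of Bernstein--Reznikov and then to read off assertions (ii)--(v) by elementary harmonic analysis on the maximal compact subgroup $K$. For (i), fix for each $\tau\in\hat K$ a $q$-orthonormal basis $\{w_{\tau,j}\}_j$ of $V[\tau]$ and put $\Phi_\tau(x):=\sum_j|m_{w_{\tau,j},\eta}(x)|^2$. Since the unitary norm $q$ here is the one defined by $\eta$, we have $q(v)=\|m_{v,\eta}\|_{L^2(X)}$, hence $\int_X\Phi_\tau=m_\tau d_\tau$. Cauchy--Schwarz over the $K$-types, after inserting the weights $(1+|\tau|)^{\pm s}$, gives $|m_{v,\eta}(x)|^2\lesssim q_s(v)^2\,G_s(x)$ for all $v\in V^\infty$ and $x\in X$, with $G_s(x):=\sum_\tau(1+|\tau|)^{-2s}\Phi_\tau(x)$; conversely, evaluating a hypothetical inequality $p_{\rm aut}\le Cq_s$ on the vectors $v^F_{x_0}:=\sum_{\tau\in F}\sum_j\overline{m_{w_{\tau,j},\eta}(x_0)}\,(1+|\tau|)^{-2s}\,w_{\tau,j}$ (with $F\subset\hat K$ finite), for which $p_{\rm aut}(v^F_{x_0})\ge\sum_{\tau\in F}(1+|\tau|)^{-2s}\Phi_\tau(x_0)$ while $q_s(v^F_{x_0})^2$ is comparable to the same sum, shows that $p_{\rm aut}\lesssim q_s$ is \emph{equivalent} to the pointwise bound $\sup_{x\in X}G_s<\infty$. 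The necessity half of (i) is then immediate, since $\sum_\tau m_\tau d_\tau(1+|\tau|)^{-2s}=\int_X G_s\le\vol(X)\sup_x G_s<\infty$. The sufficiency half---that the $L^1$-bound $\int_X G_s<\infty$ already forces the $L^\infty$-bound $\sup_x G_s<\infty$---is precisely \cite[Th.~1.1]{BR}, proved there by the method of relative traces (amplification of the orthogonal projector onto the $K$-type $\tau$ inside $L^2(X)$), and this we take as a black box.

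For assertion (ii): by Casselman's subrepresentation theorem $V$ embeds into a minimal principal series $V_{\sigma,\lambda}$, so Frobenius reciprocity yields the uniform multiplicity bound $m_\tau\le\dim\Hom_M(\sigma,\tau|_M)\le\dim(\tau|_M)=d_\tau$ valid for every Harish-Chandra module. Hence $\sum_\tau m_\tau d_\tau(1+|\tau|)^{-2s}\le\sum_\tau d_\tau^2(1+|\tau|)^{-2s}$, and the latter converges for $s>\tfrac12\dim K$ because $\sum_{|\tau|\le R}d_\tau^2\asymp R^{\dim K}$ (Weyl's law for $\Delta_K$, equivalently the leading term $t^{-\dim K/2}$ of $\tr e^{-t\Delta_K}$ as $t\to0^+$). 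For assertion (iii), sphericity of $(K\times M)/\diag M$ gives $\dim\Hom_M(\sigma,\tau|_M)\le1$ for all $\sigma\in\hat M$, $\tau\in\hat K$, so the previous estimate improves to $m_\tau\in\{0,1\}$; thus $m_\tau d_\tau=d_\tau\,\mathbf 1_{\Spec_K(V)}(\tau)$ and \eqref{eq BR1} turns into \eqref{eq BR3}.

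For assertions (iv) and (v) the pairs $(\kf_\C,\mf_\C)=(\so(n,\C),\so(n-1,\C))$, resp.\ $(\gl(n,\C),\gl(n-1,\C))$, are spherical, so (iii) applies and it remains to compute the growth of $d_\tau$ along $\Spec_K(V)$ from the Weyl dimension formula and the interlacing branching rules $\SO(n)\downarrow\SO(n-1)$, resp.\ $U(n)\downarrow U(n-1)$. Since the noncompact part $\pf_\C$ of $\gf_\C$ is, as a $K$-module, a sum of copies of the standard representation (for $\so(1,n)$), resp.\ of the standard representation and its dual (for $\su(1,n)$), only the first coordinate, resp.\ the first and/or last coordinate, of the highest weight of a $K$-type of any irreducible $V$ can be unbounded. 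In the orthogonal case the number of positive roots of $\SO(n)$ involving the first coordinate is $n-2$, so $d_\tau\asymp|\tau|^{n-2}$ on $\Spec_K(V)$ and, for non-trivial $V$, $\Spec_K(V)$ has positive density in a one-parameter family; then \eqref{eq BR3} is comparable to $\sum_{k\ge1}k^{n-2-2s}$, convergent iff $s>\tfrac{n-1}2$, which is (iv). In the unitary case there are three asymptotic shapes of $\Spec_K(V)$: for $V\simeq V_{\sigma,\lambda}$ both extreme coordinates grow, filling a two-dimensional cone on which $d_\tau$ grows like a polynomial of degree $2n-3$ in the two growing coordinates, so \eqref{eq BR3} converges iff $s>n-\tfrac12$; for a generalized Verma module $V|_K\simeq S^\bullet(\pf^{\mp})\otimes F$ only one coordinate grows, $d_\tau\asymp|\tau|^{n-1}$, and convergence holds iff $s>\tfrac n2$; for the remaining composition factors of a reducible $V_{\sigma,\lambda}$ the analogous dimension count yields convergence iff $s>n-1$. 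This proves (v), the case analysis for reducible principal series of $\SU(n,1)$ being the one carried out in \cite{BR}.

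The one genuinely deep ingredient is the sufficiency direction of (i): the relative-trace estimate of \cite{BR} which promotes the $L^2$-average control of $G_s$ to a uniform pointwise bound. Everything else is bookkeeping, and the only part of that bookkeeping that is not entirely routine is (v), where one must invoke the classification of the irreducible Harish-Chandra modules of $\SU(n,1)$ (principal series, generalized Verma modules, and the intermediate composition factors) and check, via the branching rules and the Weyl dimension formula, that these three families reproduce exactly the thresholds $n-\tfrac12$, $\tfrac n2$ and $n-1$.
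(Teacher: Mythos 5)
Your proposal is correct and follows the paper's structure: cite Bernstein--Reznikov for (i), and reduce (ii)--(v) to asymptotic counts of $K$-types via the Weyl dimension formula and branching rules. The variations are worth noting. In (i) you give a self-contained reduction showing that $p_{\rm aut}\lesssim q_s$ is \emph{equivalent} to $\sup_{x\in X}G_s(x)<\infty$ (Cauchy--Schwarz in one direction, the test vectors $v^F_{x_0}$ in the other), delegating to \cite{BR} only the elliptic-regularity step from $\int_X G_s<\infty$ to $\sup_X G_s<\infty$; the paper cites \cite[Th.~1.1 and Prop.~A.2]{BR} for (i) wholesale, so your decomposition is a useful unpacking. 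In (ii) you obtain $s>\tfrac{1}{2}\dim K$ from Weyl's law $\sum_{|\tau|\le R}d_\tau^2\asymp R^{\dim K}$, whereas the paper bounds $d_\tau\lesssim(1+|\tau|)^{(\dim K-\rank K)/2}$ via the Weyl dimension formula and sums over a rank-$\rank K$ lattice; the two arguments are equivalent. In (iv) and (v) you locate the growth directions of $\Spec_K(V)$ via the $K$-module structure of $\pf_\C$, a clean shortcut to the paper's Casselman-subrepresentation-plus-branching analysis, and the subsequent dimension counts --- degree $n-2$ in one free parameter for $\so(1,n)$; degree $2n-3$ over a two-dimensional cone, degree $n-1$ over a ray, and degree $2n-3$ over a one-parameter ray in the three $\su(1,n)$ families --- recover the paper's thresholds $\tfrac{n-1}{2}$, $n-\tfrac12$, $\tfrac n2$, $n-1$.
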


\begin{proof}
(\ref{prop BR - item 1}) is \cite[Th. 1.1 and Prop. A.2]{BR}.
For (\ref{prop BR - item 2}) we use Weyl's dimension formula to bound $d_\tau\lesssim (1+|\tau|)^{\frac{1}{2}(\dim K - \rank K)}$. According to Harish-Chandra we have $m_\tau\leq d_\tau$ in general. Hence \eqref{eq BR1} is satisfied if
$$
\sum_{\tau\in \hat K} \frac{(1+|\tau|)^{\dim K -\rank K}}{ (1+|\tau|)^{2s}}<\infty,
$$
that is $2s - \dim K +\rank K >\rank K$ which in turn is condition \eqref{eq BR2}.

\par For (\ref{prop BR - item 3}) we note that $(K\times M)/ \diag(M)$ is spherical implies  $m_\tau\leq 1$ for all $\tau$
as one can embed $V$ into a minimal principal series $V_{\sigma,\lambda}$ by Casselman's subrepresentation theorem and $V_{\sigma,\lambda} \simeq \C[K\times_M \sigma]$ as a $K$-module. Thus  \eqref{eq BR3} follows. 
\par Moving on to (\ref{prop BR - item 4}) and \eqref{prop BR - item 5} we recall that $(K\times M)/ \diag(M)$ is spherical in the two given cases. 
Thus we need to explicate \eqref{eq BR3}. We start with the basic case for a module $V=V_{\1,\lambda}$ of the spherical principal series as the analysis here is very clean. 
Then $V=\C[K/M]$ as $K$-module and recall for the classical cases 
$$
\Spec_K \C[K/M]
=\begin{cases}2\N_0\e_1 & \hbox{for}\quad K/M=\SO(n)/\mathrm{O}(n-1) \\
(\N_0\e_1+ \Z\e_n)_+ & \hbox{for} \quad K/M=\mathrm{U}(n)/\mathrm{U}(n-1)\\
\end{cases}
$$
where $\e_j$ are the standard  weights of $\kf_\C$ which is either  $\so(n,\C)$ or $\gl(n,\C)$. Here $(\N_0\e_1+ \Z\e_n)_+= \{l_1 \e_1+l_2\e_{n}\mid  l_1\in \N_0, l_2\in \Z, l_1 \geq l_2\} $.

For $\so(n,\C)$ with $n=2$ we adapt accordingly and replace $\N_0$  by $\Z$. In case $K/M$ is not classical as above, then similar results hold with appropriate rational scaling of the semi-lattices which give the spectrum. We allow ourselves to skip the details.

We estimate now $d_\tau$ for $\tau \in \Spec_K(V)$ and let $\nu=\nu_\tau$ be the highest weight of $\tau$.
We start with 
$\gf =\so(1,n)$,  $n\geq 2$. Then $\nu_\tau = l \e_1$ for some $l\in \N_0$
and  Weyl's dimension formula gives 
$$
d_{\tau}
\asymp l^{n-2}.
$$
For $\gf=\su(1,n)$ we have $\nu_\tau = l_1 \e_1+ l_2\e_n$ with $l_1\in \N_0$ and $l_2\in\Z$ with $l_1\geq l_2$. In this case the dimension formula yields
$$
d_\tau
\asymp (l_1l_2)^{n-2} (l_1-l_2+1).
$$
Therefore, \eqref{eq BR3} is satisfied if and only if 
\begin{itemize}
\item $s>\frac{n-1}{2}$ for $\gf =\so(1,n)$ with $n\geq 2$,
\item $s>n -\frac{1}{2}$ for $\gf =\su(1,n)$ with $n\geq 2$.
\end{itemize}

\par Next we consider general $V_{\sigma, \lambda}\simeq \C[K\times_M\sigma]$. Note that $\sigma$ sits with multiplicity one in each $\tau\in \Spec_K \C[K\times_M \sigma]$. The branching rules for $K|M$ are classical and yield that the highest weight of $\sigma$ interlaces 
the highest weight of $\tau$. We explain this for $ K|M=\mathrm{U}(n)|\mathrm{U}(n-1)$ and let $\mu=(\mu_1,\ldots,\mu_{n-1})$ be the fixed highest weight of $\sigma$.
Then the highest weights $\nu=(\nu_1,\ldots,\nu_n)$  of  $\tau$ for  which there is an embedding of $\sigma\subset \tau$ are given by the interlacing conditions
$$
\nu_1
\geq \mu_1
\geq \nu_2\ldots
\geq \mu_{n-1}
\geq \nu_n\, .
$$
See \cite[Th. 8.1.1]{GW}.
This means that
$$
\Spec_K\C[K\times_M \sigma]
=\Z_{\geq \mu_1} \e_1 + F + \Z_{\leq\mu_{n-1} }\e_n
$$
for some finite set $F$ of weights. For $K|M=\SO(n)|\mathrm{O}(n-1)$ see \cite[Th. 8.1.3 and Th. 8.1.4]{GW} for the interlacing conditions which yield 
$$
\Spec_K\C[K\times_M \sigma]
=(2\Z)_{\geq \mu_1} \e_1 + F
$$
for a finite set of weights $F$. Having said all that we can now proceed as above and obtain (\ref{prop BR - item 4}) and \eqref{prop BR - item 5} for all principal series $V_{\sigma, \lambda}$. The most general case is obtained via embedding $V$ into some $V_{\sigma,\lambda}$.
If $V$ is not isomorphic to some $V_{\sigma,\lambda}$, then this means that $V$ is a module of the generalized discrete series; see \cite{Th} for $\gf=\so(1,n)$ and \cite{Kra} for $\gf=\su(1,n)$. 

We start with $\gf=\so(1,n)$ which features $\Spec_K\C [K\times_M \sigma]=\Z_{\geq \mu_1}\e_1 +F $ with $F$ a finite set. This is in essence a rank one semi-group and irreducible infinite dimensional submodules feature the same property \cite{Th}.
For $\gf=\su(1,n)$ we need to be more careful as the rank of the lattice drops by one if $V\neq V_{\sigma,\lambda}$; see \cite[Th. 5] {Kra}.
In more detail, the two extremal cases correspond to generalized Verma-modules, i.e. where $V$ admits a realization in holomorphic or antiholomorphic sections of a complex vector bundle $G\times_K W$ over the bounded symmetric domain $G/K$. 
These two have $K$-spectrum with full rank in 
$\N_{\geq \mu_1} \e_1 + F $ or $\Z_{\leq \mu_{n-1}}\e_n+F$ and thus
$d_\tau\asymp l^{n-1}$ for $l$ the free parameter. The convergence condition \eqref{eq BR3} is thus $s>\frac{n}{2}$, i.e. \eqref{prop BR - item 5b}.  For all intermediate discrete series and their limits the $K$-spectrum is in essence generated by an element $p\e_1 + q \e_n$ with $p,q\neq 0$. Thus $d_\tau \asymp l^{2n-3}$ with $l$ the free parameter.  Here the convergence condition is $s>n-1$, i.e. \eqref{prop BR - item 5c}
With that the proof of (\ref{prop BR - item 4}) and \eqref{prop BR - item 5} is complete.
\end{proof} 

\begin{rmk}\label{rmk automorphic}
Note that 
Proposition \eqref{prop BR} implies in the cocompact case the following convexity bound for automorphic Fourier coefficients
\begin{equation}\label{eq BR4}
|\eta(v_\tau)|
\leq p_{\aut}(v_\tau)
\leq C_s (1+\tau)^s q(v_\tau)
\qquad (v_\tau \in V[\tau])
\end{equation}
for any $s$ which satisfies \eqref{eq BR1}. 
We assume that $V=V_{\sigma,\lambda}$ is a unitary minimal principal series and compare \eqref{eq BR4} with Proposition \ref{prop mps aut} for $\sl(n,\R)$ and the real rank one algebras $\so(1,n)$ and $\su(1,n)$.  Recall that Proposition \ref{prop mps aut} gives 
\begin{equation}\label{eq our bound}
|\eta(v_\tau)|\leq p_{\aut}(v_\tau) \leq C (1+\tau)^{c_\gf} q(v_\tau)
\qquad (v_\tau \in V[\tau])
\end{equation}
and the constants $c_\gf$ are listed in Remark \ref{rmk constant}.
\begin{enumerate}[(a)]
\item\label{rmk automorphic - item 1}
For $\gf=\sl(n,\R)$ the group $M$ is finite and thus \eqref{eq BR1} is satisfied if and only if \eqref{eq BR2} holds. Hence, we need to compare $\frac{\dim K}{2}$ with $c_\gf= \frac{n(n^2-1)}{12}$. Note that $c_\gf=\frac{\dim K}{2}$ for $n=2$ and $c_\gf> \frac{\dim K}{2}$ drastically deviate for $n\geq 3$ as $\frac{\dim K}{2}\sim \frac{n^2}{4}$ and $c_\gf\sim \frac{n^3}{12}$.
\item\label{rmk automorphic - item 2}
We recall from Remark \ref{rmk constant} that $c_{\gf}=\frac{n-1}{2}$ for $\gf=\so(1,n)$ and $c_\gf = n$ for $\gf =\su(1,n)$. 
Hence $c_\gf$ matches the sharp bound for $\gf=\so(1,n)$ of Proposition \ref{prop BR}(\ref{prop BR - item 4} whereas for $\gf=\su(1,n)$ the bound in Proposition \ref{prop BR}(\ref{prop BR - item 5}) is stronger.
\end{enumerate}

We summarize our discussion:
The abstract convexity bound in Corollary \ref{Cor mps convexity} which is likely tight yields the automorphic convexity bound \eqref{eq BR4} implied by \cite{BR} for $\gf=\so(1,n)$ in the case where $X$ is compact.  We emphasize that our individual $K$-type bound \eqref{eq our bound} on $p_{\aut}(v_\tau)$ is stronger than the implied individual bound \eqref{eq BR4} from Proposition \ref{prop BR} as it does not require that $X$ be compact.
In the general and in particular cuspidal case it gives after summing over the one-dimensional semi-lattice of the $K$-spectrum the Sobolev domination 
\begin{equation} \label{eq aut domination}
p_{\rm aut}
\lesssim q_s
\qquad \big(s> \tfrac{n}{2}\big).
\end{equation}
which is fairly close to the tight Sobolev domination $s>\frac{n-1}{2}$ in the cocompact case.
We repeat the simple argument for \eqref{eq aut domination}. Let 
$v=\sum_\tau v_\tau$ be a smooth vector. Then
$$
p_{\rm aut}(v)
\leq \sum_\tau p_{\rm aut}(v_\tau)
\leq C \sum_{\tau} q_{c_\gf}(v_\tau) \leq C \sum_\tau \frac{1}{(1+|\tau|)^r}q_{c_\gf+ r}(v_\tau)
$$
Now apply Cauchy-Schwarz for $r>\frac{1}{2}$, note that $\sum_\tau \frac{1}{(1+|\tau|)^{2r}}<\infty$ and arrive at the domination 
$$
p_{\rm aut}(v)
\leq C_r \left(\sum_\tau q_{c_\gf+ r}(v_\tau)^2\right)^{\frac{1}{2}}
= C_r\, q_{c_\gf +r}(v)
$$
for a constant $C_r>0$.  

For other rank one groups our comparison in (\ref{rmk automorphic - item 2}) shows that the automorphic convexity bound of \cite{BR} is better and matters start to deviate drastically in higher rank as seen in (\ref{rmk automorphic - item 1}). Phrased differently, the maximal norm equals the automorphic norm on $K$-types for $\gf=\so(1,n)$ but probably not in general. 
\end{rmk} 

We summarize the essence of our discussion in what is worth recording. 

\begin{theorem} \label{thm Lorentz}
Let $V$ be a Harish-Chandra  module of the unitary principal series for a connected real reductive group $G$ with $\gf=\so(1,n)$, $n\geq 2$, and let $q$ be a unitary norm on $V$. Let $V=\bigoplus_{\tau\in \hat K}V[\tau]$ be the isotypical decomposition of $V$ into $K$-types and identify $\tau$ with its highest weight. Let $\Gamma<G$ be a lattice and $\eta\in (V^{-\infty})^\Gamma$ be a non-zero automorphic functional. Assume that $\Gamma$ is cocompact or $\eta$ is cuspidal. Then there exists a constant $C>0$ such that 
$$
p_{\rm aut}(v_\tau)
\leq C ( 1+|\tau|)^{\frac{n-1}{2}} q(v_\tau)
\qquad (\tau \in\hat K,  v_\tau\in V[\tau]).
$$
Moreover, one has the Sobolev domination 
\begin{equation} \label{eq Sob dom intro2}
p_{\rm aut}
\lesssim q_s
\qquad \big(s>\tfrac{n}{2}\big).
\end{equation}
\end{theorem}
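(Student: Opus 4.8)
The plan is to specialize the general machinery for minimal principal series to $\gf=\so(1,n)$ and then run the elementary summation argument of Remark \ref{rmk automorphic}. Throughout, $V=V_{\sigma,\lambda}$ with $\lambda\in i\af^*$, and the hypothesis that $\Gamma$ is cocompact or $\eta$ is cuspidal is used only to guarantee that the generalized matrix coefficients $m_{v,\eta}$ are bounded on $X=\Gamma\bs G$ — automatic when $X$ is compact, and valid in the cuspidal case because cusp forms decay at infinity — so that $p_{\rm aut}$ is a finite, isometric, $G$-continuous norm.

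The first step is numerical input: by Remark \ref{rmk constant}, for $\gf=\so(1,n)$ one has $\{\alpha\}\subset\Sigma^+\subset\{\alpha,2\alpha\}$ with $\dim\gf^{2\alpha}=0$ and $\dim\gf^\alpha=n-1$, hence the structural constant is $c_\gf=\tfrac{n-1}{2}$. For the individual $K$-type bound I would argue as follows. Being isometric, hence $G$-invariant, $p_{\rm aut}$ lies in $\No(V)=\No(V,\1)$, so $p_{\rm aut}\lesssim p_{\max}$. Since $\lambda\in i\af^*$ we are in the case $R=0$, and Corollary \ref{Cor mps convexity}(\ref{Cor mps convexity - item 3})(\ref{Cor mps convexity - item 3b}) gives $p_{\max}(v_\tau)\le c\,q_{c_\gf}(v_\tau)$; since $q$ is $K$-Hermitian, $q_s(v_\tau)\asymp(1+|\tau|)^s\,q(v_\tau)$ on each $K$-type by \eqref{eq D_s}. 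Combining these gives $p_{\rm aut}(v_\tau)\le C(1+|\tau|)^{c_\gf}q(v_\tau)=C(1+|\tau|)^{(n-1)/2}q(v_\tau)$, which is the first displayed inequality. (This is essentially Proposition \ref{prop mps aut}; the standing $L^2$-normalization hypothesis there is needed only for uniformity in $\lambda$ and may be omitted for a single fixed $V$.)

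For the Sobolev domination $p_{\rm aut}\lesssim q_s$ with $s>\tfrac n2$, I would combine subadditivity of the isometric norm over the isotypical decomposition with the fact — established in the proof of Proposition \ref{prop BR}(\ref{prop BR - item 4}) via the branching rules for $K/M=\SO(n)/\mathrm{O}(n-1)$ — that $\Spec_K(V)=\Spec_K\C[K\times_M\sigma]$ is contained in a one-dimensional semi-lattice of the shape $(2\Z)_{\ge\mu_1}\e_1+F$ with $F$ finite. Consequently $\sum_{\tau\in\Spec_K(V)}(1+|\tau|)^{-2r}<\infty$ already for every $r>\tfrac12$. For $v=\sum_\tau v_\tau\in V^\infty$, inserting the $K$-type bound, extracting a factor $(1+|\tau|)^{-r}$, and applying Cauchy--Schwarz over $\Spec_K(V)$ then yields
$$
p_{\rm aut}(v)\;\le\;\sum_{\tau\in\Spec_K(V)}p_{\rm aut}(v_\tau)\;\le\;C_r\Big(\sum_\tau(1+|\tau|)^{2(c_\gf+r)}q(v_\tau)^2\Big)^{1/2}\;=\;C_r\,q_{c_\gf+r}(v)\qquad(r>\tfrac12),
$$
and hence $p_{\rm aut}\lesssim q_s$ for all $s>c_\gf+\tfrac12=\tfrac n2$, which is \eqref{eq Sob dom intro2}.

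The analytic substance is carried entirely by Proposition \ref{prop mps aut} and, behind it, Key Lemma \ref{Lemma Uniform lower bounds} on uniform-in-$K$-type lower bounds for matrix coefficients obtained by Dirac approximation — all already in place — so I do not expect a genuine obstacle here. The only point specific to the Lorentz case, and the reason \eqref{eq Sob dom intro2} lands within $\tfrac12$ of the optimal cocompact threshold $s>\tfrac{n-1}{2}$ of \cite{BR}, is the one-dimensionality of the $K$-spectrum of $V_{\sigma,\lambda}$, which reduces the cost of the Cauchy--Schwarz summation from $\tfrac{\rank K}{2}$ to $\tfrac12$.
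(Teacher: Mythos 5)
Your proposal is correct and follows essentially the same route as the paper: the individual $K$-type bound is Proposition \ref{prop mps aut} specialized to $c_\gf=\tfrac{n-1}{2}$ (Remark \ref{rmk constant}) and $R=0$, via $p_{\rm aut}\lesssim p_{\max}\lesssim q_{c_\gf}$ from Corollary \ref{Cor mps convexity}, and the Sobolev domination with $s>\tfrac n2$ is exactly the Cauchy--Schwarz summation over the one-dimensional $K$-spectrum carried out in Remark \ref{rmk automorphic}. You also correctly identify that the cocompact/cuspidal hypothesis enters only to make $p_{\rm aut}$ a finite $G$-continuous norm, and that the one-dimensionality of $\Spec_K(V)$ is precisely what keeps the Cauchy--Schwarz loss at $\tfrac12$ rather than $\tfrac{\rank K}{2}$, which is the source of the near-optimality.
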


\subsubsection{The automorphic norm for $G=\SL(2,\C)$ with regard to locally bounded spectral parameter}\label{Subsubsection Blomer}

We consider $G=\SL(2,\C)$ and note that $\gf\simeq \so(1,3)$ is of real rank one. We stick with the usual choices of $K=\SU(2)$ and 
$A=\{\diag(t, \frac{1}{t})\mid t>0\}$. 
Then $M=T\simeq \mathbb{S}^1$ consists of the unitary diagonal matrices. We let $\sigma=\sigma_l$ be given by $\sigma_l(z)=z^l$ 
for $z\in \mathbb{S}^1=T$ and $l\in \N_0$. The concern in \cite{BHMM} is with the
modules $V_{\sigma_l, \lambda}$ with $l$ fixed and $\lambda\in i\af^*$ varying in a compact interval $I\subset i\af^*$.

Representations $\tau=\tau_n\in \hat K$ are parameterized by their highest weights $n\in\N_0$ and have dimension $n+1$.
As $c_\gf=1$, our general convexity bound from Proposition \ref{prop mps aut} gives
$$
p_{\aut}(v_n)
\leq C (1+n) q(v_n)
\qquad (v_n \in V_{\sigma,\lambda}[\tau_n], \lambda \in I).
$$
with a constant $C$ depending on the compact interval $I$.

In \cite{BHMM} the authors specialize in newforms, i.e their focus is on the minimal $K$-type $\tau=\tau_l$ with highest weight $l$. Note that the $T$-weights of 
$\tau_l$ are 
$$
\Spec_T(\tau_l)
=\{ - l, -l +2, \ldots, l-2, l\}.
$$
Let $(u_j)_{j}$ be an orthonormal basis of $T$-eigenvectors for a fixed inner product on $\tau_l$. Then the minimal $K$-type of $V_{\sigma_l,\lambda}[\tau_l]$ is spanned by the $T$-eigenfunctions
$$
\tilde v_{j,l}(k)
= \la \tau_l(k^{-1})u_j, u_l\ra
\qquad (k \in K).
$$
All the $\tilde v_{j,l}$ have $L^2$-norm $\frac{1}{\sqrt{l+1}}$ by Schur-Weyl orthogonality. We write $v_{j,l}$ for their normalization.
We then obtain 
$$
p_{\rm aut} (v_{l,l})
\leq C ( 1+l)
$$
which agrees with the convexity bound in \cite{BHMM}. 
We can say a little more. The eigenfunction 
$v_{l,l}$ is special in the sense that it has maximal $\sup$-norm 
$$
\|v_{l,l}\|_\infty
=v_{l,l}(\1)=\sqrt{l+1}.
$$

Now in our general convexity bound from Corollary \ref{Cor mps convexity} we haven't used the full strength of Key Lemma \ref{Lemma Uniform lower bounds}
which uses the stronger norm $q^\infty$ instead of $q$. This gives a slightly better bound $p_{\max}(v_{l,l}) \leq C \sqrt{l+1}$ and with that 
$$
p_{\rm aut}(v_{l,l})
\leq C \sqrt{l+1}
$$
which is in accordance (and in fact slightly better) with \cite[Th. 3(b)]{BHMM}.

\subsection{Speculation on norm-rigidity}\label{Subsection speculation}

One might pose the following rigidity problem. Suppose $\Gamma_1, \Gamma_2<G$ are cocompact lattices in a general semi-simple Lie group and let $V$ be a Harish-Chandra module which is automorphic with respect to $\Gamma_i$-invariant vectors $\eta_i\neq 0$. Consider the two automorphic norms
$$
p_i (v)
= \sup_{g \in G} |m_{v, \eta_i}(g)|.
$$
Does then $[p_1]=[p_2]$ imply that $\Gamma_1$ and $\Gamma_2$ are commensurable?
We refine the question further and consider a single cocompact lattice $\Gamma$ with higher dimensional multiplicity space, i.e. $\dim (V^{-\infty})^\Gamma>1$. Take $\eta_1, \eta_2 \in (V^{-\infty})^\Gamma\bs \{0\}$ linearly independent. Can we then distinguish $\eta_1$ and $\eta_2$ through $[p_1]$ and $[p_2]$?

\section{\texorpdfstring{Estimates of matrix-coefficients for $\SL(2,\R)$}{Estimates of matrix-coefficients for SL(2,R)}}\label{Sec9: key estimates}

This section is devoted to establish the bounds in Propositions \ref{thm upper-lower} and \ref{thm estimate convex combination} for the group $G=\SL(2,\R)$. 
Throughout this section we will use the following notation. Let $S\subseteq \Z$ and let $A(n), B(n)\in\C$ for $n\in S$. 
\begin{itemize}
\item If there exists a constant $c>0$ such that 
$c^{-1} |A(n)|\leq |B(n)|\leq c |A(n)|$ for all but finitely many $n\in S$, then we write
$$
A(n)
\asymp B(n).
$$
\item If there exists a constant $c>0$ so that  $A(n)\leq c\,B(n)$ for all but finitely many $n\in S$, then we write
$$
A(n)
\lesssim B(n).
$$
\item If $A(n)=B(n)\big(1+O(n^{-\alpha})\big)$ for some $\alpha>0$ as $n\to\infty$, then we write 
$$
A(n)
\sim B(n).
$$
\end{itemize}

The unitary dual of $G$ is partitioned in the unitary principal series, complementary series and discrete series. The proofs for both propositions will be divided accordingly.

\subsection{Proof of Proposition \ref{thm upper-lower} }\label{subsection Proof for Theorem upper-lower}

Notice that the two statements in Proposition \ref{thm upper-lower} are equivalent by duality. We therefore confine ourselves to prove Proposition \ref{thm upper-lower}(\ref{thm upper-lower - item 1}).

\subsubsection{Unitary principal series and complementary series representations}

We start with a brief description of the principal series. 

We let $G=\SL(2,\R)$ act on $\R^2$ naturally and set $N$ to be the stabilizer of the point $\begin{bmatrix} 1\\ 0\end{bmatrix}$. We identify $G/N$ with $\R^2\bs\{0\}$ via 
$gN\mapsto g\cdot \begin{bmatrix} 1\\ 0\end{bmatrix}$. For a character $\sigma: \R^\times \to \{-1,1\}$ and $\lambda\in \C$ we define $V_{\sigma,\lambda}^\infty $ as the space of smooth function 
$f\in C^\infty(\R^2\bs\{0\})$ which are homogeneous in the sense of 
$$
f(tx)
= \sigma(t) |t|^{-1 - \lambda} f(x)
\qquad (t\in \R^\times, x \in \R^2\bs\{0\}).
$$ 
Then $V_{\sigma,\lambda}^\infty$ becomes a $G$-module under the natural left regular action: 
$$
(\pi_{\sigma,\lambda}(g)f)(x)
= f(g^{-1}\cdot x)
\qquad (g \in G, f\in V_{\sigma,\lambda}^\infty, x\in \R^2\bs \{0\}).
$$
The restriction of smooth homogeneous functions to the line $\begin{bmatrix} 1\\ \R\end{bmatrix}$ is faithful and henceforth we view 
$V_{\sigma,\lambda}^\infty$ as a subspace of $C^\infty(\R)$ via $f \leftrightarrow \phi_f$ and 
$\phi_f (x) = f(1,x)$. 
The Harish-Chandra module of $V_{\sigma,\lambda}^\infty$ is denoted by $V_{\sigma,\lambda}$.
Under the natural identification of $\hat K \simeq \Z$ we have
$$
\Spec_K (V_{\sigma,\lambda})
=\begin{cases}
2\Z & \hbox{for}\ \sigma=\1,\\
2\Z+1 & \hbox{for}\ \sigma\neq \1.
\end{cases}
$$
The $K$-types are one-dimensional and spanned by the functions 
$$
\phi_n^\lambda(x)
=\frac{e^{in \arctan x}} {(1+x^2)^{\frac{1}{2}(1 +\lambda)}}
\qquad (x\in \R, n \in \Z).
$$
We note that 
\begin{equation} \label{pairing nondeg}
(\phi, \psi)
:=\int_\R \phi(x) \psi(x) \,dx
\qquad\big(\phi\in V_{\sigma,-\lambda}^\infty, \psi\in V_{\sigma,\lambda}^\infty\big)
\end{equation} 
defines a a non-degenerate $G$-invariant bilinear pairing $
(\cdot,\cdot):V_{\sigma,-\lambda}^\infty\times V_{\sigma,\lambda}^\infty\to \C$.
For imaginary parameters $\lambda\in i\R$, it follows that 
\begin{equation} \label{def unitary ps} 
\la \phi, \psi\ra
:=\int_\R \phi(x) \oline{\psi(x)}\,dx 
\qquad \big(\phi,\psi\in V_{\sigma,\lambda}^{\infty}\big)
\end{equation} 
defines a unitary inner product on $V_{\sigma,\lambda}^{\infty}$. For $\lambda\in i\R$ we thus obtain the so-called unitary principal series of representations $V_{\sigma,\lambda}$. 
The remaining unitarizable $V_{\sigma,\lambda}$ form the complementary series. They are of the form $V_{\1,\lambda}$ with $\lambda \in (-1,1)\bs\{0\}$. For $\lambda>0$ the standard Knapp-Stein intertwining operator $J_{\lambda}: V_{\1,\lambda} \to V_{\1,-\lambda}$ is defined by convergent integrals and yields an equivalence $V_{\1,\lambda} \simeq V_{\1,-\lambda}$. Furthermore, \eqref{pairing nondeg} yields that the prescription 
\begin{equation}\label{def unitary cs} 
\la \phi, \psi\ra
:=\int_\R \phi(x) \oline{J_{\lambda}(\psi)(x)}\,dx
\qquad\big(\phi,\psi\in V_{\1,\lambda}^\infty\big)
\end{equation}
defines a unitary inner product on $V_{\1,\lambda}^\infty$. 

Finally, we record the orthogonality relations 
$$
(\phi_m^{-\lambda} , \phi_n^{\lambda})
=\frac{1}{\pi} \delta_{n, -m}
$$
for all $\lambda\in\C$.

For fixed $\lambda\in \C$ and $m\in \Spec_K(V_{\sigma,\lambda})$ we are interested in the behavior of the quantity 
$$
\sup_{g\in G} |(\pi_{\sigma,-\lambda} (g)\phi_m^{-\lambda}, \phi_n^\lambda)|
$$
in dependence of $n\in \Spec_K(V_{\sigma,\lambda}^\infty)\subset \Z$. 
 
\begin{lemma}
For fixed $m \in \Spec_K(V_{\sigma,-\lambda})$ and fixed $\lambda \in \C$ with $|\re \lambda|< 1$ there exists a $C>0$ such that 
$$
\sup_{g\in G} |(\pi_{\sigma,-\lambda} (g)\phi_m^{-\lambda}, \phi_n^\lambda)|\leq C 
|n|^{-\frac{1}{2}(1 -|\re \lambda|)}\log(1+|n|) 
$$
for all $n \in \Spec_K(V_{\sigma,\lambda})\bs \{0\}\subset \Z \bs\{0\}$. 
\end{lemma}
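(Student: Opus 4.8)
The plan is to reduce the supremum over $G$ to a one–parameter family of oscillatory integrals over a compact interval, and then to split the parameter range at the scale $t\sim\log|n|$.

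Since $\phi_m^{-\lambda}$ and $\phi_n^\lambda$ are $K$-weight vectors, the Cartan decomposition $G=K\overline{A^+}K$ with $\overline{A^+}=\{a_t=\diag(e^{t/2},e^{-t/2}):t\ge0\}$ gives $|(\pi_{\sigma,-\lambda}(k_1a_tk_2)\phi_m^{-\lambda},\phi_n^\lambda)|=|(\pi_{\sigma,-\lambda}(a_t)\phi_m^{-\lambda},\phi_n^\lambda)|$, so it suffices to bound $m(a_t):=|(\pi_{\sigma,-\lambda}(a_t)\phi_m^{-\lambda},\phi_n^\lambda)|$ uniformly in $t\ge0$. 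Working in the line model and using homogeneity of the sections one gets $(\pi_{\sigma,-\lambda}(a_t)\phi_m^{-\lambda})(x)=e^{\frac t2(1-\lambda)}\phi_m^{-\lambda}(e^tx)$, whence, after the substitution $x=\tan\theta$,
\[
m(a_t)=e^{\frac t2(1-\lambda)}\int_{-\pi/2}^{\pi/2}e^{i\Phi_t(\theta)}B_t(\theta)\,d\theta,
\]
where $\Phi_t(\theta)=m\arctan(e^t\tan\theta)+n\theta$ and $B_t(\theta)=(\cos^2\theta+e^{2t}\sin^2\theta)^{-\frac12(1-\lambda)}$. Here $B_t$ is smooth on $[-\tfrac\pi2,\tfrac\pi2]$, $|B_t|\le1$ and monotone in $|\theta|$ (using $\re\lambda<1$), and $\Phi_t'(\theta)=n+me^t(\cos^2\theta+e^{2t}\sin^2\theta)^{-1}$ extends continuously to the endpoints with $\Phi_t'(\pm\tfrac\pi2)=n+me^{-t}$.

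First I would record the \emph{trivial bound}: estimating $m(a_t)\le e^{\frac t2(1-\re\lambda)}\int_{-\pi/2}^{\pi/2}(\cos^2\theta+e^{2t}\sin^2\theta)^{-\frac12(1-\re\lambda)}\,d\theta$ and evaluating the last integral by splitting at the scale $\theta\sim e^{-t}$ (this is where $|\re\lambda|<1$ enters) yields $m(a_t)\lesssim(1+t)\,e^{-\frac t2(1-|\re\lambda|)}$ for all $t\ge0$, with the factor $1+t$ needed only when $\re\lambda=0$. Next comes the \emph{oscillatory bound}, valid on a bounded $t$-range. Put $\tau_n:=\log\frac{|n|}{2\max(|m|,1)}$. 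For $0\le t\le\tau_n$ one has $|me^t(\cos^2\theta+e^{2t}\sin^2\theta)^{-1}|\le|m|e^t\le|n|/2$, so $|\Phi_t'(\theta)|\ge|n|/2$ uniformly in $\theta$, and a single integration by parts gives
\[
m(a_t)\ \lesssim\ e^{\frac t2(1-\re\lambda)}\Big(\tfrac{|B_t(\pm\pi/2)|}{|n|}+\tfrac1{|n|}\!\int_{-\pi/2}^{\pi/2}\!|B_t'|\,d\theta+\tfrac1{|n|^2}\!\int_{-\pi/2}^{\pi/2}\!|B_t\Phi_t''|\,d\theta\Big).
\]
This is the step I expect to be the main obstacle: $B_t$, $\Phi_t'$ and $\Phi_t''$ all acquire $t$-dependent singular behaviour concentrated near $\theta\sim e^{-t}$, and one must see that their contributions are nevertheless harmless. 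The key point is that under the substitution $u=\cos^2\theta+e^{2t}\sin^2\theta$ both $B_t'$ and $\Phi_t''$ become integrable negative powers of $u$ — precisely because $\re\lambda<1$ — so that $|B_t(\pm\tfrac\pi2)|=e^{-t(1-\re\lambda)}$, $\int_{-\pi/2}^{\pi/2}|B_t'|\,d\theta\lesssim1$ and $\int_{-\pi/2}^{\pi/2}|B_t\Phi_t''|\,d\theta\lesssim|m|e^t$; equivalently, $|B_t|$ being monotone is what makes a single integration by parts suffice. Feeding in $e^t\le e^{\tau_n}\asymp|n|$ then gives $m(a_t)\lesssim|n|^{-\frac12(1+\re\lambda)}$ on $[0,\tau_n]$, with no logarithmic loss.

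Finally I would patch the two estimates at $t=\tau_n$. On $[0,\tau_n]$ the oscillatory bound gives $m(a_t)\lesssim|n|^{-\frac12(1+\re\lambda)}\le|n|^{-\frac12(1-|\re\lambda|)}$. On $(\tau_n,\infty)$, since $\tau_n\to\infty$ the function $t\mapsto(1+t)e^{-\frac t2(1-|\re\lambda|)}$ is decreasing there, so the trivial bound gives $\sup_{t>\tau_n}m(a_t)\lesssim(1+\tau_n)e^{-\frac{\tau_n}2(1-|\re\lambda|)}\asymp(\log|n|)\,|n|^{-\frac12(1-|\re\lambda|)}$ — this is where the logarithm comes from. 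Taking the supremum over $t\ge0$, and enlarging $C$ to absorb the finitely many small $|n|$ (for which $\sup_t m(a_t)<\infty$ by the trivial bound), yields $\sup_{g\in G}|(\pi_{\sigma,-\lambda}(g)\phi_m^{-\lambda},\phi_n^\lambda)|\lesssim|n|^{-\frac12(1-|\re\lambda|)}\log(1+|n|)$, as claimed. A variant would be to invoke Bargmann's closed formula for $m(a_t)$ as a Gauss hypergeometric function and apply uniform asymptotics of ${}_2F_1$, but the route above is self-contained.
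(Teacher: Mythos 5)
Your argument is correct, and it takes a genuinely different route from the paper's. The paper stays in the noncompact line model with $a_t=\diag(t,t^{-1})$, $t\ge1$: for the large-$t$ regime $t\ge\sqrt n$ it bounds $\mathcal I_{\lambda,n}(t)$ by splitting $\R$ at $|x|=1$ and using absolute-value estimates only, while for $1\le t\le\sqrt n$ it splits $\R$ at $|x|=\sqrt n$, bounds the tail $|x|\ge\sqrt n$ trivially, and on $|x|\le\sqrt n$ integrates by parts once using only the $n$-oscillation $e^{in\arctan x}=\frac{1+x^2}{in}\frac{d}{dx}e^{in\arctan x}$, leaving the $e^{im\arctan(t^2x)}$ factor inside and estimating three resulting integrals. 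You instead pass to the compact circle model, which removes the need for cutoffs in the spatial variable, and you integrate by parts against the \emph{full} phase $\Phi_t'=n+me^t(\cos^2\theta+e^{2t}\sin^2\theta)^{-1}$, using the uniform lower bound $|\Phi_t'|\ge|n|/2$ on $[0,\tau_n]$. The observations that $|B_t|$ is monotone, $|B_t(\pm\pi/2)|=e^{-t(1-\re\lambda)}$, $\int|B_t'|\lesssim_\lambda1$ (this is total variation, since $|B_t|$ is monotone) and $\int|B_t\Phi_t''|\lesssim|m|e^t$ (the $\sin(2\theta)$ factor tames the singularity, convergence requiring exactly $\re\lambda<1$) are all correct after checking. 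The two decompositions of the $t$-range agree under $t_{\rm paper}=e^{t_{\rm yours}/2}$, so the thresholds $\sqrt n$ and $\tau_n\sim\log n$ match. One interesting difference in the output: in the paper the $\log$ factor enters both via the trivial bound for $\lambda_r=0$ and via the $\mathcal I_{n,2}$ integration-by-parts estimate on $1\le t\le\sqrt n$, whereas in your argument the oscillatory bound on $[0,\tau_n]$ gives $|n|^{-\frac12(1+\re\lambda)}$ with no logarithm, and the $\log$ comes solely from the $\re\lambda=0$ case of the trivial bound on $(\tau_n,\infty)$. Your argument therefore actually proves the slightly stronger statement that the $\log(1+|n|)$ factor can be dropped whenever $\re\lambda\neq0$; this is consistent with (and recovers) the sharper behaviour recorded in Proposition \ref{thm upper-lower}, where the logarithmic term carries an exponent $\varepsilon(V)$ distinguishing the discrete series from the others. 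A cosmetic note: the constants in your displayed integration-by-parts bound should have $\inf|\Phi_t'|$ in place of $|n|$, but since $\inf|\Phi_t'|\ge|n|/2$ this only changes absolute constants.
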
 

\begin{proof}
We fix $\lambda\in \C$ with $|\re\lambda|<1$ and $m\in\Z$.
In view of the Cartan decomposition $G=K\oline{A^+}K$ we have to estimate the integrals 
$$
\mathcal{I}_{\lambda,n}(t)
:=\int_\R (\pi_{\sigma,-\lambda}(a_t )\phi^{-\lambda}_m)(x) \phi_n^{\lambda}(x)\,dx
=\int_\R \frac{t^{1-\lambda}e^{im \arctan(t^{2} x)}}{ (1+t^4 x^2)^{\frac{1}{2}(1 - \lambda)}} \frac{e^{in \arctan(x)}} {(1+x^2)^{\frac{1}{2}(1 +\lambda)}}\,dx
$$
with $a_t=\begin{pmatrix} t & 0 \\ 0 &t^{-1}\end{pmatrix}$ for $t\geq 1$. 
We abbreviate with $\lambda_r=\re\lambda$. 

We first assume $t\geq \sqrt{n}$. We split the integral $\mathcal{I}_{\lambda,n}(t)= \mathcal{I}_{n,1}(t)+\mathcal{I}_{n,2}(t)$ corresponding to $|x|\leq 1$ and $|x|\geq 1$. 
We obtain 
\begin{align*}
|\mathcal{I}_{n,1}(t)| & \leq 2 \int_0^1 \frac{t^{1-\lambda_r}}{ (1+t^4 x^2)^{\frac{1}{2}(1 - \lambda_r)}} \frac{1} {(1+x^2)^{\frac{1}{2}(1 +\lambda_r)}}\,dx \\
& \lesssim \int_0^1 \frac{t^{1-\lambda_r}}{ (1+t^4 x^2)^{\frac{1}{2}(1 - \lambda_r)}} \,dx = \int_0^{t^2} \frac{t^{-1-\lambda_r}}{ (1+x^2)^{\frac{1}{2}(1 - \lambda_r)}}\,dx \\
&\lesssim \int_1^{t^2} \frac{t^{-1-\lambda_r}}{ x^{1 - \lambda_r}}\,dx \lesssim \begin{cases} \big|t^{-1+\lambda_r}-t^{-1-\lambda_r} \big|& \hbox{for} \ \lambda_r \neq 0\\
\frac{\log t}{t} & \hbox{for} \ \lambda_r=0\end{cases}\\
&\lesssim \begin{cases} n^{-\frac{1}{2}(1 -|\lambda_r|)} & \hbox{for} \ \lambda_r \neq 0\\
\frac{\log n}{\sqrt{n}} & \hbox{for} \ \lambda_r=0\end{cases}
\end{align*} 
We continue with
\begin{align*}
|\mathcal{I}_{n,2}(t)| & \leq 2 \int_1^\infty\frac{t^{1-\lambda_r}}{ (1+t^4 x^2)^{\frac{1}{2}(1 - \lambda_r)}} \frac{1} {(1+x^2)^{\frac{1}{2}(1 +\lambda_r)}}\,dx \\
& \lesssim \int_1^\infty \frac{t^{1-\lambda_r}}{ t^{2-2\lambda_r} x^{1 - \lambda_r}}\frac{1}{x^{1+\lambda_r}} \,dx \lesssim \int_1^{\infty} \frac{t^{-1+\lambda_r}}{ x^2}\,dx \lesssim t^{-1+\lambda_r}\\
&\lesssim n^{-\frac{1}{2}(1 -|\lambda_r|)}
\end{align*} 
and conclude our discussion for $\mathcal{I}_{\lambda, n}(t)$ for $t\geq \sqrt{n}$. 
\par We move on and focus on $1\leq t \leq \sqrt{n}$. We split the integral $\mathcal{I}_{\lambda,n}(t)= \mathcal{I}_{n,1}(t)+\mathcal{I}_{n,2}(t)$ this time corresponding to $|x|\geq \sqrt{n}$ and $|x|\leq \sqrt{n}$. 
Now 
\begin{align*} |\mathcal{I}_{n,1}(t)| & \leq 2\int_{\sqrt{n}}^\infty \frac{t^{1-\lambda_r}}{ (1+t^4 x^2)^{\frac{1}{2}(1 - \lambda_r)}} \frac{1} {(1+x^2)^{\frac{1}{2}(1 +\lambda_r)}}\,dx \\
& \lesssim \int_{\sqrt{n}}^{\infty} \frac{t^{-1+\lambda_r}}{ x^2}\,dx \lesssim \frac {t^{-1+\lambda_r}}{\sqrt{n}} \lesssim n^{-{\frac{1}{2}}}\, .
\end{align*} 
For $\mathcal{I}_{n,2}(t)$ we first observe that 
$$
e^{in \arctan x}
=\frac {1+x^2}{in} \frac {d}{dx} e^{in \arctan x}
$$
and rewrite 
$$
\mathcal{I}_{n,2}(t)
=\frac{1}{in} \int_{|x|\leq \sqrt{n}} \frac{t^{1-\lambda}e^{im \arctan(t^{2} x)}}{ (1+t^4 x^2)^{\frac{1}{2}(1 - \lambda)}} (1+x^2)^{\frac{1}{2}(1 -\lambda)} \frac {d}{dx} e^{in \arctan x} \,dx.
$$
We now perform integration by parts and first estimate the boundary terms by
\begin{align*} &\left| \frac{1}{in}\frac{t^{1-\lambda}e^{im \arctan(t^{2} x)}}{ (1+t^4 x^2)^{\frac{1}{2}(1 - \lambda)}} (1+x^2)^{\frac{1}{2}(1 -\lambda)} e^{in \arctan x}\Big|_{x=-\sqrt{n}}^{x={\sqrt n}}\right|\\
&\lesssim \frac{1}{n} \frac{t^{1-\lambda_r}}{ (t^2 \sqrt{n})^{1 - \lambda_r}} \sqrt{n}^{1 -\lambda_r} =\frac{1}{nt^{1-\lambda_r}}\lesssim n^{-1}
\end{align*} 
Thus we get that 
\begin{align*}
|\mathcal{I}_{n,2}(t)| 
&\lesssim n^{-1} + \frac{1}{n} \int_0^{\sqrt{n}} \left|\frac{d}{dx}\left(\frac{t^{1-\lambda_r}e^{im \arctan(t^{2} x)}}{ (1+t^4 x^2)^{\frac{1}{2}(1 - \lambda_r)}} (1+x^2)^{\frac{1}{2}(1 -\lambda_r)} \right) \right| \,dx\\
&\lesssim n^{-1}
+\frac{t^{1-\lambda_r}}{n} \int_0^{\sqrt{n}} \frac{t^4x(1+x^2)^{\frac{1}{2}(1 -\lambda_r)} }{ (1+t^4 x^2)^{\frac{1}{2}(3 - \lambda_r)}}\,dx\\
&\qquad+\frac{t^{1-\lambda_r}}{n}\int_0^{\sqrt{n}}\frac{x}{ (1+t^4 x^2)^{\frac{1}{2}(1 - \lambda_r)}(1+x^2)^{\frac{1}{2}(1 +\lambda_r)}}\,dx\\
&\qquad+\frac{m t^{3-\lambda_r}}{n}\int_0^{\sqrt{n}}\frac{(1+x^2)^{\frac{1}{2}(1 -\lambda_r)}}{ (1+t^4 x^2)^{\frac{1}{2}(3 - \lambda_r)}}\,dx.
\end{align*}
We now estimate the last three integrals. For the first we have
\begin{align*}
&\int_0^{\sqrt{n}} \frac{t^4x(1+x^2)^{\frac{1}{2}(1 -\lambda_r)} }{ (1+t^4 x^2)^{\frac{1}{2}(3 - \lambda_r)}}\,dx \\
&=\int_0^{1} \frac{t^4x(1+x^2)^{\frac{1}{2}(1 -\lambda_r)} }{ (1+t^4 x^2)^{\frac{1}{2}(3 - \lambda_r)}}\,dx
+\int_1^{\sqrt{n}} \frac{t^4x(1+x^2)^{\frac{1}{2}(1 -\lambda_r)} }{ (1+t^4 x^2)^{\frac{1}{2}(3 - \lambda_r)}}\,dx\\
&\lesssim \int_0^{1} \frac{t^4x }{ (1+t^4 x^2)^{\frac{1}{2}(3 - \lambda_r)}}\,dx
+\int_1^{\sqrt{n}} \frac{t^4x^{2-\lambda_r} }{ (t^2x)^{3 - \lambda_r}}\, dx\\
&=\int_0^{t^4} \frac{1}{ (1+y)^{\frac{1}{2}(3 - \lambda_r)}} \,dy+ \frac{1}{2 t^{2 - 2\lambda_r}} \log n
\lesssim 1+\log n.
\end{align*}
We move on to the second integral. We have
\begin{align*}
&\int_0^{\sqrt{n}}\frac{x}{ (1+t^4 x^2)^{\frac{1}{2}(1 - \lambda_r)}(1+x^2)^{\frac{1}{2}(1 +\lambda_r)}}\,dx\\
&=\int_0^{1}\frac{x}{ (1+t^4 x^2)^{\frac{1}{2}(1 - \lambda_r)}(1+x^2)^{\frac{1}{2}(1 +\lambda_r)}}\,dx
+\int_1^{\sqrt{n}}\frac{x}{ (1+t^4 x^2)^{\frac{1}{2}(1 - \lambda_r)}(1+x^2)^{\frac{1}{2}(1 +\lambda_r)}}\,dx\\
&\lesssim 1+\int_1^{\sqrt{n}}\frac{x}{ (t^4 x^2)^{\frac{1}{2}(1 - \lambda_r)}(x^2)^{\frac{1}{2}(1 +\lambda_r)}}\,dx
= 1+\frac{1}{2 t^{2-2\lambda_r}}\log n
\lesssim 1+\log n.
\end{align*}
Finally, we estimate the third integral and obtain
\begin{align*}
&\int_0^{\sqrt{n}}\frac{(1+x^2)^{\frac{1}{2}(1 -\lambda_r)}}{ (1+t^4 x^2)^{\frac{1}{2}(3 - \lambda_r)}} \,dx\\
&=\int_0^{1}\frac{(1+x^2)^{\frac{1}{2}(1 -\lambda_r)}}{ (1+t^4 x^2)^{\frac{1}{2}(3 - \lambda_r)}} \,dx
+\int_1^{\sqrt{n}}\frac{(1+x^2)^{\frac{1}{2}(1 -\lambda_r)}}{ (1+t^4 x^2)^{\frac{1}{2}(3 - \lambda_r)}} \,dx\\
&\lesssim \int_0^{1}\frac{1}{ (1+t^4 x^2)^{\frac{1}{2}(3 - \lambda_r)}} \,dx
+\int_1^{\sqrt{n}}\frac{x^{1 -\lambda_r}}{ (t^4 x^2)^{\frac{1}{2}(3 - \lambda_r)}} \,dx\\
&= \frac{1}{t^{2}}\int_0^{t^{2}}\frac{1}{ (1+x^2)^{\frac{1}{2}(3 - \lambda_r)}} \,dx
+\frac{1}{t^{6-2\lambda_r}}\int_1^{\sqrt{n}}\frac{1}{ x^{2}}\,dx
\lesssim \frac{1}{t^{2}}
\end{align*}
Summarizing, we have
\begin{align*}
|\mathcal{I}_{n,2}(t)| 
&\lesssim n^{-1}
+2\frac{t^{1-\lambda_r}}{n}(1+\log n) 
+\frac{m t^{3-\lambda_r}}{n}\frac{1}{t^{2}}\\
&\lesssim n^{\frac{1}{2}(-1-\lambda_r)}(1+\log n) .
\end{align*}
This concludes the proof of the lemma.
\end{proof} 

\begin{cor} \label{cor upper}
Let $V$ be an irreducible unitarizable Harish-Chandra module which is either of the principal series or of the complementary series. Let $q$ be a unitary norm on $V$ and let $p_{\min}$ be a representative for the minimal element in $\Noc(V)$. Then 
$$
p_{\rm min}(v_n)
\lesssim\frac {\log(|n|+1)}{\sqrt{|n|+1}} q(v_n)
$$
for all $v_n\in V[n]$, $n \in \Spec_K(V)\subset \Z$. In particular, the upper bounds in Proposition \ref{thm upper-lower}(\ref{thm upper-lower - item 1}) hold for representation of the unitary principal series and the complementary series. 
\end{cor}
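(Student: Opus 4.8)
The plan is to deduce the bound from the oscillatory‑integral estimate of the preceding Lemma, combined with a computation of the unitary norm of a single $K$‑type. As the inequality is homogeneous of degree one in $v_n$ and each $K$‑isotypic component $V[n]$ is one‑dimensional, it is enough to treat $v_n=e_n$, the $q$‑normalized $K$‑type of index $n$, so that $q(e_n)=1$; and by the convention on $\lesssim$ only large $|n|$ matters. I will realize $V$ as a principal series module $V_{\sigma,\mu}$ for a suitable $\mu$ and use that $V$, hence also its contragredient, is irreducible: by Lemma~\ref{Lemma minimal norm} a representative of $[p_{\min}]$ is furnished by any single nonzero covector, so fixing $m\in\Spec_K(V_{\sigma,-\mu})$ and using $G$‑invariance of the bilinear pairing between $V_{\sigma,-\mu}$ and $V_{\sigma,\mu}$ together with the supremum over all of $G$, one gets
$$
p_{\min}(\phi_n^{\mu})
\asymp \sup_{g\in G}\big|\big(\pi_{\sigma,-\mu}(g)\phi_m^{-\mu},\,\phi_n^{\mu}\big)\big|,
$$
the right‑hand side being exactly the quantity estimated by the Lemma.

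For the unitary principal series ($V\simeq V_{\sigma,\lambda}$ with $\lambda\in i\R$) I take $\mu=\lambda$. Here $\re\lambda=0$, so $|\phi_n^{\lambda}(x)|^{2}=(1+x^{2})^{-1}$ and hence $q(\phi_n^{\lambda})^{2}=\int_{\R}(1+x^{2})^{-1}\,dx=\pi$ is independent of $n$; thus $e_n=\pi^{-1/2}\phi_n^{\lambda}$. The Lemma (with $\re\lambda=0$) gives $p_{\min}(\phi_n^{\lambda})\lesssim |n|^{-1/2}\log(1+|n|)$, and dividing by $\sqrt{\pi}$ proves the claim in this case.

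For the complementary series the naive choice $\mu=\lambda$ is too lossy, and I would instead change the model. Write $V\simeq V_{\1,\lambda}$ with $\lambda\in(0,1)$ (swapping $\lambda\leftrightarrow-\lambda$ if needed); by the Knapp--Stein equivalence $V_{\1,\lambda}\simeq V_{\1,-\lambda}$ recalled in the excerpt we may work in the model $V_{\1,-\lambda}$, i.e. take $\mu=-\lambda$. Since these modules are irreducible and unitary, any such equivalence is an isometry up to a positive scalar (Schur), so in this model $e_n$ equals $\phi_n^{-\lambda}/q(\phi_n^{-\lambda})$ up to a unit scalar; hence $p_{\min}(e_n)=p_{\min}(\phi_n^{-\lambda})/q(\phi_n^{-\lambda})$, and I estimate the two factors separately. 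Applying the Lemma with parameter $-\lambda$ (so $|\re(-\lambda)|=\lambda$) and covector $\phi_m^{\lambda}\in\widetilde{V_{\1,-\lambda}}\simeq V_{\1,\lambda}$ gives
$$
p_{\min}(\phi_n^{-\lambda})
\lesssim (1+|n|)^{-\frac{1}{2}(1-\lambda)}\log(1+|n|).
$$
For the denominator, $q(\phi_n^{-\lambda})^{2}$ is — up to a constant factor — the eigenvalue of the inverse Knapp--Stein operator on $\phi_n^{-\lambda}$; the classical $c$‑function asymptotics for $\SL(2,\R)$ (in the line model, a stationary‑phase evaluation of $\widehat{\phi_n^{-\lambda}}$, the operator being convolution by $|x|^{-(1+\lambda)}$ with Fourier symbol $\asymp|\xi|^{\lambda}$, and $\widehat{\phi_n^{-\lambda}}$ being essentially supported on $[0,|n|]$) give $q(\phi_n^{-\lambda})^{2}\asymp(1+|n|)^{\lambda}$. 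Dividing,
$$
p_{\min}(e_n)\lesssim (1+|n|)^{-\frac{1}{2}(1-\lambda)-\frac{\lambda}{2}}\log(1+|n|)=(1+|n|)^{-\frac{1}{2}}\log(1+|n|),
$$
which is the assertion; the ``in particular'' clause is the case $\varepsilon(V)=1$ of Proposition~\ref{thm upper-lower}(\ref{thm upper-lower - item 1}).

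The genuinely difficult ingredient is the Lemma itself (the oscillatory‑integral analysis), which is already in hand. Within the present deduction the one delicate point is the complementary‑series case: in the $V_{\1,\lambda}$‑model the Lemma yields only $p_{\min}(e_n)\lesssim(1+|n|)^{-1/2+\lambda}\log(1+|n|)$, useless for $\lambda$ close to $1$, whereas passing to $V_{\1,-\lambda}$ — in which the un‑normalized $K$‑types $\phi_n^{-\lambda}$ have $q$‑norm growing like $(1+|n|)^{\lambda/2}$, rather than decaying like $(1+|n|)^{-\lambda/2}$ — restores the sharp exponent $-\tfrac12$. The two external facts needed, that the Knapp--Stein equivalence is isometric up to a positive scalar and that $q(\phi_n^{-\lambda})^{2}\asymp(1+|n|)^{\lambda}$, are classical and can be read off from Bargmann's explicit formulas \cite{B}.
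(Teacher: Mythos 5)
Your treatment of the unitary principal series matches the paper's argument exactly and is fine. The issue is the complementary series.

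You assert that in the model $V_{\1,\lambda}$ with $\lambda\in(0,1)$ the norms $q(\phi_n^\lambda)$ \emph{decay} like $(1+|n|)^{-\lambda/2}$, and that in $V_{\1,-\lambda}$ the norms $q(\phi_n^{-\lambda})$ \emph{grow} like $(1+|n|)^{\lambda/2}$. Both directions are backwards. The Knapp--Stein eigenvalue formula used in the paper,
\[
J_\lambda(\phi_n^\lambda)
=\frac{\Gamma(\tfrac{1}{2}(1+\lambda)+|n|)}{\Gamma(\tfrac{1}{2}(1-\lambda)+|n|)}\,\phi_n^{-\lambda}
\sim n^{\lambda}\,\phi_n^{-\lambda},
\]
together with the orthogonality $\int\phi_n^{\lambda}\overline{\phi_n^{-\lambda}}\,dx=\pi$, gives
$q(\phi_n^\lambda)^2\sim\pi\,n^{\lambda}$, i.e.\ $q(\phi_n^\lambda)\sim n^{\lambda/2}$ \emph{grows} for $\lambda>0$ --- this is precisely the paper's equation \eqref{eq q(phi_n)}. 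Your own Schur argument forces the opposite of what you claim for the other model: since $J_\lambda$ is a positive scalar times an isometry onto $V_{\1,-\lambda}$, the $q$-normalized $K$-type there is $J_\lambda(e_n)\sim n^{\lambda/2}\phi_n^{-\lambda}$, so $q(\phi_n^{-\lambda})\asymp n^{-\lambda/2}$, \emph{decaying}, not growing. Your Fourier heuristic similarly has the symbol of the wrong operator: the Knapp--Stein $J_\lambda\colon V_{\1,\lambda}\to V_{\1,-\lambda}$ is the one with symbol $\asymp|\xi|^{\lambda}$ (matching eigenvalue $\sim n^\lambda$), so the operator that enters $q(\phi_n^{-\lambda})^2$ has symbol $\asymp|\xi|^{-\lambda}$.

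Because of this sign error your assessment of the two models is inverted. The ``naive'' choice $\mu=\lambda$ is not lossy; it is exactly what the paper does, and with the correct growth $q(\phi_n^\lambda)\sim n^{\lambda/2}$ it gives
\[
p_{\min}(e_n)
=\frac{p_{\min}(\phi_n^\lambda)}{q(\phi_n^\lambda)}
\lesssim \frac{n^{-\frac{1}{2}(1-\lambda)}\log(1+|n|)}{n^{\lambda/2}}
=n^{-\frac{1}{2}}\log(1+|n|).
\]
Your change of model to $V_{\1,-\lambda}$, once the corrected $q(\phi_n^{-\lambda})\asymp n^{-\lambda/2}$ is used, yields $p_{\min}(e_n)\lesssim n^{-\frac{1}{2}(1-\lambda)+\frac{\lambda}{2}}\log = n^{-\frac{1}{2}+\lambda}\log$, which is exactly the useless bound you attributed to the naive approach. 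Your final displayed exponent coincides with the paper's only because the incorrect $q$-asymptotics compensates for the incorrect model; the derivation as written does not hold up. The fix is simple: stay in $V_{\1,\lambda}$ with $\lambda\in(0,1)$, apply the Lemma directly, and use that $q(\phi_n^\lambda)$ \emph{grows} like $n^{\lambda/2}$.
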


\begin{proof}
We have to distinguish the cases $V_{\sigma,\lambda}$ with $\lambda\in i\R$ and 
$V_{\1,\lambda}$ for $\lambda \in (0,1)$. 
Let us start with $\lambda\in i\R$. Then the unitary inner product on $V_{\sigma,\lambda}$ is in view of \eqref{def unitary ps} given by
$$
\la \phi, \psi\ra
= \int_\R \phi(x) \oline{\psi(x)}\,dx
$$
and the assertion is immediate from the previous lemma. 
For complementary series let $J_{\lambda}: V_{\1,\lambda} \to V_{\1,-\lambda}$ be the intertwining operator which gives us the unitary inner product by
$$
\la \phi, \psi\ra = \int_\R \phi(x) \oline{J_\lambda(\psi)(x)}\,dx.
$$
See \eqref{def unitary cs}.
By \cite[(5), p. 308]{KlVi} we have
$$
J_\lambda(\phi^{\lambda}_n) =\frac{\Gamma(\frac{1}{2}(1+\lambda) +|n|)}{\Gamma(\frac{1}{2}(1-\lambda) +|n|)}\phi^{-\lambda}_n
\sim n^{\lambda} \phi^{-\lambda}_n.
$$
(Since \cite{KlVi} deals with unnormalized principal series for $\SU(1,1)$, one has to take $\tau=\frac{1}{2}(\lambda-1)$.) 
Thus, we obtain 
\begin{equation}\label{eq q(phi_n)}
q(\phi_n^\lambda)
\sim n^{\frac{\lambda}{2}},
\end{equation}
and deduce the assertions from the previous lemma.
\end{proof} 

Finally, we address the lower bounds. 

\begin{lemma} \label{lemma Dirac}
Let $V=V_{\sigma,\lambda}$ be a representation of the principal series. Then there exists a vector $\phi\in V_{\sigma,-\lambda}^\infty$ such that 
$$
\sup_{g\in G} |(\pi_{\sigma,-\lambda}(g) \phi, \phi_n)|
\gtrsim (1+|n|)^{-\frac{1}{2} -\re \lambda}.
$$
for $n\in \Spec_K(V)$.
\end{lemma}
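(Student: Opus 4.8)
The plan is to establish this lower bound by a quantitative Dirac approximation in the non-compact (line) model, exactly in the spirit of the Key Lemma~\ref{Lemma Uniform lower bounds} but made explicit for $\SL(2,\R)$. First I would fix the test vector: choose $\phi\in C_c^\infty(\R)$ with $\phi\ge 0$, $\supp\phi\subset[-1,1]$ and $\int_\R\phi=1$, and regard $\phi$ as an element of $V_{\sigma,-\lambda}^\infty$ via its homogeneous extension $(x_1,x_2)\mapsto\sigma(x_1)|x_1|^{-1+\lambda}\phi(x_2/x_1)$; this is smooth on $\R^2\setminus\{0\}$ precisely because $\phi$ has compact support, so the extension vanishes in a neighbourhood of the $x_1$-axis. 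Since the supremum over $G$ can only decrease when restricted, it then suffices to test against the one-parameter group $a_t=\diag(t,t^{-1})$, $t\ge 1$.

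Next I would compute the matrix coefficient along $a_t$. From the homogeneity of elements of $V_{\sigma,-\lambda}^\infty$ one gets $(\pi_{\sigma,-\lambda}(a_t)\phi)(x)=t^{1-\lambda}\phi(t^2x)$ — the same transformation already used in the proof of the upper-bound lemma above — so with the bilinear pairing \eqref{pairing nondeg} and the substitution $y=t^2x$,
$$
(\pi_{\sigma,-\lambda}(a_t)\phi,\phi_n^{\lambda})=\int_\R t^{1-\lambda}\phi(t^2x)\,\phi_n^{\lambda}(x)\,dx= t^{-1-\lambda}\int_\R\phi(y)\,\phi_n^{\lambda}(y/t^2)\,dy .
$$
The core of the argument is then the concentration estimate: set $t=t_n$ with $t_n^2=C(1+|n|)$ for a fixed large constant $C$. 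Since $\lambda$ is fixed there is $\delta_\lambda>0$ with $\bigl|(1+z^2)^{-\frac12(1+\lambda)}-1\bigr|\le\tfrac14$ for $|z|\le\delta_\lambda$; choosing $C\ge\max(4,\delta_\lambda^{-1})$ makes, for all $n$ and all $|y|\le1$, both $|y/t_n^2|\le\delta_\lambda$ and $|n\arctan(y/t_n^2)|\le |n||y|/t_n^2\le 1/C\le\tfrac14$, hence $|\phi_n^{\lambda}(y/t_n^2)-1|\le\tfrac12$. Therefore $\bigl|\int_\R\phi(y)\phi_n^{\lambda}(y/t_n^2)\,dy\bigr|\ge\tfrac12$, and
$$
\sup_{g\in G}|(\pi_{\sigma,-\lambda}(g)\phi,\phi_n^{\lambda})|\ \ge\ |(\pi_{\sigma,-\lambda}(a_{t_n})\phi,\phi_n^{\lambda})|\ \ge\ \tfrac12\,t_n^{-1-\re\lambda}\ \gtrsim\ (1+|n|)^{-\frac12-\frac12\re\lambda}\ \ge\ (1+|n|)^{-\frac12-\re\lambda},
$$
the last step using $\re\lambda\ge 0$ (which is $0$ for the unitary principal series and, after the standard intertwiner normalization, positive for the complementary series). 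This is the asserted inequality.

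The only genuine subtlety is this concentration step: one must guarantee $|\phi_n^{\lambda}(y/t_n^2)-1|\le\tfrac12$ \emph{uniformly in $n$}, which is exactly why $\phi$ is taken with fixed compact support and why the mild dilation $t_n^2\asymp 1+|n|$ — rather than anything growing faster — already suffices; no stationary-phase or oscillatory-integral estimate is needed here, in contrast with the upper bound. Everything else (smoothness of $\phi$ in the induced picture, absolute convergence of the integral, and the fact that the constants are locally uniform in $\lambda$) is routine. Combined with Corollary~\ref{cor upper} this pins $p_{\min}(e_n)$ between constant multiples of $(1+|n|)^{-1/2}$ and $(1+|n|)^{-1/2}\log(1+|n|)$, giving Proposition~\ref{thm upper-lower}(\ref{thm upper-lower - item 1}) for the (complementary and unitary) principal series.
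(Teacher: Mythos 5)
Your proof follows exactly the paper's route: a Dirac approximation in the line model along $a_t$ with $t^2\asymp|n|$, applied to a compactly supported $\phi\in V_{\sigma,-\lambda}^\infty$. The only structural difference is in the concentration step. The paper fixes $t=\sqrt{|n|}$ exactly, so that after substitution the integrand converges pointwise to $\phi(y)e^{\mp iy}$, and then invokes dominated convergence; to know the limit $\int_{-1}^1\phi(y)e^{\mp iy}\,dy$ is nonzero it additionally requires $\phi>0$ on $(-1,1)\subset(-\tfrac{\pi}{2},\tfrac{\pi}{2})$ so that the real part $\int\phi\cos y\,dy>0$. You instead take $t_n^2=C(1+|n|)$ with $C$ large, which makes the whole integrand uniformly within $\tfrac12$ of $1$ on $[-1,1]$, so that $\phi\geq0$ with $\int\phi=1$ already suffices; this is slightly cleaner and costs nothing since $t_n^{-1-\re\lambda}\asymp(1+|n|)^{-\frac12(1+\re\lambda)}$ regardless. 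Both derivations yield the exponent $-\tfrac12-\tfrac12\re\lambda$.

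One thing to flag in your last line: you pass from $(1+|n|)^{-\frac12-\frac12\re\lambda}$ to $(1+|n|)^{-\frac12-\re\lambda}$ using $\re\lambda\geq0$, justified by claiming the complementary series is normalized with $\re\lambda>0$. In the paper's setup the complementary series is worked with for $\lambda\in(-1,0)$, i.e.\ $\re\lambda<0$, and in fact the paper's own proof also produces only the exponent $-\tfrac12(1+\re\lambda)$, not $-\tfrac12-\re\lambda$. The bound $(1+|n|)^{-\frac12-\frac12\re\lambda}$ is exactly what is used immediately afterward, combined with $q(\phi_n^\lambda)\sim n^{\lambda/2}$, to get $p_{\min}(\phi_n)\gtrsim(1+|n|)^{-\frac12}q(\phi_n)$, and that implication requires $\re\lambda\leq0$, not $\re\lambda\geq0$. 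So your argument proves precisely what the paper proves; the extra monotone step at the end should simply be dropped (the exponent in the Lemma as stated does not quite match what either proof gives).
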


\begin{proof}
We choose $\phi \in V_{\sigma,-\lambda}^\infty\subset C^\infty(\R)$ with $\supp \phi\subset [-1,1]$, and $\phi(x)> 0$ for $-1<x<1$. 
Notice that $\big(\pi_{\sigma,-\lambda}(a_{\sqrt{|n|}}\big)\phi)(x)= |n|^{\frac{1}{2}(1-\lambda)}\phi(nx)$ has support in $[-\frac{1}{n}, \frac{1}{n}]$. 
Now 
\begin{align*}
&|n|^{\frac{1}{2}(1+\lambda)}\big(\pi_{\sigma,-\lambda}(a_{{\sqrt{|n|}}}) \phi, \phi_n\big)
=|n|\int_{-\frac{1}{n}}^{\frac{1}{n}} \phi(nx) \frac{e^{in \arctan(x)}} {(1+x^2)^{\frac{1}{2}(1 +\lambda)}}\,dx\\
&\qquad=\mathrm{sign}(n)\int_{-1}^{1}\phi(x)\frac{e^{-in\arctan\left(\frac{x}{n}\right)}}{(1+n^{-2}x^{2})^{\frac{1}{2}(1+\lambda)}}\,dx
\end{align*}
Since $\arctan(y)=y+O(y^{3})$ as $y\to0$, it follows from Lebesgue's dominated convergence theorem that
$$
\lim_{n\to\infty}\int_{-1}^{1}\phi(x)\frac{e^{-in\arctan\left(\frac{x}{n}\right)}}{(1+n^{-2}x^{2})^{\frac{1}{2}(1+\lambda)}}\,dx
=\int_{-1}^{1}\phi(x)e^{-ix}\,dx
\neq 0.
$$
\end{proof}

Note that Proposition \ref{thm upper-lower} is a statement about the minimal norm and hence we can work with $C=\{\phi\}$ as a compact generating set; see Proposition \ref{Prop smooth minimal}. We thus obtain as in Corollary \ref{cor upper} that a representative of the minimal element in $\Noc(V)$ for a unitarizable Harish-Chandra module $V=V_{\sigma,\lambda}$ with unitary norm $q$ satisfies 
$$
p_{\rm min}(\phi_n)
\gtrsim (1+|n|)^{-\frac{1}{2}}q(\phi_{n}).
$$
See (\ref{eq q(phi_n)}). We have thus proven the following corollary.

\begin{cor}
The lower bounds in Proposition \ref{thm upper-lower}(\ref{thm upper-lower - item 1}) hold for representations in the unitary principal series and complementary series. 
\end{cor}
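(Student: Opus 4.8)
The plan is to produce, for each type of unitarizable module, one smooth test vector and a one-parameter subgroup $(a_{t})_{t>0}\subset A$ along which a Dirac approximation takes place, and then to read off the bound after normalizing the $K$-types by the unitary norm. Since the two items of Proposition~\ref{thm upper-lower} are equivalent by duality, it suffices to bound $p_{\min}(e_{n})$ from below.

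First I invoke Proposition~\ref{Prop smooth minimal}: because $V=V_{\sigma,\lambda}$ is irreducible, any nonzero smooth vector $\phi\in\tilde V^{\infty}$ has $\Span G\cdot\phi$ weakly dense, so $v\mapsto\sup_{g\in G}|m_{v,\phi}(g)|$ represents $[p_{\min}]$ in $\No(V)$, and by $G$-invariance of the bilinear pairing \eqref{pairing nondeg} this equals $v\mapsto\sup_{g\in G}|(\pi_{\sigma,-\lambda}(g)\phi,v)|$. For the unitary principal series ($\lambda\in i\R$) I take $\phi$ as in Lemma~\ref{lemma Dirac}, which yields $p_{\min}(\phi^{\lambda}_{n})\gtrsim(1+|n|)^{-1/2}$ since $\re\lambda=0$. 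By \eqref{def unitary ps} the unitary norm of the $K$-type is $q(\phi^{\lambda}_{n})^{2}=\int_{\R}(1+x^{2})^{-1}\,dx=\pi$, independent of $n$; hence $e_{n}=\phi^{\lambda}_{n}/q(\phi^{\lambda}_{n})$ satisfies $p_{\min}(e_{n})\gtrsim(1+|n|)^{-1/2}$.

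For the complementary series ($V=V_{\1,\lambda}$, $\lambda\in(0,1)$) the same Dirac construction is available — the proof of Lemma~\ref{lemma Dirac} uses only $|\re\lambda|<1$ — but the normalization changes: by \eqref{eq q(phi_n)} one has $q(\phi^{\lambda}_{n})\sim|n|^{\lambda/2}$. I would run the argument as in the proof of Corollary~\ref{cor upper}, transferring the Dirac estimate through the Knapp-Stein isomorphism $J_{\lambda}\colon V_{\1,\lambda}\to V_{\1,-\lambda}$ and using the explicit scalars from \eqref{def unitary cs} and \eqref{eq q(phi_n)}, so as to obtain $p_{\min}(\phi^{\lambda}_{n})\gtrsim(1+|n|)^{-1/2}\,q(\phi^{\lambda}_{n})$; dividing by $q(\phi^{\lambda}_{n})$ gives $p_{\min}(e_{n})\gtrsim(1+|n|)^{-1/2}$ again.

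The complementary-series case is the only real obstacle. A naive bump vector in $V_{\1,-\lambda}^{\infty}$ together with a compressing $a_{t}$ produces only the exponent $-\tfrac12(1+\lambda)$, which after renormalizing by $q(\phi^{\lambda}_{n})\sim|n|^{\lambda/2}$ is too weak by a factor $|n|^{\lambda}$; recovering this factor forces one to pass to the $(-\lambda)$-realization and to use the growth of the intertwiner on $K$-types recorded in \eqref{eq q(phi_n)}. Once this is in place the corollary follows from Proposition~\ref{Prop smooth minimal}, Lemma~\ref{lemma Dirac} and \eqref{eq q(phi_n)}, the principal-series case costing essentially nothing.
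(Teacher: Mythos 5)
Your proposal reproduces the paper's overall plan — realise the minimal norm with a single smooth test vector (Proposition~\ref{Prop smooth minimal}), feed in the Dirac estimate of Lemma~\ref{lemma Dirac}, and divide by the $K$-type normalisation from \eqref{eq q(phi_n)}. For the unitary principal series this is exactly what the paper does, and your account is accurate.

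For the complementary series, however, your repair via the $(-\lambda)$-realization is not what the paper does, and as stated it cannot be correct. If one applies Lemma~\ref{lemma Dirac} in the $(-\lambda)$-model and then renormalises by $q(\phi_n^{-\lambda})\sim |n|^{-\lambda/2}$, the resulting lower bound is $p_{\min}(e_n)\gtrsim |n|^{\lambda-1/2}$. But for any $\lambda>\tfrac12$ this diverges, while the minimal norm is dominated by the unitary norm, so $p_{\min}(e_n)\lesssim q(e_n)=1$. In other words, the proposed detour produces a bound that is not merely "recovering a factor $|n|^\lambda$" — for $\lambda$ large it overshoots into an impossibility, which signals that the Dirac-plus-intertwiner bookkeeping in the $(-\lambda)$-realization is not consistent. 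The paper's own proof does not change realisations: it stays in the $\lambda$-model throughout and simply combines Lemma~\ref{lemma Dirac} with \eqref{eq q(phi_n)} "as in Corollary~\ref{cor upper}." You should either stay in the $\lambda$-model as the paper does, or, if you believe the exponents there genuinely leave a gap of size $|n|^\lambda$ (your arithmetic on that point is worth double-checking against the precise form of the intertwiner scalar and of the exponent in Lemma~\ref{lemma Dirac}), that observation should be raised as a possible issue in the source rather than patched by a manoeuvre that produces an absurdity at $\lambda>\tfrac12$.
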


\subsubsection{Switch to $\SU(1,1)$}

For the rest of this section we will work with $G:=\SU(1,1)$ instead of $\SL(2,\R)$. We recall that 
$$
G=\left\{\begin{pmatrix}
	\alpha && \beta \\ \bar{\beta} && \bar{\alpha}
\end{pmatrix}:\alpha,\beta\in\C \text{ with }|\alpha|^2-|\beta|^2=1\right\}
$$
and let 
$$
K=\begin{Bmatrix}
	k_\phi :=\begin{pmatrix}
		e^{-i\phi} && 0\\0 &&e^{i\phi}
	\end{pmatrix}: \phi\in [-\pi, \pi)
\end{Bmatrix}\simeq \mathbb{S}^1
$$
be our choice for the maximal compact subgroup of $G$. For $n\in\Z$, let $\chi_n:K\rightarrow \mathbb{S}^1$ be the character of $K$ defined by
$$
\chi_{n}(k_{\phi})
=e^{in\phi}.
$$
We will use this enumeration to identify $K$-types of a representation with $\Z$.
Finally, define 
$$
A:=\begin{Bmatrix}
	a_t:=\begin{pmatrix}
		\cosh t && \sinh t \\ \sinh t && \cosh t
	\end{pmatrix}: t\in \R
\end{Bmatrix}
$$
and
$$
A^{+}
=\{a_{t}:t>0\}.
$$
We note that $G=K\overline{A^{+}}K$.
In the following we will often identify $\overline{A^+}$ with $[0,1)$ via the map
\begin{equation} \label{A-identification}
[0,1)\to \oline{A^{+}},\quad x\mapsto {\bf a}_x
:=\begin{pmatrix} \frac{1}{\sqrt{1-x}} & \frac{\sqrt{x}}{\sqrt{1-x}} \\\frac{\sqrt{x}}{\sqrt{1-x}} & \frac{1}{\sqrt{1-x}}\end{pmatrix}.
\end{equation}

\subsection{Gau\ss{} hypergeometric functions}
\label{Appendix Hypergeometric functions}

In the sequel hypergeometric functions will appear frequently. We list a few basic properties which will be of later use.

For $a,b,c\in \C$ with $-c\notin \N_0$, the hypergeometric function $z\mapsto {}_2F_1(a,b,c,z)$ is defined by the Gau\ss{} series
\begin{equation}\label{hgfdef}
{}_2F_1(a,b,c;z)
:=\sum_{m=0}^{\infty}\frac{(a)_m(b)_m}{(c)_m m!}z^m,
\end{equation}
which converges absolutely on the disk $|z|<1$ and defines a holomorphic function.
Here we have used the ascending factorial notation
$$
(d)_m
:= d(d+1)\cdots (d+m-1)
=\frac{\Gamma(m+d)}{\Gamma(d)}.
$$
Under the further condition that $\re(c-a-b)>0$ the series converges absolutely on the closed disk and defines a continuous function there.
If $a$ or $b$ is a non-positive integer, the series defining ${}_2F_1(a, b; c; x)$ terminates and defines a polynomial. More precisely, for $k\in \N_{0}$,
\begin{equation}\label{polyF}
{}_2F_1(-k,b,c;z)
:=\sum_{m=0}^{k}(-1)^m\binom{k}{m}\frac{(b)_m}{(c)_m m!}z^m \qquad(b,c,z \in \C, -c \notin \N_{0}).
\end{equation}

\subsubsection{Discrete series}\label{ds}

In order to describe the discrete series we follow Bargmann \cite[Section 9]{B}. There are two types of discrete series, holomorphic and antiholomorphic, which are dual to each other. We restrict ourselves to the holomorphic ones and recall their construction. In \cite{B} they are denoted with a minus sign, i.e. $D_k^-$. 

We use the standard conformal action of $G$ on the unit disc $\mathbb{D}$ given by
$$
g\cdot z
:= \frac{\alpha z+\beta}{\bar{\beta} z+\bar{\alpha}}\qquad \left(z\in \mathbb{D},\ g=\begin{pmatrix}
	\alpha && \beta \\ \bar{\beta} && \bar{\alpha}
\end{pmatrix}\in G\right).
$$
For $\ell\in \N$ with $\ell\geq 2$, let $\mathcal{H}_\ell$ be the Hilbert space of holomorphic functions on $\mathbb{D}$ equipped with the inner-product
$$
\langle f_1,f_2\rangle_{\ell}
:=\frac{\ell-1}{\pi }\int_{\mathbb{D}}(1-|z|^2)^{\ell-2}\overline{f_1(z)}f_2(z)dz.
$$
With the cocycle $\mu_\ell(g,z):=(\bar{\beta}z+\bar{\alpha})^{-\ell}$ one defines a representation of $G$ 
on $\mathcal{H}_\ell$ via
$$
\pi_{\ell}(g)f(z)
:=\mu_\ell(g^{-1},z)f(g^{-1}\cdot z)\qquad (g\in G,\, z\in \mathbb{D}, \, f\in \mathcal{H}_\ell)\, .
$$
The so obtained representations $(\pi_\ell, \mathcal{H}_\ell)$ of $G$ are irreducible and unitary and are known as the holomorphic discrete series representations. For $m\in \frac{\ell}{2}+\N_{0}$, consider the function 
$$
f_{m}(z)
:=(-1)^{m-\frac{\ell}{2}} {\binom{m+\frac{\ell}{2}-1}{m-\frac{\ell}{2}}}^{\frac{1}{2}}z^{m-\frac{\ell}{2}}\qquad (z\in \mathbb{D}).
$$
The set $\{f_{m}:m\in \frac{\ell}{2}+\N_{0}\}$ forms an orthonormal basis for $\mathcal{H}_\ell$ consisting of $K$-types: For each $m\in \frac{\ell}{2}+\N_{0}$ we have 
$$
\pi_\ell(k_{\phi})f_{m}
=\chi_{2m}(k_\phi)f_{m}\quad\quad\left(k_\phi
=\begin{pmatrix}
		e^{-i\phi} && 0\\0 &&e^{i\phi}
	\end{pmatrix}\in K\right).
$$
In other words, for each $m\in \frac{\ell}{2}+\N_{0}$, $f_{\ell,m}$ is of $K$-type $2m$, and hence the $K$-spectrum of $\pi_\ell$ is given by $S(\pi_\ell)=\{2n+\ell:n\in \N_{0}\}$. 

We now consider the matrix coefficient defined by
$$
v^{\ell}_{m,n}(g)
:=\langle f_{m}, \pi_\ell(g)f_{n}\rangle_{\ell}\qquad (g\in G)
$$
which for $g=a_t$ has the form
\begin{align}
	\label{d+mc}
v^{\ell}_{m,n}(a_t)
=&\frac{(-1)^{m-\frac{\ell}{2}}}{\Gamma(\ell)}\left( \frac{\Gamma(m+\tfrac{\ell}{2})\Gamma(n+\tfrac{\ell}{2})}{\Gamma(m+1-\tfrac{\ell}{2})\Gamma(n+1-\tfrac{\ell}{2})}\right)^{\frac{1}{2}}\frac{(\sinh t)^{m+n-\ell}}{(1+\sinh^2t)^{(m+n)/2}}\\
&\times {}_2F_1\left(\tfrac{\ell}{2}-m,\tfrac{\ell}{2}-n,\ell; \tfrac{-1}{\sinh^2t}\right).\nonumber
\end{align}
See \cite[(11.2)]{B}.
Here ${}_2F_1(a,b,c;z)$ denotes the Gau{\ss} hypergeometric function. See Subsection \ref{Appendix Hypergeometric functions}.

We fix $m$. Recall our identification (\ref{A-identification}) of $\oline{A^+}$ with $[0,1)$. Note for $x=(\tanh t)^2$ one has $(\sinh t)^2=\frac{x}{1-x}$. With this identification we can rewrite 
the discrete series matrix coefficient \eqref{d+mc} as 
\begin{equation}\label{ds+mc2}
v^{\ell}_{m,n}({\bf a}_{x})
=J^{\ell}_{m,n}\,x^{\frac{m+n-\ell}{2}}(1-x)^{\tfrac{\ell}{2}}{}_2F_1\left(\tfrac{\ell}{2}-m,\tfrac{\ell}{2}-n,\ell; -\tfrac{1-x}{x}\right)
\end{equation}
where the constant $J^{\ell}_{m,n}$ is given by
\begin{equation}\label{jnm}
J^{\ell}_{m,n}
:=\frac{(-1)^{m-\frac{\ell}{2}}}{\Gamma(\ell)}\left( \frac{\Gamma(m+\frac{\ell}{2})\Gamma(n+\frac{\ell}{2})}{\Gamma(m+1-\frac{\ell}{2})\Gamma(n+1-\frac{\ell}{2})}\right)^{\frac{1}{2}}
\asymp n^{\frac{\ell-1}{2}}.
\end{equation}
As $\frac{\ell}{2}-m\in-\N_{0}$ we have in view of (\ref{polyF})
\begin{equation}\label{estdiss}
|v^{\ell}_{m,n}({\bf a}_{x})|
\lesssim n^{\frac{\ell-1}{2}}\sum_{k=0}^{m-\frac{\ell}{2}} \frac{|(-n+\frac{\ell}{2})_{k}|}{(\ell)_{k}k!}\left(x^{\frac{m+n-\ell-2k}{2}}(1-x)^{\frac{\ell}{2}+k}\right),
\end{equation}
where we have used that the binomial coefficients $\binom{m-\frac{\ell}{2}}{k}$ are bounded.
Now an easy calculation shows that the expression inside the bracket of the above expression is optimized when $x=\frac{m+n-\ell-2k}{m+n}$, and also $|(-n+\frac{\ell}{2})_{k}|\lesssim n^k$. Hence, from \eqref{estdiss} we obtain
\begin{align*}
|v^{\ell}_{m,n}({\bf a}_{x})|
&\lesssim n^{\frac{\ell-1}{2}}\sum_{k=0}^{m-\frac{\ell}{2}} \frac{(n-\frac{\ell}{2})^k}{(\ell)_{k}k!} \left(\frac{m+n-\ell-2k}{m+n}\right)^{\frac{m+n-\ell-2k}{2}}\left(\frac{\ell+2k}{n+m}\right)^{\frac{\ell}{2}+k}
\lesssim n^{-\frac{1}{2}}.
\end{align*}
Hence, the upper bound in Proposition \ref{thm upper-lower} follows.

We turn to the lower bound. Let $u\in (0,1)$. For each $n\in \frac{\ell}{2}+\N_{0}$, let $x_n=1-\frac{1}{1+\frac{n}{u}}\in[0,1)$. From \eqref{ds+mc2} and \eqref{jnm} we obtain
$$
|v^{\ell}_{m,n}({\bf a}_{x_{n}})|
\gtrsim n^{\frac{\ell-1}{2}}\left(1-\frac{1}{1+\frac{n}{u}}\right)^{\frac{m+n-\ell}{2}}\left(1+\frac{n}{u}\right)^{-\frac{\ell}{2}}\left|{}_2F_1\left(-m+\tfrac{\ell}{2},-n+\tfrac{\ell}{2},\ell; -\tfrac{u}{n}\right)\right|.
$$
Notice that
$$
{}_2F_1\left(-m+\tfrac{\ell}{2},-n+\tfrac{\ell}{2},\ell; -\tfrac{u}{n}\right)
=\sum_{k=0}^{m-\frac{\ell}{2}}\frac{(-m+\frac{\ell}{2})_{k}(-n+\frac{\ell}{2})_{k}}{(\ell)_{k}k!}(-n)^{-k}u^{k}
$$
For $n\to\infty$ the right-hand side converges to 
$$
\sum_{k=0}^{m-\frac{\ell}{2}}\frac{(-m+\frac{\ell}{2})_{k}}{(\ell)_{k}k!}u^{k}
={}_{1}F_1(-m+\tfrac{\ell}{2},\ell;u).
$$
We now choose $u$ so that ${}_{1}F_1(-m+\tfrac{\ell}{2},\ell;u)\neq 0$. We then have
$$
|v^{\ell}_{m,n}({\bf a}_{x_n})|
\gtrsim n^{-\frac{1}{2}},
$$
from which the expected lower bound follows.

\subsection{Proof of Proposition \ref{thm estimate convex combination}}\label{subsection Proof for estimate convex combination}

This entire subsection is devoted to proving Proposition \ref{thm estimate convex combination}. In our approach, we need to distinguish the various series in the unitary dual. For the unitary principal series and the complementary series we will use the explicit formulae from \cite{KlVi}, while for the representations of the discrete series we will use Bargmann's formulae from Section \ref{ds}. Since in all cases the authors work with $\SU(1,1)$, we will do so too in this section. 

Recall the identification of $\oline{A^+}\simeq [0,1)$ from \eqref{A-identification}.
In all three cases our choice of the measure $\beta=\beta_\e$ for $\e>0$ is
\begin{equation}\label{def beta}
d \beta(x)
=c\,(1-x)^{-1+\epsilon}~dx \qquad (x\in [0,1))
\end{equation}
with normalizing constant 
$$
c
=\left(\int_{0}^{1}(1-x)^{-1+\e}\, dx\right)^{-1}.
$$

\subsubsection{Unitary principal series}

For parameters $\lambda\in \C$ and $\sigma\in \{0,\tfrac{1}{2}\}$ we consider the representation $\pi_{\sigma,\lambda}$ of $G$ on $L^2(\mathbb{S}^1)$ defined by the action:
$$
(\pi_{\sigma,\lambda}(g)f)(e^{i\theta})=(\beta e^{i\theta}+\bar{\alpha})^{\lambda+\sigma}(\bar{\beta}e^{-i\theta}+\alpha)^{\lambda-\sigma}f\left(\frac{\alpha e^{i\theta}+\bar{\beta}}{\beta e^{i\theta}+\bar{\alpha}}\right)
\qquad\left(g=\begin{pmatrix}	\alpha && \beta \\ \bar{\beta} && \bar{\alpha}\end{pmatrix}\right).
$$
These are the unnormalized generalized principal series representations. We define the functions $f_n: \mathbb{S}^1 \to \C$ to be given by $f_{n}(e^{i\theta})=e^{-i n \theta}$ for $n\in \Z$. By Fourier analysis the set $\{f_n:n\in\Z\}$ forms an orthonormal basis for $L^2(\mathbb{S}^1)$. Note that 
$$
\pi_{\sigma,\lambda}(k_\phi) f_n
= \chi_{2n+2\sigma}(k_\phi)f_n.
$$
In other words, the vector $f_n$ has $K$-type $2n+ 2\sigma$.

We recall that $\pi_{\sigma,\lambda}$ is irreducible if and only if $\lambda+\sigma\notin \Z$. Moreover, $\pi_{\sigma,\lambda}$ and $\pi_{\sigma',\lambda'}$ are isomorphic if and only if $\sigma =\sigma', \lambda+\tfrac{1}{2}=\pm(\lambda'+\tfrac{1}{2})$. The representation $\pi_{\sigma,\lambda}$ is unitary with respect to the usual inner product on $L^2(\mathbb{S}^1)$ only when $\lambda=-\tfrac{1}{2}+iu$ with $u\in\R$ and hence unitary principal series equals the family of representations $\pi_{\sigma,\lambda}$ with $\sigma\in\{0,\tfrac{1}{2}\}$ and $\lambda\in -\tfrac{1}{2}+iu$ with $u\in\R$.

By \cite[Chapter 6, p.312]{KlVi} we have the explicit formulas for the matrix coefficients of the principal series representations.
For $\lambda\in \C$, $\sigma\in\{0,\tfrac{1}{2}\}$ and $m,n\in \Z$ we write the matrix coefficients as
$$
t^{\lambda, \sigma}_{n,m}(g)
:=\langle \pi_{\sigma,\lambda}(g)f_m,f_n\rangle
\qquad (g\in G).
$$
This can be computed explicitly in terms of hypergeometric functions as follows (see \cite[(6) \& (7) in Section 6.5.1]{KlVi}):
\begin{align}\label{matrixcoeff}
t^{\lambda,\sigma}_{n,m}(g)&=\frac{\Gamma(\lambda-m-\sigma+1)}{\Gamma(n-m+1)}\frac{\alpha^{\lambda-n-\sigma}\bar{\alpha}^{\lambda+m+\sigma}\bar{\beta}^{n-m}}{\Gamma(\lambda-n-\sigma+1)} \nonumber\\
		& \hspace*{1cm} \times {}_2F_1\left(-\lambda-m-\sigma, -\lambda+n+\sigma; n-m+1; |\beta/\alpha|^2\right)
\end{align}
whenever $m\leq n$, and 
\begin{align}\label{matrixcoeff2}
t^{\lambda,\sigma}_{n,m}(g)&=\frac{\Gamma(\lambda+m+\sigma+1)}{\Gamma(m-n+1)}\frac{\alpha^{\lambda-m-\sigma}\bar{\alpha}^{\lambda+n+\sigma}\beta^{m-n}}{\Gamma(\lambda+n+\sigma+1)} \nonumber\\
	& \hspace*{1cm} \times {}_2F_1\left(-\lambda-n-\sigma, -\lambda+m+\sigma; m-n+1; |\beta/\alpha|^2\right).
\end{align}
whenever $m\geq n$.
(Note that formulas \ref{matrixcoeff} and \ref{matrixcoeff2} are obtained from (7) and (6) in \cite{KlVi}, respectively, by interchanging $m$ and $n$.) Here ${}_2F_1$ denotes the Gau\ss{} hypergeometric function. See Subsection \ref{Appendix Hypergeometric functions}.

Since we will use some of the following also for the estimates for complementary series representations we do not immediately restrict to $\lambda\in-\tfrac{1}{2}+i\R$, but rather fix $\lambda\in\C\setminus\tfrac{1}{2}\Z$.
Without loss of generality, we may assume that $n\geq m$. 

Recall the identification of $\oline{A^+}$ with $[0,1)$ from \eqref{A-identification}. 
We write 
$$
a=-\lambda-m-\sigma,
\quad
b= -\lambda+n+\sigma,
\quad
c=n-m+1.
$$
It follows from \eqref{matrixcoeff} that 
\begin{align*}
t^{\lambda, \sigma}_{n, m}({\bf a}_{x})&=C_{n,m}^{\lambda,\sigma}~ x^{\frac{n-m}{2}}(1-x)^{-\lambda} ~ {}_2F_1\left(a, b; c; x\right)\quad\quad(0\leq x<1),
\end{align*}
where the constant $C_{n,m}^{\lambda,\sigma}$ is given by
$$
C_{n,m}^{\lambda,\sigma}
=\frac{\Gamma(\lambda-m-\sigma+1)}{\Gamma(n-m+1)\Gamma(\lambda-n-\sigma+1)}.
$$

For the rest of this section we restrict to the Cowling strip
$$
\lambda\in U:=\{\lambda \in \C\mid -1<\re\lambda<0\}.
$$
Then all $\pi_{\sigma,\lambda}$ for $\lambda \in U$ are irreducible except for $\pi_{\frac{1}{2},-\frac{1}{2}}$. The latter breaks into two irreducible representations, the limits of discrete series. However, these two representations are dual to each other. Therefore, without loss of generality we may restrict our considerations to the case $m=0$. Using Euler's identity for Gamma-functions, matters simplify to 
\begin{align}
\label{ps-full-exp for m=0}
t^{\lambda, \sigma}_{n, 0}({\bf a}_{x})&=C_{n,0}^{\lambda,\sigma} ~x^{\frac{n}{2}}(1-x)^{-\lambda} ~ {}_2F_1\left(-\lambda -\sigma, -\lambda +n +\sigma; n+1; x\right)\quad\quad(0\leq x<1),
\end{align}
where 
\begin{align}
	\label{dn0}
C^{\lambda,\sigma}_{n,0}=\frac{(-1)^n\sin(\pi(\sigma-\lambda))}{\pi}\frac{\Gamma(\lambda-\sigma+1)\Gamma(n+\sigma-\lambda)}{\Gamma(n+1)}.
\end{align}
Notice that $C^{\lambda,\sigma}_{n,0}$ is holomorphic in $\lambda \in U$ and has no zeroes. Recall the definition of the measure $\beta=\beta_\e$ from 
\eqref{def beta}.
This gives us 
\begin{align}\label{def In}
I_{n,\epsilon}
&:=I_{n,\epsilon}(\sigma,\lambda)
:=c\int_{0}^{1}\la \pi_{\sigma,\lambda}({\bf a}_x) f_0, f_n\ra (1-x)^{-1+\epsilon}\,dx\\
\label{def In2}&=c~C^{\lambda,\sigma}_{n,0}\int_{0}^{1}x^{\frac{n}{2}}(1-x)^{-\lambda-1+\e}{}_2F_1(-\lambda-\sigma,-\lambda+\sigma+n;n+1;x)\,dx.
\end{align}
Before we continue and determine the asymptotics of $I_{n,\epsilon}(\sigma,\lambda)$, we consider an illuminating special case, namely $\lambda=-\frac{1}{2}$ and $\sigma=\frac{1}{2}$, which form the pair of parameters for the limits of the discrete series. 
Here the situation is much simpler as the hypergeometric series collapses and 
\eqref{ps-full-exp for m=0} becomes 
$$
t^{-\frac{1}{2}, \frac{1}{2}}_{n, 0}(x)
=C_{n,0}^{-\frac{1}{2},\frac{1}{2}} ~x^{\frac{n}{2}}(1-x)^{\frac{1}{2}}
$$
with $C_{n,0}^{-\frac{1}{2},\frac{1}{2}}=(-1)^n$. Thus 
\begin{align}\label{formula for easy case}
    I_{n,\epsilon}\left(\tfrac{1}{2},-\tfrac{1}{2}\right)
&= c\,(-1)^n\int_{0}^1 x^{\frac{n}{2}}(1-x)^{-\frac{1}{2}+\e} \, dx
=c\,(-1)^n B(\tfrac{n}{2}+1, \tfrac{1}{2}+\e) \notag \\
&\sim c(-1)^n \Gamma(\tfrac{1}{2}+\e)n^{-\frac{1}{2}-\e} \asymp n^{-\frac{1}{2}-\e}\, .
\end{align}

Here we used Stirling's approximation to approximate the Beta-function.

We return to the general case. From \eqref{def In} we obtain an important fact: 
\begin{lemma} The assignment 
$$
U \to \C,\quad \lambda\mapsto I_{n,\e}(\lambda,\sigma)
$$
is holomorphic. 
\end{lemma}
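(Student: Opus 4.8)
The plan is to view $I_{n,\epsilon}$ as a function of $\lambda$ and to write it, using \eqref{def In2}, as the product $c\,C^{\lambda,\sigma}_{n,0}\,F_n(\lambda)$ with
$$
F_n(\lambda)=\int_0^1 x^{\frac n2}(1-x)^{-\lambda-1+\epsilon}\,{}_2F_1\bigl(-\lambda-\sigma,-\lambda+\sigma+n;n+1;x\bigr)\,dx .
$$
Since $C^{\lambda,\sigma}_{n,0}$ is holomorphic on $U$ by the remark following \eqref{dn0}, everything reduces to the holomorphy of the integral $F_n$, which I would establish by the standard holomorphy-under-the-integral mechanism (Morera's theorem together with Fubini). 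Pointwise holomorphy of the integrand is immediate: for fixed $x\in(0,1)$ the factor $(1-x)^{-\lambda-1+\epsilon}$ is entire, and $\lambda\mapsto{}_2F_1(-\lambda-\sigma,-\lambda+\sigma+n;n+1;x)$ is holomorphic because the third parameter $n+1$ is a fixed positive integer, so every term of the Gau\ss{} series \eqref{hgfdef} is a polynomial in $\lambda$ and the series converges locally uniformly in the parameters on $\{|x|<1\}$ (Weierstrass).

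The heart of the argument, and the only step I expect to require care, is to produce a locally uniform integrable majorant for the integrand, the delicate region being $x\to 1^-$. Fix a compact $L\subset U$, so that $-1+\delta\le\re\lambda\le-\delta$ on $L$ for some $\delta\in(0,\tfrac12)$. On $[0,\tfrac12]$ the integrand is bounded uniformly in $\lambda\in L$, so only $[\tfrac12,1)$ matters. There I would use the classical behaviour of ${}_2F_1$ at $z=1$: since
$$
(n+1)-(-\lambda-\sigma)-(-\lambda+\sigma+n)=1+2\lambda ,
$$
the growth of ${}_2F_1(-\lambda-\sigma,-\lambda+\sigma+n;n+1;\cdot)$ at $z=1$ is governed by $(1-z)^{1+2\lambda}$, with an extra logarithm only when $1+2\lambda\in\Z_{\le0}$, i.e. at $\lambda=-\tfrac12$; concretely the connection formula gives, locally uniformly in $\lambda\in L$, a bound $|{}_2F_1|\le C_L\bigl(1+(1-x)^{1+2\re\lambda}+|\log(1-x)|\bigr)$ for $x\in[\tfrac12,1)$. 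Multiplying by $x^{n/2}\le1$ and $|(1-x)^{-\lambda-1+\epsilon}|=(1-x)^{-\re\lambda-1+\epsilon}$, the integrand on $[\tfrac12,1)$ is dominated, uniformly in $\lambda\in L$, by a finite sum of terms $(1-x)^{\theta}|\log(1-x)|^{j}$ with $j\in\{0,1\}$ and $\theta$ among $-\re\lambda-1+\epsilon$, $\re\lambda+\epsilon$, $-\tfrac12+\epsilon$; as $-1<\re\lambda<0$ throughout, each such $\theta$ exceeds $-1$, so this is integrable on $(0,1)$, and after bounding $\re\lambda$ by $\delta$ it can be taken independent of $\lambda\in L$. (If one prefers to avoid the connection formula: holomorphy is local; near a $\lambda_0$ with $\re\lambda_0>-\tfrac12$ the hypergeometric factor is already bounded at $z=1$, and near a $\lambda_0$ with $\re\lambda_0<-\tfrac12$ Euler's transformation rewrites the integrand as $x^{n/2}(1-x)^{\lambda+\epsilon}\,{}_2F_1(1+\lambda-\sigma,\,n+1+\lambda+\sigma;\,n+1;\,x)$, whose hypergeometric factor is bounded at $z=1$; the line $\re\lambda=-\tfrac12$ is then handled by a slightly coarser estimate.)

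Given the majorant, I would conclude routinely: dominated convergence yields continuity of $F_n$ on $U$; for every closed triangle $\Delta\subset U$ (with compact closure in $U$), Fubini — legitimate because the majorant, times the length of $\partial\Delta$, is integrable on $(0,1)$ — together with Cauchy's theorem for the $\lambda$-holomorphic integrand gives $\oint_{\partial\Delta}F_n(\lambda)\,d\lambda=0$; Morera's theorem then shows $F_n$ is holomorphic on $U$, whence $I_{n,\epsilon}=c\,C^{\lambda,\sigma}_{n,0}F_n$ is holomorphic on $U$ as well.
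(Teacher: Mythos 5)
Your proposal is correct, but it takes a genuinely different and more computational route than the paper. The paper sidesteps the hypergeometric analysis entirely: it observes that $\Phi(\lambda,x)=\langle\pi_{\sigma,\lambda}(\mathbf{a}_x)f_0,f_n\rangle$ is holomorphic in $\lambda$ and bounded uniformly on compacta of $U\times[0,1)$ as a consequence of Cowling's theorem that the representations $\pi_{\sigma,\lambda}$ in the Cowling strip are uniformly bounded (\cite[Sect.~7]{C1}); since $(1-x)^{-1+\epsilon}\,dx$ is a finite measure, Cauchy estimates for $\partial_\lambda\Phi$ and the Leibniz integral rule finish the argument in a few lines, with no case distinction. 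You instead extract the hypergeometric factor from \eqref{def In2} and control it near $x=1$ via Euler's transformation and connection formulas, which forces a split into $\re\lambda\gtrless-\frac{1}{2}$ and is delicate near the line $\re\lambda=-\frac{1}{2}$. Your ``slightly coarser estimate'' there is left implicit; to make it watertight one can bound the Gau\ss{} series directly using $\frac{(a)_m(b)_m}{(c)_m\,m!}\asymp m^{a+b-c-1}=m^{-2-2\lambda}$, which yields a majorant $(1-x)^{-2\delta}$ uniform on a small complex neighbourhood of any $\lambda_0$ with $\re\lambda_0=-\frac12$ (including $\lambda_0=-\frac12$ itself), still integrable against $(1-x)^{-\re\lambda-1+\epsilon}$. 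The trade-off is clear: your argument is elementary and self-contained modulo classical hypergeometric asymptotics, at the cost of a case analysis and one spot that needs tightening; the paper's is shorter and uniform in $\lambda$, at the cost of invoking Cowling's nontrivial uniform boundedness result.
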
 

\begin{proof}
From the definition of the action on the circle model we obtain that the function 
$$
\Phi: U\times [0,1)\to \C,
\quad (\lambda,x) \mapsto \la \pi_{\sigma,\lambda}({\bf a}_x) f_0, f_n\ra 
$$
is continuous and in addition holomorphic in $\lambda$. For parameters $\lambda$ in the Cowling strip $U$ the representations $\pi_{\sigma,\lambda}$ are uniformly bounded; see \cite[Sect. 7]{C1}. This means 
that there exists a $G$-continuous inner product on
$V_{\sigma, \lambda}$, say $\la\cdot,\cdot\ra'$ (which coincides with the standard inner product for unitary principal series) so that $\|\pi_{\sigma,\lambda}(g)\|'\leq 1$ for all $g\in G$.
Hence, the matrix coefficient is bounded by $1$ if one renormalizes $f_0$ and $f_n$ with respect to $\la\cdot, \cdot\ra'$. The renormalization is locally bounded in $\lambda$ (see \cite[proof of Th. 7.1]{C1}) and with that we obtain for each compactum $C_U \subset U$ that 
$$
\sup_{\substack{x\in [0,1) \\ \lambda \in C_U}}|\Phi(\lambda, x)|<\infty.
$$
By Cauchy's inequality we obtain the same type of finiteness for the derivatives $\frac{\partial}{\partial \lambda}\Phi$.
Note that every bounded continuous function on $[0,1)$ is $\beta_{\epsilon}$-integrable. Therefore, by the Leibniz-integral rule we may interchange derivation in $\lambda$ and the integral and thus we find that $I_{n,\e}(\lambda,\sigma)$ is holomorphic in $\lambda$.
\end{proof}

The lemma implies that 
\begin{equation}\label{eq Def J_n}
J_{n,\epsilon}(\sigma,\lambda)
:=c^{-1} (C_{n,0}^{\lambda,\sigma})^{-1}I_{n,\epsilon}(\sigma,\lambda)
\end{equation}
is holomorphic in $\lambda \in U$ as well. 

Let first $\lambda \in U^+$
where 
$$
U^+
:=\left\{\lambda \in U \mid \re \lambda+\tfrac{1}{2}>0\right\}
$$ 
is the right half of the strip $U$. Then the Gau{\ss} series for the hypergeometric function in (\ref{def In2}) is for every $n\in\N$ absolutely convergent on $[0,1]$ and can be evaluated at $x=1$; see Subsection \ref{Appendix Hypergeometric functions}. 
We use the definition of the hypergeometric function \eqref{hgfdef} and Fubini's theorem and rewrite the integral $J_{n,\epsilon}(\sigma,\lambda)$ of \eqref{def In2} as 
\begin{align}\label{eq J sum} 
J_{n,\epsilon}(\sigma,\lambda)
\nonumber&=\sum_{k=0}^{\infty} \frac{(-\lambda-\sigma)_{k}(-\lambda+\sigma+n)_{k}}{(n+1)_{k}k!} \int_0^1 x^{\frac{n}{2}+k } (1-x)^{-\lambda-1+\e}\,dx \\
&=\sum_{k=0}^{\infty} \frac{(-\lambda-\sigma)_{k}(-\lambda+\sigma+n)_{k}}{(n+1)_{k}k!} \frac{\Gamma(\frac{n}{2}+k+1)\Gamma(-\lambda+\e)}{\Gamma(\frac{n}{2}+k+1-\lambda+\e)}\\
\nonumber&=\frac{\Gamma(n+1)\Gamma(-\lambda+\e)}{\Gamma(-\lambda-\sigma)\Gamma(-\lambda+\sigma+n)}\\
\nonumber&\qquad \times \sum_{k=0}^{\infty}\frac{\Gamma(-\lambda-\sigma+k)\Gamma(-\lambda+\sigma+n+k)}{\Gamma(n+1+k)\Gamma(k+1)}
\frac{\Gamma(\frac{n}{2}+k+1)}{\Gamma(\frac{n}{2}+k+1-\lambda+\e)}\,.
\end{align}
We claim that \eqref{eq J sum} is valid for all 
$\lambda\in U$, i.e. the sum on the right hand side of \eqref{eq J sum} is absolutely convergent for $\lambda\in U$.
To prove the claim, we recall that Stirling's approximation yields $\frac{\Gamma(z+\alpha)}{\Gamma(z)}= z^\alpha(1 + R_\alpha(z))$ for $\re z, \re (z+\alpha) >0$ with $R_\alpha(z)$ analytic in $\alpha$ and $z$ and $R_\alpha(z)=O(\frac{1}{z})$ uniformly when $\alpha$ is confined to a compactum in $\C$. 
We perform this four times and obtain with $O(\cdot)$'s locally uniform in $\lambda$ that 
\begin{align}
\label{4 Stirling}
\frac{\Gamma(n+1)}{\Gamma(-\lambda+\sigma+n)}
&=n^{1+\lambda -\sigma} \left(1 +O\left(\tfrac{1}{n}\right)\right)\notag \\
\frac{\Gamma(-\lambda-\sigma+k)}{\Gamma(k+1)}
&=\frac{1}{(k+1)^{\lambda +\sigma+1}}
\left(1 +O\left(\tfrac{1}{k}\right)\right)\qquad (k\geq 1)\notag \\
\frac{\Gamma(-\lambda+\sigma+n+k)}{\Gamma(n+1+k)}
&=\frac{1}{(n+k)^{1+\lambda-\sigma}} \left(1 +O\left(\tfrac{1}{n+k}\right)\right)\\
\frac{\Gamma(\frac{n}{2}+k+1)}{\Gamma(\frac{n}{2}+k+1-\lambda+\e)}&=\left(\tfrac{n}{2}+k\right)^{\lambda -\e}\left(1 +O\left(\tfrac{1}{n+k}\right)\right)
\, .\notag
\end{align}

It follows that for fixed $n\in \N$ and $\lambda$ contained in a compact subset of $U$ there exists a constant $C>0$ so that for every $k\in\N_{0}$ we have 
\begin{align*}
\left|\frac{\Gamma(n+1)}{\Gamma(-\lambda+\sigma+n)}\right|
    &\leq C\\
\left|\frac{\Gamma(-\lambda-\sigma+k)}{\Gamma(-\lambda-\sigma)\Gamma(k+1)}\right|
    &\leq C (k+1)^{-\lambda -\sigma-1}\\
\left|\frac{\Gamma(-\lambda+\sigma+n+k)}{\Gamma(n+1+k)}\right|
    &\leq C (n+k+1)^{-1-\lambda+\sigma}\\
\left|\frac{\Gamma(\frac{n}{2}+k+1)}{\Gamma(\frac{n}{2}+k+1-\lambda+\e)}\right|
    &\leq C (n+k+1)^{\lambda -\e}.
\end{align*}
Notice that 
$$
(n+k+1)^{\lambda -\e}(n+k+1)^{-1-\lambda+\sigma}
=(n+k+1)^{-1-\e+\sigma}\leq (k+1)^{-1-\e+\sigma}.
$$
This gives us an estimate of the right hand side in \eqref{eq J sum} locally uniformly in $\lambda\in U$  for $\lambda+\sigma\neq 0$ as follows,
\begin{align*}
|J_{n,\epsilon}(\sigma,\lambda)|
&\leq C^{4}\left|\Gamma(-\lambda+\e)\right|\sum_{k=0}^{\infty}(k+1)^{-\re\lambda-\epsilon-2}<\infty,
\end{align*}
as $\re \lambda+1\geq 0$. Hence, the right-hand side of $\eqref{eq J sum}$ is analytic for $\lambda\in U$ as well and our claim follows. 

\begin{prop}\label{prop:bnd on J}
Let $\lambda\in U$ and $\sigma\in \{0,\frac{1}{2}\}$ for which $\re(\lambda)+\sigma \leq 0$. Then we have:    
$$
J_{n,\epsilon}(\sigma,\lambda)\asymp n^{-\e-\sigma}.
$$
\end{prop}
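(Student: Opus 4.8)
The plan is to extract the $n$-decay from the series representation \eqref{eq J sum}, which is already known to converge absolutely and locally uniformly for $\lambda\in U$. Write $a:=-\lambda-\sigma$, let $\Sigma_n$ be the sum on the right of \eqref{eq J sum} and $u_{n,k}$ its $k$-th term, so that
$$
J_{n,\epsilon}(\sigma,\lambda)=\frac{\Gamma(n+1)\,\Gamma(-\lambda+\epsilon)}{\Gamma(-\lambda-\sigma)\,\Gamma(-\lambda+\sigma+n)}\,\Sigma_n,\qquad u_{n,k}=\frac{\Gamma(a+k)}{\Gamma(k+1)}\cdot\frac{\Gamma(-\lambda+\sigma+n+k)}{\Gamma(n+1+k)}\cdot\frac{\Gamma(\tfrac n2+k+1)}{\Gamma(\tfrac n2+k+1-\lambda+\epsilon)}.
$$
By Stirling $\tfrac{\Gamma(n+1)}{\Gamma(-\lambda+\sigma+n)}\sim n^{1+\lambda-\sigma}$, so it suffices to show $\Sigma_n\asymp n^{-\lambda-1-\epsilon}$; multiplying then gives $J_{n,\epsilon}\asymp n^{-\sigma-\epsilon}$. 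To see the order of $\Sigma_n$ I would feed the four Stirling expansions \eqref{4 Stirling} into $u_{n,k}$ and substitute $k=n\kappa$: one finds $u_{n,k}=n^{-\lambda-2-\epsilon}\,h(\kappa)\,(1+o(1))$ with $h(\kappa)=\kappa^{a-1}(1+\kappa)^{-\lambda+\sigma-1}(\tfrac12+\kappa)^{\lambda-\epsilon}$, so that $\Sigma_n$ is a Riemann sum for $n^{-\lambda-1-\epsilon}\int_0^\infty h(\kappa)\,d\kappa$. This integral converges at $\infty$ since $\re\lambda>-1$, and at $0$ since $\re a=-\re\lambda-\sigma\ge 0$ — this is exactly where the hypothesis $\re\lambda+\sigma\le 0$ is used; on the borderline $\re a=0$ one has convergence only as an improper integral, because $a\ne0$ for $\lambda\in U$ so $\kappa^{a-1}=\kappa^{-1+i\im a}$ oscillates, and this is handled by summation by parts directly on $\Sigma_n$.

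To pin down the limiting constant, and in particular to verify that it does not vanish (the lower bound hidden in $\asymp$), I would return to
$$
J_{n,\epsilon}(\sigma,\lambda)=\int_0^1 x^{n/2}(1-x)^{-\lambda-1+\epsilon}\,{}_2F_1\big(-\lambda-\sigma,-\lambda+\sigma+n;n+1;x\big)\,dx,
$$
apply Euler's transformation ${}_2F_1(a',b';c';x)=(1-x)^{c'-a'-b'}{}_2F_1(c'-a',c'-b';c';x)$ (turning the integrand into $x^{n/2}(1-x)^{\lambda+\epsilon}{}_2F_1(n+1+\lambda+\sigma,1+\lambda-\sigma;n+1;x)$), and then Euler's integral representation for the resulting Gauss function, which is legitimate precisely when $\re\lambda+\sigma\le 0$. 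This produces a double integral over $(x,t)\in[0,1]^2$; rescaling $x=1-s/n$, $t=1-r/n$ and passing to the limit by dominated convergence gives $J_{n,\epsilon}(\sigma,\lambda)=K(\sigma,\lambda,\epsilon)\,n^{-\sigma-\epsilon}(1+o(1))$ with
$$
K(\sigma,\lambda,\epsilon)=\frac{\Gamma(-\lambda)}{\Gamma(-\lambda-\sigma)}\int_0^\infty \xi^{\lambda+\epsilon}(1+\xi)^{\sigma-1-\lambda}\big(1+\tfrac\xi2\big)^{\lambda}\,d\xi=\frac{\Gamma(-\lambda)\,\Gamma(\lambda+\epsilon+1)\,\Gamma(-\sigma-\lambda-\epsilon)}{\Gamma(-\lambda-\sigma)\,\Gamma(1-\sigma)}\,{}_2F_1\big(-\lambda,\lambda+\epsilon+1;1-\sigma;\tfrac12\big),
$$
the last equality being another Euler integral. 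Since $\Gamma$ never vanishes and ${}_2F_1\big(-\lambda,\lambda+\epsilon+1;1-\sigma;\tfrac12\big)$ is analytic in $\epsilon$ and not identically zero, $K(\sigma,\lambda,\epsilon)\ne 0$ for all but countably many $\epsilon>0$; this is the source of the ``almost all $\epsilon$'' in the statement, and it is consistent with the explicit value $\Gamma(\tfrac12+\epsilon)$ coming from \eqref{formula for easy case} at the limits of the discrete series.

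The main obstacle is uniformity: converting the Riemann-sum and Laplace-type heuristics into genuine two-sided asymptotics requires a bound on $u_{n,k}$ uniform in \emph{both} $n$ and $k$ — one splits at $k\asymp n$ and uses the Stirling remainder estimates already recorded in the text, which after the rescaling $k=n\kappa$ produce an $n$-independent dominating function $\kappa^{\re a-1}(1+\kappa)^{\sigma-1-\epsilon}$; this is integrable precisely when $\re a>0$, so in the boundary case $\re\lambda=-\tfrac12,\ \sigma=\tfrac12$ (the $\sigma=\tfrac12$ unitary principal series) the non-absolutely-convergent tail must be summed by parts, and there Euler's integral representation degenerates so the naive limit of the interior constant $K$ is \emph{not} the correct one. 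The remaining case $\sigma=\tfrac12,\ \re\lambda>-\tfrac12$ is not covered by the hypothesis and is reduced to it via the isomorphism $V_{\sigma,\lambda}\simeq V_{\sigma,-1-\lambda}$. The case $\sigma=0$ has no boundary issue and follows cleanly from the Euler-representation route.
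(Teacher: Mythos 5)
Your route genuinely differs from the paper's. The paper works entirely with the series in \eqref{eq J sum}: it splits $J_{n,\epsilon}=J'+J''$ at $k=n$, recognizes $J''$ as a Riemann sum converging to an $\epsilon$-analytic integral $g$, and then decomposes $J'$ into another Riemann-sum piece $h$ plus a partial zeta-sum $s(\lambda,\sigma,n)=n^{\lambda+\sigma}\sum_{k\le n}(k+1)^{-1-\lambda-\sigma}(1+R(k+1))$, whose asymptotics are governed by Lemma~\ref{Lemma Faulhaber}; the outcome is $J_{n,\epsilon}\sim\frac{n^{-\epsilon-\sigma}\Gamma(-\lambda+\epsilon)}{\Gamma(-\lambda-\sigma)}\bigl(\phi+2^{-\lambda+\epsilon}\psi\,n^{\lambda+\sigma}\bigr)$, and the two-sided bound then rests on an analyticity-in-$\epsilon$ argument showing $|\phi|\neq|2^{-\lambda+\epsilon}\psi|$ generically. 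You instead pull the $n$-decay out of the Euler transformation and the double Euler integral, rescale $x=1-s/n$, $t=1-r/n$, and pass to a limiting constant $K(\sigma,\lambda,\epsilon)$. This is a cleaner derivation of both the exponent and the constant when it applies.

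The concrete gap is that your route covers only the strict interior $\re\lambda+\sigma<0$. The Euler integral for ${}_2F_1(n+1+\lambda+\sigma,1+\lambda-\sigma;n+1;x)$ needs $\re(c-a)=-\re\lambda-\sigma>0$, and the limiting $K$ (and the 2-d dominating function $s^{\re\lambda+\epsilon}r^{-\re\lambda-\sigma-1}(s+r)^{-1-\re\lambda+\sigma}$ near $r=0$) both break at $\re\lambda+\sigma=0$. That boundary is not a degenerate corner: it is the entire odd ($\sigma=\tfrac12$) unitary principal series $\lambda\in-\tfrac12+i\R\setminus\{-\tfrac12\}$, which the proposition must cover. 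You flag this, but the promised "summation by parts directly on $\Sigma_n$" is not carried out, and it is exactly the hard part: at the boundary the constant is not a single number but has the form $\phi+c\,n^{\lambda+\sigma}$ with $n^{\lambda+\sigma}$ oscillating on the unit circle, so the lower half of $\asymp$ requires proving $|\phi|\ne|c|$ for generic $\epsilon$. That is what the paper's split $J=J'+J''$, Lemma~\ref{Lemma Faulhaber}(\ref{Lemma Faulhaber - item 2}), and the final analyticity argument accomplish, and nothing in your proposal produces an analogue. There are also small algebraic slips in $K$ (the factor should be $(1+\tfrac{\xi}{2})^{\lambda-\epsilon}$, not $(1+\tfrac{\xi}{2})^{\lambda}$, with the matching $\Gamma(\epsilon-\lambda)$ prefactor), but those are harmless; the missing treatment of $\re\lambda+\sigma=0$ is the substantive issue.
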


For the proof we recall the following elementary lemma.

\begin{lemma}\label{Lemma Faulhaber}
Let $c\in \C$. Then the following assertions hold: 
\begin{enumerate}[(i)]
\item\label{Lemma Faulhaber - item 1} If $\re(c)<1 $. Then we have asymptotically in $n\in\N$ 
$$
\sum_{k=1}^n \frac{1}{k^c}
=\frac{n^{1-c}}{1-c}+\frac{1}{2}n^{-c}+O(n^{-c-1}).
$$
\item\label{Lemma Faulhaber - item 2} If $c \in 1+i\R$ and $c \neq 1$, then asymptotically in $n\in \N$
$$
\sum_{k=1}^{n}\frac{1}{k^{c}}
=\frac{n^{1-c}}{1-c} +\zeta(c) +O(\tfrac{1}{n}).
$$
\end{enumerate}
\end{lemma}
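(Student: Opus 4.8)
The plan is to deduce both parts from a single application of the Euler--Maclaurin summation formula to $f(x)=x^{-c}$ on $[1,n]$, and then specialize. First I would record the first-order Euler--Maclaurin identity
\[
\sum_{k=1}^{n}k^{-c}=\int_{1}^{n}x^{-c}\,dx+\frac{1+n^{-c}}{2}-c\int_{1}^{n}\Big(\{x\}-\tfrac12\Big)x^{-c-1}\,dx,
\]
valid for every $c\in\C$, where $\{x\}$ denotes the fractional part. Since $c\neq1$, the first integral equals $\tfrac{n^{1-c}}{1-c}-\tfrac{1}{1-c}$, so the main term $\tfrac{n^{1-c}}{1-c}$ and the half-term $\tfrac12 n^{-c}$ are already visible; what remains is to understand the constant $\tfrac12-\tfrac{1}{1-c}$ together with the oscillatory remainder integral.

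Next I would split $\int_{1}^{n}=\int_{1}^{\infty}-\int_{n}^{\infty}$ in the remainder. One integration by parts, using that the periodic primitive of $\{x\}-\tfrac12$ is bounded and vanishes at the integers, simultaneously shows that $\int_{1}^{\infty}\big(\{x\}-\tfrac12\big)x^{-c-1}\,dx$ converges for $\re c>-1$ and that $\int_{n}^{\infty}\big(\{x\}-\tfrac12\big)x^{-c-1}\,dx=O\big(n^{-\re c-1}\big)$. Collecting terms yields the uniform expansion
\[
\sum_{k=1}^{n}k^{-c}=\frac{n^{1-c}}{1-c}+C(c)+\frac12 n^{-c}+O\big(n^{-\re c-1}\big),\qquad C(c):=\frac12-\frac{1}{1-c}-c\int_{1}^{\infty}\Big(\{x\}-\tfrac12\Big)x^{-c-1}\,dx,
\]
valid for $c\neq1$ with $\re c>-1$. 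Then I would invoke the classical identification $C(c)=\zeta(c)$: for $\re c>1$ it is immediate since $\sum_{k\geq1}k^{-c}=\zeta(c)$ together with the tail estimate above, and both sides are holomorphic on $\{\re c>-1\}\setminus\{1\}$, so the identity propagates by analytic continuation. This is precisely the standard Euler--Maclaurin representation of the Riemann zeta function.

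Finally I would specialize. In case (ii) one has $\re c=1$, so $n^{1-c}$ has modulus $1$, $\tfrac12 n^{-c}=O(1/n)$, and the error $O(n^{-\re c-1})=O(n^{-2})$ is also $O(1/n)$; the expansion collapses to $\tfrac{n^{1-c}}{1-c}+\zeta(c)+O(1/n)$. In case (i) one has $\re c<1$ and the error term is $O(n^{-c-1})$, so the expansion reproduces the stated formula. I do not anticipate a genuine obstacle in this scheme: apart from the two routine integration-by-parts estimates for the remainder integral, the only point worth stating carefully is the identification $C(c)=\zeta(c)$, i.e.\ that the constant manufactured by Euler--Maclaurin is the analytic continuation of $\sum_k k^{-c}$ rather than some accidental quantity.
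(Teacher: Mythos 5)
Your proposal is correct in its mechanics and takes a genuinely different route from the paper. The paper proves only part (ii), via a tail-sum comparison: starting from $\re c>1$ one writes $\zeta(c)-\sum_{k\le n}k^{-c}=\sum_{k>n}k^{-c}$, compares this with $\int_{n+1}^\infty x^{-c}\,dx$, packages the discrepancy into an integral $R(n,c)=\int_{n+1}^\infty\bigl(\lfloor x\rfloor^{-c}-x^{-c}\bigr)dx$, observes that $R$ extends holomorphically to $\re c>0$ and is $O(1/n)$ for $\re c\ge1$, and analytically continues; part (i) is dismissed as ``standard in calculus.'' You instead apply first-order Euler--Maclaurin directly to the partial sum on $[1,n]$ and obtain a single expansion
$$
\sum_{k=1}^n k^{-c}=\frac{n^{1-c}}{1-c}+C(c)+\tfrac12n^{-c}+O\bigl(n^{-\re c-1}\bigr),
\qquad C(c)=\zeta(c),
$$
valid for $c\ne 1$, $\re c>-1$, identifying $C(c)=\zeta(c)$ by the same analytic-continuation device. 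Both proofs pivot on the same continuation step; yours has the advantage of producing the $\tfrac12n^{-c}$ correction and the constant $\zeta(c)$ simultaneously and uniformly, so the specialization to (ii) is immediate.

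There is, however, a gap in your final sentence claiming that case (i) ``reproduces the stated formula.'' Your expansion contains the constant $\zeta(c)$, and on the range $-1<\re c<1$ this constant is \emph{not} $O(n^{-c-1})$ (there $|n^{-c-1}|=n^{-\re c-1}\to0$ while $\zeta(c)$ is a fixed nonzero number), so the expansion you derived does not literally match what item (i) asserts. For $\re c\le -1$ the constant \emph{would} be absorbed, but there your argument breaks down for a different reason: a single integration by parts no longer shows that $\int_1^\infty\bigl(\{x\}-\tfrac12\bigr)x^{-c-1}\,dx$ converges, since the boundary term $P(x)x^{-c-1}$ no longer tends to zero at infinity and one would need higher-order Euler--Maclaurin. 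The discrepancy is in fact a latent imprecision in item (i) as stated in the paper --- the constant $\zeta(c)$ ought to appear, or the error should be weakened to $O(1)$ for $-1<\re c<1$ --- and it is harmless in the paper's application, where the sum is subsequently multiplied by $n^{c-1}$ (with $\re c<1$), which annihilates the constant and leaves only the leading term $\frac{1}{1-c}$. But your write-up should flag this rather than assert that the expansion ``reproduces'' (i); as things stand, that last step is false for most of the stated range of $c$.
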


\begin{proof}
The first assertion is standard in calculus. For convenience to the reader we recall the short argument for (\ref{Lemma Faulhaber - item 2}). We first assume $\re c> 1$. Then 
\begin{align*}
\zeta(c)-\sum_{k=1}^n \frac{1}{k^c} &=\sum_{k=n+1}^\infty \frac{1}{k^c}=\int_{n+1}^\infty x^{-c} \,dx +\int_{n+1}^\infty \big(\lfloor x\rfloor^{-c} -x^{-c}\big)\, dx \\
&=-\frac{(n+1)^{1-c}}{1-c} +R(n,c)\,.
\end{align*}
Now $R(n,c)$ extends holomorphically to $\re c>0$ and is of order $O(\tfrac{1}{n})$ for $\re c\geq 1$. Analytic continuation yields the assertion.
\end{proof} 

\begin{proof}[Proof of Proposition \ref{prop:bnd on J}]
Notice that the case $\lambda=-\frac{1}{2},\sigma=\frac{1}{2}$ was consider in 
\eqref{formula for easy case}. 
Thus we will assume  $\lambda+\sigma \neq 0.$

We break the series for $J_{n,\epsilon}(\sigma,\lambda)$ into two pieces $J_{n,\epsilon}(\sigma,\lambda)=J_{n,\epsilon}(\sigma,\lambda)'+J_{n,\epsilon}(\sigma,\lambda)''$ corresponding to the sums of $0\leq k \leq n $ and $n<k <\infty$.
The latter is the easier part as the Stirling approximations (\ref{4 Stirling}) have remainder uniformly of size $O(\frac{1}{n})$ and we obtain asymptotically in $n$ 
\begin{align*}
J_{n,\epsilon}(\sigma,\lambda)''
&\sim \frac{n^{1+\lambda-\sigma}\Gamma(-\lambda+\e)}{\Gamma(-\lambda-\sigma)}\sum_{k=n+1}^{\infty}\frac{\left(\frac{n}{2}+k\right)^{\lambda -\e}}{(n+k)^{1+\lambda-\sigma}k^{1+\lambda+\sigma}}\\
&= \frac{n^{-1-\e-\sigma}\Gamma(-\lambda+\e)}{\Gamma(-\lambda-\sigma)}\sum_{k=n+1}^{\infty}\frac{\left(\frac{1}{2}+\frac{k}{n}\right)^{\lambda -\e}}{(1+\frac{k}{n})^{1+\lambda-\sigma}(\frac{k}{n})^{1+\lambda+\sigma}}\\
&\sim \frac{n^{-\e-\sigma}\Gamma(-\lambda+\e)}{\Gamma(-\lambda-\sigma)}\frac{1}{n}\sum_{k=1}^{\infty}\frac{\left(\frac{3}{2}+\frac{k}{n}\right)^{\lambda -\e}}{(2+\frac{k}{n})^{1+\lambda-\sigma}(1+\frac{k}{n})^{1+\lambda+\sigma}}\\
&\sim \frac{n^{-\e-\sigma}\Gamma(-\lambda+\e)}{\Gamma(-\lambda-\sigma)}\underbrace{\int_{0}^{\infty}\frac{\left(\frac{3}{2}+x\right)^{\lambda -\e}}{(2+x)^{1+\lambda-\sigma}(1+x)^{1+\lambda+\sigma}}\,dx}_{=:g(\lambda,\sigma,\e)},
\end{align*}
where we used that the approximation of the integral above by a Riemann sum yields an error of $O(\frac{1}{n})$ as the derivative of the integrand is absolutely integrable. Notice that 
$g(\lambda,\sigma,\e)$ is holomorphic in $\lambda\in U$ and analytic in $\e>0$. For fixed $\lambda$ and generic $\e>0$ we thus have $g(\lambda,\sigma,\e)\neq 0$ which is tacitly assumed above. In summary, for any fixed $\lambda\in U$ and generic $\e>0$ we have
$$
J_{n,\epsilon}(\sigma,\lambda)''
\sim \frac{n^{-\e-\sigma}\Gamma(-\lambda+\e)}{\Gamma(-\lambda-\sigma)} g(\lambda,\sigma,\e).
$$

For the first sum $J_{n,\epsilon}(\sigma,\lambda)'$ we need a bit more careful and avoid the second of the four Stirling approximations in \eqref{4 Stirling}.
We assume that $\lambda$ is confined to a bounded region in $U$ and write $R$ for the remainder function $R_{-\lambda-\sigma-1}$. Then
\begin{align*}
J_{n,\epsilon}(\sigma,\lambda)'
&\sim
\frac{n^{1+\lambda-\sigma}\Gamma(-\lambda+\e)}{\Gamma(-\lambda-\sigma)}\sum_{k=0}^{n}\frac{\left(\frac{n}{2}+k+1\right)^{\lambda -\e}}{(n+k+1)^{1+\lambda-\sigma}} \frac{\Gamma(-\lambda-\sigma+k)}{\Gamma(k+1)}\notag\\
&=\frac{n^{1+\lambda-\sigma}\Gamma(-\lambda+\e)}{\Gamma(-\lambda-\sigma)}
\sum_{k=0}^n\frac{\left(\frac{n}{2}+k+1\right)^{\lambda -\e}}{(n+k+1)^{1+\lambda-\sigma}(k+1)^{1+\lambda +\sigma}} (1 +R(k+1))\\
&=\frac{n^{\lambda-\e}\Gamma(-\lambda+\e)}{\Gamma(-\lambda-\sigma)}\sum_{k=0}^n\frac{\left(\frac{1}{2}+\frac{k+1}{n}\right)^{\lambda -\e}}{(1+\frac{k+1}{n})^{1+\lambda-\sigma}(k+1)^{1+\lambda+\sigma}} (1+R(k+1))\notag
\end{align*}
Let 
$$
f_{\lambda,\e} (x)
=\frac{\left(\frac{1}{2}+x\right)^{\lambda -\e}}{(1+x)^{1+\lambda-\sigma}}.
$$
and note that $f_{\lambda,\e}$ is smooth in a neighborhood of $[0,1]$. 
We continue with 
\begin{align*}
J_{n,\epsilon}(\sigma,\lambda)'
&\sim \frac{n^{\lambda -\e}\Gamma(-\lambda+\e)}{\Gamma(-\lambda-\sigma)}\sum_{k=0}^n f_{\lambda,\e}(\tfrac{k+1}{n})
\frac{1}{(k+1)^{1+\lambda+\sigma}} (1+R(k+1))\\
&\sim \frac{n^{\lambda -\e}\Gamma(-\lambda+\e)}{\Gamma(-\lambda-\sigma)}
    \sum_{k=0}^n\frac{f_{\lambda,\e}(\tfrac{k+1}{n})-f_{\lambda,\e}(0)}{\frac{k+1}{n}}\frac{1}{n (k+1)^{\lambda+\sigma}} (1+R(k+1))\\
&\quad +  \frac{n^{\lambda -\e}\Gamma(-\lambda+\e)}{\Gamma(-\lambda-\sigma)}f_{\lambda,\e}(0)
    \sum_{k=0}^{n}\frac{1}{(k+1)^{1+\lambda+\sigma}} (1+R(k+1))\\
&=\frac{n^{-\sigma -\e}\Gamma(-\lambda+\e)}{\Gamma(-\lambda-\sigma)}
    \frac{1}{n}\sum_{k=0}^n\frac{f_{\lambda,\e}(\tfrac{k+1}{n})-f_{\lambda,\e}(0)}{\left(\frac{k+1}{n}\right)}\left(\frac{k+1}{n}\right)^{-\lambda-\sigma} (1+R(k+1))\\
&\quad +  \frac{n^{-\sigma-\e}\Gamma(-\lambda+\e)}{\Gamma(-\lambda-\sigma)}f_{\lambda,\e}(0)
    n^{\lambda+\sigma}\sum_{k=0}^{n}\frac{1}{(k+1)^{1+\lambda+\sigma}} (1+R(k+1))\\
\end{align*}
Note that 
$$
\frac{1}{n}\sum_{k=0}^n\frac{f_{\lambda,\e}(\tfrac{k+1}{n})-f_{\lambda,\e}(0)}{\left(\frac{k+1}{n}\right)}\left(\frac{k+1}{n}\right)^{-\lambda-\sigma} (1+R(k+1))
\sim \underbrace{\int_{0}^{1}\frac{f_{\lambda,\epsilon}(x)-f_{\lambda,\epsilon}(0)}{x} x^{-\lambda-\sigma}\,dx}_{=:h(\lambda,\sigma,\e)},
$$
where $h(\lambda,\sigma,\e)$ is holomorphic in $\lambda\in U$ and analytic in $\e>0$. Also note $f_{\lambda,\e}(0)=\left(\tfrac{1}{2}\right)^{\lambda-\e}$.
With
$$
s(\lambda,\sigma,n)
:=n^{\lambda+\sigma}\sum_{k=0}^n
\frac{1}{(k+1)^{1+\lambda+\sigma}} (1+R(k+1))
$$
we can rewrite matters in more compact form 
$$
J_{n,\epsilon}(\sigma,\lambda)'
\sim\frac{n^{-\sigma -\e}\Gamma(-\lambda+\e)}{\Gamma(-\lambda-\sigma)}\left(h(\lambda,\sigma,\e)+ \left(\tfrac{1}{2}\right)^{\lambda-\e}  s(\lambda,\sigma,n)\right).
$$
We will use Lemma \ref{Lemma Faulhaber} to analyze $s(\lambda,\sigma,n)$. Thus we need to consider two cases:
\begin{enumerate}[(1)]
    \item $\re(\lambda)+\sigma<0$. In this case \ref{Lemma Faulhaber}(\ref{Lemma Faulhaber - item 2}) yields: 
$
s(\lambda,\sigma,n)
\sim -\frac{1}{\lambda
+\sigma}.
$
\item 
If $\lambda+\sigma\in i\R\setminus\{0\}$, then we write $R(k)=\frac{a_{k}(\lambda)}{k}$ for $k\in \N$. Now $a_{k}(\lambda)$ is bounded in $k$ and depends holomorphically on $\lambda$. Then 
$$
s(\lambda,\sigma,n)
=n^{\lambda+\sigma}\sum_{k=0}^{n} (k+1)^{-1-\lambda-\sigma}\left(1+\tfrac{a_{k+1}(\lambda)}{k+1}\right).
$$
Notice that 
$$
\sum_{k=0}^{n}\frac{a_{k+1}(\lambda)}{(k+1)^{2+\lambda+\sigma}}
= \gamma(\lambda) + O\left(\tfrac{1}{n}\right)
$$
for a number $\gamma(\lambda,\sigma)\in\C$ depending holomorphically on $\lambda$. 
By Lemma \ref{Lemma Faulhaber}(\ref{Lemma Faulhaber - item 2}) we have 
$$
\sum_{k=0}^{n}(k+1)^{-1-\lambda-\sigma}
=-\frac{n^{-\lambda-\sigma}}{\lambda+\sigma} +\zeta(1+\lambda+\sigma) +O(\tfrac{1}{n})
$$ 
and therefore 
$$
s(\lambda,\sigma,n)
=-\frac{1}{\lambda+\sigma} +n^{\lambda+\sigma}\left(\zeta(1+\lambda+\sigma) +\gamma(\lambda,\sigma)+O(\tfrac{1}{n})\right).
$$
\end{enumerate}
In both cases we obtain
$$
J_{n,\epsilon}(\sigma,\lambda)
\sim \frac{n^{-\e-\sigma}\Gamma(-\lambda+\e)}{\Gamma(-\lambda-\sigma)}\Big(\phi(\lambda,\sigma,\epsilon)+ 2^{-\lambda+\e}  \psi(\lambda,\sigma) n^{\lambda+\sigma}\Big),
$$
where
$$
\phi(\lambda,\sigma,\epsilon)
=g(\lambda,\sigma,\e)+h(\lambda,\sigma,\e)- 
\frac{2^{-\lambda+\e}}{\lambda+\sigma}
$$
and 
$$
\psi(\lambda,\sigma)
=\left\{
  \begin{array}{ll}
0&\big(\re \lambda+\sigma<0\big) \\
\zeta(1+\lambda+\sigma) +\gamma(\lambda,\sigma)
&\big(\re\lambda+\sigma=0\big).
\end{array}
\right.
$$
It is easy to see that 
$$
\lim_{\epsilon\to\infty} 2^{-\epsilon}\big(g(\lambda,\sigma,\epsilon)+h(\lambda,\sigma,\epsilon)\big)
=0.
$$
Since $\e\mapsto 2^{-\epsilon}\big(g(\lambda,\sigma,\epsilon)+h(\lambda,\sigma,\epsilon)\big)$ is non-constant and depends analytically on $\epsilon$, it follows that 
$$
\left|2^{-\epsilon} g(\lambda,\sigma,\e)+2^{-\epsilon}h(\lambda,\sigma,\e)-\frac{2^{-\lambda}}{\lambda+\sigma}\right|
$$
is not constant as a function of $\epsilon$ in any neighborhood of $0$. (This also implies that $g(\lambda,\sigma,\epsilon)+h(\lambda,\sigma,\epsilon)$ is not identically equal to $0$.) This shows that for generic $\epsilon>0$ we have
$$
J_{n,\epsilon}(\sigma,\lambda)
\asymp n^{-\sigma-\epsilon}.
$$
\end{proof}

If $\re\lambda=-\frac{1}{2}$ and $\lambda+\sigma\neq 0$, then it follows from Proposition \ref{prop:bnd on J} that 
$$
I_{n,\epsilon}(\sigma,\lambda)
\asymp n^{-\e-\sigma} C_{n,0}^{\lambda, \sigma}
$$
and with \eqref{dn0} and Stirling we finally obtain 
\begin{align*} I_{n,\epsilon}(\sigma,\lambda)&\asymp n^{-\e-\sigma}
\frac{(-1)^n\sin(\pi(\sigma-\lambda))}{\pi}\frac{\Gamma(\lambda-\sigma+1)\Gamma(n+\sigma-\lambda)}{\Gamma(n+1)} 
\asymp n^{-\frac{1}{2}-\e}\, .
\end{align*}
This completes the proof of Proposition \ref{thm estimate convex combination} for the unitary principal series.

\subsubsection{Complementary series}

Let $\lambda\in (-1,0)$. Consider the following inner-product on $C^{\infty}(S^1)_{even}$ 
$$
H_{\lambda}(f_1,f_2)
:=\sum_{k\in\Z}\frac{\Gamma(\lambda-k+1)}{\Gamma(-\lambda-k)}a_k\overline{b}_k,
$$
where
$$
f_1(\theta)
:=\sum_{k\in\Z}a_ke^{-ik\theta},\qquad
f_2(\theta)
:=\sum_{k\in\Z}b_ke^{-ik\theta}
$$
see \cite[Sect.~6.4.6 no. (5)]{KlVi}. The representation $\pi_{0,\lambda}$  defines a unitary representation on the completion $\mathcal{H}_{\lambda}$ of $C^{\infty}(S^1)_{even}$ with respect to the norm defined by $H_{\lambda}$. The representations $(\pi_{0,\lambda}, \mathcal{H}_{\lambda})$ for $\lambda\in (-1,0)$ form the complementary series of representations. 
We renormalize $\tilde f_n:=H_\lambda (f_n,f_{n})^{-\frac{1}{2}}f_n$ to obtain an orthonormal basis.
In this case, matrix coefficients are given by
\begin{equation} \label{def u coeff}
\la \pi_{0,\lambda}({\bf a}_x) \tilde f_m, \tilde f_n\ra
=C_{\lambda}(n,m)t^{\lambda,0}_{n,m}(g),
\end{equation} 
where $t^{\lambda,0}_{n,m}(g)$ are given by the formulas \eqref{matrixcoeff}, and \eqref{matrixcoeff2}, and the constant $C_{\lambda}(n,m)$ is defined by
$$
C_{\lambda}(n,m)
:=\left(\frac{\Gamma(\lambda-n+1)\Gamma(-\lambda-m)}{\Gamma(\lambda-m+1)\Gamma(-\lambda-n)}\right)^{\frac{1}{2}},
$$
see \cite[Sect.~6.5.1 no. (8)]{KlVi}.

Let $V=V_{\lambda}$ be the Harish-Chandra module corresponding to a complementary series representation $\pi_{0,\lambda}$ where $-\tfrac{1}{2}<\lambda<0$. Notice that the $K$-spectrum is even, i.e. $S:=S(\pi_{0,\lambda})=2\Z$. We fix $m=0$. Without loss of generality, we may only consider $K$-types $2n\in S$ with $n\in\N_{0}$.
Our concern is with 
\begin{align*}
\tilde I_{n,\epsilon}(\lambda):=c\int_{0}^{1}\la \pi_{0,\lambda}({\bf a}_x) \tilde f_0, \tilde f_{n}\ra (1-x)^{-1+\epsilon}\,dx\
\end{align*}
which we can via \eqref{def u coeff} write as 
$$
\tilde I_{n,\e}(\lambda)
= C_\lambda(n,0) I_{n,\e}(0,\lambda)
$$
where $I_{n,\e}(0,\lambda)=c\,C_{n,0}^{\lambda,0}J_{n,\epsilon}(0,\lambda)$ 
is from the previous section; see (\ref{eq Def J_n}). From Proposition \ref{prop:bnd on J} we obtain $J_{n,\epsilon}(\sigma,\lambda)\asymp n^{-\e}$ for generic $\e>0$. We now apply Stirling's approximation to (\ref{dn0}). This yields $C_{n,0}^{\lambda,0}C_\lambda(n,0)\asymp n^{-\frac{1}{2}}$.
Therefore,
\begin{align*}
\tilde I_{n,\epsilon}(\lambda)\asymp n^{-\frac{1}{2}-\e}
\end{align*}
for generic $\e>0$. 
This completes the proof for the complementary series.

\subsubsection{Discrete series}

The treatment for the holomorphic and anti-holomorphic discrete series is very similar, and thus we restrict ourselves to the case that $V=V_\ell$ is a Harish-Chandra module corresponding to the holomorphic discrete series representation $\pi_\ell$ with $\ell\in\N$ with $\ell\geq 2$. We now fix $m\in \frac{\ell}{2}+\N_{0}$. From \eqref{ds+mc2} we observe that
$$
\la f_{m}, \pi_{\ell}({\bf a}_x)f_{n}\ra
=J^{\ell}_{m,n}~x^{\frac{m+n-\ell}{2}}(1-x)^{\frac{\ell}{2}}{}_2F_1\left(-m+\tfrac{\ell}{2},-n+\tfrac{\ell}{2},\ell; -\tfrac{1-x}{x}\right)
$$
for all $x\in [0,1)$ and $n\in\frac{\ell}{2}+\N_{0}$,
where $J_{m,n}^{\ell}$ is given by (\ref{jnm}).
Notice that the hypergeometric function in this formula is a polynomial function.
With $\beta=\beta_\e$ as before, see \eqref{def beta}, we obtain for $n\in\frac{\ell}{2}+\N_{0}$ 
\begin{align*}
I_{n,\epsilon}(\ell)
&:=c \int_{0}^{1} \la f_{m},\pi_{\ell}({\bf a}_x) f_{n}\ra \, (1-x)^{-1+\e}\,dx\\
&=c J^{\ell}_{m,n} \int_{0}^{1}x^{\frac{m+n-\ell}{2}}(1-x)^{\frac{\ell}{2}}{}_2F_1\left(-m+\tfrac{\ell}{2},-n+\tfrac{\ell}{2},\ell; -\tfrac{1-x}{x}\right)(1-x)^{-1+\e}\,dx
\end{align*}
where
$$
{}_2F_1\left(-m+\tfrac{\ell}{2},-n+\tfrac{\ell}{2},\ell; -\tfrac{1-x}{x}\right)
=\sum_{k=0}^{m-\frac{\ell}{2}}(-1)^k\binom{m-\frac{\ell}{2}}{k}\frac{(-n+\frac{\ell}{2})_k}{(\ell)_{k} k!}\left(-\frac{1-x}{x}\right)^k.
$$
We thus have
\begin{align*}
I_{n,\epsilon}(\ell)
&=c J_{m,n}^{\ell}\sum_{k=0}^{m-\frac{\ell}{2}}\binom{m-\frac{\ell}{2}}{k}\frac{(-n+\frac{\ell}{2})_{k}}{(\ell)_{k}k!} \int_{0}^{1} x^{\frac{n+m-\ell}{2}-k } (1-x)^{\frac{\ell}{2}+k} (1-x)^{-1+\e}\,dx\\
&=c J_{m,n}^{\ell}\sum_{k=0}^{m-\frac{\ell}{2}} \binom{m-\frac{\ell}{2}}{k}\frac{(-n+\frac{\ell}{2})_{k}}{(\ell)_{k}k!}\frac{\Gamma(\frac{n+m-\ell}{2}-k+1)\Gamma(\frac{\ell}{2}+k+\e)}{\Gamma(\frac{n+m}{2}+1+\e)}\\
&\sim c J_{m,n}^{\ell}n^{-\frac{\ell}{2}-\e}\left(\sum_{k=0}^{m-\frac{\ell}{2}} \binom{m-\frac{\ell}{2}}{k}\frac{(-n+\frac{\ell}{2})_{k}}{(\ell)_{k}k!}\frac{\Gamma(\frac{\ell}{2}+k+\e)}{n^{k}}\right)\\
&\sim c J_{m,n}^{\ell}n^{-\frac{\ell}{2}-\e}\left(\sum_{k=0}^{m-\frac{\ell}{2}} (-1)^{k}\binom{m-\frac{\ell}{2}}{k}\frac{\Gamma(\frac{\ell}{2}+k+\e)}{(\ell)_{k}k!}\right).
\end{align*}
Here we have used Stirling's formula for estimating the Gamma functions. For almost every $\epsilon>0$ the sum in the bracket does not vanish. Finally, using the estimate of $J^{\ell}_{m,n}$ from \eqref{jnm}, we obtain
$$
I_{n,\epsilon}(\ell)
\asymp n^{-\frac{1}{2}-\e}.
$$
This completes the proof for the discrete series representations.

\end{document}